\def\rr{{\mathbb R}}
\def\rn{{\mathbb{R}^n}}
\def\zz{{\mathbb Z}}
\def\nn{{\mathbb N}}
\def\cp{{\mathcal P}}
\def\cs{{\mathcal S}}
 \def\CO{{\mathcal O}}
\def\fz{\infty }
\def\az{\alpha}
\def\bz{\beta}
\def\dz{\delta}
\def\lz{\lambda}
\def\pa{\partial}
\def\f{\frac}
\def\boz{{\Omega}}
\def\sa{\sigma}
\def\lf{\left}
\def\r{\right}
\def\hs{\hspace{0.25cm}}
\def\ls{\lesssim}
\def\gs{\gtrsim}
\def\noz{\nonumber}
\def\wz{\widetilde}
\def\st{\subset}
\def\com{\complement}
\def\lg{\langle}
\def\rg{\rangle}
\def\loc{{\mathop\mathrm{\,loc\,}}}
\def\supp{\mathop\mathrm{\,supp\,}}
\def\esup{\mathop\mathrm{\,ess\,sup\,}}
\def\va{\vec{a}}
\def\vp{\vec{p}}
\def\HL{M_{{\rm HL}}}
\def\vh{{H_{\va}^{\vp}(\rn)}}
\def\vAh{{H_{A}^{p}(\rn)}}
\def\vah{{H_{\va}^{\vp,\,r,\,s}(\rn)}}
\def\vahfz{{H_{\va}^{\vp,\,\fz,\,s}(\rn)}}
\def\vfah{{H_{\va,\,{\rm fin}}^{\vp,\,r,\,s}(\rn)}}
\def\vfahfz{{H_{\va,\,{\rm fin}}^{\vp,\,\fz,\,s}(\rn)}}
\def\lv{{L^{\vp}(\rn)}}
\def\lvv{L^{{\vec{p}'}(\rn)}}
\def\de{\widetilde{A}}
\def\Qk{\widetilde{B}_k}
\def\Qkk{B_k^*}
\def\Bik{{B_i^k}}
\def\Qik{{Q_i^k}}
\newcommand{\sumn}{\sum_{i=1}^n}
\newtheorem{theorem}{Theorem}[section]
\newtheorem{lemma}[theorem]{Lemma}
\newtheorem{corollary}[theorem]{Corollary}
\newtheorem{proposition}[theorem]{Proposition}
\theoremstyle{definition}
\newtheorem{remark}[theorem]{Remark}
\newtheorem{definition}[theorem]{Definition}
\renewcommand{\appendix}{\par
   \setcounter{section}{0}%
   \setcounter{subsection}{0}%
   \setcounter{subsubsection}{0}%
   \gdef\thesection{\@Alph\c@section}%
   \gdef\thesubsection{\@Alph\c@section.\@arabic\c@subsection}%
   \gdef\theHsection{\@Alph\c@section.}%
   \gdef\theHsubsection{\@Alph\c@section.\@arabic\c@subsection}%
   \csname appendixmore\endcsname
 }
\numberwithin{equation}{section}
\begin{document}

\arraycolsep=1pt

\title{\bf\Large Atomic and Littlewood-Paley Characterizations of Anisotropic Mixed-Norm Hardy Spaces
and Their Applications
\footnotetext{\hspace{-0.35cm} 2010 {\it
Mathematics Subject Classification}. Primary 42B35;
Secondary 42B30, 42B25, 42B20, 30L99.
\endgraf {\it Key words and phrases.}
anisotropic Euclidean space, (mixed-norm) Hardy space, Calder\'on-Zygmund decomposition,
discrete Calder\'{o}n reproducing
formula, grand maximal function, atom,
Littlewood-Paley function, Calder\'{o}n-Zygmund operator.
\endgraf This project is supported by the National
Natural Science Foundation of China
(Grant Nos.~11761131002, 11571039, 11726621  and 11471042).}}
\author{Long Huang, Jun Liu, Dachun Yang
and Wen Yuan\footnote{Corresponding author / January 22, 2018.}}
\date{}
\maketitle

\vspace{-0.8cm}

\begin{center}
\begin{minipage}{13cm}
{\small {\bf Abstract}\quad
Let $\vec{a}:=(a_1,\ldots,a_n)\in[1,\infty)^n$,
$\vec{p}:=(p_1,\ldots,p_n)\in(0,\infty)^n$
and $H_{\vec{a}}^{\vec{p}}(\mathbb{R}^n)$
be the anisotropic mixed-norm Hardy space associated with $\vec{a}$
defined via the non-tangential grand maximal function. In this article,
via first establishing a
Calder\'{o}n-Zygmund decomposition and a discrete Calder\'{o}n reproducing
formula, the authors then characterize $H_{\vec{a}}^{\vec{p}}(\mathbb{R}^n)$,
respectively, by means of atoms,
the Lusin area function, the Littlewood-Paley $g$-function or
$g_{\lambda}^\ast$-function.
The obtained Littlewood-Paley $g$-function characterization of
$H_{\vec{a}}^{\vec{p}}(\mathbb{R}^n)$ coincidentally confirms a conjecture proposed
by Hart et al. [Trans. Amer. Math. Soc. (2017), DOI: 10.1090/tran/7312].
Applying the aforementioned Calder\'{o}n-Zygmund decomposition as well as
the atomic characterization of $H_{\vec{a}}^{\vec{p}}(\mathbb{R}^n)$,
the authors establish a finite atomic characterization of
$H_{\vec{a}}^{\vec{p}}(\mathbb{R}^n)$, which further induces
a criterion on the boundedness of sublinear operators
from $H_{\vec{a}}^{\vec{p}}(\mathbb{R}^n)$ into a quasi-Banach space.
Then, applying this criterion, the authors obtain the boundedness of anisotropic
Calder\'{o}n-Zygmund operators from $H_{\vec{a}}^{\vec{p}}(\mathbb{R}^n)$ to
itself [or to $L^{\vec{p}}(\mathbb{R}^n)$]. The obtained
atomic characterizations of $H_{\vec{a}}^{\vec{p}}(\mathbb{R}^n)$
and boundedness of anisotropic Calder\'{o}n-Zygmund
operators on these Hardy-type spaces positively answer
two questions mentioned by Cleanthous et al. in [J. Geom. Anal. 27 (2017), 2758-2787]. 
All these results are new even for the isotropic mixed-norm Hardy spaces on $\mathbb{R}^n$.
}
\end{minipage}
\end{center}

\vspace{0.215cm}

\section{Introduction\label{s1}}

The real-variable theory of Hardy spaces on the Euclidean space $\rn$ certainly plays an
important role in analysis, including harmonic analysis, partial differential equations
and geometrical analysis, and has been systematically studied and
developed so far; see, for example, \cite{fs72,lg14b,lu,s93,sw60}. It is well known that
Hardy spaces are good substitutes of Lebesgue spaces $L^p(\rn)$, with $p\in(0,1]$,
particularly, in the study on the boundedness of maximal functions
and Calder\'{o}n-Zygmund operators.
Notice that, as a generalization of the classical Lebesgue space $L^p(\rn)$,
the mixed-norm Lebesgue space $\lv$, in which the constant exponent
$p$ is replaced by an exponent vector $\vp\in [1,\fz]^n$, was studied by Benedek and
Panzone \cite{bp61} in 1961, which can be traced back to H\"{o}rmander \cite{h60}.
Later, in 1970, Lizorkin \cite{l70} further studied both the theory of
multipliers of Fourier integrals and estimates of convolutions
in the mixed-norm Lebesgue spaces.
Moreover, very recently, there appears a renewed increasing interest in
the theory of mixed-norm function spaces, including mixed-norm Lebesgue spaces,
mixed-norm Hardy spaces, mixed-norm Besov spaces and mixed-norm Triebel-Lizorkin spaces;
see, for example, \cite{cgn17,cgn17-2,gn16,htw17,jms13,jms14,jms15}.
For more developments of mixed-norm function spaces, we refer the reader to
\cite{cs,cgn17bs,f87,gjn17,js07,js08}.

On the other hand, due to the celebrated work \cite{c77,ct75,ct77} of Calder\'{o}n and
Torchinsky on parabolic Hardy spaces, there has been an increasing interest
in extending classical function spaces from Euclidean spaces to
some more general underlying spaces; see, for example,
\cite{mb03,ds16,fs82,hmy06,hmy08,s13,s16,s17,st87,t06,t15,yyh13}.
In particular, Bownik \cite{mb03} studied the anisotropic Hardy space $H_A^p(\rn)$ with
$A$ being a general expansive matrix on $\rn$ and $p\in(0,\fz)$,
which was a generalization of parabolic Hardy spaces introduced in \cite{c77}.
Later on, Bownik et al. \cite{blyz08} further extended the anisotropic Hardy space on $\rn$
to the weighted setting. For more progresses about the theory of anisotropic function spaces on $\rn$,
we refer the reader to \cite{blyz10,fhly17,lby14,lbyz10,lfy15,lwyy17,lyy16,lyy16LP,lyy17hl}
for anisotropic Hardy-type spaces and to \cite{mb05,mb07,bh06,lbyy12,lbyy14,lyy17}
for anisotropic Besov and Triebel-Lizorkin spaces.
Very recently, Cleanthous et al. \cite{cgn17} introduced the anisotropic mixed-norm Hardy space $\vh$
with $\va\in [1,\fz)^n$ and $\vp\in (0,\fz)^n$ via the non-tangential grand maximal function and
established its radial or non-tangential maximal function characterizations; moreover, they 
mentioned several natural questions to be studied, which include the atomic characterizations
of $H_{\vec{a}}^{\vec{p}}(\mathbb{R}^n)$ and the boundedness of anisotropic Calder\'{o}n-Zygmund
operators on these Hardy-type spaces.
For more progresses about this theory, we refer the reader to
\cite{bn08,cgn17,f00,fjs00,jms13,jms14,jms15,yam86a,yam86b}.
Notice that, when $\vp:=(p,\ldots,p)\in(0,\fz)^n$, the anisotropic mixed-norm Hardy space $\vh$
becomes the anisotropic Hardy space $H_{\va}^p(\rn)$. Here,
we should point out that, in this case, $H_{\va}^p(\rn)$
and the anisotropic Hardy space $H_A^p(\rn)$ (see \cite{mb03})
coincide with equivalent quasi-norms, where $A$ is as in \eqref{4e5} below
(see Proposition \ref{2r4'} below).
In addition, Hart et al. \cite{htw17} introduced the mixed-norm Hardy space
$H^{p,q}(\mathbb{R}^{n+1})$ with $p,\,q\in(0,\fz)$ via the Littlewood-Paley $g$-function and
showed that $H^{p,q}(\mathbb{R}^{n+1})$, when $p,\,q\in(1,\fz)$, coincides,
in the sense of equivalent quasi-norms,
with $H_{\va}^{\vp}(\mathbb{R}^{n+1})$ from \cite{cgn17} when
$$\va:=(\overbrace{1,\ldots, 1}^{n+1\ \mathrm{times}})\quad {\rm and}\quad
\vp:=(\overbrace{p,\ldots,p}^{n\ \mathrm{times}},q)\in(1,\fz)^{n+1},$$
which is defined via the non-tangential grand maximal function
(see \cite[Definition 3.3]{cgn17} or Definition 2.11 below); moreover,
Hart et al. in \cite[p.\,9]{htw17} stated that ``We do not know if such mixed Hardy spaces coincide with the
$H^{p,q}(\mathbb{R}^{n+1})$ above for other values of $p$ and $q$, but it is likely",
in which such mixed Hardy spaces mean the Hardy-type spaces $\vh$.

In addition, recall that the classical isotropic singular integral operator
was first introduced by Calder\'{o}n and Zygmund \cite{cz52}, in which they
established the boundedness of these operators on $L^p(\rn)$ for any $p\in(1,\fz)$.
Later, Fern\'{a}ndez \cite{f87} investigated the corresponding boundedness
of some classical isotropic singular integral operators on the mixed-norm Lebesgue space
$\lv$ with $\vp\in (1,\fz)^n$ (see also Stefanov and Torres \cite{st04}).
For more developments of the boundedness of the classical isotropic singular
integral operators, we refer the reader to Torres \cite{t91}.
On the other hand, in 1966, Besov et al. \cite{bil66} and, independently,
Fabes and Rivi\`{e}re \cite{f66} introduced a class of anisotropic singular
integral operators and obtained the $L^p(\rn)$ boundedness of these operators
for any $p\in(1,\fz)$. Moreover, the boundedness of these anisotropic singular
integral operators from \cite{f66} on generalized Morrey spaces was studied by
Guliyev and Mustafayev \cite{gm11} in 2011, which extends the corresponding
results obtained by Besov et al. \cite{bil66} as well as Fabes and Rivi\`{e}re
\cite{f66}. However, the boundedness of the anisotropic singular integral operators
on the mixed-norm Lebesgue space $\lv$ with $\vp\in (1,\fz)^n$ and
from the anisotropic mixed-norm Hardy space $\vh$ (even from the isotropic mixed-norm Hardy space)
to itself or to $\lv$ is still unknown so far, where $\va\in [1,\fz)^n$ and $\vp\in (0,1]^n$.

Let
$$\vec{a}:=(a_1,\ldots,a_n)\in[1,\infty)^n,\ \vec{p}:=(p_1,\ldots,p_n)\in(0,\infty)^n$$
and $H_{\vec{a}}^{\vec{p}}(\mathbb{R}^n)$
be the anisotropic mixed-norm Hardy space associated with $\vec{a}$
introduced by Cleanthous et al. in \cite{cgn17}, via the non-tangential grand maximal function.
In this article, we confirm the aforementioned conjecture proposed by Hart et al. \cite{htw17}
via completing the real-variable theory
of $\vh$ initially studied by Cleanthous et al. in \cite{cgn17}. To be precise, via first establishing a
Calder\'{o}n-Zygmund decomposition and a discrete Calder\'{o}n reproducing
formula, we then characterize $H_{\vec{a}}^{\vec{p}}(\mathbb{R}^n)$,
respectively, by means of atoms,
the Lusin area function, the Littlewood-Paley $g$-function or
$g_{\lambda}^\ast$-function.
The obtained Littlewood-Paley $g$-function characterization of
$H_{\vec{a}}^{\vec{p}}(\mathbb{R}^n)$ coincidentally confirms the aforementioned conjecture of
Hart et al. Applying the aforementioned Calder\'{o}n-Zygmund decomposition as well as
the atomic characterization of $H_{\vec{a}}^{\vec{p}}(\mathbb{R}^n)$,
we establish a finite atomic characterization of
$H_{\vec{a}}^{\vec{p}}(\mathbb{R}^n)$, which further induces
a criterion on the boundedness of sublinear operators
from $H_{\vec{a}}^{\vec{p}}(\mathbb{R}^n)$ into a quasi-Banach space.
Then, applying this criterion, we obtain the boundedness of anisotropic
convolutional $\delta$-type and non-convolutional $\bz$-order
Calder\'{o}n-Zygmund operators from $H_{\vec{a}}^{\vec{p}}(\mathbb{R}^n)$ to
itself [or to $\lv$] with
$\delta\in(0,1]$, $\beta\in(0,\fz)\setminus\mathbb{N}$, $\vec{p}\in (0,1]^n$
and $\widetilde{p}_-\in(\frac{\nu}{\nu+\delta},1]$
or $\widetilde{p}_-\in(\frac{\nu}{\nu+\beta},\frac{\nu}{\nu+\lfloor\beta\rfloor a_-}]$,
where $\nu:=a_1+\cdots+a_n$, $\widetilde{p}_-:=\min\{p_1,\ldots,p_n\}$,
$a_-:=\min\{a_1,\ldots,a_n\}$ and $\lfloor\beta\rfloor$ denotes the largest integer
not greater than $\beta$. We should point out that the obtained
atomic characterizations of $H_{\vec{a}}^{\vec{p}}(\mathbb{R}^n)$
and boundedness of anisotropic Calder\'{o}n-Zygmund 
operators on these Hardy-type spaces positively answer
two questions mentioned by Cleanthous et al. in \cite[p.\,2760]{cgn17}. All these results are
new even for the isotropic mixed-norm Hardy spaces on $\mathbb{R}^n$.

This article is organized as follows.

In Section \ref{s2}, we first present some notation and notions used in this article,
including the anisotropic homogeneous quasi-norm, the anisotropic bracket, the mixed-norm Lebesgue space
and their basic properties. Then we recall the definition of anisotropic mixed-norm Hardy
spaces $\vh$ via the non-tangential grand maximal function from \cite{cgn17}.

The aim of Section \ref{s3} is to establish the atomic characterizations of $\vh$.
Recall that, in the proof of the atomic decomposition for the classical isotropic Hardy space $H^p(\rn)$,
we need to use the Calder\'on-Zygmund decomposition to decompose any element of $H^p(\rn)$ into
a sum of atoms (see, for example, \cite{s93}). Thus, in this section, we first establish a Calder\'{o}n-Zygmund decomposition
in anisotropic $\rn$ (see Lemma \ref{3l4} below) by borrowing some ideas from
the proof of Stein \cite[p.\,101, Proposition]{s93}; the obtained Calder\'{o}n-Zygmund
decomposition actually extends Stein \cite[p.\,101, Proposition]{s93}
and Grafakos \cite[Theorem 5.3.1]{lg14} as well as Sawano et al. \cite[Lemma 2.23]{shyy17} to the present setting.
Then, applying this Calder\'{o}n-Zygmund decomposition, we show the density of the subset
$L^{\vp/\wz{p}_-}(\rn)\cap\vh$ in $\vh$ for any $\vp\in(0,\fz)^n$ with $\wz{p}_-$ as
in \eqref{3e1} below (see Lemma \ref{3l7} below). By this density and the anisotropic
Calder\'{o}n-Zygmund decomposition associated with non-tangential grand maximal
functions as well as an argument similar to that used in
the proof of Nakai and Sawano \cite[Theorem 4.5]{ns12}, we then prove that $\vh$ is continuously
embedded into $\vahfz$ and hence also into $\vah$ due to the fact that each
$(\vp,\infty,s)$-atom is also a $(\vp,r,s)$-atom for any $r\in(1,\fz)$,
where $\vah$ denotes the anisotropic mixed-norm atomic Hardy space (see Definition \ref{3d2} below).
Conversely, via borrowing some ideas from the proofs of
Sawano \cite[Theorem 1]{sawa13} and Zhuo at al.
\cite[Proposition 2.11]{zsy16}, we first show that some estimates related to $\lv$ norms
for some series of functions can be reduced into dealing with the $L^r(\rn)$ norms of
the corresponding functions with $r\in (\max\{1,p_+\},\fz]$ and $p_+$ as in \eqref{2e10}
below (see Lemma \ref{3l6} below), which plays a key role in the proof of the atomic
characterizations of $\vh$ (see Theorem \ref{3t1} below)
and is also of independent interest. Indeed, using this key
lemma, the anisotropic Fefferman-Stein vector-valued inequality of the Hardy-Littlewood
maximal operator $\HL$ on $\lv$ (see Lemma \ref{3l2} below) and an argument similar to that used in
the proof of Sawano et al. \cite[Theorem 3.6]{shyy17}, we prove that
$\vah\st\vh$ and the inclusion is continuous, which then completes the proof
of the atomic characterizations of $\vh$. We point out that the obtained 
atomic characterizations of $\vh$ gives a positive answer to a question
mentioned by Cleanthous et al. in \cite[p.\,2760]{cgn17}.

In Section \ref{s4}, as an application of the atomic characterization of
$\vh$ obtained in Theorem \ref{3t1}, we establish characterizations of
$\vh$ via Littlewood-Paley functions, including the Lusin
area function, the Littlewood-Paley $g$-function or $g^*_{\lambda}$-function.
Indeed, via  borrowing some ideas from the proof of
Bownik et al. \cite[Lemma 2.12]{blyz10}, we establish a discrete Calder\'{o}n reproducing
formula (see Lemma \ref{4l5} below) associated to the anisotropic homogeneous quasi-norm on
$\rn$ for distributions vanishing weakly at infinity, which was introduced by
Folland and Stein \cite{fs82} on homogeneous groups.
Applying this discrete Calder\'{o}n reproducing formula
and an argument similar to that used in the proof
of Theorem \ref{3t1}, we first establish the Lusin  area function
characterization of $\vh$ (see Theorem \ref{4t1} below).
Then, using this characterization and an approach initiated by Ullrich \cite{u12}
and further developed by Liang et al. \cite{lsuyy} and Liu et al. \cite{lyy17},
together with the anisotropic Fefferman-Stein
vector-valued inequality of the Hardy-Littlewood maximal operator
$\HL$ on $\lv$ (see Lemma \ref{3l2} below), we establish the Littlewood-Paley
$g$-function and $g_{\lambda}^\ast$-function characterizations of $\vh$
(see, respectively, Theorems \ref{4t2} and \ref{4t3} below).
Indeed, by the aforementioned approach from Ullrich \cite{u12}, via a key lemma
(see Lemma \ref{4l6} below) and an auxiliary function $g_{t,\ast}(f)$
(see \eqref{4e20} below), we show that the $\lv$ quasi-norm of the Lusin area
function can be controlled by that of the Littlewood-Paley $g$-function.
Moreover, the Littlewood-Paley $g$-function characterization of $\vh$ obtained in
Theorem \ref{4t2} below confirms the conjecture proposed by
Hart et al. in \cite[p.\,9]{htw17}.

Section \ref{s5} is devoted to establishing a finite atomic characterization of $\vh$.
In what follows, we use $C_c^\fz(\rn)$ to denote the set of all infinitely differentiable
functions with compact supports.
For any triplet $(\vp,r,s)$ as in Theorem \ref{3t1} below, we first show that $\vh\cap L^q(\rn)$,
with $q\in[1,\fz]$, and $\vh\cap C_c^\fz(\rn)$ are both dense in $\vh$ (see Lemma \ref{5l6} below),
and we then establish the finite atomic characterizations of $\vh$ (see Theorem \ref{5t1} below).
To be precise, via borrowing some ideas from the proofs of
\cite[Theorem 5.7]{lyy16} and \cite[Theorem 2.14]{lwyy17hlLP}, we prove that, for any given finite linear
combination of $(\vp,r,s)$-atoms with $r\in(\max\{p_+,1\},\fz)$ (or continuous $(\vp,\fz,s)$-atoms),
its quasi-norm in $\vh$ can be achieved via all its finite combinations of atoms of the same type.
This extends Meda et al.
\cite[Theorem 3.1 and Remark 3.3]{msv08} and Grafakos et al. \cite[Theorem 5.6]{gly08}
to the present setting of anisotropic mixed-norm Hardy spaces.

In Section \ref{s6}, applying the finite atomic
characterizations for $\vh$ obtained in Section \ref{s5},
we establish a criterion on the boundedness of
sublinear operators from $\vh$ into a quasi-Banach space
(see Theorem \ref{6t1} below), which is of independent interest;
moreover, using this criterion, we further show that,
if $T$ is a sublinear operator and maps all $(\vp,r,s)$-atoms
with $r\in(1,\fz)$ (or all continuous $(\vp,\fz,s)$-atoms) into uniformly bounded
elements of some $\gamma$-quasi-Banach space $\mathcal{B}_{\gamma}$ with $\gamma\in (0,1]$,
then $T$ has a unique bounded $\mathcal{B}_{\gamma}$-sublinear extension from $\vh$ into $\mathcal{B}_{\gamma}$
(see Corollary \ref{6c1} below).
This extends the corresponding results of Meda et al. \cite[Corollary 3.4]{msv08}
and Grafakos et al. \cite[Theorem 5.9]{gly08} as well as Ky \cite[Theorem 3.5]{ky14}
(see also \cite[Theorem 1.6.9]{ylk17})
to the present setting. Finally, via borrowing some ideas from the proofs
of Yan et al. \cite[Theorems 7.4 and 7.6]{yyyz16} and the criterion established
in Theorem \ref{6t1} and Corollary \ref{6c1} below, we also obtain the boundedness
of anisotropic convolutional $\delta$-type and anisotropic non-convolutional
$\bz$-order Calder\'{o}n-Zygmund operators
from $\vh$ to itself (see Theorems \ref{6t2} and  \ref{6t4} below) or to $\lv$
(see Theorems \ref{6t3} and \ref{6t5} below),
where $\delta\in(0,1]$, $\bz\in(0,\fz)\setminus\nn$, $\vp\in (0,1]^n$ and $\widetilde{p}_-\in(\frac{\nu}{\nu+\delta},1]$
or $\widetilde{p}_-\in(\frac{\nu}{\nu+\bz},\frac{\nu}{\nu+\lfloor\bz\rfloor a_-}]$
with $\widetilde{p}_-$ as in \eqref{3e1} and $a_-$ as in \eqref{2e9} below.
We point out that Theorem \ref{6t2} extends the corresponding results of
Fefferman and Stein \cite[Theorem 12]{fs72} and that
Theorems \ref{6t4} and \ref{6t5} extend the corresponding results of
Stefanov and Torres \cite[Theorem 1]{st04} as well as Yan et al.
\cite[Theorem 7.6]{yyyz16} to the present setting. We also point out that
the obtained boundedness of these anisotropic Calder\'{o}n-Zygmund operators
on $\vh$ positively answers a question
mentioned by Cleanthous et al. in \cite[p.\,2760]{cgn17}.

Finally, we make some conventions on notation.
We always let
$\mathbb{N}:=\{1,2,\ldots\}$,
$\mathbb{Z}_+:=\{0\}\cup\mathbb{N}$
and $\vec0_n$ be the \emph{origin} of $\rn$.
For any multi-index
$\bz:=(\bz_1,\ldots,\bz_n)\in(\mathbb{Z}_+)^n=:\mathbb{Z}_+^n$,
let
$|\bz|:=\bz_1+\cdots+\bz_n$ and
$\pa^{\bz}:=(\f{\pa}{\pa x_1})^{\bz_1} \cdots (\f{\pa}{\pa x_n})^{\bz_n}.$
We denote by $C$ a \emph{positive constant}
which is independent of the main parameters,
but may vary from line to line. We also use $C_{(\az,\bz,\ldots)}$ to denote a positive constant
depending on the indicated parameters $\az,\,\bz,\ldots$.
The notation $f\ls g$ means $f\le Cg$ and, if $f\ls g\ls f$,
then we write $f\sim g$. For any $q\in[1,\fz]$, we denote by $q'$
its \emph{conjugate index}, namely, $1/q+1/q'=1$.
Moreover, if $\vec{q}:=(q_1,\ldots,q_n)\in[1,\fz]^n$, we denote by
$\vec{q}':=(q_1',\ldots,q_n')$ its \emph{conjugate index}, namely,
for any $i\in\{1,\ldots,n\}$, $1/q_i+1/q_i'=1$. In addition,
for any set $E\subset\rn$, we denote by $E^\complement$ the
set $\rn\setminus E$, by $\chi_E$ its \emph{characteristic function},
by $|E|$ its \emph{n-dimensional Lebesgue measure}
and by $\sharp E$ its \emph{cardinality}.
For any $s\in\mathbb{R}$, we denote by $\lfloor s\rfloor$
the \emph{largest integer not greater than $s$}. In what follows, $C^{\fz}(\rn)$
denotes the set of all \emph{infinite differentiable functions} on $\rn$
and $C_c^{\fz}(\rn)$ the set of all $C^{\fz}(\rn)$ functions with compact supports.

\section{Preliminaries \label{s2}}

In this section, we present the definition of the anisotropic mixed-norm Hardy space
via the non-tangential grand maximal function from \cite{cgn17}. To this end,
we first recall the notion of anisotropic homogeneous quasi-norms
and then state some of their basic conclusions to be used in this article.

We begin with recalling the definition of anisotropic homogeneous quasi-norms
from \cite{bil66, f66} (see also \cite{sw78}) as follows.
For any $b:=(b_1,\ldots,b_n)$, $x:=(x_1,\ldots,x_n)\in \rn$ and $t\in[0,\fz)$,
let $t^b x:=(t^{b_1}x_1,\ldots,t^{b_n}x_n)$.

\begin{definition}\label{2d1}
Let $\va:=(a_1,\ldots,a_n)\in [1,\fz)^n$.
The \emph{anisotropic homogeneous quasi-norm} $|\cdot|_{\va}$,
associated with $\va$, is a non-negative measurable function on $\rn$
defined by setting $|\vec0_n|_{\va}:=0$ and, for any $x\in \rn\setminus\{\vec0_n\}$,
$|x|_{\va}:=t_0$, where $t_0$ is the unique positive number such that $|t_0^{-\va}x|=1$,
namely,
$$\f{x_1^2}{t_0^{2a_1}}+\cdots+\f{x_n^2}{t_0^{2a_n}}=1.$$
\end{definition}

We also need the following notion of the anisotropic bracket
and the homogeneous dimension from \cite{sw78}, which plays an important role
in the study on anisotropic function spaces.

\begin{definition}\label{2d2}
Let $\va:=(a_1,\ldots,a_n)\in [1,\fz)^n$. The \emph{anisotropic bracket}, associated with $\va$,
is defined by setting,  for any $x\in \rn$, $$\lg x\rg_{\va}:=|(1,x)|_{(1,\va)}.$$
Furthermore, the \emph{homogeneous dimension} $\nu$ is defined as
$$\nu:=|\vec{a}|:=a_1+\cdots +a_n.$$
\end{definition}

\begin{remark}\label{2r1}
\begin{enumerate}
\item[{\rm(i)}] It is easy to see that, for any $x\in \rn$, $|x|_{\va} < \lg x\rg_{\va}$.
\item[{\rm(ii)}] By $\lg \vec0_n\rg_{\va} = 1$ and the fact that,
for any $x\in \rn\setminus\{\vec0_n\}$, $\lg x\rg_{\va} > 1$, we find that,
for any $x\in \rn$, $\lg x\rg_{\va} \geq 1$.
\item[{\rm(iii)}] By \cite[(2.7)]{cgn17}, we know that $\lg \cdot \rg_{\va} $ belongs to $C^{\fz}(\rn)$ and,
for any $s\in \rr$ and multi-index $\bz \in \mathbb{Z}_+^n$, there exists a positive
constant $C_{(\va,s,\bz)}$, depending on $\va$, s and $\bz$, such that, for any $x\in \rn$,
\begin{align*}
\lf|\pa^{\bz}\lg x\rg_{\va}^s\r| \le C_{(\va,s,\bz)}\lg x\rg_{\va}^{s-\va\cdot\bz},
\end{align*}
here and hereafter, for any $\az:=(\az_1,\ldots,\az_n)$,
$\bz:=(\bz_1,\ldots,\bz_n) \in \rn$, $\az\cdot\bz:=\sum_{i=1}^{n}\az_i \bz_i$.
\end{enumerate}
\end{remark}

To compare the anisotropic homogeneous quasi-norm with the Euclidean norm,
we need the following Lemma \ref{2l1}, which is easy to be proved by using
Definition \ref{2d1}, the details being omitted.

\begin{lemma}\label{2l1}
Let $\va\in [1,\fz)^n$ and $x\in \rn$. Then
\begin{enumerate}
\item[{\rm(i)}] $|x|_{\va} > 1$ if and only if $|x| > 1$;
\item[{\rm(ii)}] $|x|_{\va} < 1$ if and only if $|x| < 1.$
\end{enumerate}
\end{lemma}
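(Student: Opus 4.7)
The plan is to exploit the defining equation for $|x|_{\va}$, namely that, when $x\ne\vec0_n$, the value $t_0:=|x|_{\va}$ is the unique positive number satisfying $\sum_{i=1}^n x_i^2/t_0^{2a_i}=1$, and to compare the summands with the elementary Euclidean identity $\sum_{i=1}^n x_i^2=|x|^2$. The only nontrivial ingredient is the monotonicity of $t\mapsto t^{a_i}$ at the threshold $t=1$: since every $a_i\ge 1$, if $t>1$ then $t^{a_i}\ge t$, while if $0<t<1$ then $t^{a_i}\le t$. Both (i) and (ii) will therefore reduce to essentially the same short estimate.

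First I would dispose of the trivial case $x=\vec0_n$ (where $|x|_{\va}=0=|x|$, so both equivalences hold vacuously) and then assume $x\ne\vec0_n$, writing $t_0:=|x|_{\va}$. For the forward direction of (i), if $t_0>1$, then $t_0^{2a_i}\ge t_0^2$ for every $i$, hence $1=\sum_{i=1}^n x_i^2/t_0^{2a_i}\le t_0^{-2}\sum_{i=1}^n x_i^2=|x|^2/t_0^2$, which yields $|x|\ge t_0>1$. For the converse I would argue by contraposition: if $t_0\le 1$, then $t_0^{2a_i}\le t_0^2\le 1$, so that $1=\sum_{i=1}^n x_i^2/t_0^{2a_i}\ge t_0^{-2}\sum_{i=1}^n x_i^2=|x|^2/t_0^2\ge|x|^2$, forcing $|x|\le 1$.

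Part (ii) follows by the symmetric calculation with the inequalities reversed: if $t_0<1$, then $t_0^{2a_i}\le t_0^2$ gives $1\ge|x|^2/t_0^2$, so $|x|\le t_0<1$; conversely, if $|x|<1$, then $t_0\ge 1$ would, via the estimate used above, imply $|x|\ge 1$, a contradiction. I do not foresee any genuine obstacle here: the lemma is a direct consequence of the normalization $|t_0^{-\va}x|=1$ combined with $\va\in[1,\fz)^n$, and the only care needed is to keep track of strict versus non-strict inequalities so that the equivalences ``$>$'' and ``$<$'' in (i) and (ii) come out cleanly; the boundary case $t_0=1$ falls out of each chain of inequalities without extra work.
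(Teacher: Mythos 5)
Your proof is correct and is precisely the direct verification the paper has in mind when it says the lemma follows easily from Definition \ref{2d1}; the paper omits the details, and you have supplied them carefully, including the handling of the boundary case $t_0=1$ and the trivial case $x=\vec0_n$.
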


For any $\va:=(a_1,\ldots,a_n)\in [1,\fz)^n$, let
\begin{align}\label{2e9}
a_-:=\min\{a_1,\ldots,a_n\}\hspace{0.35cm}
{\rm and}\hspace{0.35cm} a_+:=\max\{a_1,\ldots,a_n\}.
\end{align}

Now let us recall some basic properties of $|\cdot|_{\va}$
and $\lg \cdot\rg_{\va}$;
see \cite{cgn17, js07, js08, sw78} for more details.

\begin{lemma}\label{2l2}
Let $\va:=(a_1,\ldots,a_n) \in [1,\fz)^n$, $t\in[0,\fz)$ and $a_-$ and $a_+$ be as
in \eqref{2e9}. Then, for any $x,\ y\in \rn$,

\begin{enumerate}
\item[{\rm(i)}] $\lf|t^{\va}x\r|_{\va}= t|x|_{\va}$;
\item[{\rm(ii)}] $|x+y|_{\va}\le |x|_{\va}+|y|_{\va}$;
\item[{\rm(iii)}] $\max\{|x_1|^{1/a_1},\ldots,|x_n|^{1/a_n}\}
\le |x|_{\va}\le \sumn |x_i|^{1/a_i}$;
\item[{\rm(iv)}] when $|x| \geq 1$, $|x|^{1/a_+} \le |x|_{\va} \le|x|^{1/a_-}$;
\item[{\rm(v)}] when $|x| < 1$, $|x|^{1/a_-} \le |x|_{\va} \le|x|^{1/a_+}$;
\item[{\rm(vi)}] $(\f{1}{2})^{a_-}(1+|x|_{\va})^{a_-} \le 1+|x| \le 2(1+|x|_{\va})^{a_+}$;
\item[{\rm(vii)}] $ \lg x\rg_{\va} \le 1+|x|_{\va}\le 2\lg x\rg_{\va}$;
\item[{\rm(viii)}] $\lg x+y\rg_{\va} \le 4\lg x\rg_{\va} \lg y\rg_{\va}$;
\item[{\rm(ix)}] for any measurable function $f$ on $\rn$,
$$\int_{\rn}f(x)\,dx=\int_0^{\fz}\int_{S^{n-1}}f(\rho^{\va}\xi)\rho^{\nu-1}\,d\sigma(\rho)\,d\rho,$$
where $S^{n-1}$ denotes the $n-1$ dimension unit sphere of $\rn$
and $\sigma(\rho)$ the spherical measure.
\end{enumerate}
\end{lemma}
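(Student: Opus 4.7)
The plan is to verify the nine items essentially directly from the defining equation $\sum_{i=1}^n x_i^2 |x|_{\va}^{-2a_i}=1$ and from Definition \ref{2d2}; most items are short computations, while the triangle inequality (ii) is the main obstacle and hinges on a Minkowski-type convexity argument that crucially uses $a_i\ge 1$ for every $i$.

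First I would obtain (i) directly from the uniqueness in Definition \ref{2d1}: if $t_0:=|x|_{\va}$, then $(tt_0)^{-\va}(t^{\va}x)=t_0^{-\va}x$ still has Euclidean norm one, so $|t^{\va}x|_{\va}=tt_0$. For (iii), set $y:=|x|_{\va}^{-\va}x$, which has Euclidean norm one; since $|y_i|\le 1$ one gets $|x_i|^{1/a_i}\le |x|_{\va}$ for each $i$, yielding the lower bound. For the upper bound, the comparison $|y_i|^{1/a_i}\ge |y_i|^{1/a_-}$ (valid since $a_i\ge a_-$ and $|y_i|\le 1$) together with the $\ell^q$-norm monotonicity $\|y\|_{\ell^2}\le \|y\|_{\ell^{1/a_-}}$ (valid since $1/a_-\le 1\le 2$) gives $\sum_i|y_i|^{1/a_i}\ge 1$, and hence $\sum_i|x_i|^{1/a_i}=|x|_{\va}\sum_i|y_i|^{1/a_i}\ge |x|_{\va}$. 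Properties (iv) and (v) follow by combining (iii) with elementary comparisons split into the cases $|x|\ge 1$ and $|x|<1$, and (vi) is an algebraic consequence of (iv) and (v).

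The main step is (ii). Setting $s:=|x|_{\va}$ and $t:=|y|_{\va}$, the assumption $a_i\ge 1$ yields $(s+t)^{1-a_i}\le \min\{s^{1-a_i},t^{1-a_i}\}$, from which one obtains the coordinatewise estimate
\begin{align*}
(s+t)^{-a_i}|x_i+y_i|\le \frac{s}{s+t}\,s^{-a_i}|x_i|+\frac{t}{s+t}\,t^{-a_i}|y_i|.
\end{align*}
Applying the convexity of $u\mapsto u^2$ with weights $\frac{s}{s+t}$ and $\frac{t}{s+t}$, then summing over $i$ and invoking $\sum_i(s^{-a_i}x_i)^2=\sum_i(t^{-a_i}y_i)^2=1$, one concludes $\sum_i[(s+t)^{-a_i}(x_i+y_i)]^2\le 1$, which by the defining equation is exactly $|x+y|_{\va}\le s+t$.

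Finally, for (vii), applying (ii) in $\R^{n+1}$ to the decomposition $(1,x)=(1,\vec0_n)+(0,x)$, together with $|(1,\vec0_n)|_{(1,\va)}=1$ and $|(0,x)|_{(1,\va)}=|x|_{\va}$ (the latter by (i)), yields $\lg x\rg_{\va}\le 1+|x|_{\va}$; conversely, from $t_0^{-2}+\sum_i x_i^2 t_0^{-2a_i}=1$ with $t_0:=\lg x\rg_{\va}$ one reads off $t_0\ge 1$ and $t_0\ge |x|_{\va}$, hence $2\lg x\rg_{\va}\ge 1+|x|_{\va}$. Property (viii) then follows from the chain
\begin{align*}
\lg x+y\rg_{\va}\le 1+|x+y|_{\va}\le 1+|x|_{\va}+|y|_{\va}\le 2\lf(\lg x\rg_{\va}+\lg y\rg_{\va}\r)\le 4\lg x\rg_{\va}\lg y\rg_{\va},
\end{align*}
the last step using Remark \ref{2r1}(ii). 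For (ix), the change of variables $(\rho,\xi)\in(0,\fz)\times S^{n-1}\mapsto \rho^{\va}\xi\in\rn\setminus\{\vec0_n\}$ has Jacobian $\rho^{\nu-1}$, computed from $\det(\rho^{\va})=\rho^{a_1+\cdots+a_n}=\rho^{\nu}$ and the standard extraction of the radial factor along the anisotropic dilation orbit; integrating over the orbit-sphere decomposition yields the claimed formula.
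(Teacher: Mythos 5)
The paper states this lemma without proof, pointing only to the references \cite{cgn17, js07, js08, sw78}, so your direct derivation is a self-contained addition rather than an alternative to an in-text argument. The key nontrivial item is (ii), and your convexity argument there is correct and tight: the coordinatewise bound $(s+t)^{-a_i}|x_i+y_i|\le\frac{s}{s+t}s^{-a_i}|x_i|+\frac{t}{s+t}t^{-a_i}|y_i|$ indeed follows from $(s+t)^{1-a_i}\le\min\{s^{1-a_i},t^{1-a_i}\}$, the convexity of $u\mapsto u^2$ turns it into $\sum_i[(s+t)^{-a_i}(x_i+y_i)]^2\le 1$, and the strict monotonicity of $t\mapsto|t^{-\va}z|$ converts this into $|x+y|_{\va}\le s+t$. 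Items (i), (iii), (vi), (vii) and (viii) are also handled correctly; the only small mis-attribution is that $|(0,x)|_{(1,\va)}=|x|_{\va}$ follows directly from the defining equation, not from (i).

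There is one genuine flaw of logic: (iv) and (v) do not follow from (iii). From (iii) you get $\max_i|x_i|^{1/a_i}\le|x|_{\va}\le\sum_i|x_i|^{1/a_i}$, but for $|x|\ge 1$ neither $|x|^{1/a_+}\le\max_i|x_i|^{1/a_i}$ nor $\sum_i|x_i|^{1/a_i}\le|x|^{1/a_-}$ holds in general (take $n=2$, $\va=(1,2)$, $x=(1,1)$: $\max_i|x_i|^{1/a_i}=1<2^{1/4}=|x|^{1/a_+}$ and $\sum_i|x_i|^{1/a_i}=2>\sqrt2=|x|^{1/a_-}$). The correct route bypasses (iii) entirely: set $t_0:=|x|_{\va}$ and read off $\sum_i x_i^2 t_0^{-2a_i}=1$. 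Since $|x|\ge 1$ forces $t_0\ge 1$ (Lemma~\ref{2l1}), one has $t_0^{-2a_+}\le t_0^{-2a_i}\le t_0^{-2a_-}$, and multiplying by $x_i^2$ and summing gives $t_0^{-2a_+}|x|^2\le 1\le t_0^{-2a_-}|x|^2$, that is $|x|^{1/a_+}\le t_0\le|x|^{1/a_-}$; the case $|x|<1$ is symmetric with the inequalities reversed. Finally, in (ix) you should state explicitly that $\sigma$ is the anisotropic surface measure which absorbs the residual angular determinant (involving $\sum_i a_i\xi_i^2$-type factors); only the pure radial factor is $\rho^{\nu-1}$, so calling $\sigma$ ``the spherical measure'' without qualification is imprecise both in your write-up and in the paper's own statement.
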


\begin{remark}\label{2r2}
\begin{enumerate}
\item[{\rm(i)}] By Lemma \ref{2l2}(i), we easily know that the anisotropic
quasi-homogeneous norm $|\cdot|_{\va}$ is a norm if and only if
$\va=(\overbrace{1,\ldots,1}^{n\ \mathrm{times}})$ and, in this case,
the homogeneous quasi-norm $|\cdot|_{\va}$ becomes the Euclidean norm $|\cdot|$.
\item[{\rm(ii)}] It is easy to see that $\va\in [1,\fz)^n$ guarantees Lemma \ref{2l2}(ii).
\end{enumerate}
\end{remark}

For any $\va\in [1,\fz)^n$, $r\in (0,\fz)$ and $x\in \rn$,
we define the \emph{anisotropic ball} $B_{\va}(x,r)$, with center $x$ and radius $r$, by
setting $B_{\va}(x,r):=\{y\in \rn:\,|y-x|_{\va} < r\}$.
Then $B_{\va}(x,r)= x+r^{\va}B_{\va}(\vec0_n,1)$ and
$|B_{\va}(x,r)|=\nu_n r^{\nu}$, where $\nu_n:=|B_{\va}(\vec0_n,1)|$ (see \cite[(2.12)]{cgn17}).
Moreover, from Lemma \ref{2l1}(ii), we deduce that $B_0:=B_{\va}(\vec0_n,1)=B(\vec0_n,1)$,
where $B(\vec0_n,1)$ denotes the unit ball of $\rn$, namely, $B(\vec0_n,1):=\{y\in \rn:\,|y|<1\}$.
In what follows, we always let $\mathfrak{B}$ be the set of all anisotropic balls, namely,
\begin{align}\label{2e2}
\mathfrak{B}:=\lf\{B_{\va}(x,r):\ x\in\rn,\ r\in(0,\fz)\r\}.
\end{align}
For any $B\in\mathfrak{B}$ centered at $x\in\rn$ with radius $r\in (0,\fz)$ and $\delta\in(0,\fz)$,
let
\begin{align}\label{2e2'}
B^{(\delta)}:=B^{(\delta)}_{\va}(x,r):=B_{\va}(x,\delta r).
\end{align}
In addition, for any $x\in \rn$ and $r\in (0,\fz)$, the \emph{anisotropic cube}
$Q_{\va}(x,r)$ is defined by setting $Q_{\va}(x,r):=x + r^{\va}(-1,1)^n,$
whose Lebesgue measure $|Q_{\va}(x,r)|$ equals $2^n r^\nu$.
Denote by $\mathfrak{Q}$ the set of all anisotropic cubes, namely,
\begin{align}\label{2e3}
\mathfrak{Q}:=\lf\{Q_{\va}(x,r):\ x\in\rn,\ r\in(0,\fz)\r\}.
\end{align}

Now let us recall the definition of mixed-norm Lebesgue spaces from \cite{bp61}.
\begin{definition}\label{2d3}
Let $\vp:=(p_1,\ldots,p_n)\in (0,\fz]^n$. The \emph{mixed-norm Lebesgue space} $\lv$ is
defined to be the set of all measurable functions $f$ such that their quasi-norms
$$\|f\|_{\lv}:=\left\{\int_{\rr}\cdots\left[\int_{\rr}\left\{\int_{\rr}|f(x_1,\ldots,x_n)|^{p_1}
\,dx_1\right\}^{\f{p_2}{p_1}}\,dx_2\right]^{\f{p_3}{p_2}}\cdots\, dx_n\right\}^{\f{1}{p_n}}<\fz$$
with the usual modifications made when $p_i=\fz$, $i\in \{1,\ldots,n\}$.
\end{definition}

\begin{remark}\label{2r3}
\begin{enumerate}
  \item[{\rm(i)}] Obviously, when $\vp=(p,\ldots,p)\in (0,\fz]^n$, $\lv$ coincides with
  the classical Lebesgue space $L^p(\rn)$.
  \item[{\rm(ii)}] For any $\vp\in(0,\fz]^n$, $(\lv,\|\cdot\|_{\lv})$
  is a quasi-Banach space and, for any $\vp \in [1,\fz]^n$,
  $(\lv,\|\cdot\|_{\lv})$ becomes a Banach space; see \cite[p.\,304, Theorem 1]{bp61}.
  \item[{\rm(iii)}] Let $\vp\in (0,\fz]^n$. Then, for any $s\in (0,\fz)$ and $f\in\lv$,
  \begin{align}\label{2e8}\lf\||f|^s\r\|_{\lv}=\|f\|_{L^{s \vp}(\rn)}^s.\end{align}
  In addition, for any $\lz\in{\mathbb C}$, $\theta\in [0,\min(1,p_1,\ldots,p_n)]$
  and $f,\ g\in\lv$, $\|\lz f\|_{\lv}=|\lz|\|f\|_{\lv}$ and
  \begin{align*}
  \|f+g\|_{\lv}^{\theta}\le \|f\|_{\lv}^{\theta}
  +\|g\|_{\lv}^{\theta}
  \end{align*}
  (see \cite[p.\,188]{js07}).
  \item[{\rm(iv)}] Let $\vp\in [1,\fz]^n$. Then, for any $f\in\lv$ and $g\in L^{\vp'}(\rn)$,
  it is easy to see that
  \begin{align*}
  \int_{\rn}|f(x)g(x)|\,dx \le \|f\|_{\lv}\|g\|_{\lvv},
  \end{align*}
  where $\vp'$ denotes the conjugate index of $\vp$,
  namely, for any $i\in \{1,\ldots,n\}$, $1/p_i+1/p_i'=1$.
\end{enumerate}
\end{remark}

For any $\vp:=(p_1,\ldots,p_n)\in (0,\fz)^n$, we always let
\begin{align}\label{2e10}
p_-:=\min\{1,p_1,\ldots,p_n\}\hspace{0.35cm}
{\rm and}\hspace{0.35cm} p_+:=\max\{p_1,\ldots,p_n\}.
\end{align}

Recall that a \emph{Schwartz function} is a $C^\infty(\rn)$
function $\varphi$ satisfying,
for any $N\in\zz_+$ and multi-index $\az\in\zz_+^n$,
$$\|\varphi\|_{N,\alpha}:=
\sup_{x\in\rn}\lf\{(1+|x|)^N
|\partial^\alpha\varphi(x)|\r\}<\infty.$$
Denote by
$\cs(\rn)$ the set of all Schwartz functions, equipped
with the topology determined by
$\{\|\cdot\|_{N,\alpha}\}_{N\in\zz_+,\az\in\zz_+^n}$,
and $\cs'(\rn)$ the \emph{dual space} of $\cs(\rn)$, equipped
with the weak-$\ast$ topology.
For any $N\in\mathbb{Z}_+$, let
$$\cs_N(\rn):=\lf\{\varphi\in\cs(\rn):\
\|\varphi\|_{\cs_N(\rn)}:=
\sup_{x\in\rn}\lf[\lg x\rg_{\va}^N\sup_{|\az|\le N}
|\partial^\alpha\varphi(x)|\r]\le 1\r\}.$$
In what follows, for any $\varphi \in \cs(\rn)$ and $t\in (0,\fz)$,
let $\varphi_t(\cdot):=t^{-\nu}\varphi(t^{-\va}\cdot)$.

\begin{definition}\label{2d4}
Let $\varphi\in\cs(\rn)$ and $f\in\cs'(\rn)$. The
\emph{non-tangential maximal function} $M_\varphi(f)$,
with respect to $\varphi$, is defined by setting, for any $x\in\rn$,
$$
M_\varphi(f)(x):= \sup_{y\in B_{\va}(x,t),
t\in (0,\fz)}|f\ast\varphi_t(y)|.
$$
Moreover, for any given $N\in\mathbb{N}$, the
\emph{non-tangential grand maximal function} $M_N(f)$ of
$f\in\cs'(\rn)$ is defined by setting, for any $x\in\rn$,
\begin{equation*}
M_N(f)(x):=\sup_{\varphi\in\cs_N(\rn)}
M_\varphi(f)(x).
\end{equation*}
\end{definition}

\begin{remark}\label{2r5}
Obviously, for any given $N\in \nn$ and any $f\in \cs'(\rn)$, $\varphi\in\cs(\rn)$,
$t\in (0,\fz)$ and $x\in \rn$,
$$|f\ast\varphi_t(x)|\le \|\varphi\|_{\cs_N(\rn)}M_N(f)(x).$$
\end{remark}

We now recall the notion of anisotropic mixed-norm Hardy spaces as follows,
which is just \cite[Definition 3.3]{cgn17}.

\begin{definition}\label{2d5}
Let $\va:=(a_1,\ldots,a_n)\in[1,\fz)^n$, $\vp\in(0,\fz)^n$,
$N_{\vp}:=\lfloor(\nu\frac{a_+}{a_-}
(\f{1}{p_-}+1)+\nu+2a_+\rfloor+1$ and
\begin{align}\label{2e11}
N\in\mathbb{N}\cap \lf[N_{\vp},\fz\r),
\end{align}
where $a_-,\,a_+$ are as in \eqref{2e9} and $p_-$ is as in \eqref{2e10}.
The \emph{anisotropic mixed-norm Hardy space} $\vh$ is defined by setting
\begin{equation*}
\vh:=\lf\{f\in\cs'(\rn):\ M_N(f)\in\lv\r\}
\end{equation*}
and, for any $f\in\vh$, let
$\|f\|_{\vh}:=\| M_N(f)\|_{\lv}$.
\end{definition}

\begin{remark}\label{2r4}
The quasi-norm of $\vh$ in Definition \ref{2d5} depends on $N$, however,
by Theorem \ref{3t1} below, we know that the space $\vh$ is independent
of the choice of $N$ as long as $N$ is as in \eqref{2e11}.
In addition, if $\va:=(\overbrace{1,\ldots,1}^{n\ \rm times})$
and $\vp:=(\overbrace{p,\ldots,p}^{n\ \rm times})$, where $p\in(0,\fz)$,
then $\vh$ coincides with the classical isotropic Hardy space $H^p(\rn)$ of Fefferman and Stein \cite{fs72}.
\end{remark}

\section{Atomic characterizations of $\vh$\label{s3}}

In this section, we establish the atomic characterizations of $\vh$.
We begin with introducing the definition of anisotropic mixed-norm
$(\vp,r,s)$-atoms. In what follows, for any $r\in(0,\fz]$,
we use $L^r(\rn)$ to denote the space of all measurable functions $f$ such that
$$\|f\|_{L^r(\rn)}:=\lf\{\int_{\rn}|f(x)|^r\,dx\r\}^{1/r}<\fz$$
with the usual modification made when $r=\fz$.

\begin{definition}\label{3d1}
Let $\va\in[1,\fz)^n$, $\vp:=(p_1,\ldots,p_n)\in(0,\fz)^n$, $r\in (1,\fz]$ and
\begin{align}\label{3e1}
s\in\lf[\lf\lfloor\f{\nu}{a_-}\lf(\f{1}{\widetilde{p}_-}-1\r) \r\rfloor,\fz\r)\cap\zz_+,
\end{align}
where $a_-$ is as in \eqref{2e9} and $\widetilde{p}_-:=\min\{p_1,\ldots,p_n\}$.
An \emph{anisotropic mixed-norm $(\vp,r,s)$-atom} $a$ is
a measurable function on $\rn$ satisfying
\begin{enumerate}
\item[{\rm (i)}] $\supp a \st B$, where
$B\in\mathfrak{B}$ with $\mathfrak{B}$ as in \eqref{2e2};

\item[{\rm (ii)}] $\|a\|_{L^r(\rn)}\le \frac{|B|^{1/r}}{\|\chi_B\|_{\lv}}$;

\item[{\rm (iii)}] $\int_{\mathbb R^n}a(x)x^\az\,dx=0$ for any $\az\in\zz_+^n$
with $|\az|\le s$.
\end{enumerate}
\end{definition}

Throughout this article, we always call an anisotropic mixed-norm
$(\vp,r,s)$-atom simply by a $(\vp,r,s)$-atom.
Now, using $(\vp,r,s)$-atoms, we introduce the anisotropic
mixed-norm atomic Hardy space $\vah$ as follows.

\begin{definition}\label{3d2}
Let $\va\in[1,\fz)^n$, $\vp\in(0,\fz)^n$, $r\in (1,\fz]$
and $s$ be as in \eqref{3e1}. The \emph{anisotropic mixed-norm
atomic Hardy space} $\vah$ is defined to be the
set of all $f\in\cs'(\rn)$ satisfying that there exist
$\{\lz_i\}_{i\in\nn}\st\mathbb{C}$
and a sequence of $(\vp,r,s)$-atoms, $\{a_i\}_{i\in\nn}$,
supported, respectively, on
$\{B_i\}_{i\in\nn}\st\mathfrak{B}$ such that
\begin{align*}
f=\sum_{i\in\nn}\lz_ia_i
\quad\mathrm{in}\quad\cs'(\rn).
\end{align*}
Moreover, for any $f\in\vah$, let
\begin{align*}
\|f\|_{\vah}:=
{\inf}\lf\|\lf\{\sum_{i\in\nn}
\lf[\frac{|\lz_i|\chi_{B_i}}{\|\chi_{B_i}\|_{\lv}}\r]^
{p_-}\r\}^{1/{p_-}}\r\|_{\lv},
\end{align*}
where the infimum is taken over all decompositions of $f$ as above.
\end{definition}

Let $L_{\rm loc}^1(\rn)$ denote the collection of
all locally integrable functions on $\rn$.

\begin{definition}\label{3d3}
The \emph{Hardy-Littlewood maximal operator} $M_{{\rm HL}}(f)$ of $f\in L_{\rm loc}^1(\rn)$ is defined by setting,
for any $x\in\rn$,
\begin{align}\label{3e2}
M_{{\rm HL}}(f)(x):=\sup_{x\in Q\in\mathfrak{Q}}
\frac1{|Q|}\int_Q|f(y)|\,dy,
\end{align}
where $\mathfrak{Q}$ is as in \eqref{2e3}.
\end{definition}

\begin{remark}\label{3r1}
For any $f\in L_{\rm loc}^1(\rn)$ and $x\in \rn$, let
\begin{align*}
M(f)(x):=\sup_{I_n\in \mathbb{I}_{x_n}}\lf\{\f{1}{|I_n|}\int_{I_n}
\cdots\sup_{I_2\in \mathbb{I}_{x_2}}\lf[\f{1}{|I_2|}\int_{I_2}
\sup_{I_1\in \mathbb{I}_{x_1}}\lf\{\f{1}{|I_1|}\int_{I_1}
|f(y_1,y_2\ldots,y_n)|\,dy_1\r\}\,dy_2\r]\cdots \,dy_n\r\},
\end{align*}
where, for any $k\in \{1,\ldots,n\}$, ${\mathbb I}_{x_k}$ denotes the set of all intervals in $\rr_{x_k}$ containing
$x_k$. Then, it is easy to see that, for any $x\in \rn$, $$ M_{{\rm HL}}(f)(x)\le M(f)(x).$$
\end{remark}

To establish the atomic characterizations of $\vh$, we need several technical lemmas.
We begin with the following boundedness of the Hardy-Littlewood maximal operator
$M_{\rm HL}$ on $\lv$ with $\vp\in(1,\fz]^n$.

\begin{lemma}\label{3l1}
Let $\vp\in (1,\fz]^n$. Then there exists a positive
constant C, depending on $\vp$, such that, for any $f\in \lv$,
$$\|M_{{\rm HL}}(f)\|_{\lv}\le C\|f\|_{\lv},$$
where $M_{{\rm HL}}$ is as in \eqref{3e2}.
\end{lemma}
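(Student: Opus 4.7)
The plan is to dominate $M_{{\rm HL}}$ pointwise by the iterated one-dimensional maximal operator and then iterate the classical scalar Hardy-Littlewood theorem coordinate by coordinate.

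By Remark~\ref{3r1}, $M_{{\rm HL}}(f)(x)\le M(f)(x)$ for every $x\in\rn$, so it suffices to bound $\|M(f)\|_{\lv}$ by $\|f\|_{\lv}$. The iterated strong maximal operator factors as $M=M^{(n)}\circ\cdots\circ M^{(1)}$, where $M^{(i)}$ is the one-dimensional Hardy-Littlewood maximal in the $i$-th coordinate, treating the remaining $n-1$ variables as parameters. If for each $i\in\{1,\ldots,n\}$ I can establish the single-coordinate bound $\|M^{(i)}g\|_{\lv}\le C_{p_i}\|g\|_{\lv}$, then composing the $n$ estimates yields the desired result with $C=\prod_{i=1}^n C_{p_i}$.

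For the innermost coordinate $i=1$, the estimate is immediate: the classical scalar $L^{p_1}(\rr)$-boundedness of $M^{(1)}$ (valid since $p_1>1$), applied for each fixed $(x_2,\ldots,x_n)$, gives $\int|M^{(1)}g(\cdot,x_2,\ldots,x_n)|^{p_1}\,dx_1\le C^{p_1}\int|g|^{p_1}\,dx_1$, and the subsequent $L^{p_j/p_{j-1}}$ layers in the definition of $\|\cdot\|_{\lv}$ are monotone in this non-negative integrand. For an intermediate or outer coordinate $i\ge 2$, the $L^{p_i}_{x_i}$ norm sits in the middle of the nested tower defining $\|\cdot\|_{\lv}$, so a direct Fubini reduction is unavailable; instead I would appeal to a Banach-space-valued extension of the one-dimensional maximal theorem. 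Specifically, $M^{(i)}$ is weak-$(1,1)$ and strong $L^\infty$ in the $x_i$ variable, uniformly in the other coordinates, and these endpoint bounds lift verbatim to functions valued in the mixed-norm Banach function space over the remaining $n-1$ coordinates (since every remaining exponent exceeds $1$, this is a genuine Banach lattice). Marcinkiewicz interpolation in this Banach-space-valued setting then yields the required strong-$(p_i,p_i)$ bound, after which a Minkowski-type comparison reconciles the resulting Bochner norm with $\|\cdot\|_{\lv}$. Equivalently, one may simply invoke the general mixed-norm transference result of Benedek and Panzone \cite{bp61}, which asserts precisely that a scalar operator acting on one coordinate extends to $\lv$ with the same norm as on $L^{p_i}(\rr)$.

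The principal obstacle is the estimate for intermediate coordinates $1<i<n$: because the exponent vector $\vp$ need not be monotone, one cannot freely permute the nested $L^{p_j}$ norms of $\|\cdot\|_{\lv}$ to position $L^{p_i}_{x_i}$ either innermost or outermost, and this forces the detour through Banach-space-valued interpolation (or the Benedek-Panzone framework). The case $i=1$ is routine, and the case $i=n$ becomes routine once one accepts the same Banach-lattice interpolation, so the real work is packaged into this single vector-valued step; composing the $n$ single-coordinate bounds then closes the proof.
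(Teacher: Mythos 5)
Your overall strategy matches the paper's: by Remark~\ref{3r1} it suffices to bound the iterated one-dimensional maximal operator $M=M_n\circ\cdots\circ M_1$, and you propose to handle one coordinate at a time, treating the remaining coordinates as a Banach-lattice parameter. You also correctly identify the crux of the argument: a Banach-lattice-valued one-dimensional maximal inequality. The gap lies in how you justify that inequality.

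The claim that the scalar weak-$(1,1)$ and strong $L^\infty$ endpoint bounds for $M^{(i)}$ ``lift verbatim'' to $Y$-valued functions, with $Y$ the mixed-norm lattice in the remaining coordinates, is false, and it is false precisely because the lattice maximal function $\sup_{r>0}A_r|g|$ is a lattice supremum over the averaging radius, which does not commute with the $Y$-norm: from $\|A_r g(x_i)\|_Y\le M$ for every $r$ one cannot conclude $\|\sup_{r>0} A_r g(x_i)\|_Y\le M$. For $Y=\ell^1$ --- a genuine Banach lattice, not merely a quasi-Banach one --- the vector-valued weak-$(1,1)$ estimate actually fails: take $g(x)=(\chi_{[j,j+1]}(x))_{j=1}^N$, so that $\|g\|_{L^1(\ell^1)}\sim N$ while $\|(M\chi_{[j,j+1]}(x))_j\|_{\ell^1}\gtrsim\log N$ on a set of measure $\sim N$, which contradicts the would-be weak-$(1,1)$ inequality for large $N$. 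Thus ``all remaining exponents exceed $1$, hence the lattice is Banach'' is not the right criterion; what is needed is the Hardy--Littlewood (Fefferman--Stein) property of the lattice, which does hold for mixed-norm $L^{\vec q}(\mathbb{R}^{n-1})$ with all $q_j\in(1,\infty]$, but this is itself a substantive theorem and not a verbatim lift followed by Marcinkiewicz interpolation. Your fallback appeal to a Benedek--Panzone ``transference'' is not available either: the Benedek--Calder\'on--Panzone theorem concerns linear translation-invariant (convolution) operators, while $M^{(i)}$ is sublinear and not of convolution type.

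The paper supplies exactly this missing ingredient by citing the mixed-norm vector-valued maximal estimate of Bagby \cite{b75}, displayed as \eqref{3e4} in its proof, and then carries out the coordinate-by-coordinate peeling in \eqref{3e5} that you propose; it also treats the case $p_{i_0}=\infty$ separately, which your write-up would need to do as well.
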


\begin{proof}
We first assume that $\vp:=(p_1,\ldots,p_n)\in(1,\fz)^n$. In this case, for any given
$m_1,~m_2\in \mathbb{Z}_+$ with $m_1+m_2=n$ and any $f\in\lv$, $s\in \rr^{m_1}$ and $t\in\rr^{m_2}$,
let $$f^*(s,t):=\sup_{r\in (0,\fz)}\f{1}{|B(s,r)|}
\int_{B(s,r)}|f(y,t)|\,dy.$$
In addition, for any given $\vec{p}_{m_2}:=(p_1,\ldots,p_{m_2})\in(1,\fz)^{m_2}$ and any $s\in \rr^{m_1}$, let
\begin{align*}
T_{L^{\vec{p}_{m_2}}(\rr^{m_2})}(f)(s)&:=\|f(s,\cdot)
\|_{L^{\vec{p}_{m_2}}(\rr^{m_2})}\\
&:=\left\{\int_{\rr}\cdots\left[\int_{\rr}\left\{\int_{\rr}
|f(s,t_1\ldots,t_{m_2})|^{p_1}\,dt_1\right\}^{\f{p_2}{p_1}}
\,dt_2\right]^{\f{p_3}{p_2}}\cdots \,dt_{m_2}\right\}^{\f{1}{p_{m_2}}}.
\end{align*}
Then it holds true that, for any $q\in (1,\fz)$,
\begin{align}\label{3e4}
\int_{\rr^{m_1}}\lf[T_{L^{\vec{p}_{m_2}}(\rr^{m_2})}(f^*)(s)\r]^q \,dx
\ls \int_{\rr^{m_1}}\lf[T_{L^{\vec{p}_{m_2}}(\rr^{m_2})}(f)(s)\r]^q \,dx
\end{align}
(see \cite[p.\,421]{b75}).
For any $k\in\{1,\ldots,n\}$ and $x:=(x_1,\cdots,x_n)\in \rn$, let
$$M_k(f)(x):=\sup_{I\in \mathbb{I}_{x_k}}\f{1}{|I|}\int_I
|f(x_1,\ldots,y_k,\ldots,x_n)|\,dy_k,$$
where $\mathbb{I}_{x_k}$ is as in Remark \ref{3r1}.
Then, for any $x\in\rn$, we have
\begin{align*}
M(f)(x)=M_n\lf(\cdots\lf(M_1(f)\r)\cdots\r)(x).
\end{align*}
By this, Remark \ref{3r1} and \eqref{3e4} with
$m_1:=1,~m_2:=n-1$, $s:=x_n$, $t:=(x_1,\ldots,x_{n-1})$
and $q:=p_n$, we conclude that
\begin{align}\label{3e5}
\|M_{{\rm HL}}(f)\|_{\lv}&\le\|M_n\lf(\cdots\lf(M_1(f)\r)\cdots\r)\|_{\lv}\\
&\ls\lf\{\int_{\rr}\lf[T_{L^{\vec{p}_{n-1}}(\rr^{n-1})}
\lf(\lf[M_{n-1}\lf(\cdots\lf(M_1(f)\r)\cdots\r)\r]^*\r)(x_n)\r]^{p_n}\, dx_n\r\}^{\f{1}{p_n}}\noz\\
&\ls \lf\{\int_{\rr}\lf[T_{L^{\vec{p}_{n-1}}(\rr^{n-1})}
(M_{n-1}\lf(\cdots\lf(M_1(f)\r)\cdots\r))(x_n)\r]^{p_n} \,dx_n\r\}^{\f{1}{p_n}}\noz\\
&\sim \|M_{n-1}\lf(\cdots\lf(M_1(f)\r)\cdots\r)\|_{\lv}\noz.
\end{align}
Repeating the above estimate $n-1$ times, we easily find that Lemma \ref{3l1} holds true
in the case when $\vp\in (1,\fz)^n$.

If $p_{i_0}=\fz$ for some $i_0\in \{1,\ldots,n\}$ and, for any $i\in \{1,\ldots,n\}$ with $i\neq i_0$,
$p_i\in (1,\fz)$, then, by an argument similar to that used in the estimation of \eqref{3e5} and the boundedness of $M_{\rm HL}$ on
$L^{\fz}(\rr)$ (see \cite[p.\,14]{mb03}), we easily conclude that Lemma \ref{3l1} also
holds true in this case. This finishes the proof of Lemma \ref{3l1}.
\end{proof}

By \cite[(2.24)]{gn16} and Remark \ref{3r1},
we immediately obtain the following Fefferman-Stein vector-valued inequality of
$M_{\rm HL}$ on $\lv$, the details being omitted.
\begin{lemma}\label{3l2}
Let $\vp\in (1,\fz)^n$ and $u\in(1,\fz]$.
Then there exists a positive
constant $C$ such that, for any
sequence $\{f_k\}_{k\in\nn}\st L^1_{\rm loc}(\rn)$,
$$\lf\|\lf\{\sum_{k\in\nn}
\lf[\HL(f_k)\r]^u\r\}^{1/u}\r\|_{\lv}
\le C\lf\|\lf(\sum_{k\in\nn}|f_k|^u\r)^{1/u}\r\|_{\lv}$$
with the usual modification made when $u=\fz$,
where $\HL$ denotes the Hardy-Littlewood maximal operator as in \eqref{3e2}.
\end{lemma}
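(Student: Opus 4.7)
The plan is to reduce the desired mixed-norm vector-valued bound for $\HL$ to the classical one-dimensional Fefferman--Stein vector-valued inequality, applied iteratively in each coordinate. By Remark \ref{3r1}, the anisotropic Hardy--Littlewood operator $\HL$ is dominated pointwise by the iterated strong maximal operator $M = M_n \circ \cdots \circ M_1$, where $M_k$ denotes the one-dimensional Hardy--Littlewood maximal operator in the $x_k$-variable (with the remaining variables frozen), exactly as already exploited in the proof of Lemma \ref{3l1}. Hence it suffices to establish the stated inequality with $\HL(f_k)$ replaced by $M(f_k)$.

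To handle the latter, I would peel off coordinates one at a time. Fix $(x_2,\ldots,x_n)\in\rr^{n-1}$ and apply the classical Fefferman--Stein vector-valued inequality on $L^{p_1}(\rr)$, valid precisely because $p_1\in(1,\fz)$ and $u\in(1,\fz]$, to the sequence of functions $g_k := M_n\circ\cdots\circ M_2(f_k)(\cdot,x_2,\ldots,x_n)$; this yields a pointwise-in-$(x_2,\ldots,x_n)$ estimate of the $L^{p_1}(\rr_{x_1})$-norm of $\{\sum_k[M_1(g_k)]^u\}^{1/u}$ by the corresponding $L^{p_1}(\rr_{x_1})$-norm of $\{\sum_k|g_k|^u\}^{1/u}$. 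Taking the $L^{(p_2,\ldots,p_n)}$ mixed norm in the remaining variables (justified by Fubini, in the same spirit as in the proof of Lemma \ref{3l1}) and then iterating the same procedure in each successive variable reduces the estimate, after $n$ applications, to a trivial identity. The base case is exactly the one-dimensional Fefferman--Stein inequality.

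The main technical obstacle is the endpoint $u=\fz$, where the $\ell^u$-quasi-norm is a pointwise supremum rather than an integral, so direct Fubini and monotonicity arguments in the $\ell^u$-sum are not immediately available. This is handled by a standard truncation: establish the estimate first for finite sums $\sum_{k=1}^{N}$ (to which Fubini applies at every step), and then pass $N\to\fz$ using monotone convergence together with the $\ell^\fz$-version of the one-dimensional Fefferman--Stein bound. This is precisely the strategy encapsulated in \cite[(2.24)]{gn16}, which, combined with the pointwise domination in Remark \ref{3r1}, immediately yields the lemma and is why the authors omit further details.
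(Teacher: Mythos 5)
The pointwise domination $\HL\le M$ with $M$ the iterated strong maximal operator (Remark \ref{3r1}) is the correct starting point, and the first peel in the innermost variable $x_1$ is fine. The gap is the claim that ``iterating the same procedure'' reduces the whole thing to repeated uses of the classical $\ell^u$-valued Fefferman--Stein inequality. That inequality only applies in the variable whose Lebesgue integral sits immediately next to the $\ell^u$-sum in the nested norm. After the first pass the $\ell^u$-sum is buried under the $L^{p_1}_{x_1}$-integral, and controlling the next one-dimensional operator (say $M_2$, in $x_2$) requires
\begin{align*}
&\left\|\left(\int_{\rr}\Bigl\{\sum_{k}\bigl[M_2(g_k)(x_1,\cdot)\bigr]^u\Bigr\}^{p_1/u}\,dx_1\right)^{1/p_1}\right\|_{L^{p_2}_{x_2}}\\
&\hs\ls
\left\|\left(\int_{\rr}\Bigl\{\sum_{k}|g_k(x_1,\cdot)|^u\Bigr\}^{p_1/u}\,dx_1\right)^{1/p_1}\right\|_{L^{p_2}_{x_2}},
\end{align*}
which is a Fefferman--Stein inequality on $L^{p_2}(\rr;X)$ for the Banach lattice $X=L^{p_1}(\rr;\ell^u)$, not the classical $\ell^u$-inequality. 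Since $p_1\neq p_2$ in general, Fubini cannot move the $x_2$-integral past the $x_1$-integral to recover the classical form; and $M_1$ and $M_2$ do not commute, so you also cannot reorder which operator you peel first. (Relatedly, the identification $M(f_k)=M_1(g_k)$ with $g_k:=M_n\circ\cdots\circ M_2(f_k)$ fails under the paper's convention $M=M_n\circ\cdots\circ M_1$; one must instead dominate $\HL$ by $M_1\circ M_2\circ\cdots\circ M_n$ from the outset, but that does not resolve the deeper issue.) So ``the base case is exactly the one-dimensional Fefferman--Stein inequality'' is not accurate: all steps past the first need a genuinely lattice-valued estimate.

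Those lattice-valued estimates are true for $\vp\in(1,\fz)^n$ and $u\in(1,\fz]$, but they rest on materially stronger input than the one-dimensional Fefferman--Stein inequality: either Bourgain's lattice maximal theorem for UMD Banach lattices (each $L^{(p_1,\ldots,p_{j-1})}(\ell^u)$ with exponents in $(1,\fz)$ is UMD), or, more economically, the Andersen--John duality criterion that the vector-valued Fefferman--Stein inequality holds on a Banach function space $X$ whenever $\HL$ is bounded on both $X$ and its associate space $X'$. For $X=\lv$ with $\vp\in(1,\fz)^n$ the two hypotheses are exactly what Lemma \ref{3l1} supplies (applied to $\vp$ and to $\vp'$), which is the cleanest rigorous way to close the gap and more faithful to what a citation of \cite[(2.24)]{gn16} presupposes. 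Lastly, the truncation detour you propose for $u=\fz$ is unnecessary: the pointwise bound $\sup_{k}\HL(f_k)\le\HL\bigl(\sup_{k}|f_k|\bigr)$ reduces the $u=\fz$ endpoint in one line to the scalar boundedness already proved in Lemma \ref{3l1}.
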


\begin{remark}\label{3r2}
By Lemma \ref{3l2} and an argument similar to that used in the proof of \cite[Remark 2.5]{yyyz16},
we easily conclude that, for any given $\vp\in(0,\fz)^n$ and $r\in(0,p_-)$ with $p_-$
as in \eqref{2e10}, there exists a positive constant $C$ such that, for any $\bz\in[1,\fz)$
and sequence $\{B_i\}_{i\in\nn}\st\mathfrak{B}$,
$$\lf\|\sum_{i\in\nn}\chi_{B_i^{(\bz)}}\r\|_{\lv}\le C \bz^{\f{\nu}{r}}
\lf\|\sum_{i\in\nn}\chi_{B_i}\r\|_{\lv},$$
where $B_i^{(\bz)}$ is as in \eqref{2e2'}.
\end{remark}

\begin{definition}\label{3d4}
Let $\phi\in\cs(\rn)$ satisfying $\int_{\rn}\phi(x)\,dx\neq0$ and $f\in\cs'(\rn)$. The
\emph{radial maximal function} $M_\phi^0(f)$
of $f$, with respect to $\phi$, is defined by setting, for any $x\in\rn$,
\begin{equation*}
M_\phi^0(f)(x):= \sup_{t\in (0,\fz)}
|f\ast\phi_t(x)|.
\end{equation*}
\end{definition}

The following Lemma \ref{3l8} is from \cite[Theorem 3.4]{cgn17}.
\begin{lemma}\label{3l8}
Let $\va\in[1,\fz)^n$, $\vp\in(0,\fz)^n$
and $N$ be as in \eqref{2e11}.
Then, for any given $\phi\in\cs(\rn)$
with $\int_{\rn}\phi(x)\,dx\neq0$ and any $f\in\cs'(\rn)$,
the following statements are equivalent:
\begin{enumerate}
\item[{\rm(i)}] $f\in\vh;$
\item[{\rm(ii)}] $M_\phi^0(f)\in\lv.$
\end{enumerate}
Moreover, there exists a positive constant $C$ such that, for any $f\in\vh$,
\begin{align*}
\|f\|_{\vh}\le C\lf\|M_\phi^0(f)
\r\|_{\lv}\le C\|f\|_{\vh}.
\end{align*}
\end{lemma}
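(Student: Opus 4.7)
The plan is to prove the two norm inequalities implicit in the statement separately. The easy direction is (i)$\Rightarrow$(ii): after choosing a normalizing constant $c>0$ so that $c\phi\in\cs_N(\rn)$, Remark \ref{2r5} yields the pointwise bound $|f*\phi_t(x)|\le c^{-1}\|\phi\|_{\cs_N(\rn)}M_N(f)(x)$ uniformly in $t\in(0,\fz)$, hence $M_\phi^0(f)(x)\le CM_N(f)(x)$ pointwise. Taking $\lv$-quasi-norms and invoking Definition \ref{2d5} gives $\|M_\phi^0(f)\|_{\lv}\le C\|f\|_{\vh}$.

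The substantive direction is (ii)$\Rightarrow$(i), namely $\|M_N(f)\|_{\lv}\ls\|M_\phi^0(f)\|_{\lv}$. Following the classical Fefferman--Stein strategy adapted to the anisotropic scaling $t^{\va}$, I would introduce the \emph{anisotropic tangential maximal function}
\[
\phi_{\ast\ast}^{K}(f)(x):=\sup_{y\in\rn,\,t\in(0,\fz)}\f{|f\ast\phi_t(y)|}{(1+t^{-1}|x-y|_{\va})^{K}}
\]
for a suitably large $K\in\nn$, and prove two pointwise estimates. First, using the non-vanishing of $\int_{\rn}\phi$, I would build, via a Calder\'{o}n-type reproducing formula with dilations $t^{\va}$ (of the form $\psi_t=\int_0^{\fz}\phi_s\ast\eta_{s,t}^{(\psi)}\,\f{ds}{s}$ with smooth, rapidly-decaying $\eta_{s,t}^{(\psi)}$), a bound of the form $M_N(f)(x)\ls\phi_{\ast\ast}^{K}(f)(x)$; the decay estimates on $\lg\cdot\rg_{\va}$ from Remark \ref{2r1}(iii) together with the quasi-triangle inequality Lemma \ref{2l2}(ii) are exactly what is needed to control the convolution integrals uniformly in $\psi\in\cs_N(\rn)$, provided $N$ is at least $N_{\vp}$ as in \eqref{2e11}. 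Second, I would prove the pointwise inequality
\[
\phi_{\ast\ast}^{K}(f)(x)\ls\lf\{\HL\lf(\lf[M_\phi^0(f)\r]^{r}\r)(x)\r\}^{1/r}
\]
for any $r\in(0,\fz)$ with $Kr>\nu$; this is the standard sublevel-set argument on the anisotropic ball $B_{\va}(y,t)$ combined with a distributional-derivative bound that forces $f\ast\phi_t$ not to oscillate too wildly between nearby points.

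Choosing $r\in(0,p_-)$ with $p_-$ as in \eqref{2e10} and $K$ correspondingly large, the chain becomes
\[
\|M_N(f)\|_{\lv}\ls\lf\|\phi_{\ast\ast}^{K}(f)\r\|_{\lv}=\lf\|\HL\lf(\lf[M_\phi^0(f)\r]^{r}\r)\r\|_{L^{\vp/r}(\rn)}^{1/r}.
\]
Since $\vp/r\in(1,\fz)^n$, Lemma \ref{3l2} (applied to the single function $[M_\phi^0(f)]^{r}$, i.e.\ the scalar Fefferman--Stein inequality for $\HL$ on mixed-norm Lebesgue spaces) yields that the last quantity is bounded by $\|[M_\phi^0(f)]^{r}\|_{L^{\vp/r}(\rn)}^{1/r}=\|M_\phi^0(f)\|_{\lv}$, where we used \eqref{2e8}. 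This closes the argument.

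The main obstacle will be the first pointwise comparison $M_N(f)\ls\phi_{\ast\ast}^{K}(f)$: constructing the anisotropic reproducing formula requires locating an auxiliary $\phi\in\cs(\rn)$ and its mate so that the resulting kernels $\eta_{s,t}^{(\psi)}$ have the anisotropic Schwartz decay uniformly in $\psi\in\cs_N(\rn)$, and then estimating $\int_0^{\fz}\int_{\rn}|f\ast\phi_s(y)||\eta_{s,t}^{(\psi)}(y-z)|\,dy\,\f{ds}{s}$ via the definition of $\phi_{\ast\ast}^{K}$ and integrability in $s$. Handling the two regimes $s\le t$ and $s>t$ separately and tracking the $\va$-dilation exponents carefully is the delicate part; all other steps reduce to standard estimates once this reproducing formula is in hand.
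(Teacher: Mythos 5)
First, for context: the paper does not prove Lemma \ref{3l8} at all; the sentence immediately preceding it reads ``The following Lemma \ref{3l8} is from \cite[Theorem 3.4]{cgn17}'', so the authors simply cite Cleanthous--Georgiadis--Nielsen for the radial-maximal characterization. Your proposal therefore supplies an argument where the paper supplies none. The skeleton you describe -- normalize $\phi$ into $\cs_N(\rn)$ for the easy inequality; introduce the anisotropic tangential maximal function $\phi_{\ast\ast}^{K}$; control the grand maximal function by $\phi_{\ast\ast}^{K}$ via an anisotropic reproducing formula; bound $\phi_{\ast\ast}^{K}$ by $\HL$ applied to a power of a smaller maximal function; and transfer to $\lv$ via the boundedness of $\HL$ on $L^{\vp/r}(\rn)$ -- is the right Fefferman--Stein chain, essentially the one Bownik uses in the non-mixed anisotropic case and which \cite{cgn17} adapts.

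There is, however, a genuine gap in the key middle step. The pointwise bound you assert,
$$\phi_{\ast\ast}^{K}(f)(x)\ls\lf\{\HL\lf(\lf[M_\phi^0(f)\r]^{r}\r)(x)\r\}^{1/r},$$
with the radial maximal function $M_\phi^0$ on the right, is not what the sublevel-set/averaging argument gives. Averaging $|f\ast\phi_t(y)|^r$ over $B_{\va}(y,t)$ using $|f\ast\phi_t(y)|\le M_\phi(f)(z)$ for $z\in B_{\va}(y,t)$, and comparing $|B_{\va}(x,|x-y|_{\va}+t)|$ with $|B_{\va}(y,t)|\sim t^{\nu}$ to harvest the factor $(1+t^{-1}|x-y|_{\va})^{\nu}$, yields only
$$\phi_{\ast\ast}^{K}(f)(x)\ls\lf\{\HL\lf(\lf[M_\phi(f)\r]^{r}\r)(x)\r\}^{1/r},$$
with the non-tangential $M_\phi$ inside. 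Plugging this into your chain and using $M_\phi(f)\le M_N(f)$ gives only the tautology $\|M_N(f)\|_{\lv}\ls\|M_N(f)\|_{\lv}$. The passage from the non-tangential $M_\phi$ to the radial $M_\phi^0$ is the hardest part of the Fefferman--Stein argument and is not a pointwise inequality: in Stein's book and in Bownik's anisotropic memoir it is proved by introducing truncated, spatially-weighted auxiliary maximal functions, establishing a \emph{conditional} pointwise bound of the type you want only on a set whose complement carries at most a fixed fraction of the relevant $L^{\vp}$ mass, and then bootstrapping and removing the truncation. Your phrase ``a distributional-derivative bound that forces $f\ast\phi_t$ not to oscillate too wildly'' correctly identifies the analytic input (a mean-value estimate using the derivatives $\partial_j\phi$), but without the truncation that oscillation estimate is circular -- the derivative bound reintroduces a maximal quantity not yet known to be finite -- so collapsing the whole reduction into a single unconditional pointwise inequality is not valid; the two reductions (tangential/grand to non-tangential, and non-tangential to radial) have to be kept separate and the second one needs the truncation-and-bootstrap machinery.
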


Observe that $(\rn,\,|\cdot|_{\va},\,dx)$ is an RD-space (see \cite{hmy08,zy11}). From this
and \cite[Lemma 4.6]{gly08} (see also \cite[lemma 4.5]{zsy16}), we deduce the following lemma,
the details being omitted.

\begin{lemma}\label{3l3}
Let $\Omega\st \rn$ be an open subset with $|\Omega|<\infty$ and, for any $x\in \rn$, let
$$d(x,\boz):=\inf\{|x-y|_{\vec{a}}:\ y\notin \boz\}.$$
Then there exists a sequence $\{x_k\}_{k\in\nn}\st \boz$ such that,
for any $A\in [1,\fz)$ and
$$r_k:=d(x_k,\boz)/(2A)\hspace{0.2cm}  with\hspace{0.2cm} k\in \nn,$$
it holds true that
\begin{enumerate}
\item[\rm(i)] $\boz=\bigcup_{k\in\nn} B_{\va}(x_k,r_k)$;

\item[\rm(ii)] $\{B_{\va}(x_k,r_k/4)\}_{k\in\nn}$
are pairwise disjoint;

\item[\rm(iii)] for any given $k\in \nn$,
$B_{\va}(x_k,Ar_k)\st \boz$;

\item[\rm(iv)] $Ar_k<d(x,\boz)<3Ar_k$ whenever
$k\in \nn$ and $x\in B_{\va}(x_k,Ar_k)$;

\item[\rm(v)] for any given $k\in \nn$,
there exists a $y_k\notin \boz$ such that
$|x_k-y_k|_{\vec{a}}<3Ar_k$;

\item[\rm(vi)] there exists a positive constant $R$, independent of $\Omega$,
such that, for any $k\in\nn$, $$\sharp \lf\{j\in\nn:\ \lf[B_{\va}(x_k,r_k)
\bigcap B_{\va}(x_j,Ar_j)\r]\neq\emptyset\r\}\le R.$$
\end{enumerate}
\end{lemma}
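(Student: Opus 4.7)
The plan is to carry out the standard Whitney-type decomposition in the quasi-metric measure space $(\rn,|\cdot|_{\va},dx)$, which by Lemma \ref{2l2} (in particular the quasi-triangle inequality in item (ii)) and the exact formula $|B_{\va}(x,r)|=\nu_n r^\nu$ is a translation-invariant space of homogeneous type, indeed an RD-space. For each $x\in\Omega$, set $r(x):=d(x,\Omega^{\complement})/(2A)$, and consider the cover $\{B_{\va}(x,r(x)/4):x\in\Omega\}$ of $\Omega$. First I would apply a Vitali-type selection argument in this quasi-metric setting to extract a maximal pairwise disjoint subfamily $\{B_{\va}(x_k,r_k/4)\}_{k\in\nn}$, with $r_k:=r(x_k)$; since $|\Omega|<\infty$ and every selected ball has positive measure, the family is at most countable. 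This immediately gives (ii), and maximality together with the quasi-triangle inequality forces the enlarged balls $\{B_{\va}(x_k,r_k)\}_{k\in\nn}$ to cover $\Omega$, which yields (i).

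Properties (iii)--(v) then reduce to routine applications of Lemma \ref{2l2}(ii). For (iii), if $x\in B_{\va}(x_k,Ar_k)$ and $y\notin\Omega$, then $|x-y|_{\va}\ge|x_k-y|_{\va}-|x-x_k|_{\va}\ge d(x_k,\Omega^{\complement})-Ar_k=Ar_k>0$, so $B_{\va}(x_k,Ar_k)\subset\Omega$; taking the infimum over such $y$ also yields the lower bound in (iv), while the upper bound in (iv) comes from $|x-y|_{\va}\le|x-x_k|_{\va}+|x_k-y|_{\va}$ combined with the definition of $d(x_k,\Omega^{\complement})=2Ar_k$ as an infimum. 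Property (v) is then immediate: by the infimum definition there exists $y_k\notin\Omega$ with $|x_k-y_k|_{\va}<\tfrac{3}{2}d(x_k,\Omega^{\complement})=3Ar_k$.

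The main obstacle is the bounded overlap (vi), and more precisely ensuring that the constant $R$ depends neither on $\Omega$ nor on the individual ball. The plan is a volume comparison. Suppose $B_{\va}(x_k,r_k)\cap B_{\va}(x_j,Ar_j)\ne\emptyset$. By the quasi-triangle inequality and (iv) applied at both $x_k$ and $x_j$, the radii $r_k$ and $r_j$ are comparable with constants depending only on $A$, and all such $x_j$ lie in a single ball $B_{\va}(x_k,c_1Ar_k)$; hence the disjoint balls $B_{\va}(x_j,r_j/4)$ are contained in a fixed enlargement $B_{\va}(x_k,c_2Ar_k)$. Using the exact, translation-invariant volume formula $|B_{\va}(x,r)|=\nu_n r^\nu$, each of these disjoint balls has measure comparable to $r_k^\nu$ while their union fits inside a ball of measure $\sim(Ar_k)^\nu$, so the number of such indices $j$ is bounded by a constant $R$ depending only on $A$, $n$, and $\va$, which is exactly what (vi) requires.

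In fact, since $(\rn,|\cdot|_{\va},dx)$ is an RD-space as noted in the excerpt, one may simply invoke the abstract Whitney covering lemma \cite[Lemma 4.6]{gly08} (or \cite[Lemma 4.5]{zsy16}) once Lemma \ref{2l2} and the volume identity $|B_{\va}(x,r)|=\nu_n r^\nu$ are used to verify the hypotheses; the six properties listed are precisely the conclusions of that standard statement, and this is the route suggested by the phrase ``the details being omitted'' in the excerpt.
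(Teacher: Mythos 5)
Your proposal is correct and takes essentially the same route as the paper: the paper's entire proof is to observe that $(\rn,|\cdot|_{\va},dx)$ is an RD-space (since $|\cdot|_{\va}$ satisfies the genuine triangle inequality by Lemma \ref{2l2}(ii) and $|B_{\va}(x,r)|=\nu_n r^\nu$ gives exact volume scaling) and then cite the abstract Whitney-type covering lemma \cite[Lemma 4.6]{gly08} (see also \cite[Lemma 4.5]{zsy16}), exactly as you do in your final paragraph. The direct sketch you supply before that is a reasonable unwinding of that reference; the one step worth stating explicitly, which your sketch uses implicitly when passing from a maximal pairwise disjoint subfamily of $\{B_{\va}(x,r(x)/4)\}$ to a cover by the four-times-enlarged balls, is that $x\mapsto d(x,\Omega^{\complement})$ is $1$-Lipschitz with respect to $|\cdot|_{\va}$, which forces intersecting selected balls to have comparable radii and makes the enlargement factor $4$ (rather than the generic Vitali factor $5$) suffice.
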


Let $\Phi$ be some fixed $C^\fz(\rn)$ function satisfying $\supp \Phi\st B(\vec{0}_n,1)$
and $\int_{\rn} \Phi(x)dx \neq 0$. For any $f\in\cs'(\rn)$ and $x\in \rn$, we always let
\begin{equation}\label{3e16}
M_0(f)(x):=M_\Phi^0(f)(x),
\end{equation}
where $M_\Phi^0(f)$ is as in Definition \ref{3d4} with $\phi$ replaced by $\Phi$.
In what follows, for any given $s\in \mathbb{Z}_+$,
the \emph{symbol $\cp_s(\rn)$} denotes the linear space of all polynomials
on $\rn$ with degree not greater than $s$.

The following Calder\'on-Zygmund decomposition extends the corresponding results of
Stein \cite[p.\,101, Proposition]{s93} and Grafakos \cite[Theorem 5.3.1]{lg14}
as well as Sawano et al. \cite[Lemma 2.23]{shyy17} to the present setting.

\begin{lemma}\label{3l4}
Let $\va \in [1,\fz)^n$, $\vp\in(0,\fz)^n$,
$s\in \mathbb{Z}_+$ and
$N$ be as in \eqref{2e11}.
For any $\sa\in (0,\fz)$ and $f\in \vh$,
let $$\CO := \{x\in \rn:\ M_N(f)(x)>\sa\},$$
where $M_N$ is as in Definition \ref{2d4}. Then
the following statements hold true:
\begin{enumerate}
\item[\rm(i)] There exists a sequence
$\{B_k^*\}_{k\in\nn}\subset \mathfrak{B}$
with $\mathfrak{B}$ as in \eqref{2e2},
which has finite intersection property,
such that $$\CO = \bigcup_{k\in \nn}\Qkk.$$

\item[\rm(ii)] There exist two distributions $g$ and $b$
such that $f=g+b$ in $\cs'(\rn)$.

\item[\rm(iii)] For the distribution $g$ as in (ii) and any $x\in \rn$,
\begin{equation}\label{3e6}
M_0(g)(x)\ls M_N(f)(x)\chi_{\CO^{\com}}(x)+\sum_{k\in\nn}
\f{\sa r_k^{\nu+(s+1)a_-}}{(r_k+|x-x_k|_{\va})^{\nu+(s+1)a_-}},
\end{equation}
where $a_-$ and $M_0$ are as in \eqref{2e9}, respectively, \eqref{3e16}, and the implicit positive
constant is independent of $f$ and $g$.
Moreover, for any $k\in \nn$, $x_k$ denotes
the center of $B_k^*$ and there exists a constant
$A^*\in (1,\fz)$, independent of $k$, such that $A^*-1$ is small enough
and $A^* r_k$ equals the radius of $B_k^*$.

\item[\rm(iv)] If $f\in L_{\loc}^1(\rn)$, then the distribution $g$ as in (ii)
belongs to $L^{\fz}(\rn)$
and $\|g\|_{L^{\fz}(\rn)} \ls \sa$ with the implicit positive
constant independent of $f$ and $g$.

\item[\rm(v)] If $s$ is as in \eqref{3e1} and $b$ as in (ii), then $b=\sum_{k\in\nn}b_k$ in $\cs'(\rn)$,
where, for each $k\in \nn,~b_k:=(f-c_k)\eta_k$, $\{\eta_k\}_{k\in \nn}$
is a partition of unity with respect to $\{B_k^*\}_{k\in \nn}$,
namely, for any $k\in\nn$, $\eta_k\in C_c^{\fz}(\rn)$,
$\supp \eta_k\st B_k^*$, $0\le\eta_k \le 1$
and $$\chi_{\CO}=\sum_{k\in \nn}\eta_k,$$ and
$c_k\in \cp_s(\rn)$ is a polynomial such that, for any $q\in\cp_s(\rn)$,
$$\langle f-c_k,q\eta_k\rangle=0.$$
Moreover, for any $k\in \nn$ and $x\in \rn$,
\begin{equation}\label{3e7}
M_0(b_k)(x)\ls M_N(f)(x)\chi_{B_k^*}(x)+\f{\sa r_k^{\nu+(s+1)a_-}}
{|x-x_k|_{\va}^{\nu+(s+1)a_-}}\chi_{({B_k^*})^{\com}}(x),
\end{equation}
where $a_-$ and $M_0$ are as in \eqref{2e9}, respectively, \eqref{3e16}, and the implicit
positive constant is independent of $f$ and $k$.
\end{enumerate}
\end{lemma}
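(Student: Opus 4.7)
The plan is to adapt the classical Calderón--Zygmund decomposition of Stein \cite[p.\,101]{s93} to the present anisotropic mixed-norm setting, using Lemma \ref{3l3} in place of the usual Whitney covering and the anisotropic decay $\lg\cdot\rg_{\va}^{-s}$ in place of its isotropic counterpart. First, observe that $M_N(f)$ is lower semicontinuous on $\rn$, so $\CO$ is open; moreover, since $M_N(f)\in\lv$ and $\CO\subseteq\{M_N(f)>\sa\}$, a standard distributional argument (via, for example, Lemma \ref{2l1} and the layer-cake style argument in the mixed-norm setting) shows $|\CO|<\fz$. Apply Lemma \ref{3l3} with some $A\in(1,\fz)$ to obtain a Whitney-type cover $\{B_{\va}(x_k,r_k)\}_{k\in\nn}$ of $\CO$. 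Choose a constant $A^*\in(1,A)$ with $A^*-1$ sufficiently small so that $B_k^*:=B_{\va}(x_k,A^*r_k)\subset\CO$, the balls $\{B_k^*\}_{k\in\nn}$ still cover $\CO$, and the finite intersection property of Lemma \ref{3l3}(vi) is preserved; this proves (i). Build a smooth partition of unity $\{\eta_k\}_{k\in\nn}$ subordinate to $\{B_k^*\}_{k\in\nn}$ with $\sum_k\eta_k=\chi_{\CO}$ and $|\pa^{\az}\eta_k(x)|\ls r_k^{-\va\cdot\az}$ for any $\az\in\zz_+^n$, obtained by regularizing the translated--dilated bumps $\varphi_k(\cdot):=\Psi(r_k^{-\va}(\cdot-x_k))$ via $\eta_k:=\varphi_k/\sum_j\varphi_j$.

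Next, I would define $c_k\in\cp_s(\rn)$ as the unique polynomial satisfying $\lg f-c_k,q\eta_k\rg=0$ for any $q\in\cp_s(\rn)$, by using the natural inner product $\lg p,q\rg_k:=[\int_{\rn}\eta_k]^{-1}\int_{\rn}p(x)q(x)\eta_k(x)\,dx$ on the finite-dimensional space $\cp_s(\rn)$; this inner product is well-defined because $\int_{\rn}\eta_k\sim r_k^{\nu}>0$. Then set $b_k:=(f-c_k)\eta_k$ (as a distribution acting by $\lg b_k,\varphi\rg:=\lg f,\varphi\eta_k\rg-\lg c_k,\varphi\eta_k\rg$), $b:=\sum_{k\in\nn}b_k$, and $g:=f-b$, which yields (ii) after verifying that the series for $b$ converges in $\cs'(\rn)$ (which follows from the finite intersection property together with the estimates to come). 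The cancellation $\int_{\rn}b_k(x)x^{\az}\,dx=0$ for $|\az|\le s$ is immediate from the defining orthogonality of $c_k$.

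The crux of the argument is the pointwise polynomial bound
\[
\sup_{x\in B_k^*}|c_k(x)|\ls\sa,
\]
which I would prove by the standard duality/extremal argument: test $c_k$ against itself in the inner product $\lg\cdot,\cdot\rg_k$, use the orthogonality to replace $c_k$ by $f$, and estimate $\lg f,c_k\eta_k/\int\eta_k\rg$ by viewing the test function as $\|c_k\eta_k/\int\eta_k\|_{\cs_N(\rn)}$ times a suitably rescaled Schwartz function; Lemma \ref{3l3}(v) provides $y_k\notin\CO$ with $|x_k-y_k|_{\va}<3Ar_k$, so the grand maximal function at $y_k$ controls this test pairing by $M_N(f)(y_k)\le\sa$. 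Granting this, (iv) follows from writing $g=f\chi_{\CO^{\com}}+\sum_k c_k\eta_k$ a.e.\ when $f\in L^1_{\loc}(\rn)$, using the Lebesgue differentiation theorem to bound $|f|\le\sa$ a.e.\ on $\CO^{\com}$, and the finite intersection property to bound $|\sum_k c_k\eta_k|\ls\sa$.

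Finally, for (v) I would split $M_0(b_k)(x)$ at $x\in B_{\va}(x_k,2A^*r_k)$ versus $x$ outside this dilate. For $x$ inside, the trivial bound $|b_k*\Phi_t(x)|\le M_N(f)(x)\|\eta_k\Phi_t(\cdot-x)\|_{\cs_N(\rn)}/\|\cdot\|\ls M_N(f)(x)$ (uniform in $t$) suffices after absorbing the $c_k$ piece via the polynomial estimate above. For $x$ outside, I would use the vanishing moments: write
\[
b_k*\Phi_t(x)=\int_{B_k^*}b_k(y)\lf[\Phi_t(x-y)-P^{(s)}_x\Phi_t(x-y)\r]\,dy,
\]
where $P^{(s)}_x\Phi_t$ is the degree-$s$ Taylor polynomial of $\Phi_t(x-\cdot)$ at $x_k$, and invoke Remark \ref{2r1}(iii) and Lemma \ref{2l2} to get the anisotropic decay factor $r_k^{(s+1)a_-}/(r_k+|x-x_k|_{\va})^{\nu+(s+1)a_-}$; the $L^1$ mass of $b_k$ is bounded using the polynomial estimate and $|B_k^*|\sim r_k^{\nu}$, yielding the $\sa r_k^{\nu+(s+1)a_-}$ numerator in \eqref{3e7}. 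The bound \eqref{3e6} for $M_0(g)$ then follows from $g*\Phi_t=f*\Phi_t-\sum_k b_k*\Phi_t$, applying the above $M_0(b_k)$ estimate to each summand and using that $x\in\CO^{\com}$ forces $|x-x_k|_{\va}\gtrsim r_k$ (so the interior piece of the split is empty), while on $\CO$ the contribution of $f*\Phi_t$ to $M_0(g)$ is absorbed into the sum by running the same moment-cancellation argument on $g$ restricted to $\CO^{\com}$.

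The main obstacle is the polynomial estimate $\sup_{B_k^*}|c_k|\ls\sa$: obtaining a truly anisotropic, mixed-norm--independent constant requires a careful comparison between the natural inner-product norm on $\cp_s(\rn)$ induced by $\eta_k$ and the Schwartz seminorm $\|\cdot\|_{\cs_N(\rn)}$ after translating and dilating to a bump supported near $y_k\notin\CO$, where Remark \ref{2r1}(iii) and Lemma \ref{2l2}(i) govern the scaling. Once this estimate is in place, all remaining pieces are routine anisotropic analogues of Stein's argument.
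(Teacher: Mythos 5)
Your proposal follows the same overall strategy as the paper's proof: Whitney cover via Lemma \ref{3l3}, smooth partition of unity subordinate to slightly dilated balls, polynomial projections $c_k$ via orthogonality in $L^2(\eta_k\,dx)$, pointwise bounds on $c_k\eta_k$ using Lemma \ref{3l5} and the fact that a point outside $\CO$ sits within $O(r_k)$ of $x_k$, Taylor expansion of $\Phi_t$ against the vanishing moments of $b_k$ to get the anisotropic tail decay, and the decomposition $g = f\chi_{\CO^\com} + \sum_k c_k\eta_k$ for (iv). These pieces are correct and essentially identical to the paper's Steps 1--2, 4--5.

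However, there are two genuine gaps. First, the argument sketched for (iii) at a point $x\in\CO$ does not work as stated. If $x\in B_m$ and one naively sums the estimates from \eqref{3e7}, the interior pieces $M_N(f)\chi_{B_k^*}$ contribute (after the finite intersection property) a term of order $M_N(f)(x)$, which is \emph{not} controlled by the right-hand side of \eqref{3e6} on $\CO$ (there the first term vanishes and the remaining tail sum near $x$ is only of size $\sigma$, whereas $M_N(f)(x)$ can be arbitrarily large). The phrase ``the contribution of $f*\Phi_t$ to $M_0(g)$ is absorbed\ldots by running the same moment-cancellation argument on $g$ restricted to $\CO^\com$'' does not give a sound estimate: one must split the sum into $E_1 := \{k:\ B_k^*\cap B_m^*\neq\emptyset\}$ (a uniformly finite set, by the finite intersection property) and $E_2 := \nn\setminus E_1$, handle $\sum_{k\in E_2} b_k$ by the tail estimate \eqref{3e12}, and handle $f - \sum_{k\in E_1} b_k = f\bigl(1-\sum_{k\in E_1}\eta_k\bigr) - \sum_{k\in E_1}c_k\eta_k$ directly, using that for small $t$ the first term vanishes near $B_m$ (since $\sum_{k\in E_1}\eta_k\equiv 1$ on a neighbourhood of $B_m$) and for large $t$ both terms are $\ls\sigma$ by Lemma \ref{3l5}. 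Without this cancellation between $f$ and the near pieces of $b$ on $\CO$, the estimate \eqref{3e6} does not follow.

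Second, the convergence of $\sum_{k\in\nn} b_k$ in $\cs'(\rn)$ is dismissed as ``following from the finite intersection property together with the estimates to come.'' This is not enough: what the paper actually shows (equation \eqref{3e20}) is that $\|\sum_k b_k\|_{\vh}\ls\|M_N(f)\chi_{\CO}\|_{\lv}<\fz$, which requires combining \eqref{3e7} with the Fefferman--Stein-type boundedness of $M_{\rm HL}$ on $L^{\frac{\nu+(s+1)a_-}{\nu}\vp}(\rn)$ (Lemma \ref{3l1}) and a duality argument via \cite[p.\,304, Theorem 2]{bp61}. The finite intersection property alone does not give summability of the tails $r_k^{\nu+(s+1)a_-}/(r_k+|x-x_k|_{\va})^{\nu+(s+1)a_-}$, so without this norm estimate the distribution $b$ (and hence $g$) is not well-defined and (ii), (iii), (v) have no content. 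You should fill in this step.
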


To show Lemma \ref{3l4}, we need an auxiliary inequality as follows,
which is a slight modification of \cite[p.\,100, (21)]{s93},
the details being omitted.

\begin{lemma}\label{3l5}
Let $\va\in [1,\fz)^n$ and $N\in \nn$. Assume that $\varphi$ is a function supported on
$B\in\mathfrak{B}$ with $\mathfrak{B}$ as in \eqref{2e2} and satisfies that, for each multi-index
$\bz\in \mathbb{Z}_+^n $ with $|\bz|\le N$, $|\pa^{\bz}\varphi|\le C_{(N)}r^{-\nu-\va \cdot \bz}$,
where $r$ is the radius of $B$ and $C_{(N)}$ a positive constant independent of $B$,
but depending on $N$. Then there exists a positive constant $C_{(N)}$, depending on $N$, such that, for any
$f\in \cs'(\rn)$ and $x\in B$,
$$\lf|\langle f,\varphi \rangle\r|\le C_{(N)} M_N (f)(x),$$
where $M_N$ is as in Definition \ref{2d4}.
\end{lemma}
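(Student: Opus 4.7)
The plan is to realize the pairing $\langle f,\varphi\rangle$ as a single convolution value $(f*\psi_r)(x)$, where $\psi$ is an anisotropic rescaling of $\varphi$ whose $\cs_N(\rn)$-quasi-norm is controlled by a constant depending only on $\va$ and $N$, and then to invoke Remark \ref{2r5} to dominate the resulting quantity by $M_N(f)(x)$. The argument is essentially bookkeeping with the anisotropic dilation; no deeper harmonic analysis is needed.

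Concretely, write $B=B_{\va}(x_B,r)$ for the ball supporting $\varphi$, fix $x\in B$, and introduce
$$\psi(u):=r^{\nu}\varphi\lf(x-r^{\va}u\r),\quad u\in\rn.$$
Unfolding the anisotropic dilation $\psi_r(\cdot)=r^{-\nu}\psi(r^{-\va}\cdot)$ directly yields $\psi_r(x-z)=\varphi(z)$ for every $z\in\rn$, and therefore
$$\langle f,\varphi\rangle=(f*\psi_r)(x).$$

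Next I would control $\|\psi\|_{\cs_N(\rn)}$. For the support, $\psi(u)\neq 0$ forces $x-r^{\va}u\in B_{\va}(x_B,r)$, and combining Lemma \ref{2l2}(i) and (ii) with $x\in B$ gives $r|u|_{\va}\le|x-x_B|_{\va}+r<2r$; hence $\supp\psi\subset B_{\va}(\vec{0}_n,2)$ and, by Lemma \ref{2l2}(vii), $\lg u\rg_{\va}\le 3$ on $\supp\psi$. A chain-rule computation together with Lemma \ref{2l2}(i) yields, for any multi-index $\bz$ with $|\bz|\le N$,
$$\pa^{\bz}\psi(u)=(-1)^{|\bz|}r^{\nu+\va\cdot\bz}(\pa^{\bz}\varphi)\lf(x-r^{\va}u\r),$$
which, combined with the hypothesis $|\pa^{\bz}\varphi|\le C_{(N)}r^{-\nu-\va\cdot\bz}$, gives $|\pa^{\bz}\psi(u)|\ls 1$ on $\rn$. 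Taking the supremum over $u$ with the weight $\lg u\rg_{\va}^N\le 3^N$ shows $\|\psi\|_{\cs_N(\rn)}\ls 1$.

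Finally, applying Remark \ref{2r5} to $\psi$ at dilation parameter $t=r$ and base point $x$, I would conclude
$$\lf|\langle f,\varphi\rangle\r|=\lf|(f*\psi_r)(x)\r|\le\|\psi\|_{\cs_N(\rn)}M_N(f)(x)\ls M_N(f)(x),$$
which is the claimed estimate. I do not anticipate a real obstacle; the only care point is ensuring that the anisotropic dilation factor $r^{\nu+\va\cdot\bz}$ introduced by the chain rule exactly cancels the hypothesized size $r^{-\nu-\va\cdot\bz}$ of $|\pa^{\bz}\varphi|$, so that the resulting bound on $\psi$ is uniform in $r$ and $x$.
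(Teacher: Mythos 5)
Your proof is correct and is exactly the anisotropic adaptation of the argument in Stein \cite[p.\,100, (21)]{s93} that the paper refers to (the paper omits the details); the identity $\langle f,\varphi\rangle=(f*\psi_r)(x)$ with $\psi(u)=r^\nu\varphi(x-r^{\va}u)$, the support bound $\supp\psi\subset B_{\va}(\vec 0_n,2)$, the cancellation of the $r^{\nu+\va\cdot\bz}$ factors, and the invocation of Remark~\ref{2r5} all check out.
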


Now we prove Lemma \ref{3l4}.
\begin{proof}[Proof of Lemma \ref{3l4}]
We prove this lemma by five steps.

\emph{Step 1.} In this step, we show (i). To this end, applying Lemma \ref{3l3} to
$\CO$ with $\CO$ as in Lemma \ref{3l4}, we obtain a collection of cubes, $\{B_k\}_{k\in \nn} \subset\mathfrak{B}$ with $\mathfrak{B}$ as in \eqref{2e2},
which has finite intersection property, such that $\CO=\bigcup_{k\in \nn}B_k$. Next, fix two numbers
$\de$ and $A^*$ satisfying $1<\de<A^*<\fz$. For any $k\in\nn$, let $\Qk:=B_k^{(\de)}$
and $\Qkk:=B_k^{(A^*)}$, then $$B_k\subset \Qk \subset \Qkk,$$
where $B_k^{(\de)}$ and $B_k^{(A^*)}$ are as in \eqref{2e2'} with $\dz$ replaced, respectively, by
$\de$ and $A^*$.
Moreover, by choosing $A^*$ sufficiently close to $1$ such that $\CO=\bigcup_{k\in \nn}\Qkk$
and $\{\Qkk\}_{k\in \nn}$ also has finite intersection property which is guaranteed by Lemma \ref{3l3}(vi), we know that (i) holds true.

\emph{Step 2.} In this step, we prove (iv). Let $\zeta\in C^{\fz}(\rn)$ satisfy that
$\supp\zeta\subset B_{\va}(\vec0_n,\de)$, $0\le\zeta\le1$ and $\zeta\equiv1$ on $B(\vec0_n,1)$.
In addition, for any $k\in\nn$ and $x\in\rn$, let $\zeta_k(x):=\zeta(r_k^{-\va}(x-x_k))$,
where $x_k$ denotes the center of $B_k$ and $r_k$ its radius, and
\begin{align}\label{3e8}
\eta_k(x):=
\frac {\zeta_k(x)}{\Sigma_{k\in\nn}\zeta_k(x)}.
\end{align}
Then it is easy to see that $\{\eta_k\}_{k\in\mathbb{N}}$ forms a smooth partition of unity
of $\CO$, namely, for any $k\in \nn$, $\eta_k\in C_c^{\fz}(\rn)$, $\supp\eta_k\subset\Qk$, $0\le \eta_k\le 1$
and $$\chi_{\CO}=\sum_{k\in \nn}\eta_k.$$

Notice that, for any multi-index $\bz\in \mathbb{Z}_+^n $, $k\in \nn$ and $x\in \rn$,
\begin{equation}\label{3e18}
\lf|\pa^{\beta}\eta_k(x)\r|\ls r_k^{-\beta\cdot\va}
\end{equation}
and $\int_{\rn}\eta_k(x)\,dx\thicksim |\Qk|\thicksim r_k^{\nu}\in (0,\fz)$.
Due to this, for any $k\in\nn$ and $x\in \rn$, let
$$\widetilde{\eta}_k(x):=\f{\eta_k}{\int_{\rn}\eta_k(y)\,dy}.$$

On the other hand, for any $k\in \nn$, let $\widetilde{b}_k:=(f-\widetilde{c}_k)\eta_k$,
where $\widetilde{c}_k$ is a constant determined by the requirement
that $\int_{\rn}\widetilde{b}_k(x)\,dx=0$, namely, $\widetilde{c}_k=\langle f,\widetilde{\eta}_k\rangle$.
We then conclude that, for any $k\in \nn$,
\begin{align}\label{3e9}
\lf|\widetilde{c}_k\r|\ls \sigma.
\end{align}
Indeed, for any $k\in \nn$, by Lemma \ref{3l3}(iv), we find that there exists some
$\widetilde{x}\in B_k^{(3A)}\cap\CO^{\com}$ with $A\in[1,\fz)$ as in Lemma \ref{3l3}.
Then \eqref{3e9} follows from Lemma \ref{3l5} with $\varphi:=\widetilde{\eta}_k$
and $B:=B_k^{(3A)}$. Similarly, from the fact that $\Qkk\st\CO$
and Lemma \ref{3l5}, we deduce that, for any $N,\,k\in \nn$ and $x\in \Qkk$,
\begin{align}\label{3e10}
|\widetilde{c}_k|\ls M_N(f)(x).
\end{align}
Now, for any $x\in \rn$, let $$g(x):=f(x)\chi_{\CO^{\com}}(x)+\sum_{k\in \nn}\widetilde{c}_k\eta_k (x).$$
If $x\in \CO^{\com}$, then $g(x)=f(x)$ and $M_N(f)(x)\le \sigma$, which imply that $|g(x)|\ls \sigma$;
if $x\in \CO$, then, by \eqref{3e9} and the finite intersection property of $\{\widetilde{B}_{k}\}_{k\in\nn}$,
we know that $|g(x)|\ls \sigma$. Thus, $g\in L^{\fz}(\rn)$
and $\|g\|_{L^{\fz}(\rn)} \ls \sa$, which completes the proof of (iv).

\emph{Step 3.} In this step, we show (v).
For any $k\in \nn$, let $b_k:=(f-c_k)\eta_k$ with $\eta_k$ as in \eqref{3e8}, where $c_k\in\cp_s(\rn)$
such that, for any $q\in\cp_s(\rn)$,$$\langle f-c_k,q\eta_k\rangle=0.$$
The existence of $c_k$ can be verified by an argument similar to that used in \cite[p.\,104]{s93}.

Now we establish the following two estimates, namely,
for any $N,\,k\in \nn$ and $x\in \Qkk$,
\begin{equation}\label{3e11}
M_0(b_k)(x)\ls M_N(f)(x)
\end{equation}
and, for any $k\in \nn$ and $x\in {(B_k^*)}^{\com}$,
\begin{equation}\label{3e12}
M_0(b_k)(x)\ls \f{\sa r_k^{\nu+(s+1)a_-}}
{|x-x_k|_{\va}^{\nu+(s+1)a_-}}.
\end{equation}
To this end, we first claim that,
for any $k\in \nn$ and $x\in \rn$,
\begin{align}\label{3e13}
|c_k(x)\eta_k(x)|\ls \sigma
\end{align}
and, for any $N,\,k\in \nn$ and $x\in \Qkk$,
\begin{align}\label{3e14}
|c_k(x)\eta_k(x)|\ls M_N(f)(x).
\end{align}
Indeed, for any $k\in \nn$
and $q\in \cp_s(\rn)$, if we define
\begin{align*}
\|q\|_{\mathcal{H}}:=
\lf[\frac1{\int_\rn\eta_{k}(x)\,dx}\int_\rn
|q(x)|^2\eta_{k}(x)\,dx\r]^{1/2},
\end{align*}
then, by an argument similar to that used in the proof of \cite[p.\,104, (28)]{s93}, we find that,
for any multi-index $\bz\in\mathbb{Z}_+^n$, $k\in\nn$ and $q\in\cp_s(\rn)$,
\begin{align}\label{3e15}
\sup_{x\in \Qkk}\lf|\pa^{\bz} q(x)\r|\ls r_k^{-\bz\cdot\va}\|q\|_\mathcal{H},
\end{align}
where $\mathcal{H}$ denotes the Hilbert space $L^2(\Qkk,\widetilde{\eta}_kdx)$.
By \eqref{3e15} and an argument similar to that used in the estimations of \eqref{3e9} and \eqref{3e10},
we easily know that \eqref{3e13} and \eqref{3e14} hold true.

Let $k\in \nn$, $x\in \Qkk$, $t\in (0,\fz)$ and $\varphi(\cdot):=\eta_k(\cdot)\Phi_t(x-\cdot)$,
where $\Phi$ is as in \eqref{3e16}.
Then $(f\eta_k)*\Phi_t(x)=\langle f,\varphi\rangle$. When $t\in (0,r_k]$, from Lemma \ref{3l5} with $B:=B_{\va}(x,t)$
and the fact that, for any $y\in \rn$, $|\pa^\beta\varphi(y)|\ls t^{-\nu-\beta\cdot\va}$, it follows that
$\lf|(f\eta_k)*\Phi_t(x)\r|=|\langle f,\varphi\rangle|\ls_{(N)} M_N(f)(x)$ for any $N\in\nn$,
here and hereafter, the symbol $\ls_{(N)}$ means that the implicit positive constant may depend on $N$. When $t\in (r_k,\fz)$,
notice that, for any $y\in \rn$,
$|\pa^\beta\varphi(y)|\ls r_k^{-\nu-\beta\cdot\va}$. Then, by Lemma \ref{3l5} with $B:=B_{\va}(x,cr_k)$,
where $c$ is a positive constant large enough such that $\Qkk\st B_{\va}(x,cr_k)$,
we conclude that $\lf|(f\eta_k)*\Phi_t(x)\r|=|\langle f,\varphi\rangle|\ls_{(N)} M_N(f)(x)$ for any $N\in\nn$. Thus,
for any $N,\,k\in \nn$ and $x\in \Qkk$,
$$M_0(f\eta_k)(x)=\sup_{t\in (0,\fz)}
|f\eta_k\ast\Phi_t(x)|\ls_{(N)} M_N(f)(x).$$
On the other hand, by \eqref{3e16} and \eqref{3e14}, we find that, for any $N,\,k\in \nn$ and $x\in \Qkk$,
$M_0(c_k\eta_k)(x)\ls_{(N)} M_N(f)(x)$. Therefore, for any $N,\,k\in \nn$ and $x\in \Qkk$,
$$M_0(b_k)(x)\le M_0(f\eta_k)(x)+M_0(c_k \eta_k)(x)\ls_{(N)} M_N(f)(x),$$
which implies that \eqref{3e11} holds true.

Let $\Phi$ be as in \eqref{3e16}, $k\in\nn$ and $x\in(\Qkk)^{\com}$.
Then, by the fact that, for any $q\in \cp_s(\rn)$, $\langle b_k,q \rangle=0$, we know that, for any $k\in\nn$ and $t\in(0,\fz)$,
\begin{align}\label{3e3}
b_k*\Phi_t(x)&=b_k*\Phi_t(x)-\langle b_k,q_0 \rangle\\
&=\lf[f\eta_k*\Phi_t(x)-\langle f\eta_k,q_0 \rangle\r] -\lf[c_k\eta_k*\Phi_t(x)
-\langle c_k\eta_k,q_0 \rangle\r]=:\textrm{I}-\textrm{II},\noz
\end{align}
where $q_0$ is the Taylor expansion of $\Phi_t$ at the point $x-x_k$ of order $s$ and $x_k$
denotes the center of $B_k^*$.
For any $x\in(\Qkk)^{\com}$, let $\widetilde{\varphi}(\cdot):=\eta_k(\cdot)[\Phi_t(x-\cdot)-q_0(\cdot)]$.
Then, from the Taylor remainder theorem,
we deduce that, for any $k\in\nn$, $t\in(0,\fz)$ and $y\in \rn$,
\begin{align}\label{3e29}
\lf|\widetilde{\varphi}(y)\r|&=\lf|t^{-\nu}\eta_k(y)\lf[\Phi\lf(\dfrac{x-y}{t^{\va}}\r)-\sum_{|\az|\le s}
\dfrac{\pa^\az\Phi(\dfrac{x-x_k}{t^{\va}})}
{\az!}\lf(\dfrac{x_k-y}{t^{\va}}\r)^\az\r]\r|\\
&\ls \lf|t^{-\nu}\eta_k(y)
\sum_{|\az|=s+1}\pa^\az\Phi
\lf(\dfrac{\xi}{t^{\va}}\r)
\lf(\dfrac{x_k-y}{t^{\va}}\r)^\az\r|,\noz
\end{align}
where $\xi:=x-x_k+\theta(x_k-y)$ for some $\theta\in [0,1]$.

Recall that, for any $k\in\nn$, $\supp\eta_k\subset\Qk$ and $\Qk\subsetneqq\Qkk$.
Thus, for any $k\in \nn$ and $y\in \rn$, if $x\in(\Qkk)^{\com}$ and $\eta_k(y)\neq 0$,
then, it holds true that $|x-y|_{\va}\sim|x-x_k|_{\va}\gtrsim r_k$, which, combined with Lemma \ref{2l2}(ii), implies that
$|\xi|_{\va}\geq|x-x_k|_{\va}-|x_k-y|_{\va}\gtrsim |x-x_k|_{\va}$.
By this, the fact $|\xi/t^{\va}|_{\va}<1$ [which is deduced from $\widetilde{\varphi}(\cdot)\neq 0$
and $\supp \Phi\st B(\vec{0}_n,1)$], and Lemma \ref{2l2}(i), we conclude that $t\gs |x-x_k|_{\va}$.
Thus, we claim that, for any multi-index $\bz\in \mathbb{Z}_+^n $, $k\in \nn$ and $y\in \rn$,
\begin{equation}\label{3e17}
\lf|\pa^\bz \widetilde{\varphi}(y)\r| \ls \f{r_k^{\nu+(s+1)a_-}}
{|x-x_k|_{\va}^{\nu+(s+1)a_-}} r_k^{-\nu-\va\cdot\bz}.
\end{equation}
Indeed, by \eqref{3e29}, for any multi-index $\bz\in \mathbb{Z}_+^n $, $k\in \nn$ and $y\in \rn$,
\begin{align}\label{3e30}
\lf|\pa^\bz \widetilde{\varphi}(y)\r| \ls& \lf|t^{-\nu}\pa^\bz
\eta_k(y)\sum_{|\az|=s+1}\pa^\az\Phi
\lf(\dfrac{\xi}{t^{\va}}\r)
\lf(\dfrac{x_k-y}{t^{\va}}\r)^\az\r|
+\lf|t^{-\nu}\eta_k(y)
\sum_{|\az|=s+1}\pa^{\az+\bz}\Phi
\lf(\dfrac{\xi}{t^{\va}}\r)
\lf(\dfrac{x_k-y}{t^{\va}}\r)^\az\r|\\
&+\lf|t^{-\nu}\eta_k(y)
\sum_{|\az|=s+1}\pa^\az\Phi
\lf(\dfrac{\xi}{t^{\va}}\r)
\pa^\bz\lf(\dfrac{x_k-y}{t^{\va}}\r)^\az\r|
=:\textrm{I}_1+\textrm{I}_2+\textrm{I}_3.\noz
\end{align}
Then, by \eqref{3e18}, $\Phi\in \cs(\rn)$, the fact that $|\xi|_{\va}\gtrsim |x-x_k|_{\va}$,
and (i), (v) and (vi) of Lemma \ref{2l2}, we find that, for any $K\in \mathbb{Z}_+$,
\begin{align}\label{3e31}
\textrm{I}_1 &\ls t^{-\nu}r_k^{-\va\cdot\bz}
\dfrac1{(1+|\frac{\xi}{t^{\va}}|)^K}
\lf|\dfrac{x_k-y}{t^{\va}}\r|^{s+1}
\ls t^{-\nu}r_k^{-\va\cdot\bz}\lf(\frac{t}
{|\xi|_{\va}}\r)^{K a_- } \lf(\frac{r_k}{t}\r)^{(s+1)a_-}\\
&\ls t^{-\nu}r_k^{-\va\cdot\bz}\lf(\dfrac{t}
{|x-x_k|_{\va}}\r)^{ K a_-} \lf(\frac{r_k}{t}\r)^{(s+1)a_-}.\noz
\end{align}
Let $K:=s+1$. From \eqref{3e31} and the fact that $t\gs |x-x_k|_{\va}$, we further deduce that
\begin{align}\label{3e32}
\textrm{I}_1
\ls \f{r_k^{\nu+(s+1)a_-}}
{|x-x_k|_{\va}^{\nu+(s+1)a_-}} r_k^{-\nu-\va\cdot\bz}.
\end{align}
Similarly, we conclude that, for any $i\in\{2,3\}$,
\begin{align*}
\textrm{I}_i
\ls \f{r_k^{\nu+(s+1)a_-}}
{|x-x_k|_{\va}^{\nu+(s+1)a_-}} r_k^{-\nu-\va\cdot\bz}.
\end{align*}
This, together with \eqref{3e30} and \eqref{3e32}, implies that \eqref{3e17} holds true. For any $k\in \nn$,
from Lemma \ref{3l3}(iv), it is easy to see that $B_k^{(3A)}\cap\CO^{\com}\neq\emptyset$.
Thus, by Lemma \ref{3l5} with $B:=B_k^{(3A)}$ and \eqref{3e17}, we know that, for any $x\in(\Qkk)^{\com}$,
\begin{equation}\label{3e19}
\lf|\textrm{I}\r|=\lf|\langle f,\widetilde{\varphi}\rangle\r|\ls \f{\sa r_k^{\nu+(s+1)a_-}}
{|x-x_k|_{\va}^{\nu+(s+1)a_-}}.
\end{equation}
On the other hand, from \eqref{3e13}, \eqref{3e17} with $\bz=\vec0_n$ and the fact that $0\le \eta_k\le 1$, it follows that, for any $x\in(\Qkk)^{\com}$,
\begin{align*}
|\textrm{II}|\le\int_{\rn}\lf|c_k(y)\eta_k(y)\r|\lf|\lf[\Phi_t(x-y)-q_0(y)\r]\r|\,dy
\ls \f{\sa r_k^{\nu+(s+1)a_-}}{|x-x_k|_{\va}^{\nu+(s+1)a_-}}\int_{\Qk}r_k^{-\nu}\,dy\sim
\f{\sa r_k^{\nu+(s+1)a_-}}{|x-x_k|_{\va}^{\nu+(s+1)a_-}},
\end{align*}
which, combined with \eqref{3e19}, \eqref{3e3}, \eqref{3e16} and Definition \ref{3d4}, further implies that \eqref{3e12} holds true.
By this and \eqref{3e11}, we find that \eqref{3e7} holds true.

Next we show the series $\{\sum_{k=1}^r b_k\}_{r\in\nn}$ converges in $\cs'(\rn)$.
Indeed, by Lemma \ref{3l8}, \eqref{3e7} and \eqref{2e8}, we conclude that,
for any given $N$ as in \eqref{2e11} and any $k\in \nn$,
\begin{equation*}\begin{aligned}
\lf\|b_k\r\|_{\vh} &\sim \lf\|M_0(b_k)\r\|_{\lv}\ls \lf\|M_N(f)\chi_{B_k^*}\r\|_{\lv}+
\sa\lf\|\f{ r_k^{\nu+(s+1)a_-}}
{|\cdot-x_k|_{\va}^{\nu+(s+1)a_-}}
\chi_{(B_k^*)^{\com}}\r\|_{\lv}\\
&\ls \lf\|M_N(f)\chi_{B_k^*}\r\|_{\lv}+\sa\lf\|
M_{\rm HL}(\chi_{\Qkk})\r\|_{L^{\frac{\nu+(s+1)a_-}
{\nu}\vp}(\rn)}^{\frac{\nu+(s+1)a_-}{\nu}}.
\end{aligned}\end{equation*}
Notice that $\widetilde{p}_->\frac{\nu}{\nu+(s+1)a_-}$ with $\widetilde{p}_-$
as in \eqref{3e1}. Then, by Lemma \ref{3l1} and \eqref{2e8}, we find that,
for any given $N$ as in \eqref{2e11} and any $k\in \nn$,
\begin{equation*}
\lf\|b_k\r\|_{\vh} \ls \lf\|M_N(f)\chi_{B_k^*}\r\|_{\lv}+\sa\lf\|
\chi_{\Qkk}\r\|_{\lv} \ls \lf\|M_N(f)\chi_{B_k^*}\r\|_{\lv},
\end{equation*}
which, combined with \cite[p.\,304, Theorem 2]{bp61}, implies that there exists some $g\in L^{\vp'}(\rn)$
with $\|g\|_{L^{\vp'}(\rn)}\le 1$ such that, for any $m,\,p\in \nn$,
\begin{align*}
\lf\|\sum_{k=m}^{m+p}b_k\r\|_{\vh}
\ls& \sum_{k=m}^{m+p}\lf\|b_k\r\|_{\vh}
\ls\sum_{k=m}^{m+p}\lf\|M_N(f)\chi_{B_k^*}\r\|_{\lv}\\
\ls&\int_{\rn}\lf|\sum_{k=m}^{m+p}M_N(f)(x)\chi_{B_k^*}(x)g(x)\r|\,dx
\ls \lf\|\sum_{k=m}^{m+p}M_N(f)\chi_{B_k^*}\r\|_{\vh}.
\end{align*}
Thus,
\begin{equation}\label{3e20}
\lf\|\sum_{k\in\nn}b_k\r\|_{\vh} \ls
\lf\|\sum_{k\in\nn}M_N(f)\chi_{B_k^*}\r\|_{\lv}
\ls \lf\|M_N(f)\chi_{\CO}\r\|_{\lv}\ls \|f\|_{\vh}<\fz,
\end{equation}
which implies that the series $\{\sum_{k=1}^r b_k\}_{r\in\nn}$ converges in $\vh$
and hence converges in $\cs'(\rn)$.
This finishes the proof of (v).

\emph{Step 4.} In this step, we prove (ii). Let $b:=\sum_{k\in\nn}b_k$ in $\cs'(\rn)$,
which is well defined by Step 3. From this, it further follows that the distribution
\begin{align}
\label{3e33}g:=f-b:=f-\sum_{k\in\nn}b_k
\end{align}
is well defined. Thus, $f=g+b$ in $\cs'(\rn)$,
which completes the proof of (ii).

\emph{Step 5.} In this step, we show (iii).
If $x\in \CO^{\com}$, then, from (v) and the facts that $M_0(f)\ls M_N(f)$ and
$|x-x_k|_{\va}\gs r_k$, we deduce that, for any given $N$ as in \eqref{2e11},
\begin{align*}
M_0(g)(x)\le M_0(f)(x)+\sum_{k\in \nn}M_0(b_k)(x)\ls M_N(f)(x)+\sum_{k\in\nn}
\f{\sa r_k^{\nu+(s+1)a_-}}{(r_k+|x-x_k|_{\va})^{\nu+(s+1)a_-}},
\end{align*}
which implies that \eqref{3e6} holds true when $x\in \CO^{\com}$.

If $x\in \CO$, then there exists some $m\in \nn$ such that $x\in B_m$. Let
$$E_1:=\{k\in\nn:\ \Qkk\cap B_m^*\neq\emptyset\}\quad{\rm and}\quad
E_2:=\{k\in\nn:\ \Qkk\cap B_m^*=\emptyset\}.$$
By \eqref{3e33}, we rewrite $$g=\lf(f-\sum_{k\in E_1}b_k\r)-\sum_{k\in E_2}b_k.$$
If $k\in E_2$, then $x\in B_m\cap(\Qkk)^{\com}$
and $|x-x_k|_{\va}\gs r_k$. By this, \eqref{3e12} and the fact that $\{\sum_{k=1}^r b_k\}_{r\in\nn}$ converges in $\cs'(\rn)$,
we conclude that, for any $x\in B_m$,
\begin{align}\label{3e34}
M_0\lf(\sum_{k\in E_2}b_k\r)(x)\ls\sum_{k\in E_2}
\f{\sa r_k^{\nu+(s+1)a_-}}{(r_k+|x-x_k|_{\va})^{\nu+(s+1)a_-}}.
\end{align}

Notice that
\begin{align}\label{3e35}
f-\sum_{k\in E_1}b_k=
\lf(f-\sum_{k\in E_1}f \eta_k\r)-\sum_{k\in E_1}c_k\eta_k
=:L-\sum_{k\in E_1}c_k\eta_k.
\end{align}
For the second term, by the finite intersection property of $\{\Qkk\}_{k\in \nn}$, we know that there exists
a positive constant $R$, independent of $m$, such that $\sharp E_1\le R$. Then, by \eqref{3e13} and the fact that
$x\in B_m$, we have
$$M_0\lf(\sum_{k\in E_1}c_k\eta_k\r)(x)\le
\sum_{k\in E_1}M_0\lf(c_k\eta_k\r)(x)\ls\sa\sim
\f{\sa r_m^{\nu+(s+1)a_-}}{(r_m+|x-x_m|_{\va})^{\nu+(s+1)a_-}}.$$
Finally, we estimate $M_0(L)(x)$ by
considering $L*\Phi_t(x)$ with $\Phi$ as in \eqref{3e16} and $t\in (0,\fz)$.
If $t\in (0,c_0 {\rm dist}(B_m,\CO^\com)]$,
where $${\rm dist}(B_m,\CO^\com):=\inf\lf\{|x-y|_{\va}:\ x\in B_m,~y\in \CO^\com\r\}$$
and $c_0\in (0,\fz)$ is small enough such that, for any $x\in B_m$ and
$t\in (0,c_0 {\rm dist}(B_m,\CO^\com)]$, $B_{\va}(x,t)\subset \widetilde{B}_m$, then,
for any $x\in B_m$, from the fact that $1-\sum_{k\in E_1}\eta_k(x)=1-\chi_{\CO}(x)=0$
and $\supp\Phi_t\st B_{\va}(0,t)$, we deduce that
\begin{align}\label{3e36}
L*\Phi_t(x)=\lf[f\lf(1-\sum_{k\in E_1}\eta_k\r)\r]*\Phi_t(x)=0.
\end{align}
On the other hand, if $t\in (c_0 {\rm dist}(B_m,\CO^\com),\fz)$, then, for any $x\in B_m$,
let $$\psi(\cdot):=\lf[1-\sum_{k\in E_1}\eta_k(\cdot)\r]\Phi_t(x-\cdot).$$
Obviously, $L*\Phi_t(x)=\langle f,\psi \rangle$.
By this and an argument similar to that used in the estimation of \eqref{3e11},
we find that, for any $t\in (c_0 {\rm dist}(B_m,\CO^\com),\fz)$ and $x\in B_m$,
$$\lf|L*\Phi_t(x)\r|=\lf|\langle f,\psi \rangle\r|\ls\sa\sim
\f{\sa r_m^{\nu+(s+1)a_-}}{(r_m+|x-x_m|_{\va})^{\nu+(s+1)a_-}}.$$
This, together with \eqref{3e35}, \eqref{3e36} and \eqref{3e34}, implies that \eqref{3e6} holds true, which
completes the proof of (iii) and hence of Lemma \ref{3l4}.
\end{proof}

From Lemma \ref{3l4} and its proof, we deduce the  following result on the density,
which plays a key role in the proof of Theorem \ref{3t1} below.

\begin{lemma}\label{3l7}
Let $\va\in [1,\fz)^n$, $\vp\in(0,\fz)^n$ with $\widetilde{p}_-$ as
in \eqref{3e1} and $N$ be as in \eqref{2e11}.
Then $\vh\cap L^{\vp/\widetilde{p}_-}(\rn)$ is dense in $\vh$.
\end{lemma}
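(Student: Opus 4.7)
Given $f\in\vh$, my plan is to apply the Calder\'on--Zygmund decomposition of Lemma~\ref{3l4} at each height $\sigma_k:=2^k$, $k\in\nn$, to write $f=g^k+b^k$ in $\cs'(\rn)$, and then to verify that (a) each $g^k$ belongs to $\vh\cap L^{\vp/\widetilde{p}_-}(\rn)$, while (b) $\|b^k\|_{\vh}\to 0$ as $k\to\fz$, equivalently, $g^k\to f$ in $\vh$. Together these give the claimed density.

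For part (a), the heart of the matter is a pointwise $L^\fz$-type bound on $g^k$. Starting from estimate \eqref{3e6} in Lemma~\ref{3l4}(iii),
\begin{equation*}
M_0(g^k)(x)\ls M_N(f)(x)\chi_{\CO_k^\com}(x)+\sigma_k\sum_{j\in\nn}\lf[\frac{r_j}{r_j+|x-x_j|_{\va}}\r]^{\nu+(s+1)a_-},
\end{equation*}
we have $M_N(f)\le\sigma_k$ on $\CO_k^\com$ by definition of $\CO_k$, while the standard Whitney-type pointwise estimate for the tail sum, combined with the finite intersection property of $\{B_j^*\}_{j\in\nn}$ provided by Lemma~\ref{3l3}(vi), yields $\sum_j[r_j/(r_j+|x-x_j|_{\va})]^{\nu+(s+1)a_-}\ls 1$ pointwise. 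Hence $M_0(g^k)\ls\sigma_k$, and $g^k$ is identified with a function satisfying $|g^k|\ls\sigma_k$ a.e. The identity $\|h\|_{L^{\vp/\widetilde{p}_-}(\rn)}=\||h|^{1/\widetilde{p}_-}\|_{\lv}^{\widetilde{p}_-}$ from Remark~\ref{2r3}(iii), together with the pointwise bound $|g^k|^{1/\widetilde{p}_-}\le(C\sigma_k)^{1/\widetilde{p}_--1}|g^k|$ (immediate when $\widetilde{p}_-\le 1$), then reduces matters to $g^k\in\lv$. The latter follows from $M_0(g^k)\in\lv$, which is obtained by pairing the pointwise estimate $[r_j/(r_j+|x-x_j|_{\va})]^{\nu}\ls\HL(\chi_{B_j})(x)$ in the tail with Lemma~\ref{3l2} applied at the exponent $u:=(\nu+(s+1)a_-)/\nu$; the Fefferman--Stein inequality requires $\widetilde{p}_-u>1$, which is precisely the size condition implicit in \eqref{3e1}. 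In the remaining regime $\widetilde{p}_->1$ we have $\vp\in(1,\fz)^n$ and $\vh=\lv$, in which case the inclusion follows by an analogous but simpler interpolation.

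For part (b), I would invoke estimate \eqref{3e20} from the proof of Lemma~\ref{3l4}, namely $\|b^k\|_{\vh}\ls\|M_N(f)\chi_{\CO_k}\|_{\lv}$. Since $M_N(f)\in\lv$ is finite a.e.\ and $\CO_k=\{M_N(f)>2^k\}$ decreases to a null set as $k\to\fz$, the dominated convergence theorem in $\lv$ (using $M_N(f)\chi_{\CO_k}\le M_N(f)\in\lv$ pointwise and $\chi_{\CO_k}\to 0$ a.e.) yields $\|M_N(f)\chi_{\CO_k}\|_{\lv}\to 0$, completing the argument.

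The main obstacle will be the rigorous justification of the pointwise $L^\fz$-bound on $g^k$: a priori $g^k=f-b^k$ is only a tempered distribution, so its identification with a bounded function must be argued carefully from the boundedness of $M_0(g^k)$, and the uniform pointwise control of the tail Poisson-type sum relies delicately on the Whitney structure of $\{B_j^*\}_{j\in\nn}$ encoded in Lemma~\ref{3l3}(iv)--(vi). A secondary subtlety is handling uniformly both regimes $\widetilde{p}_-\le 1$ and $\widetilde{p}_->1$ when passing from $\lv$-bounds to $L^{\vp/\widetilde{p}_-}(\rn)$-bounds.
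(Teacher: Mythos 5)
Your overall strategy — truncate at height $\sigma_k=2^k$ via Lemma~\ref{3l4}, show $g^k\in\vh\cap L^{\vp/\widetilde{p}_-}(\rn)$, and push $\|b^k\|_{\vh}\to 0$ via \eqref{3e20} and dominated convergence in $\lv$ — is the same as the paper's, and part (b) is essentially correct. There is, however, a genuine gap in part (a): the claimed uniform pointwise bound
\begin{equation*}
\sum_{j\in\nn}\lf[\frac{r_j}{r_j+|x-x_j|_{\va}}\r]^{\nu+(s+1)a_-}\ls 1
\end{equation*}
is \emph{false}. Consider the Whitney decomposition of $\CO=(0,1)$ in $\rr$ ($\va=(1)$), which near the origin consists of intervals $I_j\sim[2^{-j-1},2^{-j}]$ with $r_j\sim 2^{-j}$ and $|x_j|\sim 2^{-j}$. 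For $x\sim 2^{-N}$ and each $j<N$ one gets a term bounded below by a positive constant independent of $j$ and $N$, so the sum is $\gtrsim N$ on a set of positive measure, and hence is not in $L^\infty$; the same phenomenon occurs for $\CO=\{M_N(f)>\sigma\}$ whenever $\pa\CO$ has contributions from many scales. (The sum does belong to every $L^p$ for $p<\fz$, which is exactly why the Fefferman--Stein route works.) Since the pointwise $L^\fz$-estimate on the tail fails, your derivation of $|g^k|\ls\sigma_k$ a.e., and hence the interpolation step $|g^k|^{1/\widetilde{p}_-}\le(C\sigma_k)^{1/\widetilde{p}_--1}|g^k|$, is unjustified.

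The paper circumvents this entirely: it never proves or needs an $L^\fz$ bound on $g$. Instead it estimates $\|M_0(g)\|_{L^{\vp/\widetilde{p}_-}(\rn)}$ directly from \eqref{3e6}, using the pointwise inequality $M_N(f)\le\sigma$ on $\CO^\com$ only to trade the $L^{\vp/\widetilde{p}_-}$ norm of the first term for $\sigma^{1-\widetilde{p}_-}\|M_N(f)\|_{\lv}^{\widetilde{p}_-}$, and handling the Whitney-potential sum via $\bigl[r_k/(r_k+|\cdot-x_k|_{\va})\bigr]^{\nu}\ls M_{\rm HL}(\chi_{B_k^*})$, the identity \eqref{2e8}, and the Fefferman--Stein inequality (Lemma~\ref{3l2}) at the exponent $(\nu+(s+1)a_-)/\nu$. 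The conclusion $g\in L^{\vp/\widetilde{p}_-}(\rn)$ then follows from $M_0(g)\in L^{\vp/\widetilde{p}_-}(\rn)$ together with the identification of the Hardy space with the Lebesgue space at exponent $\vp/\widetilde{p}_-$ (the paper cites \cite[Theorem 6.1]{cgn17}); this also cleanly disposes of the distribution-versus-function identification problem you flagged as a secondary obstacle. To repair your proof, drop the pointwise $L^\fz$ bound on $g^k$ and replace the $L^\fz$-to-$\lv$ interpolation by the paper's direct $L^{\vp/\widetilde{p}_-}$ estimate of $M_0(g^k)$.
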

\begin{proof}
Let all the notation be the same as those used in the proof of Lemma \ref{3l4}.
For any $f\in\vh$, by (ii) and (v) of Lemma \ref{3l4}, we know that
$$f=g+b=g+\sum_{k\in \nn} b_k\quad\mathrm{in}\quad\cs'(\rn),$$
where $g,\,b$ and $\{b_k\}_{k\in\nn}$ are as in Lemma \ref{3l4}.
By \eqref{3e20}, we have
\begin{equation*}
\lf\|b\r\|_{\vh}=
\lf\|\sum_{k\in \nn}b_k\r\|_{\vh}
\ls \lf\|M_N(f)\chi_{\CO}\r\|_{\lv}\to 0
\quad{\rm as}\quad \sigma\to\fz,
\end{equation*}
where $\CO$ is as in Lemma \ref{3l4}.
Thus, for any $\varepsilon\in (0,\fz)$,
there exists some $\sa\in (0,\fz)$ such that
$$\|f-g\|_{\vh}=\|b\|_{\vh}<\varepsilon,$$
which, combined with the fact that $f\in\vh$, implies that $g\in\vh$. Therefore, to
complete the proof of Lemma \ref{3l7}, it suffices to show that $g\in L^{\vp/\widetilde{p}_-}(\rn)$.
To this end, by \cite[Theorem 6.1]{cgn17}, Definition \ref{2d5}, Lemma \ref{3l8} and \eqref{3e16},
we only need to prove that $M_0(g)\in L^{\vp/\widetilde{p}_-}(\rn)$. Indeed, by
\eqref{3e6}, \eqref{2e8}, Lemma \ref{3l2}, the finite intersection property of $\{B_k^*\}_{k\in\nn}$ and Lemma \ref{3l4}(i),
we conclude that
\begin{equation*}\begin{aligned}
\lf\|M_0(g)\r\|_{L^{\vp/{\widetilde{p}_-}}(\rn)}&\ls
\lf\|M_N(f)\chi_{\CO^{\com}}\r\|_{L^{\vp/{\widetilde{p}_-}}(\rn)}+
\sa\lf\|\sum_{k\in\nn}\f{ r_k^{\nu+(s+1)a_-}}
{(r_k+|\cdot-x_k|_{\va})^{\nu+(s+1)a_-}}\r\|_{L^{\vp/{\widetilde{p}_-}}(\rn)}\\
&\ls \sa^{1-{\widetilde{p}_-}}\lf\|M_N(f)\chi_{\CO^{\com}}\r\|_{\lv}^{\widetilde{p}_-}+
\sa\lf\|\sum_{k\in\nn}\lf[M_{\rm HL}(\chi_{\Qkk})
\r]^{\f{\nu+(s+1)a_-}{\nu}}\r\|_{L^{\vp/{\widetilde{p}_-}}(\rn)}\\
&\ls \sa^{1-{\widetilde{p}_-}}\lf\|M_N(f)\r\|_{\lv}^{\widetilde{p}_-}+
\sa\lf\|\lf\{\sum_{k\in\nn}\lf[M_{\rm HL}(\chi_{\Qkk})
\r]^{\f{\nu+(s+1)a_-}{\nu}}\r\}^{\f{\nu}{\nu+(s+1)a_-}}\r\|_{L^{{
\f{\nu+(s+1)a_-}{\nu}}\f{\vp}{{\widetilde{p}_-}}}(\rn)}^{\f{\nu+(s+1)a_-}{\nu}}\\
&\ls \sa^{1-{\widetilde{p}_-}}\lf\|M_N(f)\r\|_{\lv}^{\widetilde{p}_-}+
\sa\lf\|\sum_{k\in\nn}\chi_{\Qkk}\r\|_{L^{\vp/{\widetilde{p}_-}}(\rn)}\\
&\ls \sa^{1-{\widetilde{p}_-}}\lf\|M_N(f)\r\|_{\lv}^{\widetilde{p}_-}+
\lf\|M_N(f)\chi_{\CO}\r\|_{\lv}^{\widetilde{p}_-}<\fz.
\end{aligned}\end{equation*}
This implies that $M_0(g)\in L^{\vp/\widetilde{p}_-}(\rn)$ and hence
finishes the proof of Lemma \ref{3l7}.
\end{proof}

Via borrowing some ideas from the proofs of \cite[Proposition 2.11]{zsy16} and \cite[Theorem 1.1]{sawa13},
we obtain the following lemma, which is of independent interest.
\begin{lemma}\label{3l6}
Let $\va\in[1,\fz)^n$, $\vp\in(0,\fz)^n$, $\bz\in(0,\fz)$ and $r\in[1,\fz]\cap(p_+,\fz]$
with $p_+$ as in \eqref{2e10}. Assume that
$\{\lz_i\}_{i\in\nn}\st\mathbb{C}$, $\{B_i\}_{i\in\nn}\st\mathfrak{B}$
and $\{m_i\}_{i\in\nn}\st L^r(\rn)$ satisfy, for any $i\in\nn$,
$\supp m_i\st B_i^{(\bz)}$ with $B_i^{(\bz)}$ as in \eqref{2e2'},
\begin{align}\label{3e37}
\|m_i\|_{L^r(\rn)}
\le\frac{|B_i|^{1/r}}{\|\chi_{B_i}\|_{\lv}}
\end{align}
and
$$\lf\|\lf\{\sum_{i\in\nn}
\lf[\frac{|\lz_i|\chi_{B_i}}{\|\chi_{B_i}\|_{\lv}}\r]^
{{p_-}}\r\}^{1/{p_-}}\r\|_{\lv}<\fz.$$
Then
\begin{align}\label{3e38}
\lf\|\lf[\sum_{i\in\nn}\lf|\lz_im_i\r|^{{p_-}}\r]
^{1/{p_-}}\r\|_{\lv}
\le C\lf\|\lf\{\sum_{i\in\nn}
\lf[\frac{|\lz_i|\chi_{B_i}}{\|\chi_{B_i}\|_{\lv}}\r]^
{{p_-}}\r\}^{1/{p_-}}\r\|_{\lv},
\end{align}
where $p_-$ is as in \eqref{2e10} and $C$ a positive constant independent
of $\lz_i$, $B_i$ and $m_i$.
\end{lemma}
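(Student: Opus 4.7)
The plan is to prove the inequality by raising both sides to the power $p_-$, invoking the duality for $L^{\vec{p}/p_-}(\rn)$ (which makes sense because $\vec{p}/p_- \in [1,\infty]^n$ by the definition of $p_-$ in \eqref{2e10}), and then reducing to an $L^r$-type estimate that can be controlled by the Hardy-Littlewood maximal function via Lemma \ref{3l1}. More precisely, using \eqref{2e8} we have
\begin{align*}
\left\|\left[\sum_{i\in\nn}|\lz_im_i|^{p_-}\right]^{1/p_-}\right\|_{\lv}^{p_-}
=\left\|\sum_{i\in\nn}|\lz_im_i|^{p_-}\right\|_{L^{\vp/p_-}(\rn)},
\end{align*}
and the right-hand side equals $\sup_g\int_\rn \sum_{i\in\nn}|\lz_i m_i(x)|^{p_-}g(x)\,dx$, where the supremum runs over all nonnegative $g\in L^{(\vp/p_-)'}(\rn)$ with $\|g\|_{L^{(\vp/p_-)'}(\rn)}\le 1$.

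For each fixed $i$, I would apply the classical H\"older inequality on $L^{r/p_-}(\rn)$ and $L^{q}(\rn)$ with $q:=(r/p_-)'=r/(r-p_-)$, which, combined with the size condition \eqref{3e37}, yields
\begin{align*}
\int_{B_i^{(\bz)}}|m_i|^{p_-}|g|\le \|m_i\|_{L^r(\rn)}^{p_-}\|g\chi_{B_i^{(\bz)}}\|_{L^{q}(\rn)}
\le\frac{|B_i|^{p_-/r}}{\|\chi_{B_i}\|_{\lv}^{p_-}}\|g\chi_{B_i^{(\bz)}}\|_{L^{q}(\rn)}
\end{align*}
(with the obvious modification when $r=\fz$). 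Since $B_i$ and $B_i^{(\bz)}$ are concentric, for any $x\in B_i$ the ball $B_i^{(\max\{\bz,1\})}$ contains $B_i^{(\bz)}$ and belongs to the class of balls over which $M_{\rm HL}(|g|^q)(x)$ takes its supremum, which gives $|B_i^{(\bz)}|^{-1}\int_{B_i^{(\bz)}}|g|^q\ls \bz^\ast M_{\rm HL}(|g|^q)(x)$ with $\bz^\ast$ depending only on $\bz$. Averaging this pointwise bound over $x\in B_i$ and exploiting the identity $p_-/r+1/q=1$ (so that the factors $|B_i|^{p_-/r}$ and $|B_i|^{1/q}$ combine to $|B_i|$), I obtain
\begin{align*}
\int_\rn|\lz_im_i|^{p_-}|g|\ls \bz^\ast\frac{|\lz_i|^{p_-}}{\|\chi_{B_i}\|_{\lv}^{p_-}}
\int_{B_i}\lf[M_{\rm HL}(|g|^q)(x)\r]^{1/q}dx.
\end{align*}

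Summing over $i$, interchanging the sum with the integral, and applying the mixed-norm H\"older inequality in Remark \ref{2r3}(iv) to the pair $L^{\vp/p_-}(\rn)$ and $L^{(\vp/p_-)'}(\rn)$, I bound the result by
\begin{align*}
\bz^\ast\left\|\sum_{i\in\nn}\frac{|\lz_i|^{p_-}\chi_{B_i}}{\|\chi_{B_i}\|_{\lv}^{p_-}}\right\|_{L^{\vp/p_-}(\rn)}
\cdot\lf\|\lf[M_{\rm HL}(|g|^q)\r]^{1/q}\r\|_{L^{(\vp/p_-)'}(\rn)}.
\end{align*}
The last factor equals $\|M_{\rm HL}(|g|^q)\|_{L^{(\vp/p_-)'/q}(\rn)}^{1/q}$, and a direct componentwise computation shows that each coordinate of $(\vp/p_-)'/q$ equals $p_i/[q(p_i-p_-)]$, which exceeds $1$ exactly when $r>p_i$; since $r>p_+$, this holds for every $i$. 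Therefore Lemma \ref{3l1} applies and yields $\|M_{\rm HL}(|g|^q)\|_{L^{(\vp/p_-)'/q}(\rn)}\ls \|g\|_{L^{(\vp/p_-)'}(\rn)}^q\le 1$. Taking the supremum over $g$, raising to power $1/p_-$ and using \eqref{2e8} once more delivers \eqref{3e38}.

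The main technical point is the verification that $r>p_+$ is exactly the right hypothesis so that $(\vp/p_-)'/q\in (1,\fz]^n$ and Lemma \ref{3l1} becomes available; once this bookkeeping is in place the rest is routine H\"older and duality. The case $r=\fz$ (so $q=1$) is handled by the same argument with H\"older replaced by the trivial $L^\fz$--$L^1$ pairing, and the geometric constant $\bz^\ast$ is harmless since the lemma allows the overall constant to depend on $\bz$.
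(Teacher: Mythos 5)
Your proof is correct and follows essentially the same duality--H\"older--maximal-function strategy as the paper's, with one minor streamlining: by averaging the pointwise maximal-function bound directly over $B_i$ you produce the factor $\chi_{B_i}$ immediately, whereas the paper first arrives at $\chi_{B_i^{(\bz)}}$ and then invokes Remark \ref{3r2} to pass from $B_i^{(\bz)}$ to $B_i$. One small imprecision: $M_{\rm HL}$ in this paper is defined over anisotropic cubes $\mathfrak{Q}$ rather than balls, so the phrase ``the class of balls over which $M_{\rm HL}$ takes its supremum'' should refer to a comparable anisotropic cube containing both $x$ and $B_i^{(\bz)}$; this only affects the $\bz$-dependent constant and not the argument.
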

\begin{proof}
By \cite[p.\,304, Theorem 2]{bp61}, we find that there exists some $g\in L^{(\vp/{p_-})'}(\rn)$
with norm not greater than 1 such that
$$\lf\|\lf[\sum_{i\in\nn}\lf|\lz_im_i\r|^{{p_-}}\r]
^{1/{p_-}}\r\|_{\lv}^{p_-}=\lf\|\sum_{i\in\nn}\lf|\lz_im_i\r|^{{p_-}}
\r\|_{L^{\vp/{p_-}}(\rn)}\ls \int_{\rn}\sum_{i\in\nn}
\lf|\lz_im_i(x)\r|^{{p_-}}\lf|g(x)\r|\,dx.$$
Moreover, from the H\"{o}lder inequality and \eqref{3e37}, we deduce that,
for any $r\in[1,\fz]$,
\begin{equation*}\begin{aligned}
\int_{\rn}\sum_{i\in\nn}\lf|\lz_im_i(x)\r|^{{p_-}}\lf|g(x)\r|\,dx
&\le \sum_{i\in \nn}|\lz_i|^{p_-}\|m_i\|_{L^r (\rn)}^{p_-}
\lf\|g\r\|_{L^{(r/{p_-})'}(B_i^{(\bz)})}\\
&\le \sum_{i\in \nn}\frac{|\lz_i|^{p_-}|B_i|^{{p_-}/r}}
{\|\chi_{B_i}\|_{\lv}^{p_-}}\lf\|g\r\|_{L^{(r/{p_-})'}(B_i^{(\bz)})}\\
&\ls \sum_{i\in \nn}\frac{|\lz_i|^{p_-}|B_i^{(\bz)}|}
{\|\chi_{B_i}\|_{\lv}^{p_-}}\inf_{z\in B_i^{(\bz)}}
\lf[M_{\rm HL}\lf(|g|^{(r/{p_-})'}\r)(z)\r]^{1/{(r/{p_-})'}}\\
&\ls \int_{\rn} \sum_{i\in \nn}\frac{|\lz_i|^{p_-}
\chi_{B_i^{(\bz)}}(x)}{\|\chi_{B_i}\|_{\lv}^{p_-}}
\lf[M_{\rm HL}\lf(|g|^{(r/{p_-})'}\r)(x)\r]^{1/{(r/{p_-})'}}\,dx,
\end{aligned}\end{equation*}
which, together with the H\"{o}lder inequality [see Remark \ref{2r3}(iv)], Remark \ref{3r2}, Lemma \ref{3l1} and the fact that $r\in (p_+,\fz]$,
implies that
\begin{equation*}\begin{aligned}
\int_{\rn}\sum_{i\in\nn}\lf|\lz_im_i(x)\r|^{{p_-}}\lf|g(x)\r|\,dx
&\ls \lf\|\sum_{i\in \nn}\frac{|\lz_i|^{p_-}
\chi_{B_i}}{\|\chi_{B_i}\|_{\lv}^{p_-}}
\r\|_{L^{\vp/{p_-}}(\rn)}
\lf\|\lf[M_{\rm HL}\lf(|g|^{(r/{p_-})'}\r)\r]^{1/{(r/{p_-})'}}
\r\|_{L^{(\vp/{p_-})'}(\rn)}\\
&\ls \lf\|\lf\{\sum_{i\in\nn}
\lf[\frac{|\lz_i|\chi_{B_i}}{\|\chi_{B_i}
\|_{\lv}}\r]^{{p_-}}\r\}^{1/{p_-}}
\r\|_{L^{\vp}(\rn)}^{p_-}
\lf\|g\r\|_{L^{(\vp/{p_-})'}(\rn)}.
\end{aligned}\end{equation*}
From this and the fact that $\lf\|g\r\|_{L^{(\vp/{p_-})'}(\rn)}\le 1$, it follows that \eqref{3e38} holds true.
This finishes the proof of Lemma \ref{3l6}.
\end{proof}

Now we state the main result of this section as follows.

\begin{theorem}\label{3t1}
Let $\va\in [1,\fz)^n,\,\vp\in (0,\fz)^n,\,r\in(\max\{p_+,1\},\fz]$
with $p_+$ as in \eqref{2e10}, $N$ be as in \eqref{2e11} and $s$
as in \eqref{3e1}.
Then $\vh=\vah$ with equivalent quasi-norms.
\end{theorem}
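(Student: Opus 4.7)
The plan is to prove the two continuous inclusions $\vh\hookrightarrow\vah$ and $\vah\hookrightarrow\vh$ separately, combining them to get the equivalence of quasi-norms. Since every $(\vp,\infty,s)$-atom is also a $(\vp,r,s)$-atom for any $r\in(1,\fz)$, it will suffice in the first direction to prove the stronger inclusion $\vh\hookrightarrow\vahfz$, and in the second direction to handle arbitrary $r\in(\max\{p_+,1\},\fz]$.

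For $\vh\hookrightarrow\vahfz$, I would first invoke Lemma \ref{3l7} to restrict to $f\in\vh\cap L^{\vp/\widetilde{p}_-}(\rn)$, since this subspace is dense in $\vh$. Applying Lemma \ref{3l4} at dyadic heights $\sigma=2^k$, $k\in\zz$, yields sequences $\{\Omega^k\}_{k\in\zz}$, $\{B_i^{*,k}\}_{i,k}$, $\{\eta_i^k\}_{i,k}$ and $\{c_i^k\}_{i,k}$ so that $f=g^k+b^k$, with $b^k=\sum_i b_i^k$ and $\|g^k\|_{L^\fz(\rn)}\ls 2^k$ by Lemma \ref{3l4}(iv). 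The idea is then to write, via a standard telescoping argument, $f=\sum_{k\in\zz}(g^{k+1}-g^k)$ in $\cs'(\rn)$, split each difference into atomic pieces supported on the $B_i^{*,k}$'s (after subtracting polynomial corrections chosen so that the vanishing-moment condition up to order $s$ holds), and identify the result with a $(\vp,\fz,s)$-atomic decomposition. Here I would follow the scheme of Nakai--Sawano \cite{ns12} and Stein \cite{s93}, using \eqref{3e7} to bound each piece in $L^\fz(\rn)$ by a constant multiple of $2^k$, and using $|\Omega^k|<\fz$ together with the finite intersection property of $\{B_i^{*,k}\}_i$ to control the coefficients; the mixed-norm quasi-norm of the atomic decomposition is then estimated by
\begin{align*}
\lf\|\lf\{\sum_{k,i}\lf[\f{|\lz_i^k|\chi_{B_i^{*,k}}}{\|\chi_{B_i^{*,k}}\|_{\lv}}\r]^{p_-}\r\}^{1/p_-}\r\|_{\lv}
\ls\lf\|\lf\{\sum_{k\in\zz}2^{kp_-}\chi_{\Omega^k}\r\}^{1/p_-}\r\|_{\lv}
\ls\|M_N(f)\|_{\lv}.
\end{align*}

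For $\vah\hookrightarrow\vh$, given $f=\sum_{i\in\nn}\lz_i a_i\in\vah$ with $(\vp,r,s)$-atoms $a_i$ supported in $B_i$, I would aim for the pointwise bound $M_N(f)\le\sum_i|\lz_i|M_N(a_i)$ raised to the $p_-$ power and then split each $M_N(a_i)$ into its restriction to a dilate $B_i^{(\bz)}$ and its tail on $(B_i^{(\bz)})^{\com}$, with $\bz\in(1,\fz)$ chosen sufficiently large. On the local part, the $L^r(\rn)$ boundedness of $M_N$ (for $r>\max\{p_+,1\}$) combined with the size condition of Definition \ref{3d1}(ii) verifies the hypothesis \eqref{3e37} of Lemma \ref{3l6}, yielding the desired $\lv$ bound on $\|\sum_i|\lz_i|M_N(a_i)\chi_{B_i^{(\bz)}}\|^{p_-}$ type quantities. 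On the tail part, the vanishing-moment condition in Definition \ref{3d1}(iii), together with the Taylor remainder argument as in the proof of \eqref{3e12}, gives the standard pointwise decay
\begin{align*}
M_N(a_i)(x)\ls\f{1}{\|\chi_{B_i}\|_{\lv}}\lf[\HL(\chi_{B_i})(x)\r]^{(\nu+(s+1)a_-)/\nu},
\quad x\in(B_i^{(\bz)})^{\com},
\end{align*}
after which Lemma \ref{3l2} (the anisotropic Fefferman--Stein vector-valued inequality, valid since $\widetilde{p}_->\nu/(\nu+(s+1)a_-)$ by \eqref{3e1}) and \eqref{2e8} finish the estimate.

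The main obstacle will be the first direction: carefully checking that the pieces extracted from the telescoping of $\{g^k\}_{k\in\zz}$ satisfy all three atomic conditions, in particular the vanishing-moment condition, which requires choosing the corrections $c_i^k\in\cp_s(\rn)$ compatibly across consecutive levels and bounding the resulting sums in $\vp$-quasi-norm using the finite intersection property of $\{B_i^{*,k}\}_i$ and Lemma \ref{3l2}. Once this delicate bookkeeping is done, the remaining arguments are relatively mechanical applications of Lemmas \ref{3l1}, \ref{3l2}, \ref{3l6} and \ref{3l8}.
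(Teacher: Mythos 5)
Your proposal follows essentially the same strategy as the paper's proof: reduce to $f\in\vh\cap L^{\vp/\widetilde{p}_-}(\rn)$ via Lemma~\ref{3l7}, apply the Calder\'{o}n--Zygmund decomposition (Lemma~\ref{3l4}) at dyadic heights, telescope $f=\sum_k(g^{k+1}-g^k)$, build $(\vp,\fz,s)$-atoms from these differences with coefficient estimates coming from the finite intersection property and the layer-cake identity over $\Omega^k\setminus\Omega^{k+1}$, and, for the converse, split each $M_N(a_i)$ (or $M_0(a_i)$) into a local part handled by Lemma~\ref{3l6} and a tail handled by the vanishing-moment decay and Lemma~\ref{3l2}. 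The one small deviation is that the paper first passes, via Lemma~\ref{3l8}, to the radial maximal function $M_0=M^0_\Phi$ with a single fixed compactly supported bump $\Phi$ and then bounds $M_0(a_i)$ on $B_i^{(2)}$ by $M_{\rm HL}(a_i)$ using Lemma~\ref{3l1}, whereas you propose to estimate the grand maximal function $M_N$ directly; both routes are valid (for your route one instead uses the $L^r(\rn)$-boundedness of $M_N$ from Lemma~\ref{5l5}(i) on the local piece, and one must run the Taylor-remainder tail estimate uniformly over the $\cs_N(\rn)$ normalization rather than against the single $\Phi$), but the paper's reduction to $M_0$ keeps those computations somewhat lighter.
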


\begin{remark}
By Proposition \ref{2r4'} below, we know that,
when $\va:=(a_1,\ldots, a_n)\in [1,\fz)^n$
and $\vp:=(\overbrace{p,\ldots,p}^{n\ \rm times})$,
where $p\in(0,1]$, Theorem \ref{3t1}
is just \cite[p.\,39, Theorem 6.5]{mb03}
with
\begin{align}\label{4e5}
A:=\left(
              \begin{array}{cccc}
                2^{a_1} & 0 & \cdots & 0\\
                0 & 2^{a_2} & \cdots & 0\\
                \vdots & \vdots& &\vdots \\
                0 & 0 & \cdots & 2^{a_n} \\
              \end{array}
            \right).
\end{align}
\end{remark}

Now we proof Theorem \ref{3t1}.
\begin{proof}[Proof of Theorem \ref{3t1}]
First, we show that
\begin{align}\label{3e21}
\vah\subset\vh.
\end{align}
To this end, for any $f\in\vah$, by Definition \ref{3d2}, we know that
there exist $\{\lz_i\}_{i\in\nn}\st\mathbb{C}$
and a sequence of $(\vp,r,s)$-atoms, $\{a_i\}_{i\in\nn}$,
supported, respectively, on
$\{B_i\}_{i\in\nn}\st\mathfrak{B}$ such that
\begin{align*}
f=\sum_{i\in\nn}\lz_ia_i
\quad\mathrm{in}\quad\cs'(\rn)
\end{align*}
and
\begin{align*}
\|f\|_{\vah}\sim
\lf\|\lf\{\sum_{i\in\nn}\lf[\frac{|\lz_i|
\chi_{B_i}}{\|\chi_{B_i}\|_{\lv}}\r]
^{p_-}\r\}^{p_-}\r\|_{\lv}.
\end{align*}
Let $M_0$ be as in \eqref{3e16}. Then, by Lemma \ref{3l8}, we have
$$\|f\|_{\vh}\sim\lf\|M_0(f)\r\|_{\lv}.$$
Thus, to prove \eqref{3e21}, it suffices
to show that
\begin{align}\label{3e22}
\lf\|M_0\lf(\sum_{i\in\nn}\lz_ia_i\r)\r\|_{\lv}\ls \lf\|\lf\{\sum_{i\in\nn}
\lf[\frac{|\lz_i|\chi_{B_i}}{\|\chi_{B_i}
\|_{\lv}}\r]^{p_-}\r\}^{p_-}\r\|_{\lv}.
\end{align}

For this purpose, first, for any $i\in\nn$ and $x\in B_i^{(2)}$,
where $B_i^{(2)}$ is as in \eqref{2e2'} with $\dz=2$, it is easy to see that
\begin{align}\label{3e23}
M_0(a_i)(x)\ls M_{\rm HL}(a_i)(x).
\end{align}

On the other hand, for any $i\in\nn$, by the vanishing
moment condition of $a_i$, we conclude that, for any $t\in(0,\fz)$ and $x\in (B_i^{(2)})^{\com}$,
\begin{align}\label{3e39}
\lf|a_i*\Phi_t(x)\r|&\le\int_{B_i}\lf|a_i(y)
\Phi_t(x-y)\r|\,dy\\
&=t^{-\nu}\int_{B_i}\lf|a_i(y)\r|
\lf|\Phi\lf(\dfrac{x-y}{t^{\va}}\r)-\sum_{|\az|\le s}
\dfrac{\pa^\az\Phi(\dfrac{x-x_i}{t^{\va}})}
{\az!}\lf(\dfrac{x_i-y}{t^{\va}}\r)^\az\r|\,dy\noz\\
&\ls t^{-\nu}\int_{B_i}\lf|a_i(y)\r|
\lf|\sum_{|\az|=s+1}\pa^\az\Phi
\lf(\dfrac{\xi}{t^{\va}}\r)
\lf(\dfrac{x_i-y}{t^{\va}}\r)^\az\r|\,dy,\noz
\end{align}
where $\Phi$ is as in \eqref{3e16}, for any $i\in \nn$, $x_i$ and $r_i$ denote the
center, respectively, the radius of $B_i$ and
$\xi:=x-x_i+\theta(x_i-y)$ for some $\theta\in [0,1]$.

For any $i\in\nn$, $x\in (B_i^{(2)})^{\com}$ and $y\in B_i$, we easily find that $|x-y|_{\va}\sim|x-x_i|_{\va}$ and $|\xi|_{\va}\geq
|x-x_i|_{\va}-|x_i-y|_{\va}\gtrsim |x-x_i|_{\va}$.
From this, \eqref{3e39}, the fact that $\Phi\in \cs(\rn)$, (i),
(v) and (vi) of Lemma \ref{2l2}, the H\"{o}lder inequality and Definition \ref{3d1}(ii),
we deduce that, for any $i\in\nn$, $t\in(0,\fz)$, $K\in\mathbb{Z}_+$ and $x\in (B_i^{(2)})^{\com}$,
\begin{align}\label{3e40}
\lf|a_i*\Phi_t(x)\r|&\ls t^{-\nu}\int_{B_i}\lf|a_i(y)\r|
\dfrac1{(1+|\xi/t^{\va}|)^K}
\lf|\dfrac{x_i-y}{t^{\va}}\r|^{s+1}\,dy\\
&\ls t^{-\nu}\lf(\frac{t}{|\xi|_{\va}}\r)^{K a_-}
\lf(\dfrac{r_i}{t}\r)^{(s+1)a_-}\int_{B_i}\lf|a_i(y)\r|\,dy\noz\\
&\ls t^{-\nu}\lf(\frac{t}{|x-x_i|_{\va}}\r)^{K a_-}
\lf(\dfrac{r_i}{t}\r)^{(s+1)a_-}\|a_i\|_{L^r(\rn)}\lf|B_i\r|^{1/{r'}}\noz\\
&\ls t^{-\nu}\lf(\frac{t}{|x-x_i|_{\va}}\r)^{K a_-}
\lf(\dfrac{r_i}{t}\r)^{(s+1)a_-}\frac{|B_i|}{\|\chi_{B_i}\|_{\lv}}.\noz
\end{align}
Without loss of generality, we may assume that, for any $i\in\nn$
and $x\in (B_i^{(2)})^{\com}$, $a_i*\Phi_t(x)\neq0$.
By this and the fact that $\supp\Phi\st B(\vec0_n,1)$,
we know that $t\gs|x-x_i|_{\va}$. From this and \eqref{3e40} with
$K:=s+1$, it follows that, for any $i\in\nn$,
$t\in(0,\fz)$ and $x\in (B_i^{(2)})^{\com}$,
\begin{align*}
\lf|a_i*\Phi_t(x)\r|\ls \frac1{\|\chi_{B_i}\|_{\lv}}
\lf(\f{r_i}{|x-x_i|_{\va}}\r)^{\nu+(s+1)a_-},
\end{align*}
which implies that, for any $i\in \nn$ and $x\in (B_i^{(2)})^{\com}$,
\begin{align*}
M_0(a_i)(x)\ls \frac1{\|\chi_{B_i}\|_{\lv}}
\lf(\f{r_i}{|x-x_i|_{\va}}\r)^{\nu+(s+1)a_-}\ls
\frac1{\|\chi_{B_i}\|_{\lv}}\lf[M_{\rm HL}\lf(\chi_{B_i}\r)(x)\r]^
{\frac{\nu+(s+1)a_-}{\nu}}.
\end{align*}
By this and \eqref{3e23}, we conclude that, for any $i\in \nn$ and $x\in \rn$,
\begin{align}\label{3e24}
M_0(a_i)(x)\ls M_{\rm HL}(a_i)(x)\chi_{B_i^{(2)}}(x)+
\frac1{\|\chi_{B_i}\|_{\lv}}\lf[M_{\rm HL}\lf(\chi_{B_i}\r)(x)\r]^
{\frac{\nu+(s+1)a_-}{\nu}}.
\end{align}
Notice that $r\in (\max\{p_+,1\},\fz]$. Then, by \eqref{3e23} and Lemma \ref{3l1},
we find that
$$\lf\|M_0(a_i)\chi_{B_i^{(2)}}\r\|_{L^r(\rn)}
\ls\lf\|M_{\rm HL}(a_i)\chi_{B_i^{(2)}}\r\|_{L^r(\rn)}
\ls\frac{|B_i|^{1/r}}{\|\chi_{B_i}\|_{\lv}},$$
which, combined with Lemma \ref{3l6}, further implies that
$$\lf\|\lf\{\sum_{i\in\nn}\lf[\lf|\lz_i\r|M_0(a_i)
\chi_{B_i^{(2)}}\r]^{{p_-}}\r\}^{1/{p_-}}\r\|_{\lv}
\ls \lf\|\lf\{\sum_{i\in\nn}
\lf[\frac{|\lz_i|\chi_{B_i}}{\|\chi_{B_i}\|_{\lv}}\r]^
{{p_-}}\r\}^{1/{p_-}}\r\|_{\lv}.$$
From this, Remark \ref{2r3}(ii),
\eqref{3e24}, \eqref{2e8} and Lemma \ref{3l2},
we deduce that
\begin{align*}
\lf\|M_0\lf(\sum_{i\in\nn}\lz_ia_i\r)\r\|_{\lv}&\ls
\lf\|\sum_{i\in\nn}|\lz_i| M_0(a_i)\r\|_{\lv}\\
&\ls \lf\|\sum_{i\in\nn}|\lz_i| M_{\rm HL}(a_i)\chi_{B_i^{(2)}}\r\|_{\lv}\noz\\
&\hs+
\lf\|\sum_{i\in\nn}\frac{|\lz_i|}{\|\chi_{B_i}\|_{\lv}}\lf[M_{\rm HL}\lf(\chi_{B_i}\r)\r]^
{\frac{\nu+(s+1)a_-}{\nu}}\r\|_{\lv}\\
&\ls \lf\|\lf\{\sum_{i\in\nn}\lf[\lf|\lz_i\r|M_{\rm HL}(a_i)
\chi_{B_i^{(2)}}\r]^{{p_-}}\r\}^{1/{p_-}}\r\|_{\lv}\noz\\
&\hs+
\lf\|\sum_{i\in\nn}\lf\{\frac{|\lz_i|}{\|\chi_{B_i}\|_{\lv}}\lf[M_{\rm HL}\lf(\chi_{B_i}\r)\r]^
{\frac{\nu+(s+1)a_-}{\nu}}\r\}^{\frac{\nu}{\nu+(s+1)a_-}}\r\|_{L^
{\frac{\nu+(s+1)a_-}{\nu}\vp}(\rn)}^
{\frac{\nu+(s+1)a_-}{\nu}}\\
&\ls\lf\|\lf\{\sum_{i\in\nn}
\lf[\frac{|\lz_i|\chi_{B_i}}{\|\chi_{B_i}
\|_{\lv}}\r]^{p_-}\r\}^{1/p_-}\r\|_{\lv},
\end{align*}
which implies that \eqref{3e22} holds true. This proves \eqref{3e21}.

We now prove that $\vh\st\vah$. To this end, it suffices
to show that
\begin{align}\label{3e25}
\vh\st H_{\va}^{\vp,\fz,s}(\rn),
\end{align}
due to the fact that each $(\vp,\infty,s)$-atom is also a $(\vp,r,s)$-atom
and hence $H_{\va}^{\vp,\fz,s}(\rn)\st\vah$.

Next we prove \eqref{3e25} by two steps.

\emph{Step 1.} In this step, we show that,
for any $f\in\vh\cap L^{\vp/{p_-}}(\rn)$,
\begin{align}\label{3e26}
\|f\|_{H_{\va}^{\vp,\fz,s}(\rn)}\ls\|f\|_{\vh}
\end{align}
holds true.

For any $j \in \mathbb{Z}$, $N$ as in \eqref{2e11}
and $f\in\vh\cap L^{\vp/{\widetilde{p}_-}}(\rn)$, let
$\CO_j:=\{x\in\rn:\ M_N(f)(x)>2^j\}$.
Then, by Lemma \ref{3l4} with $\sigma=2^j$,
we know that there exist two distributions $g_j$ and $b_j$ such that
$$f=g_j+b_j
\quad {\rm in}\quad \cs'(\rn),
$$
and $b_j=\sum_{k\in\nn}b_{j,k}$ in $\cs'(\rn)$, where, for any $j\in \zz$ and
$k\in\nn$, $b_{j,k}:=(f-c_{j,k})\eta_{j,k}$, supported on $B_{j,k}\in\mathfrak{B}$,
$c_{j,k}\in \cp_s(\rn)$ and $\eta_{j,k}$ is constructed
as in \eqref{3e8} with $B_k$ replaced by $B_{j,k}$.

Moreover, by an estimation similar to that of \eqref{3e20}, we conclude that
\begin{align}\label{3e41}
\lf\|f-g_j\r\|_{\vh}=\lf\|b_j\r\|_{\vh}\ls
\lf\|M_N(f)\chi_{\CO_j}\r\|_{\lv}\to 0
\end{align}
as $j\to \fz$.
In addition, from the fact that $f\in L^{\vp/{\widetilde{p}_-}}(\rn)$ and
the H\"{o}lder inequality [see Remark \ref{2r3}(iv)],
it follows that $f\in L_{\rm loc}^1(\rn)$. By this and
Lemma \ref{3l4}(iv), we find that $\|g_j\|_{L^{\fz}(\rn)}\ls 2^j$.
This, together with \eqref{3e41}, further implies that
\begin{align}\label{3e42}
f=\sum_{j\in \mathbb{Z}}\lf(g_{j+1}-g_{j}\r)
\quad\mathrm{in}\quad\cs'(\rn).
\end{align}

On the other hand, for any $j\in \zz,\,k\in\nn$
and $q\in\cp_s(\rn)$, let
\begin{align}\label{3e28}
\|q\|_{j,k}:=
\lf[\frac1{\int_\rn\eta_{j,k}(x)\,dx}\int_\rn
|q(x)|^2\eta_{j,k}(x)\,dx\r]^{1/2}
\end{align}
and, for any $i$, $k\in\nn$ and $j\in\zz$,
$c_{j+1,k,i}$ be the orthogonal projection of
$(f-c_{j+1,i})\eta_{j,k}$ on $\cp_{s}(\rn)$
with respect to the norm in \eqref{3e28}.
Then, by \eqref{3e42} and an argument similar to that used
in \cite[pp.\,108-109]{s93}, we know that
$$f=\sum_{j\in \mathbb{Z}}\lf(g_{j+1}-g_{j}\r)=
\sum_{j\in \mathbb{Z}}\sum_{k\in\nn}\lf[b_{j,k}-\sum_{i\in \nn}\lf(b_{j+1,i}\eta_{j,k}-c_{j+1,k,i}\eta_{j+1,i}\r)\r]
=:\sum_{j\in \mathbb{Z}}\sum_{k\in\nn}
A_{j,k}\quad\mathrm{in}\quad\cs'(\rn),$$
and, for any $j\in\zz$ and $k\in\nn$, $A_{j,k}$ is supported on $B_{j,k}$ and
satisfies $\|A_{j,k}\|_{L^{\fz}(\rn)}\le C_02^j$ with $C_0$ being some positive constant,
independent of $j$ and $k$, and, for any $q\in \cp_s(\rn)$,
$$\int_{\rn}A_{j,k}(x)q(x)\,dx=0. $$
For any $j\in\zz$ and $k\in\nn$, let
\begin{align}\label{3e27}
{\kappa_{j,k}}:=C_0 2^j\lf\|B_{j,k}\r\|_{\lv}\quad {\rm and}\quad
a_{j,k}:=\f{A_{j,k}}{\kappa_{j,k}}.
\end{align}
Then it is easy to see that each $a_{j,k}$ is
a $(\vp,\fz,s)$-atom, namely,
\begin{align*}
\supp a_{j,k}\subset B_{j,k},\quad
\lf\|a_{j,k}\r\|_{L^{\infty}(\rn)}\le\frac1{\|B_{j,k}\|_{\lv}}
\end{align*}
and, for any $q\in\cp_{s}(\rn)$,
\begin{align*}
\int_\rn a_{j,k}(x)q(x)\,dx=0.
\end{align*}
Moreover, we have
$$f=\sum_{j\in \mathbb{Z}}\sum_{k\in\nn}
\kappa_{j,k}a_{j,k}\quad\mathrm{in}\quad\cs'(\rn).$$

In addition, from Definition \ref{3d2}, \eqref{3e27}, the fact that $\bigcup_{k\in\nn}B_{j,k}=\CO_j$,
the finite intersection property of $\{B_{j,k}\}_{k\in\nn}$
and the definition of $\CO_j$, we further deduce that
\begin{align*}
\|f\|_{H_{\va}^{\vp,\fz,s}(\rn)}&\ls
\lf\|\lf\{\sum_{j\in \mathbb{Z}}\sum_{k\in\nn}
\lf[\frac{\kappa_{j,k}\chi_{B_{j,k}}}{\|\chi_{B_{j,k}}
\|_{\lv}}\r]^{p_-}\r\}^{1/p_-}\r\|_{\lv}\\
&\sim \lf\|\lf[\sum_{j\in \mathbb{Z}}\sum_{k\in\nn}
\lf(2^j\chi_{B_{j,k}}\r)^{p_-}\r]^{1/p_-}\r\|_{\lv}
\ls \lf\|\lf[\sum_{j\in \mathbb{Z}}
\lf(2^j\chi_{\CO_{j}}\r)^{p_-}\r]^{1/p_-}\r\|_{\lv}\\
&\sim \lf\|\lf[\sum_{j\in \mathbb{Z}}
\lf(2^j\chi_{\CO_{j}\backslash
{\CO_{j+1}}}\r)^{p_-}\r]^{1/p_-}\r\|_{\lv}
\sim \lf\|\lf\{\sum_{j\in \mathbb{Z}}
\lf[M_N(f)\chi_{\CO_{j}\backslash
{\CO_{j+1}}}\r]^{p_-}\r\}^{1/p_-}\r\|_{\lv}\\
&\ls \lf\|M_N(f)\r\|_{\lv}\sim \|f\|_{\vh}.
\end{align*}
This implies that \eqref{3e26} holds true.

\emph{Step 2.} In this step, we prove that \eqref{3e26} also holds true
for any $f\in\vh$.

To this end, let $f\in\vh$. Then, by Lemma \ref{3l7},
we find that there exists a sequence
$\{f_i\}_{i\in\nn}\st\vh\cap L^{\vp/{\widetilde{p}_-}}(\rn)$
such that $f=\sum_{i\in\nn}f_i$ in $\vh$ and, for any $i\in\nn$,
$$\lf\|f_i\r\|_{\vh}\le2^{2-i}\|f\|_{\vh}.$$
Notice that, for any $i\in\nn$, by the conclusion obtained in Step 1,
we conclude that $f_i$ has an atomic decomposition, namely,
$$f_i=\sum_{j\in\zz}\sum_{k\in\nn}\kappa_{j,k,i}a_{j,k,i}
\hspace{0.4cm} {\rm in}\hspace{0.3cm} \cs'(\rn),$$
where $\{\kappa_{j,k,i}\}_{j\in\zz,\,k\in\nn}$
and $\{a_{j,k,i}\}_{j\in\zz,\,k\in\nn}$
are constructed as in \eqref{3e27}. Thus,
$\{a_{j,k,i}\}_{j\in\zz,\,k\in\nn}$ are $(\vp,\fz,r)$-atoms and
hence we have
$$f=\sum_{i\in\nn}\sum_{j\in\zz}\sum_{k\in\nn}\kappa_{j,k,i}a_{j,k,i}
\hspace{0.4cm} {\rm in}\hspace{0.3cm} \cs'(\rn)$$
and
$$\|f\|_{H_{\va}^{\vp,\fz,r}(\rn)}
\le\lf[\sum_{i\in\nn}\lf\|f_i\r\|_{\vh}^{p_-}\r]^{1/{p_-}}
\ls\|f\|_{\vh},$$
which implies that \eqref{3e26} holds true for any $f\in\vh$ and
hence completes the proof of Theorem \ref{3t1}.
\end{proof}

\section{Littlewood-Paley function characterizations of $\vh$\label{s4}}

In this section, as an application of the atomic
characterizations of $\vh$ obtained in Theorem \ref{3t1},
we establish the Littlewood-Paley function
characterizations of $\vh$.

Let $\va\in [1,\fz)^n$. Assume that $\phi\in\cs(\rn)$ is a radial function such that,
for any multi-index $\alpha\in\zz_+^n$ with $|\alpha|\le s$,
where $s$ is as in \eqref{3e1},
\begin{align}\label{4e1}
\int_{\rn}\phi(x)x^\alpha\,dx=0
\end{align}
and, for any
$\xi\in\rn\backslash\{\vec{0}_n\}$,
\begin{align}\label{4e2}
\sum_{k\in\zz}\lf|\widehat{\phi}\lf(2^{k\va}\xi\r)\r|^2=1,
\end{align}
here and hereafter, $\widehat{\phi}$ denotes the \emph{Fourier transform} of $\phi$,
namely, for any $\xi\in\rn$,
\begin{align*}
\widehat \phi(\xi) := \int_{\rn} \phi(x) e^{-2\pi\imath x \cdot \xi} \, dx,
\end{align*}
where $\imath:=\sqrt{-1}$.
Then, for any $\lambda\in(0,\fz)$ and
$f\in\cs'(\rn)$, the \emph{anisotropic Lusin area function} $S(f)$,
the \emph{Littlewood-Paley} $g$-\emph{function} $g(f)$ and
the \emph{Littlewood-Paley} $g_\lambda^\ast$-\emph{function} $g_\lambda^\ast(f)$
are defined, respectively, by setting, for any $x\in\rn$,
\begin{align*}
S(f)(x):=\lf[\sum_{k\in\mathbb{Z}}2^{-k\nu}\int_{B_{\va}(x,2^k)}
\lf|f\ast\phi_{k}(y)\r|^2\,dy\r]^{1/2},
\end{align*}
\begin{align}\label{4e25}
g(f)(x):=\lf[\sum_{k\in\mathbb{Z}}
\lf|f\ast\phi_{k}(x)\r|^2\r]^{1/2}
\end{align}
and
\begin{align*}
g_\lambda^\ast(f)(x):=
\lf\{\sum_{k\in\mathbb{Z}}2^{-k\nu}\int_{\rn}
\lf[\frac{2^{k}}{2^{k}+|x-y|_{\va}}\r]^{\lambda\nu}
\lf|f\ast\phi_{k}(y)\r|^2\,dy\r\}^{1/2},
\end{align*}
where, for any $k\in \mathbb{Z}$, $\phi_{k}(\cdot)
:=2^{-k\nu}\phi(2^{-k\va}\cdot)$.

Recall that $f\in\cs'(\rn)$ is said to
\emph{vanish weakly at infinity} if, for any $\phi\in\cs(\rn)$,
$f\ast\phi_{k}\to0$ in $\cs'(\rn)$ as $k\to \fz$.
In what follows, we always let $\cs'_0(\rn)$ be the set of all $f\in\cs'(\rn)$
vanishing weakly at infinity.

Then the main results of this section are the following
succeeding three theorems.

\begin{theorem}\label{4t1}
Let $\va\in [1,\fz)^n$, $\vp\in(0,\fz)^n$ and
$N$ be as in \eqref{2e11}.
Then $f\in\vh$ if and only if
$f\in\cs'_0(\rn)$ and $S(f)\in\lv$. Moreover,
there exists a positive constant $C$ such that,
for any $f\in\vh$,
$$C^{-1}\|S(f)\|_{\lv}\le\|f\|_{\vh}\le C\|S(f)\|_{\lv}.$$
\end{theorem}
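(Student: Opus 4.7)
The plan is to establish the two directions $\|S(f)\|_{\lv}\ls\|f\|_{\vh}$ and $\|f\|_{\vh}\ls\|S(f)\|_{\lv}$ separately, using the atomic characterization in Theorem \ref{3t1} as the bridge between $S(f)$ and the grand maximal function $M_N(f)$.

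For the first direction, I would begin by checking that every $f\in\vh$ belongs to $\cs'_0(\rn)$: for any $\varphi\in\cs(\rn)$ and $k\in\zz$, the pointwise bound $|f\ast\varphi_k(x)|\ls\|\varphi\|_{\cs_N(\rn)}M_N(f)(x)$ from Remark \ref{2r5}, combined with the $\lv$-integrability of $M_N(f)$ and the $t^{-\nu}$-type scaling of $\varphi_k$, forces $f\ast\varphi_k\to 0$ in $\cs'(\rn)$ as $k\to\fz$. Then I would invoke Theorem \ref{3t1} to write $f=\sum_{i\in\nn}\lambda_i a_i$ in $\cs'(\rn)$, where each $a_i$ is a $(\vp,r,s)$-atom supported on $B_i\in\mathfrak{B}$ for some $r\in(\max\{p_+,1\},\fz)$, with
\begin{align*}
\lf\|\lf\{\sum_{i\in\nn}\lf[\frac{|\lambda_i|\chi_{B_i}}{\|\chi_{B_i}\|_{\lv}}\r]^{p_-}\r\}^{1/p_-}\r\|_{\lv}\ls\|f\|_{\vh}.
\end{align*}
For each atom $a_i$, I would split $S(a_i)=S(a_i)\chi_{B_i^{(2)}}+S(a_i)\chi_{(B_i^{(2)})^{\com}}$. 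On the enlarged ball, the $L^r(\rn)$-boundedness of $S$ (which follows from the Plancherel identity together with standard Calder\'on--Zygmund/vector-valued arguments in the anisotropic setting) yields $\|S(a_i)\chi_{B_i^{(2)}}\|_{L^r(\rn)}\ls|B_i|^{1/r}/\|\chi_{B_i}\|_{\lv}$, placing us in the framework of Lemma \ref{3l6}. Off the enlarged ball, I would use the vanishing moments \eqref{4e1} of $\phi$ and the size/moment conditions of $a_i$ to expand $a_i\ast\phi_k$ by a Taylor remainder argument, exactly mimicking the off-support estimate \eqref{3e40} in the proof of Lemma \ref{3l4}; this gives a pointwise bound of the form $S(a_i)(x)\chi_{(B_i^{(2)})^{\com}}(x)\ls\|\chi_{B_i}\|_{\lv}^{-1}[\HL(\chi_{B_i})(x)]^{(\nu+(s+1)a_-)/\nu}$, to which Lemma \ref{3l2} applies after raising to the $\nu/(\nu+(s+1)a_-)$ power (which is less than $\widetilde p_-$ by the choice of $s$ in \eqref{3e1}). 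Combining these two estimates and using the quasi-triangle inequality in $\lv$ from Remark \ref{2r3}(iii) gives $\|S(f)\|_{\lv}\ls\|f\|_{\vh}$.

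For the converse direction, the natural strategy is to apply the discrete Calder\'on reproducing formula announced as Lemma \ref{4l5} to expand any $f\in\cs'_0(\rn)$ with $S(f)\in\lv$ and extract an atomic decomposition of $f$ whose atomic quasi-norm is controlled by $\|S(f)\|_{\lv}$. Concretely, I would set $\Omega_k:=\{x\in\rn:S(f)(x)>2^k\}$ for each $k\in\zz$, apply Lemma \ref{3l3} to produce a Whitney-type covering $\{B_{k,j}\}_{j\in\nn}$ of $\Omega_k$, and sort the tiles appearing in the reproducing formula according to whether their lower half intersects $\Omega_k\setminus\Omega_{k+1}$. Packaging together the tiles with the same $(k,j)$ yields blocks $\kappa_{k,j}\alpha_{k,j}$, where each $\alpha_{k,j}$ is (up to normalization) a $(\vp,\fz,s)$-atom supported on a multiple of $B_{k,j}$ — the vanishing moments come for free from the vanishing moments of $\phi$ in \eqref{4e1}, while the $L^\fz$-size of $\alpha_{k,j}$ is controlled by $2^k$ via duality pairings of the Calder\'on tiles with $S(f)$ on the complement of $\Omega_{k+1}$. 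The coefficients satisfy $\kappa_{k,j}\ls 2^k\|\chi_{B_{k,j}}\|_{\lv}$, so that, arguing exactly as in the end of Step 1 of the proof of Theorem \ref{3t1}, one obtains
\begin{align*}
\|f\|_{\vah}\ls\lf\|\lf[\sum_{k\in\zz}(2^k\chi_{\Omega_k\setminus\Omega_{k+1}})^{p_-}\r]^{1/p_-}\r\|_{\lv}\sim\lf\|\lf\{\sum_{k\in\zz}[S(f)\chi_{\Omega_k\setminus\Omega_{k+1}}]^{p_-}\r\}^{1/p_-}\r\|_{\lv}\ls\|S(f)\|_{\lv},
\end{align*}
and Theorem \ref{3t1} converts this into $\|f\|_{\vh}\ls\|S(f)\|_{\lv}$.

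I expect the main obstacle to be the careful execution of the second direction: one has to verify that the blocks $\alpha_{k,j}$ genuinely satisfy the size estimate $\|\alpha_{k,j}\|_{L^\fz(\rn)}\ls\|\chi_{B_{k,j}}\|_{\lv}^{-1}$, and this requires a delicate Cauchy--Schwarz on the tent over $\Omega_k\setminus\Omega_{k+1}$ together with the finite intersection property from Lemma \ref{3l3}(vi). Additional care is needed to justify the convergence of the Calder\'on expansion in $\cs'(\rn)$ using the vanishing-weakly-at-infinity hypothesis $f\in\cs'_0(\rn)$; without this hypothesis the expansion would only determine $f$ modulo polynomials and the atomic identity would fail.
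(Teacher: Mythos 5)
Your overall architecture matches the paper's: atomic decomposition via Theorem \ref{3t1} for one inequality, discrete Calder\'on reproducing formula plus tile packaging for the other. However there is a genuine gap in the converse direction, and it is exactly the point you flag at the end but then dismiss too optimistically.

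You claim that packaging the Calder\'on tiles $\widehat{Q}$ associated with $\Omega_k\setminus\Omega_{k+1}$ produces $(\vp,\fz,s)$-atoms, with the $L^\fz$-size obtained ``via duality pairings'' or ``a delicate Cauchy--Schwarz on the tent.'' This cannot work. For a fixed $x$, the block $h(x)=\sum_{Q}e_Q(x)$ sums over infinitely many levels of dyadic cubes, each contributing at most $c_Q:=\big(\int_{\widehat{Q}}|\psi_t*f|^2\,dy\,dm(t)/2^{\nu t}\big)^{1/2}$ after Cauchy--Schwarz. The Lusin area function controls only the $\ell^2$-square sum $\sum_{Q\ni x}c_Q^2\ls[S(f)(x)]^2\ls 2^{2k}$ on $\widehat\Omega_k\setminus\Omega_{k+1}$; it does \emph{not} control the $\ell^1$ sum $\sum_{Q\ni x}c_Q$, and the finite intersection property in Lemma \ref{3l3}(vi) bounds multiplicity only within a fixed scale, not across scales. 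Consequently, one does not obtain a pointwise bound $|h(x)|\ls 2^k$, only an $L^r$-type bound. The paper's proof works precisely because it settles for $(\vp,r,s)$-atoms with $r\in(\max\{p_+,1\},\fz)$: after sorting tiles by maximal dyadic cubes $Q_i^k\in\mathcal{Q}_k$ and invoking \cite[Theorem 3.2]{blyz10} plus the vector-valued maximal inequality, one lands on the estimate $\|h_i^k\|_{L^r(\rn)}\ls 2^k|B_i^k|^{1/r}$ (their \eqref{4e17}), which is an $L^r$ atom, and this suffices because Theorem \ref{3t1} is available for any such $r$. You should replace your claimed $(\vp,\fz,s)$-atoms throughout with $(\vp,r,s)$-atoms; otherwise the argument does not close.

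A secondary structural issue: you propose covering $\Omega_k$ by a Whitney ball family via Lemma \ref{3l3}, but the tiles $\widehat{Q}$ in the Calder\'on expansion are indexed by dyadic cubes from Lemma \ref{4l2}, and the packaging step relies on the nesting property (Lemma \ref{4l2}(ii)) to assign each tile to a unique maximal dyadic cube $Q_i^k$. With Whitney balls there is no such hierarchy, and the sorting of tiles becomes much murkier. The paper introduces the sets $\mathcal{Q}_k$ of dyadic cubes with $|Q\cap\Omega_k|>|Q|/2$ and $|Q\cap\Omega_{k+1}|\le|Q|/2$ and then takes maximal elements precisely for this purpose; you should adopt the same device. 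Finally, in the easy direction your $\cs'_0$ argument gestures at the right conclusion but omits the crucial quantitative input: the decay of $f\ast\varphi_k$ comes from dividing by $\|\chi_{Q_{\va}(x,2^k)}\|_{\lv}\gs 2^{k\nu/p_+}$ (as in the paper's Lemma \ref{4l1} via Lemma \ref{4l7}), not from ``the $t^{-\nu}$-type scaling of $\varphi_k$'' alone.
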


\begin{theorem}\label{4t2}
Let $\va,\,\vp$ and $N$ be as in Theorem \ref{4t1}.
Then $f\in\vh$ if and only if
$f\in\cs'_0(\rn)$ and $g(f)\in\lv$. Moreover,
there exists a positive constant $C$ such that,
for any $f\in\vh$,
$$C^{-1}\|g(f)\|_{\lv}\le\|f\|_{\vh}\le C\|g(f)\|_{\lv}.$$
\end{theorem}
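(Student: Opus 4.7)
The plan is to reduce the claimed equivalence $\|f\|_{\vh}\sim\|g(f)\|_{\lv}$ to the Lusin area function characterization already established in Theorem \ref{4t1}. After that reduction, what remains is to prove the two pointwise-plus-$\lv$ comparisons $\|g(f)\|_{\lv}\ls\|S(f)\|_{\lv}$ and $\|S(f)\|_{\lv}\ls\|g(f)\|_{\lv}$ for $f\in\cs'_0(\rn)$. For the necessity side, one first verifies that $f\in\vh$ implies $f\in\cs'_0(\rn)$: since $M_N(f)\in\lv$ controls $|f\ast\varphi_t|$ pointwise (Remark \ref{2r5}), a standard dominated-convergence argument as $t\to\fz$ gives $f\ast\varphi_t\to 0$ in $\cs'(\rn)$.

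For the direction $\|g(f)\|_{\lv}\ls\|f\|_{\vh}$, the plan is to invoke the atomic characterization from Theorem \ref{3t1}. Fix a $(\vp,r,s)$-atom $a$ supported in $B\in\mathfrak{B}$ with center $x_B$ and radius $r_B$. The $L^2$-boundedness of $g$ follows directly from Plancherel's theorem together with the orthogonality identity \eqref{4e2}; this yields $\|g(a)\chi_{B^{(2)}}\|_{L^r(\rn)}\ls|B|^{1/r}/\|\chi_B\|_{\lv}$ on the dilated ball. Off $B^{(2)}$, the vanishing moments of $a$ up to order $s$ combined with the smoothness and decay of $\phi$ produce, via a Taylor expansion argument paralleling the radial-maximal function estimate in the proof of Theorem \ref{3t1} (see \eqref{3e24}), the pointwise bound $g(a)(x)\ls\|\chi_B\|_{\lv}^{-1}[\HL(\chi_B)(x)]^{(\nu+(s+1)a_-)/\nu}$. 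Summing using the key Lemma \ref{3l6} and the Fefferman-Stein vector-valued inequality from Lemma \ref{3l2} completes this direction.

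For the harder direction $\|S(f)\|_{\lv}\ls\|g(f)\|_{\lv}$, the plan is to adopt the approach of Ullrich \cite{u12} further developed by Liang et al.\ and Liu et al. First, observe the trivial pointwise majorization $S(f)(x)\ls g_{t,\ast}(f)(x)$ whenever $t$ is large enough, because $g_{t,\ast}(f)$ integrates $|f\ast\phi_k|^2$ against a weight of anisotropic polynomial decay which dominates the characteristic function of $B_{\va}(x,2^k)$ used in the definition of $S(f)$. Second, use Lemma \ref{4l5} to produce, for any $r\in(0,p_-)$ with $p_-$ as in \eqref{2e10} and for sufficiently large $t$, a pointwise bound of the form
\begin{align*}
g_{t,\ast}(f)(x)\ls\lf\{\sum_{k\in\zz}\lf[\HL\lf(|f\ast\phi_k|^r\r)(x)\r]^{2/r}\r\}^{1/2}.
\end{align*}
Finally, apply the Fefferman-Stein inequality of Lemma \ref{3l2} with exponent vector $\vp/r\in(1,\fz)^n$ and index $u:=2/r>1$ to the sequence $\{|f\ast\phi_k|^r\}_{k\in\zz}$; this pulls the maximal operator out and delivers $\|g_{t,\ast}(f)\|_{\lv}\ls\|g(f)\|_{\lv}$, which combined with the pointwise bound yields the desired norm inequality.

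The main obstacle will be the calibration of parameters in the second direction. The exponent $r$ must be chosen smaller than $p_-$ so that $M_{\rm HL}$ is bounded on $L^{\vp/r}(\rn)$, while simultaneously the weight parameter $t$ must be large enough to guarantee both the majorization $S(f)\ls g_{t,\ast}(f)$ and the validity of the pointwise bound supplied by Lemma \ref{4l6} with the correct anisotropic decay rate. Verifying that Lemma \ref{4l6} indeed furnishes the maximal-function bound with an $r$ and $t$ compatible with the mixed-norm Fefferman-Stein step in the present anisotropic setting is the technical crux of the proof.
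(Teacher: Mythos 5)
Your proposal follows essentially the same two-part strategy as the paper: the necessity direction reduces to the atomic decomposition with local $L^r$-boundedness of $g$ and an off-ball Taylor estimate as in the proof of Theorem \ref{4t1}, while the sufficiency direction passes through the Peetre maximal function $g_{t,\ast}(f)$, Lemma \ref{4l6}, and the Fefferman--Stein vector-valued inequality. The only minor glitches are a reference to Lemma \ref{4l5} where Lemma \ref{4l6} is meant, and taking $r<p_-$ where the paper more sharply allows $r<\min\{\widetilde{p}_-,2\}$; neither affects the validity of the argument.
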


\begin{theorem}\label{4t3}
Let $\va,\,\vp$ and $N$ be as in Theorem \ref{4t1} and
$\lambda\in(1+\frac{2}{{\min\{\widetilde{p}_-,2\}}}, \fz)$,
where $\widetilde{p}_-:=\min\{p_1,\ldots,p_n\}$.
Then $f\in\vh$ if and only if $f\in\cs'_0(\rn)$ and
$g_\lambda^{\ast}(f)\in\lv$. Moreover,
there exists a positive constant $C$ such that,
for any $f\in\vh$,
$$C^{-1}\lf\|g_\lz^\ast(f)\r\|_{\lv}\le\|f\|_{\vh}
\le C\lf\|g_\lz^\ast(f)\r\|_{\lv}.$$
\end{theorem}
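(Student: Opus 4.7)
The plan is to prove the two inclusions separately. The forward bound $\|f\|_{\vh}\ls\|g_\lz^\ast(f)\|_{\lv}$ is immediate: restricting the integral defining $g_\lz^\ast(f)(x)$ to $y\in B_{\va}(x,2^k)$ forces $2^k+|x-y|_{\va}\le 2\cdot 2^k$, so the weight $[2^k/(2^k+|x-y|_{\va})]^{\lz\nu}$ stays above $2^{-\lz\nu}$ and hence $S(f)(x)\le 2^{\lz\nu/2}g_\lz^\ast(f)(x)$ pointwise; Theorem \ref{4t1} then closes this direction, and the $\cs'(\rn)$-vanishing-at-infinity clause transfers through the same comparison.

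For the reverse bound I would carry out a dyadic annular decomposition of $g_\lz^\ast$ against a Peetre-type maximal function and then invoke Theorem \ref{4t2}. Concretely, for each $k\in\zz$, split $\rn=B_{\va}(x,2^k)\cup\bigcup_{j\ge 1}[B_{\va}(x,2^{k+j})\setminus B_{\va}(x,2^{k+j-1})]$; on the $j$-th piece the kernel weight is comparable to $2^{-j\lz\nu}$, yielding
\begin{align*}
[g_\lz^\ast(f)(x)]^2\ls\sum_{j=0}^{\fz}2^{-j\lz\nu}\sum_{k\in\zz}2^{-k\nu}\int_{B_{\va}(x,2^{k+j})}|f\ast\phi_k(y)|^2\,dy.
\end{align*}
For a parameter $a\in(0,\fz)$ to be chosen later, introduce the Peetre-type maximal function
\begin{align*}
(\phi_k^\ast f)_a(x):=\sup_{z\in\rn}\frac{|f\ast\phi_k(z)|}{(1+2^{-k}|x-z|_{\va})^a}.
\end{align*}
For $y\in B_{\va}(x,2^{k+j})$ one has $|f\ast\phi_k(y)|\ls 2^{ja}(\phi_k^\ast f)_a(x)$; combined with $|B_{\va}(x,2^{k+j})|=\nu_n 2^{(k+j)\nu}$ this upgrades the previous estimate to
\begin{align*}
[g_\lz^\ast(f)(x)]^2\ls\lf(\sum_{j=0}^{\fz}2^{j[\nu(1-\lz)+2a]}\r)\sum_{k\in\zz}[(\phi_k^\ast f)_a(x)]^2.
\end{align*}

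It remains to control the inner sum in $\lv$. The key ingredient, which is precisely what Lemma \ref{4l6} is designed to provide, is the anisotropic Peetre-type pointwise inequality
\begin{align*}
(\phi_k^\ast f)_a(x)\ls\lf[\HL\lf(|f\ast\phi_k|^r\r)(x)\r]^{1/r}\quad\text{whenever}\quad a>\nu/r,
\end{align*}
which itself rests on the discrete Calder\'{o}n reproducing formula (Lemma \ref{4l5}) and the growth estimates of $\lg\cdot\rg_{\va}$ from Remark \ref{2r1}(iii). Picking $r\in(0,\min\{\widetilde{p}_-,2\})$ (so that $\vp/r\in(1,\fz)^n$ and $2/r>1$) and then $a\in(\nu/r,\nu(\lz-1)/2)$---a range that is non-empty precisely because $\lz>1+2/\min\{\widetilde{p}_-,2\}$---both forces the geometric series in $j$ to converge and activates the anisotropic Fefferman--Stein vector-valued inequality (Lemma \ref{3l2}) on $L^{\vp/r}(\rn)$ with exponent $2/r$. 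Combining these gives
\begin{align*}
\|g_\lz^\ast(f)\|_{\lv}\ls\lf\|\lf\{\sum_{k\in\zz}|f\ast\phi_k|^2\r\}^{1/2}\r\|_{\lv}=\|g(f)\|_{\lv}\ls\|f\|_{\vh},
\end{align*}
where the last step is Theorem \ref{4t2}.

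The main obstacle I anticipate is the anisotropic Peetre-type pointwise inequality above: in the isotropic case it is classical, but here one must transfer the sub-mean-value argument via the discrete reproducing formula of Lemma \ref{4l5}, keeping quantitative track of the moment condition \eqref{4e1}, the Littlewood--Paley decomposition \eqref{4e2}, and the anisotropic decay of convolutions of the $\phi_k$'s at differing scales. The delicate bookkeeping point is that the admissible ranges of $a$ and $r$ must be aligned so as to yield exactly the sharp threshold $\lz>1+2/\min\{\widetilde{p}_-,2\}$ appearing in the hypothesis, which is what dictates the choice of dilation exponent in the annular decomposition.
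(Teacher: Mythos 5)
Your overall architecture matches the paper's: the easy direction follows pointwise from $S(f)\ls g_{\lz}^{\ast}(f)$ together with Theorem \ref{4t1}, and for the reverse direction you first establish the pointwise bound $g_\lz^\ast(f)(x)\ls g_{t,\ast}(f)(x)$ (your Steps 1--3 are an annular rewriting of exactly that inequality, with $a=\nu t$) and then aim to control $g_{t,\ast}(f)$ in $\lv$ by $g(f)$. Up to this point you are on the paper's track, and the parameter bookkeeping ($a\in(\nu/r,\nu(\lz-1)/2)$, $r$ close to $\min\{\widetilde p_-,2\}$) correctly reproduces the threshold $\lz>1+2/\min\{\widetilde p_-,2\}$.

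There is, however, a genuine gap in Step 4. You assert, as ``precisely what Lemma \ref{4l6} is designed to provide,'' the single-band pointwise Peetre inequality
\begin{align*}
(\phi_k^\ast f)_a(x)\ls\bigl[\HL(|f\ast\phi_k|^r)(x)\bigr]^{1/r}\qquad(a>\nu/r).
\end{align*}
This is not what Lemma \ref{4l6} says, and it is not true in the paper's setting: Theorems \ref{4t2}--\ref{4t3} work with a radial $\phi\in\cs(\rn)$ satisfying only the moment condition \eqref{4e1} and the Littlewood--Paley condition \eqref{4e2}, with no assumption that $\widehat\phi$ has compact support. Without band-limitedness of $f\ast\phi_k$, there is no sub-mean-value property and the Peetre maximal function of a single band cannot be dominated pointwise by $\HL$ applied to that same band. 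What Lemma \ref{4l6} actually provides is a sum over \emph{shifted} scales,
\begin{align*}
\bigl[(\phi^*_\ell f)_t(x)\bigr]^r
\le C_{(N_0,r)}\sum_{k\in\zz_+}2^{-\nu kN_0r}\,2^{\nu(k+\ell)}
\int_\rn\frac{|(\phi_{-k-\ell}\ast f)(y)|^r}{[1+2^{\nu\ell}\rho_{\va}(x-y)]^{tr}}\,dy,
\end{align*}
and the $\HL$ operator only enters after an additional dyadic decomposition of the $y$-integral, followed by Minkowski's inequality and the vector-valued bound of Lemma \ref{3l2}. In other words, after your Steps 1--3 you have reduced the problem to bounding $\|g_{t,\ast}(f)\|_{\lv}$, which is exactly the content of the already-established estimate \eqref{4e22} in the proof of Theorem \ref{4t2}; you should simply invoke that estimate rather than rederive it via the (incorrect) single-band Peetre--HL bound. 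With that substitution your argument becomes complete and coincides with the paper's proof of Theorem \ref{4t3}.
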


\begin{remark}\label{4r2}
\begin{enumerate}
\item[{\rm (i)}]
We should point out that Theorem \ref{4t2} gives a positive answer to the conjecture
proposed by Hart et al. in \cite[p.\,9]{htw17}, namely, the mixed-norm Hardy space
$H^{p,q}(\mathbb{R}^{n+1})$, with $p,\,q\in(0,\fz)$, introduced by Hart et al. \cite{htw17} via the
Littlewood-Paley $g$-function coincides, in the sense of equivalent quasi-norms, with $H^{\vp}_{\va}(\mathbb{R}^{n+1})$,
where $\va:=(\overbrace{1,\ldots, 1}^{n+1\ \mathrm{times}})$ and
$\vp:=(\overbrace{p,\ldots,p}^{n\ \mathrm{times}},q)$.
\item[{\rm (ii)}]
We should also point out that
the range of $\lz$ in Theorem \ref{4t3} does not coincide with the best known one,
namely, $\lz\in(2/p,\fz)$, of the $g_\lz^\ast$-function characterization
of the classical Hardy space $H^p(\rn)$ and it is still unclear whether or not the
$g_\lz^\ast$-function, when
$\lz\in(\frac{2}{\min\{\widetilde{p}_-,2\}},1+\frac{2}{\min\{\widetilde{p}_-,2\}}]$,
can characterize $\vh$, because the method used in the proof of
Theorem \ref{4t3} does not work in this case.
\end{enumerate}
\end{remark}

The following proposition establishes the relation between $\vh$ and $H_A^p(\rn)$,
where $H_A^p(\rn)$ is the anisotropic Hardy space introduced by Bownik in
\cite[p.\,17, Definition 3.11]{mb03}.

\begin{proposition}\label{2r4'}
Let $\va:=(a_1,\ldots,a_n)\in[1,\fz)^n$ and
$\vp:=(\overbrace{p,\ldots,p}^{n\ \rm times})$, where $p\in(0,\fz)$.
Then $\vh$ and the anisotropic Hardy space $\vAh$
coincide with equivalent quasi-norms, where
$A$ is as in \eqref{4e5}.
\end{proposition}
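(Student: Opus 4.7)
The plan is to identify $\vh$ and $\vAh$ via their respective atomic characterizations. By Theorem \ref{3t1} above, $\vh=\vah$ with equivalent quasi-norms; on the other hand, \cite[Theorem 6.5]{mb03} gives an atomic characterization of $\vAh$ in terms of Bownik's $(p,r,s)$-atoms, whose supports are dilated balls $x+A^kB_0$ with size and moment conditions adapted to the expansive matrix $A$. The proof therefore reduces to matching these two atomic spaces.

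First, I would record the geometric dictionary between the two settings. With $A$ as in \eqref{4e5}, one has $A^kx=2^{k\va}x$ and $|\det A^k|=2^{k\nu}$ for every $k\in\zz$, so Bownik's dilations coincide with the anisotropic dilations $t^{\va}$ at the discrete scales $t=2^k$. Consequently, Bownik's dilated balls $x+A^kB_0$ (for any fixed symmetric neighborhood $B_0$ of $\vec 0_n$) are, up to bounded dilation factors, the anisotropic balls $B_{\va}(x,2^k)$, and Bownik's step homogeneous quasi-norm $\rho_A$ satisfies $\rho_A(\cdot)\sim|\cdot|_{\va}^{\nu}$. This last equivalence follows from the defining property $\rho_A(A\,\cdot)=2^{\nu}\rho_A(\cdot)$ together with Lemma \ref{2l2}(i) (which gives the same transformation law for $|\cdot|_{\va}^{\nu}$) and the uniqueness up to equivalence of such homogeneous quasi-norms.

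Next, I would match the atoms and the atomic quasi-norms. Under the hypothesis $\vp=(\overbrace{p,\ldots,p}^{n\ \text{times}})$, Remark \ref{2r3}(i) yields $\lv=L^p(\rn)$ isometrically, so $\|\chi_B\|_{\lv}=|B|^{1/p}$. The size bound (ii) in Definition \ref{3d1} therefore becomes $\|a\|_{L^r(\rn)}\le|B|^{1/r-1/p}$, which is exactly the Bownik $(p,r,s)$-atom condition; the support and vanishing-moment conditions are identical. A direct computation then shows that the quasi-norm of Definition \ref{3d2} reduces, when $p\in(0,1]$, to the classical series expression $\inf(\sum_i|\lz_i|^p)^{1/p}$, and, when $p>1$, to a norm equivalent to Bownik's atomic norm (both Hardy spaces in this case collapsing to $L^p(\rn)$). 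Combining these identifications with Theorem \ref{3t1} and \cite[Theorem 6.5]{mb03} yields $\vh=\vAh$ with equivalent quasi-norms.

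The main obstacle I foresee is the compatibility of parameter ranges: one must confirm that the moment threshold $s\ge\lfloor(\nu/a_-)(1/p-1)\rfloor$ from \eqref{3e1} is at least as large as Bownik's analogous threshold for $A$, and similarly that the grand-maximal parameter $N_{\vp}$ of \eqref{2e11} dominates Bownik's corresponding parameter for $A$. Since both thresholds are engineered so that atoms with $s$ (respectively, test functions with $N$) above the threshold reproduce every element of the associated Hardy space, once the geometric dictionary is in place this overlap is essentially automatic, but extracting a clean statement requires some bookkeeping of the constants determined by $a_-$, $a_+$ and $\nu$.
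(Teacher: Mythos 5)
Your argument is correct but takes a genuinely different route from the paper. The paper proves this proposition via the Littlewood--Paley $g$-function: it invokes \cite[Remark 2.5(i) and Theorem 6.2]{lwyy17} (with constant exponent $p(\cdot)=p$) to get that $f\in\vAh$ iff $f\in\cs'_0(\rn)$ and $g(f)\in L^p(\rn)$, observes that with $A$ as in \eqref{4e5} the $A$-based $g$-function
\[
\lf[\sum_{k\in\zz}\lf||\det A|^{-k}f\ast\phi(A^{-k}\cdot)\r|^2\r]^{1/2}
\]
is \emph{literally identical} (not just equivalent) to the $\va$-based $g(f)$, and then applies Theorem \ref{4t2} together with $L^{\vp}(\rn)=L^p(\rn)$. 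You instead go through the atomic characterizations (Theorem \ref{3t1} and Bownik's \cite[Theorem 6.5]{mb03}) and match atoms, balls, and quasi-norms. Both routes are sound, but they buy different things. The $g$-function route is cleaner here precisely because the $g$-functions on the two sides are the \emph{same function} once $A=\mathrm{diag}(2^{a_1},\ldots,2^{a_n})$, so no equivalence of balls, quasi-norms, or moment/maximal-order thresholds needs to be checked; the proof is essentially a one-line observation plus citations. Your atomic route requires the geometric dictionary ($\rho_A\sim|\cdot|_\va^\nu$, $A^kB_0=B_\va(\vec0_n,2^k)$), the verification that the size condition $\|a\|_{L^r}\le|B|^{1/r-1/p}$ matches, the coincidence of the moment threshold $\lfloor(\nu/a_-)(1/p-1)\rfloor$ with Bownik's $\lfloor(1/p-1)\ln|\det A|/\ln\lambda_-\rfloor$, and a separate dispatch of $p>1$ (where the atomic decomposition is not how one usually presents $H^p_A=L^p$). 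All of that bookkeeping you correctly anticipate, and it does go through, but it is more work than the paper's choice. One small caveat: you should state explicitly that \cite[Theorem 6.5]{mb03} is stated for $p\in(0,1]$, so for $p>1$ you are really using $H^p_A(\rn)=L^p(\rn)=H^{\vp}_{\va}(\rn)$ directly rather than an atomic decomposition, which you gesture at but should make precise.
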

\begin{proof}
Let $A$ be as in \eqref{4e5}. Then, by \cite[Remark 2.5(i)]{lwyy17}
and \cite[Theorem 6.2]{lwyy17} with $p(\cdot):=p\in(0,\fz)$, we conclude that
$f\in\vAh$ if and only if $f\in\cs'_0(\rn)$ and
$$\lf[\sum_{k\in\mathbb{Z}}
\lf||\det A|^{-k}f\ast\phi(A^{-k}\cdot)\r|^2\r]^{1/2}
=\lf[\sum_{k\in\mathbb{Z}}
\lf|2^{-k\nu}f\ast\phi(2^{-k\va}\cdot)\r|^2\r]^{1/2}
=g(f)\in L^p(\rn),$$
where $\phi$ is as in \eqref{4e25}.
This, combined with Theorem \ref{4t2} and the obvious fact that,
when $\vp:=(\overbrace{p,\ldots,p}^{n\ \rm times})$ with $p\in(0,\fz)$,
$L^{\vp}(\rn)=L^p(\rn)$, further implies that, in this case, $f\in\vAh$
if and only if $f\in\vh$. Thus,
when $\va:=(a_1,\ldots,a_2)\in[1,\fz)^n$ and
$\vp:=(\overbrace{p,\ldots,p}^{n\ \rm times})$, where $p\in(0,\fz)$,
$\vh=\vAh$ with equivalent quasi-norms, where
$A$ is as in \eqref{4e5}. This finishes the proof of
Proposition \ref{2r4'}.
\end{proof}

\begin{remark}
Recall that, via the Lusin-area function,
the Littlewood-Paley $g$-function or $g_\lambda^\ast$-function,
Li et al. in \cite[Theorems 2.8, 3.1 and 3.9]{lfy15} characterized the anisotropic Musielak-Orlicz Hardy
space $H_A^\varphi(\rn)$ with $\varphi:\ \rn\times[0,\fz)\to[0,\fz)$
being an anisotropic growth function (see \cite[Definition 2.3]{lfy15}).
As was mentioned in \cite[p.\,285]{lfy15}, if, for any given $p\in(0,1]$
and any $x\in\rn$ and $t\in(0,\fz)$,
\begin{align}\label{4e3}
\varphi(x,t):=t^p,
\end{align}
then $H_A^\varphi(\rn)=\vAh$.
From this and Proposition \ref{2r4'}, we deduce that,
when $\vp:=(\overbrace{p,\ldots,p}^{n\ \rm times})$,
where $p\in(0,1]$, Theorems \ref{4t1}, \ref{4t2} and \ref{4t3}
are just \cite[Theorems 2.8, 3.1 and 3.9]{lfy15},
respectively, with $A$ as in \eqref{4e5} and $\varphi$ as in \eqref{4e3}.
\end{remark}

To prove Theorem \ref{4t1},
we need several technical lemmas. First, it is easy
to see that the following conclusion holds
true, the details being omitted.

\begin{lemma}\label{4l7}
Let $\va:=(a_1,\ldots,a_n)\in [1,\fz)^n$, $\vp:=(p_1,\ldots,p_n)\in(0,\fz)^n$, $r\in (0,\fz)$,
$x\in \rn$ and $Q_{\va}(x,r)\in \mathfrak{Q}$
with $\mathfrak{Q}$ as in \eqref{2e3}.
Then $\|\chi_{Q_{\va}(x,r)}\|_{\lv}=\prod_{i=1}^{n}2^{1/p_i}r^{a_i/p_i}$.
\end{lemma}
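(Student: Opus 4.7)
The plan is a direct computation starting from the product structure of the anisotropic cube. From Definition~\ref{2d1} and the definition of $Q_{\va}(x,r)$ in Section~\ref{s2}, we have
\begin{equation*}
Q_{\va}(x,r)=x+r^{\va}(-1,1)^n=\prod_{i=1}^{n}\lf(x_i-r^{a_i},\,x_i+r^{a_i}\r),
\end{equation*}
so the characteristic function factors as
\begin{equation*}
\chi_{Q_{\va}(x,r)}(y_1,\ldots,y_n)=\prod_{i=1}^{n}\chi_{(x_i-r^{a_i},\,x_i+r^{a_i})}(y_i).
\end{equation*}

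The key idea is then to iteratively compute the mixed-norm integral in Definition~\ref{2d3}, exploiting that a characteristic function satisfies $\chi_E^{p}=\chi_E$ for any $p\in(0,\fz)$, so that the $L^{p_i}$ integration in the $i$-th variable simply produces the factor $(2r^{a_i})^{1/p_i}$ which then gets raised to a further power during the next iteration. First I would perform the innermost integration $\int_{\rr}\chi_{(x_1-r^{a_1},x_1+r^{a_1})}(y_1)^{p_1}\,dy_1=2r^{a_1}$, then raise to $p_2/p_1$ and integrate in $y_2$ to pick up an additional factor $(2r^{a_1})^{p_2/p_1}\cdot 2r^{a_2}$, and so on. By an easy induction on $k\in\{1,\ldots,n\}$, the expression after integrating over $y_1,\ldots,y_k$ and raising to the cumulative power equals
\begin{equation*}
\prod_{j=k+1}^{n}\chi_{(x_j-r^{a_j},\,x_j+r^{a_j})}(y_j)\cdot\prod_{i=1}^{k}\lf(2r^{a_i}\r)^{p_k/p_i}.
\end{equation*}
Taking $k=n$ and then the outer $1/p_n$ power yields
\begin{equation*}
\|\chi_{Q_{\va}(x,r)}\|_{\lv}=\lf[\prod_{i=1}^{n}\lf(2r^{a_i}\r)^{p_n/p_i}\r]^{1/p_n}=\prod_{i=1}^{n}2^{1/p_i}r^{a_i/p_i},
\end{equation*}
which is the claim.

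There is no real obstacle here; the only thing to be careful about is bookkeeping the exponents during the iterated integration and checking that the result is independent of the center $x\in\rn$ (which follows from translation invariance of Lebesgue measure in each coordinate). The usual modifications when some $p_i=\fz$ are not needed since the hypothesis is $\vp\in(0,\fz)^n$.
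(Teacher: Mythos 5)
Your computation is correct, and it is precisely the direct iterated-integral argument that the paper has in mind when it states the lemma with the proof omitted as "easy to see." The factorization of $Q_{\va}(x,r)$ into a product of intervals, the use of $\chi_E^p=\chi_E$, and the inductive bookkeeping giving $I_k=\prod_{i=1}^{k}(2r^{a_i})^{p_k/p_i}\prod_{j=k+1}^{n}\chi_{(x_j-r^{a_j},x_j+r^{a_j})}(y_j)$ all check out, and taking $k=n$ and the outer $1/p_n$ power yields the stated formula.
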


Moreover, using Lemma \ref{4l7} and borrowing some ideas from the proof of
\cite[Lemma 6.5]{yyyz16}, we obtain the following conclusion.

\begin{lemma}\label{4l1}
Let $\va\in [1,\fz)^n$, $\vp\in(0,\fz)^n$ and
$N$ be as in \eqref{2e11}.
Then $\vh\subset\cs'_0(\rn)$.
\end{lemma}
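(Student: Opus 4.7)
The plan is to prove the apparently stronger assertion that, for any $f\in\vh$ and any $\phi\in\cs(\rn)$, the convolutions $f\ast\phi_{2^k}$ converge to zero \emph{uniformly} on $\rn$ as $k\to\fz$; once this is in hand, testing against any $\psi\in\cs(\rn)$ gives
$$|\langle f\ast\phi_{2^k},\psi\rangle|\le \|f\ast\phi_{2^k}\|_{L^\fz(\rn)}\|\psi\|_{L^1(\rn)}\longrightarrow 0,$$
so $f\ast\phi_{2^k}\to 0$ in $\cs'(\rn)$ and hence $f\in\cs'_0(\rn)$, as required.

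First I would fix $x\in\rn$ and use the symmetry $|x-y|_{\va}=|y-x|_{\va}$ to note that, for any $y\in B_{\va}(x,2^k)$, one has $x\in B_{\va}(y,2^k)$; consequently, by Definition \ref{2d4} applied to the normalized test function $\phi/\|\phi\|_{\cs_N(\rn)}\in\cs_N(\rn)$,
$$|f\ast\phi_{2^k}(x)|\le M_\phi(f)(y)\le \|\phi\|_{\cs_N(\rn)}M_N(f)(y).$$
Raising to the $p_-$-th power and averaging over $B_{\va}(x,2^k)$, whose measure equals $\nu_n 2^{k\nu}$, yields
$$|f\ast\phi_{2^k}(x)|^{p_-}\ls \|\phi\|_{\cs_N(\rn)}^{p_-}\,2^{-k\nu}\int_{B_{\va}(x,2^k)}[M_N(f)(y)]^{p_-}\,dy.$$

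Next, I would invoke the mixed-norm H\"older inequality of Remark \ref{2r3}(iv), applied with the paired exponents $\vp/p_-\in[1,\fz]^n$ and $(\vp/p_-)'$, together with identity \eqref{2e8}, to bound the last integral by $\|M_N(f)\|_{\lv}^{p_-}\|\chi_{B_{\va}(x,2^k)}\|_{L^{(\vp/p_-)'}(\rn)}$. Enclosing $B_{\va}(x,2^k)$ in a fixed dilate of $Q_{\va}(x,2^k)$ and applying Lemma \ref{4l7} with exponent vector $(\vp/p_-)'$ (whose $i$-th component equals $p_i/(p_i-p_-)$, subject to the convention $1/\fz=0$ when $p_i=p_-$), one obtains
$$\|\chi_{B_{\va}(x,2^k)}\|_{L^{(\vp/p_-)'}(\rn)}\ls 2^{k\sum_{i=1}^n a_i(1-p_-/p_i)}=2^{k(\nu-p_-\sum_{i=1}^n a_i/p_i)}.$$
Chaining these three estimates and taking $p_-$-th roots, I would conclude
$$\|f\ast\phi_{2^k}\|_{L^\fz(\rn)}\ls \|\phi\|_{\cs_N(\rn)}\,\|M_N(f)\|_{\lv}\,2^{-k\sum_{i=1}^n a_i/p_i},$$
and the right-hand side tends to zero as $k\to\fz$ since $\sum_{i=1}^n a_i/p_i>0$ (each $a_i\ge 1$ and each $p_i\in(0,\fz)$), which finishes the uniform decay and hence the proof.

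The only real subtlety I anticipate is the bookkeeping around the conjugate vector $(\vp/p_-)'$: when some coordinates $p_i$ equal $p_-$, the corresponding exponent in $(\vp/p_-)'$ becomes $\fz$ and the matching slot in the formula of Lemma \ref{4l7} must be read with the convention $1/\fz=0$. One has to verify explicitly that the claimed power $2^{k(\nu-p_-\sum_i a_i/p_i)}$ survives this degeneration and that the resulting decay exponent $\sum_{i=1}^n a_i/p_i$ stays strictly positive; both checks are immediate from $a_i\ge 1$ and $p_i\in(0,\fz)$.
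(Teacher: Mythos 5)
Your proof is correct, and it reaches the conclusion by a genuinely different technical route than the paper's, even though the underlying idea (exploit finiteness of $\|M_N(f)\|_{\lv}$ together with a pointwise bound of $|f\ast\phi_{2^k}(x)|$ by $M_N(f)$ on a set of measure $\sim 2^{k\nu}$ around $x$) is the same. The paper never averages: it simply observes that the pointwise bound $|f\ast\phi_k(x)|\ls M_N(f)(y)$ for $y$ in a dilating cube around $x$ yields $|f\ast\phi_k(x)|\,\chi_{Q_{\va}(x,2^k)}\ls M_N(f)$, applies the $\lv$ quasi-norm to both sides, and uses $\|\chi_{Q_{\va}(x,2^k)}\|_{\lv}\gs 2^{k\nu/p_+}$ for $k\in\zz_+$ (via Lemma \ref{4l7}) to conclude $|f\ast\phi_k(x)|\ls 2^{-k\nu/p_+}\|M_N(f)\|_{\lv}\to0$. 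Your route --- raising to the $p_-$-th power, averaging over $B_{\va}(x,2^k)$, invoking mixed-norm H\"older against $\chi_{B_{\va}(x,2^k)}$, and computing the conjugate norm $\|\chi\|_{L^{(\vp/p_-)'}(\rn)}$ --- is more elaborate but yields the sharper uniform rate $2^{-k\sum_{i}a_i/p_i}$ (the paper's cruder exponent $\nu/p_+=\sum_i a_i/p_+$ is dominated since $p_i\le p_+$). One minor advantage of your formulation is that working with the anisotropic ball $B_{\va}(x,2^k)$ rather than the cube $Q_{\va}(x,2^k)$ makes the inclusion $x\in B_{\va}(y,2^k)$ for $y\in B_{\va}(x,2^k)$ immediate from the symmetry $|-z|_{\va}=|z|_{\va}$, whereas the paper's deduction of the pointwise bound over a cube requires (implicitly) translating the test function and uniformly controlling its $\cs_N$-norm, a step that is correct but is glossed over. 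Your handling of the degenerate coordinates where $p_i=p_-$ (so $(p_i/p_-)'=\fz$, with the convention $1/\fz=0$) is the right bookkeeping and, as you say, does not affect the conclusion. Both arguments establish uniform convergence $f\ast\phi_{2^k}\to0$ as $k\to\fz$, which is more than the required distributional convergence.
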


\begin{proof}
Let $f\in \vh$. Then, by Remark \ref{2r5}, we know that,
for any $\phi\in \cs(\rn)$, $k\in \mathbb{Z}$,
$x\in \rn$ and $y\in Q_{\va}(x,2^k)$,
$|f\ast\phi_k(x)|\ls M_N(f)(y)$ with $N$ as in \eqref{2e11}.
Thus, there exists a positive constant $C$ such that
\begin{align}\label{4e4}
Q_{\va}(x,2^k)\subset \{y\in \rn:\, M_N(f)(y)\geq C|f\ast\phi_k(x)|\}.
\end{align}
On the other hand, from Lemma \ref{4l7}, it follows that, for any $k\in \mathbb{Z}_+$,
$\|\chi_{Q_{\va}(x,2^k)}\|_{\lv}\gs 2^{k\nu/p_+}$
with $p_+$ as in \eqref{2e10}. By this and \eqref{4e4},
we conclude that, for any $x\in \rn$,
\begin{align*}
\lf|f\ast\phi_k(x)\r|&=\lf|Q_{\va}(x,2^k)\r|^{-1/{p_+}}
\lf|Q_{\va}(x,2^k)\r|^{1/{p_+}}\lf|f\ast\phi_k(x)\r|\\
&\ls 2^{-k\nu/p_+}\lf\|\chi_{Q_{\va}(x,2^k)}\r\|_{\lv}|f\ast\phi_k(x)|
\ls  2^{-k\nu/p_+}\lf\|M_N(f)\r\|_{\lv}\to 0
\end{align*}
as $k\to \fz$. This implies $f\in \cs'_0(\rn)$ and hence finishes the proof of Lemma \ref{4l1}.
\end{proof}

The following lemma is a special case of \cite[Lemma 2.3]{blyz10},
which is a variant of \cite[Theorem 11]{mc90}. Indeed, let $(a_1,\ldots,a_n)\in[1,\fz)^n$. Then, applying
\cite[Lemma 2.3]{blyz10} with $A$ as in \eqref{4e5},
we immediately obtain the following conclusions, the details being omitted.
\begin{lemma}\label{4l2}
Let $\va\in [1,\fz)^n$. Then there exists a set
$$\mathcal{Q}:=\lf\{Q_\alpha^k\subset\rn:\ k\in\mathbb{Z},
\,\alpha\in E_k\r\}$$
of open subsets, where $E_k$ is some index set, such that
\begin{enumerate}
\item[{\rm (i)}] for each $k\in\zz$,
$\lf|\rn\setminus\bigcup_{\alpha}Q_\alpha^k\r|=0$
and, when $\alpha\neq\beta$,
$Q_\alpha^k\cap Q_\beta^k=\emptyset$;
\item[{\rm(ii)}] for any $\alpha,\,\beta,\,k,\,\ell$ with $\ell\geq k$,
either $Q_\alpha^k\cap Q_\beta^\ell=\emptyset$ or
$Q_\alpha^\ell\subset Q_\beta^k$;
\item[{\rm(iii)}] for each $(\ell,\beta)$ and each $k<\ell$,
there exists a unique $\alpha$ such that
$Q_\beta^\ell\subset Q_\alpha^k$;
\item[{\rm(iv)}] there exist some $w\in\zz\setminus\zz_+$
and $u\in\nn$ such that, for any $Q_\alpha^k$
with $k\in\mathbb{Z}$ and $\alpha\in E_k$,
there exists $x_{Q_\alpha^k}\in Q_\alpha^k$
such that, for any $x\in Q_\alpha^k$,
$$x_{Q_\alpha^k}+2^{(wk-u)\va}B_0
\subset Q_\alpha^k\subset x+2^{(wk+u)\va}B_0,$$
where $B_0$ denotes the unit ball of $\rn$.
\end{enumerate}
\end{lemma}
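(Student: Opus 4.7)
The plan is to deduce this lemma by a direct specialization of \cite[Lemma 2.3]{blyz10}, which constructs Christ-type dyadic cubes associated to an arbitrary expansive dilation $A$ on $\rn$, combined with the observation that for the diagonal choice of $A$ in \eqref{4e5} the $A$-dilations coincide exactly with the anisotropic dilations $t^{\va}$. Indeed, for any $k\in\zz$ and $x:=(x_1,\ldots,x_n)\in\rn$,
\begin{equation*}
A^k x = \lf(2^{ka_1}x_1,\ldots,2^{ka_n}x_n\r) = 2^{k\va} x,
\end{equation*}
and $|\det A|=2^{a_1+\cdots+a_n}=2^{\nu}$, so all the $A$-dilation structure in \cite{blyz10} can be read off in the language of $|\cdot|_\va$ and $B_0=B_\va(\vec0_n,1)$.

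First, I would invoke \cite[Lemma 2.3]{blyz10} applied to the expansive dilation $A$ defined in \eqref{4e5}; this directly yields a collection $\mathcal{Q}=\{Q_\alpha^k\}$ of open subsets, indexed over $k\in\zz$ and $\alpha\in E_k$, satisfying the three combinatorial conditions: the almost-everywhere partition property (i), the nesting dichotomy (ii), and the ancestor uniqueness (iii). These three items are purely set-theoretic and involve no dilation, so they transfer verbatim from \cite[Lemma 2.3]{blyz10} without any translation.

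The only point that requires a brief check is (iv). The conclusion in \cite[Lemma 2.3]{blyz10} is phrased as $x_{Q_\alpha^k}+A^{wk-u}B_0\subset Q_\alpha^k\subset x+A^{wk+u}B_0$ for some fixed $w\in\zz\setminus\zz_+$ and $u\in\nn$, where $B_0$ is the unit ball associated with $A$ in the sense of \cite{mb03}. Under our choice of $A$, the identity $A^j=2^{j\va}$ for each $j\in\zz$ immediately converts $A^{wk\pm u}B_0$ into $2^{(wk\pm u)\va}B_0$, and the unit ball $B_0$ in Bownik's sense coincides with $B_\va(\vec0_n,1)$ by the remark just after \eqref{2e3} (see also Lemma \ref{2l1}(ii)). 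Thus property (iv) is obtained in the claimed form.

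There is no real obstacle here: the argument is essentially a translation of notation, since the anisotropic quasi-norm $|\cdot|_\va$ and the expansive matrix $A$ in \eqref{4e5} generate the same family of dilated balls. The only thing worth being careful about is that the constants $w$ and $u$ produced by \cite[Lemma 2.3]{blyz10} depend only on $A$ (hence only on $\va$), so they are admissible in the present setting, and the identification $B_0=B_\va(\vec0_n,1)$ is what allows the two formulations of (iv) to match without any loss.
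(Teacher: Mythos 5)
Your proposal is correct and follows essentially the same route as the paper, which likewise obtains this lemma by applying \cite[Lemma 2.3]{blyz10} with the diagonal dilation $A$ from \eqref{4e5} and identifying $A^k x=2^{k\va}x$. The only difference is that you spell out the translation of the dilation notation and the identification $B_0=B_\va(\vec0_n,1)$ explicitly, whereas the paper omits these details.
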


In what follows, we call
$\mathcal{Q}:=
\{Q_\alpha^k\}_{k\in\mathbb{Z},\,\alpha\in E_k}$
from Lemma \ref{4l2} \emph{dyadic cubes} and
$k$ the \emph{level}, denoted by $\ell(Q_\alpha^k)$,
of the dyadic cube $Q_\alpha^k$
for any $k\in\mathbb{Z}$ and $\alpha\in E_k$.

\begin{remark}\label{4r1}
In the definition of $(\vp,r,s)$-atoms (see Definition \ref{3d1}),
if we replace anisotropic balls $\mathfrak{B}$ by
dyadic cubes, then, from Lemma \ref{4l2}, we deduce that
the corresponding variable anisotropic atomic Hardy space
coincides with the original one (see Definition \ref{3d2})
in the sense of equivalent quasi-norms.
\end{remark}

Now we establish the following Calder\'{o}n reproducing formula.

\begin{lemma}\label{4l3}
Let $\va \in [1,\fz)^n$ and $\varphi\in \cs(\rn)$ satisfy that
$\supp \widehat{\varphi}$ is compact and bounded away from the origin
and, for any $\xi\in\rn\setminus\{\vec{0}_n\}$,
\begin{align}\label{4e6}
\sum_{k\in\mathbb{Z}}
\widehat{\varphi}\lf(2^{k\va}\xi\r)=1.
\end{align}
Then, for any $f\in L^2(\rn)$,
$f=\sum_{k\in\mathbb{Z}}f\ast\varphi_k$ in $L^2(\rn)$.
The same holds true in $\cs'(\rn)$ for any $f\in \cs'_0(\rn)$.
\end{lemma}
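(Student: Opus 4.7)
My approach is to handle the $L^2(\rn)$ case by Plancherel and then to reduce the $\cs'_0(\rn)$ case to a telescoping identity plus an anisotropic approximation-to-the-identity argument.

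\emph{The $L^2$ case.} Set $m_N(\xi):=\sum_{|k|\le N}\widehat{\varphi}(2^{k\va}\xi)$, so that $\sum_{|k|\le N}f\ast\varphi_k=(m_N\widehat f)^\vee$, since $\widehat{\varphi_k}(\xi)=\widehat{\varphi}(2^{k\va}\xi)$. The assumption that $\supp\widehat\varphi$ is compact and bounded away from $\vec0_n$, combined with Lemma~\ref{2l2}(i), makes the cardinality of $\{k\in\zz:\widehat\varphi(2^{k\va}\xi)\neq 0\}$ uniformly bounded in $\xi\neq\vec0_n$. Therefore $\{m_N\}_{N\in\nn}$ is uniformly bounded on $\rn$ and, by \eqref{4e6}, $m_N(\xi)=1$ once $N$ is large enough (depending on $|\xi|_{\va}$). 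Plancherel together with the dominated convergence theorem then give $\sum_{|k|\le N}f\ast\varphi_k\to f$ in $L^2(\rn)$.

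\emph{Distributional case: telescoping.} For $f\in\cs'_0(\rn)$ I would introduce two auxiliary functions by
\[
\widehat\Phi(\xi):=\sum_{k\ge 1}\widehat{\varphi}(2^{k\va}\xi),\qquad \widehat{\Phi_1}(\xi):=1-\sum_{k\ge 1}\widehat{\varphi}(2^{-k\va}\xi).
\]
The support hypothesis on $\widehat\varphi$ makes both series locally finite, and a short check shows that $\widehat\Phi,\widehat{\Phi_1}\in C_c^\infty(\rn)$, each identically equal to $1$ on a neighbourhood of $\vec0_n$; in particular $\Phi,\Phi_1\in\cs(\rn)$ with $\int_\rn\Phi_1(x)\,dx=\widehat{\Phi_1}(\vec0_n)=1$. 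Splitting the Calder\'on sum \eqref{4e6} and using $\widehat{\Phi_N}(\xi)=\widehat\Phi(2^{N\va}\xi)$ and $\widehat{(\Phi_1)_{-N}}(\xi)=\widehat{\Phi_1}(2^{-N\va}\xi)$, a direct computation yields
\[
\sum_{k=-N}^N f\ast\varphi_k=f\ast(\Phi_1)_{-N}-f\ast\Phi_N \quad\text{in}\quad\cs'(\rn).
\]

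\emph{Passing to the limit and main obstacle.} As $N\to\fz$, the term $f\ast\Phi_N$ tends to $0$ in $\cs'(\rn)$ by the very definition of $\cs'_0(\rn)$ applied to the Schwartz function $\Phi$, while $(\Phi_1)_{-N}$ is an anisotropic approximation to the identity because $\int_\rn\Phi_1=1$, so that $f\ast(\Phi_1)_{-N}\to f$ in $\cs'(\rn)$; testing against any $\psi\in\cs(\rn)$ reduces this last step to the classical fact that $\psi\ast\widetilde{(\Phi_1)_{-N}}\to\psi$ in $\cs(\rn)$, where $\widetilde h(x):=h(-x)$. The main technical point is to verify cleanly that $\Phi,\Phi_1\in\cs(\rn)$ directly from the series definitions: the key observation is that the support hypothesis on $\widehat\varphi$ uniformly bounds (via Lemma~\ref{2l2}) the number of nonzero terms in each sum and controls their locations, so all seminorms of $\widehat\Phi$ and $\widehat{\Phi_1}$ are dominated by the corresponding seminorms of $\widehat\varphi$.
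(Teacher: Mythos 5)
Your proposal is correct and is essentially the same as the paper's proof. The paper sets $\phi:=\sum_{k\ge0}\varphi_k$ and telescopes $\sum_{k=-N}^N\varphi_k=\phi_{-N}-\phi_{N+1}$; your $\Phi_1$ is exactly this $\phi$ (and your $\Phi_N$ is $\phi_{N+1}$), so the two telescoping identities coincide term by term, and your limiting step is the content of the paper's Lemma \ref{4l4}. One small refinement worth keeping: your complementary formula $\widehat{\Phi_1}(\xi)=1-\sum_{k\ge1}\widehat\varphi(2^{-k\va}\xi)$ is pointwise identically $1$ in a full neighbourhood of $\vec0_n$, including at $\vec0_n$, which makes $\widehat{\Phi_1}\in C_c^\fz(\rn)$ transparent; the paper's series $\sum_{k\ge0}\widehat\varphi(2^{k\va}\xi)$ equals $1$ for small $\xi\neq\vec0_n$ but $0$ at the origin, so strictly speaking it is only the continuous extension that is Schwartz. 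Your $\widehat\Phi(\xi):=\sum_{k\ge1}\widehat\varphi(2^{k\va}\xi)$ has the same pointwise gap at $\vec0_n$, so the most economical choice is to put $\widehat\Phi:=\widehat{\Phi_1}-\widehat\varphi$, which is visibly in $C_c^\fz(\rn)$, rather than analyze that second series separately.
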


To show Lemma \ref{4l3}, we need the following Lemma \ref{4l4},
which is just a variant of \cite[Lemma 3.8]{mb03},
the details being omitted.

\begin{lemma}\label{4l4}
Let $\varphi\in \cs(\rn)$ and $\int_{\rn}\varphi(x)\,dx=1$.
Then, for any $f\in \cs(\rn)$, $f\ast\varphi_k\to f$ in $\cs(\rn)$
as $k\to -\fz$. The same holds true in $\cs'(\rn)$ for any $f\in \cs'(\rn)$.
\end{lemma}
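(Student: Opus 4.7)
The plan is to first prove convergence in $\cs(\rn)$ for $f\in\cs(\rn)$, and then dualize to obtain the statement in $\cs'(\rn)$.

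For $f\in\cs(\rn)$, since $\int_{\rn}\varphi(x)\,dx=1$, the change of variables $y=2^{k\va}z$ (so $\varphi_k(y)\,dy=\varphi(z)\,dz$) yields
$$f\ast\varphi_k(x)-f(x)=\int_{\rn}\lf[f\lf(x-2^{k\va}z\r)-f(x)\r]\varphi(z)\,dz.$$
Because $\pa^{\az}(f\ast\varphi_k)=(\pa^{\az}f)\ast\varphi_k$ and $\pa^{\az}f\in\cs(\rn)$, it suffices to show that, for every $f\in\cs(\rn)$ and every $N\in\zz_+$,
$$\sup_{x\in\rn}(1+|x|)^N\lf|f\ast\varphi_k(x)-f(x)\r|\to0\quad\text{as }k\to-\fz.$$
Given $\varepsilon\in(0,\fz)$, split the integral into the regions $|z|\le R$ and $|z|>R$, to be chosen. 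For the far piece, note that, for any $k\le0$, we have $|2^{k\va}z|^2=\sum_{j=1}^n 2^{2ka_j}z_j^2\le|z|^2$, so $(1+|x|)^N\le(1+|x-2^{k\va}z|)^N(1+|z|)^N$. Combining this with the Schwartz decay of $f$ bounds the far piece by
$$C\lf\|f\r\|_{\cs_N(\rn)}\int_{|z|>R}(1+|z|)^N\lf|\varphi(z)\r|\,dz,$$
which, by the Schwartz decay of $\varphi$, can be made smaller than $\varepsilon/2$ by choosing $R$ large, uniformly in $k\le0$.

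For the near piece, use the Taylor formula
$$f\lf(x-2^{k\va}z\r)-f(x)=-\int_0^1\sum_{j=1}^n 2^{ka_j}z_j\,(\pa_j f)\lf(x-t\,2^{k\va}z\r)\,dt.$$
Since $a_-\ge1$, for $k\le0$ and $|z|\le R$ one has $|2^{k\va}z|\le 2^{ka_-}R$, which tends to $0$ as $k\to-\fz$. In particular, for $k$ sufficiently negative (depending on $R$ and $N$), $|2^{k\va}z|\le1$, so that $(1+|x|)^N\le 2^N(1+|x-t\,2^{k\va}z|)^N$ uniformly in $t\in[0,1]$ and $|z|\le R$. The Schwartz bound on $\pa_j f$ then gives an estimate of the near piece by $C2^{ka_-}R\|f\|_{\cs_{N+1}(\rn)}\int_{|z|\le R}|\varphi(z)|\,dz$, which is $<\varepsilon/2$ for all $k$ sufficiently negative. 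This completes the $\cs(\rn)$-case.

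For $f\in\cs'(\rn)$ and $\psi\in\cs(\rn)$, set $\widetilde{\varphi}(\cdot):=\varphi(-\cdot)$, which also satisfies $\int_{\rn}\widetilde{\varphi}(x)\,dx=1$, and note $\langle f\ast\varphi_k,\psi\rangle=\langle f,\widetilde{\varphi}_k\ast\psi\rangle$. By the $\cs(\rn)$-case applied to $\widetilde{\varphi}$ and $\psi$, $\widetilde{\varphi}_k\ast\psi\to\psi$ in $\cs(\rn)$, and the continuity of $f$ on $\cs(\rn)$ yields $\langle f\ast\varphi_k,\psi\rangle\to\langle f,\psi\rangle$, hence $f\ast\varphi_k\to f$ in $\cs'(\rn)$. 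The main obstacle is the near-field estimate: one must transfer the polynomial weight $(1+|x|)^N$ through the anisotropic shift $2^{k\va}z$ uniformly for $|z|\le R$ and $t\in[0,1]$, which is precisely what the bound $|2^{k\va}z|\le 2^{ka_-}R\to 0$ (valid thanks to $a_-\ge1$) makes possible.
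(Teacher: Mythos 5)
Your proof is correct. The paper itself omits the argument, stating only that the lemma is ``a variant of \cite[Lemma 3.8]{mb03}''; your write-up is the standard approximate-identity proof underlying that cited result, and all the anisotropic points are handled properly: the substitution $y=2^{k\va}z$, the uniform bound $|2^{k\va}z|\le|z|$ for $k\le0$ giving the Peetre-type control of the weight $(1+|x|)^N$ on the far region, the contraction $|2^{k\va}z|\le 2^{ka_-}R\to0$ (using $a_-\ge1$) on the near region, and the duality step via $\widetilde{\varphi}(\cdot):=\varphi(-\cdot)$.
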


Now we prove Lemma \ref{4l3}.
\begin{proof}[Proof of Lemma \ref{4l3}]
We show this lemma by two steps.

\emph{Step 1.}
Assume $f\in L^2(\rn)$. For any $\xi\in \rn$, let $F(\xi):=\sum_{k\in\mathbb{Z}}
|\widehat{\varphi}(2^{k\va}\xi)|$.
Obviously, for any $\xi\in \rn$, $F(\xi)=F(2^{\va}\xi)$, which implies that, to show
$F\in L^{\fz}(\rn)$, it suffices to consider the values of
$F$ on $2^{\va}B_0\setminus B_0$, where $B_0$ denotes the unit ball of $\rn$.
Since $\widehat{\varphi}\in \cs(\rn)$ and $\supp \widehat{\varphi}$ is bounded away from $\vec0_n$,
it follows that, for any
$\xi\in \rn\setminus B_0$, $|\widehat{\varphi}(\xi)|\ls\frac1{1+|\xi|}$ and,
for any $\xi\in 2^{\va}B_0$, $|\widehat{\varphi}(\xi)|\ls|\xi|$.
Then, by (i), (v) and (vi) of Lemma \ref{2l2}, we find that, for any
$\xi \in 2^{\va}B_0\setminus B_0$,
\begin{align*}
F(\xi)&=\sum_{k\geq 0}\lf|\widehat{\varphi}\lf(2^{k\va}\xi\r)\r|+
\sum_{k< 0}\lf|\widehat{\varphi}\lf(2^{k\va}\xi\r)\r|
\ls \sum_{k\geq 0}\frac1{1+|2^{k\va}\xi|}+
\sum_{k< 0}\lf|2^{k\va}\xi\r|\\
&\ls\sum_{k\geq 0}\frac1{(2^k|\xi|_{\va})^{a_-}}+
\sum_{k< 0}\lf(2^k|\xi|_{\va}\r)^{a_-}
\ls\sum_{k\geq 0}\frac1{2^{ka_-}}+\sum_{k< 0}2^{(k+1)a_-}\ls 1,
\end{align*}
which implies $F\in L^{\fz}(\rn)$. By this, the Lebesgue
dominated convergence theorem and \eqref{4e6}, we conclude that, for any
$f\in L^2(\rn)$ and $\xi\in \rn$,
$$\widehat{f}(\xi)=\sum_{k\in \zz}\widehat{\varphi}\lf(2^{k\va}\xi\r)
\widehat{f}(\xi)\quad\mathrm{in}\quad L^2(\rn)$$
and hence $f=\sum_{k\in\mathbb{Z}}f\ast\varphi_k$ in $L^2(\rn)$.

\emph{Step 2.} Assume $f\in \cs'_0(\rn)$.
Let $\phi:=\sum_{k=0}^{\fz}\varphi_k$. Since $\varphi\in \cs(\rn)$
and $\varphi_{k}(\cdot):=2^{-k\nu}\varphi(2^{-k\va}\cdot)$,
it then follows that $\phi$ is well defined pointwise on $\rn$.
We now claim that \begin{align}\label{4e24}
\phi\in \cs(\rn)\quad \mathrm{and} \quad\int_{\rn}\phi(x)\,dx=1.
\end{align}
Assume that this claim holds true for the moment.
Then, by this and Lemma \ref{4l4}, we know that $f\ast\phi_{-N}\to f$
in $\cs'(\rn)$ as $N\to \fz$. On the other hand,
for any $f\in \cs'_0(\rn)$, it is easy to see
that $f\ast\phi_{N}\to 0$ in $\cs'(\rn)$ as $N\to \fz$.
Therefore, for any $f\in \cs'_0(\rn)$, as $N\to\fz$,
$$f\ast\phi_{-N}-f\ast\phi_{N}\to f\quad\mathrm{in}
\quad \cs'(\rn).$$
Moreover, since, for any $j\in \zz$,
$\phi_j=\sum_{k=0}^{\fz}(\varphi_k)_j=\sum_{k=j}^{\fz}\varphi_k$,
it follows that $\sum_{k=-N}^{N}\varphi_k=\phi_{-N}-\phi_{N+1}$. Therefore,
$$\lim_{N\to \fz}\sum_{k=-N}^{N}f\ast\varphi_k=\lim_{N\to \fz}f\ast
\lf(\sum_{k=-N}^{N}\varphi_k\r)=\lim_{N\to \fz}\lf(f\ast\phi_{-N}-f\ast\phi_{N+1}\r)=f$$
in $\cs'(\rn)$, which implies that, for any $f\in \cs'_0(\rn)$,
$f=\sum_{k\in\zz}f\ast\varphi_k$ holds true in $\cs'(\rn)$.

Let us now prove the above claim \eqref{4e24}. To this end, for any $\xi\in \rn$, let $G(\xi):=\sum_{k=0}^{\fz}
\widehat{\varphi}(2^{k\va}\xi)$.
Then, to show \eqref{4e24}, it suffices to prove that
$G\in \cs(\rn)$, $\phi=\mathcal{F}^{-1}G$ and
$\int_{\rn}\phi(x)\,dx=1$, where $\mathcal{F}^{-1}$ denotes
the inverse Fourier transform, namely, for any $\xi\in\rn$,
$\mathcal{F}^{-1}G(\xi):=\widehat{G}(-\xi)$.

Indeed, since $\supp \widehat{\varphi}$ is compact, we may assume that
$\supp \widehat{\varphi}\st 2^{k_0\va}B_0$ for some $k_0\in \zz$.
Then, for any $k\in\zz_+$, it is easy to see that
$\supp \widehat{\varphi}(2^{k\va}\cdot)\st 2^{(k_0-k)\va}B_0\st 2^{k_0\va}B_0$,
which implies that $\supp G\st 2^{k_0\va}B_0$. To prove $G\in C^{\fz}(\rn)$,
for any multi-index $\az\in \zz_+^n$ and $\xi\in \rn$, let
$$F_{\az}(\xi):=\sum_{k\in \zz}\lf|\pa^{\az}\lf[\widehat{\varphi}\lf(2^{k\va}\xi\r)\r]\r|.$$
We first show $F_{\az}\in L^{\fz}(\rn)$. Notice that, for any $\xi\in \rn$,
$$F_{\az}(2^{\va}\xi)=\sum_{k\in \zz}\lf|\pa^{\az}\lf[\widehat{\varphi}\lf(2^{(k+1)\va}\xi\r)\r]\r|
=\sum_{k\in \zz}\lf|\pa^{\az}\lf[\widehat{\varphi}\lf(2^{k\va}\xi\r)\r]\r|=F_{\az}(\xi),$$
which implies that, to show $F_{\az}\in L^{\fz}(\rn)$, we only need to consider the value
of $F_{\az}$ on $2^{\va}B_0\setminus B_0$.
From the fact that $\widehat{\varphi}\in \cs(\rn)$, (i) and (vi) of Lemma \ref{2l2}, we deduce that,
for any $\xi\in 2^{\va}B_0\setminus B_0$,
$$\lf|\pa^{\az}\lf[\widehat{\varphi}\lf(2^{k\va}\xi\r)\r]\r|\ls \frac1{1+|2^{k\va}\xi|}
\ls \frac1{(2^k|\xi|_{\va})^{a_-}}\ls 2^{-k a_-}$$
when $k\in\nn$, and $|\pa^{\az}\widehat{\varphi}(2^{k\va}\xi)|\ls 2^{k|\az|a_-}$
when $k\in \zz\setminus\nn$. By this, we further conclude that, for any $\xi\in 2^{\va}B_0\setminus B_0$,
$$F_{\az}(\xi)\ls \sum_{k\in\nn}{2^{-ka_-}}+\sum_{k\in \zz\setminus\nn}2^{k|\az|a_-}\ls 1.$$
Thus, $F_{\az}\in L^{\fz}(\rn)$. This implies that, for any $\xi\in \rn$,
$\pa^{\az} G(\xi)=\sum_{k=0}^{\fz}\pa^{\az}[\widehat{\varphi}(2^{k\va}\xi)]$.
Therefore, $G\in C^{\fz}(\rn)$. From this and $\supp G\st 2^{k_0\va}B_0$,
it follows that $G\in \cs(\rn)$.

Moreover, by the facts that
$\supp\widehat{\varphi}(2^{k\va}\cdot)\st 2^{(k_0-k)\va}B_0$ and $\supp (\sum_{k=0}^{\fz}
|\widehat{\varphi}(2^{k\va}\cdot)|)\st 2^{k_0\va}B_0$,
the H\"{o}lder inequality and the Minkowski inequality, we find that
\begin{align*}
\int_{\rn} \sum_{k=0}^{\fz}\lf|\widehat{\varphi}\lf(2^{k\va}\xi\r)\r|\,d\xi
&\le \lf|2^{k_0\va}B_0\r|^{1/2}\lf\{\int_{\rn} \lf[\sum_{k=0}^{\fz}
\lf|\widehat{\varphi}\lf(2^{k\va}\xi\r)\r|\r]^2\,d\xi\r\}^{1/2}
\ls 2^{\nu k_0/2} \sum_{k=0}^{\fz}\lf[\int_{\rn}
\lf|\widehat{\varphi}\lf(2^{k\va}\xi\r)\r|^2\,d\xi\r]^{1/2}\\
&\ls 2^{\nu k_0/2} \sum_{k=0}^{\fz}\lf[\int_{\rn}
\chi_{2^{(k_0-k)\va}B_0}(\xi)\,d\xi\r]^{1/2}
\ls 2^{\nu k_0} \sum_{k=0}^{\fz}2^{-k\nu/2}\ls 1.
\end{align*}
Then, by the Fubini theorem, we obtain $\mathcal{F}^{-1}G=\sum_{k=0}^{\fz}
\mathcal{F}^{-1}[\widehat{\varphi}(2^{k\va}\cdot)]=\phi$ and hence $\phi\in \cs(\rn)$.

Let $e_1:=(1,0,\ldots,0)\in \rn$. Since $\widehat{\varphi}\in \cs(\rn)$, from \eqref{4e6},
we deduce that
$$\int_{\rn}\phi(x)\,dx=\widehat{\phi}(\vec0_n)=\lim_{j\to -\fz}\widehat{\phi}\lf(2^{j\va}e_1\r)=
\lim_{j\to -\fz}\sum_{k=0}^{\fz}\widehat{\varphi}\lf(2^{(j+k)\va} e_1\r)=\sum_{k\in\mathbb{Z}}
\widehat{\varphi}\lf(2^{k\va}e_1\r)=1,$$
which completes the proof of \eqref{4e24} and hence of Lemma \ref{4l3}.
\end{proof}

Using Lemma \ref{4l3}, we obtain the following Calder\'{o}n reproducing formula.
\begin{lemma}\label{4l5}
Let $\va\in [1,\fz)^n$ and $s\in\mathbb{Z_+}$.
Then there exist $\varphi,\,\psi\in\cs(\rn)$ satisfying
\begin{enumerate}
\item[{\rm(i)}] $\supp\varphi\subset B_0,
\,\int_{\rn}x^\gamma\varphi(x)\,dx=0$ for any
$\gamma\in\zz_+^n$ with $|\gamma|\le s,
\,\widehat{\varphi}(\xi)\geq C$
for any $\xi\in\{x\in\rn:\ m\le|x|_{\va}\le t\}$,
where $0<m<t<1$ and $C\in(0,\fz)$ are constants;
\item[{\rm(ii)}] $\supp \widehat{\psi}$
is compact and bounded away from the origin;
\item[{\rm(iii)}] for any $\xi\in\rn\setminus\{\vec{0}_n\}$,
$\sum_{k\in\mathbb{Z}}
\widehat{\psi}(2^{k\va}\xi)\widehat{\varphi}(2^{k\va}\xi)=1$.
\end{enumerate}

Moreover, for any $f\in L^2(\rn),\,f=
\sum_{k\in\mathbb{Z}}f\ast\psi_k\ast\varphi_k$ in $L^2(\rn)$.
The same holds true in $\cs'(\rn)$ for any $f\in \cs'_0(\rn)$.
\end{lemma}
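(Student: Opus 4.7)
The plan is to first construct $\varphi$ satisfying (i), then build $\psi$ tailored to $\varphi$ so that (ii) and (iii) hold, and finally deduce the reproducing formula by applying Lemma \ref{4l3} to the convolution $\psi\ast\varphi$.

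For the construction of $\varphi$, I would begin with any fixed radial bump $\eta\in C_c^{\infty}(\rn)$ with $\supp\eta\subset B_0$, $\eta\ge 0$ and $\eta\equiv 1$ on $\frac12 B_0$. Following the idea of Bownik et al.\ \cite[Lemma 2.12]{blyz10}, I would seek $\varphi$ of the form
$\varphi:=\sum_{j=0}^{M}c_j\,\eta_{(j)}$, where each $\eta_{(j)}(\cdot):=2^{-j\nu}\eta(2^{-j\va}\cdot)$ anisotropically dilates $\eta$ by a factor small enough so that $\supp\varphi\subset B_0$, and the coefficients $\{c_j\}_{j=0}^{M}$ are chosen by solving a finite linear system to force the moment conditions $\int_{\rn}x^{\gamma}\varphi(x)\,dx=0$ for all $\gamma\in\zz_+^n$ with $|\gamma|\le s$. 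Since $\widehat{\eta}$ is real-analytic and $\widehat{\eta}(\vec0_n)\neq 0$, a Taylor expansion at the origin then gives $\widehat{\varphi}(\xi)=P(\xi)+\CO(|\xi|_{\va}^{(s+2)a_-})$ near $\vec0_n$, where $P$ is a nonzero homogeneous-type polynomial of order $(s+1)a_-$ in the anisotropic sense, hence strictly positive on $\{x\in\rn:\,m\le|x|_{\va}\le t\}$ for a suitable choice of $0<m<t<1$. This yields (i).

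For $\psi$, the plan is to mimic the classical Littlewood--Paley symmetrization. Define the auxiliary function $G(\xi):=\sum_{k\in\zz}|\widehat{\varphi}(2^{k\va}\xi)|^{2}$ on $\rn\setminus\{\vec0_n\}$. Arguing as in the proof of Lemma \ref{4l3}, Step~1, using Lemma \ref{2l2}(i) together with the near-origin expansion of $\widehat{\varphi}$ and the Schwartz decay at infinity, I would show $G\in C^{\infty}(\rn\setminus\{\vec0_n\})$, that $G$ is $2^{\va}$-invariant, and that $G(\xi)\ge c_0>0$ uniformly in $\xi\neq\vec0_n$ (the lower bound on the annulus $\{m\le|\xi|_{\va}\le t\}$ in (i) is inherited by every anisotropic dilation, covering $\rn\setminus\{\vec0_n\}$). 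Let $\chi\in C_c^{\infty}(\rn)$ be a radial cutoff supported in a compact annulus bounded away from $\vec0_n$ and such that $\sum_{k\in\zz}\chi(2^{k\va}\xi)=1$ for any $\xi\neq\vec0_n$ (standard anisotropic dyadic resolution of unity). I would then set
$$\widehat{\psi}(\xi):=\overline{\widehat{\varphi}(\xi)}\,\chi(\xi)/G(\xi).$$
Since $\chi$ is compactly supported and bounded away from $\vec0_n$, and $G$ is smooth and bounded below there, $\widehat{\psi}\in C_c^{\infty}(\rn)$ has the same support property, giving (ii), and hence $\psi\in\cs(\rn)$. A direct computation using the $2^{\va}$-invariance of $G$ then yields
$$\sum_{k\in\zz}\widehat{\psi}(2^{k\va}\xi)\widehat{\varphi}(2^{k\va}\xi)
=\frac{1}{G(\xi)}\sum_{k\in\zz}|\widehat{\varphi}(2^{k\va}\xi)|^{2}\chi(2^{k\va}\xi)
=1\quad\text{for all }\xi\neq\vec0_n,$$
which is (iii).

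Finally, let $\Theta:=\psi\ast\varphi$. Then $\widehat{\Theta}=\widehat{\psi}\widehat{\varphi}$ has compact support bounded away from the origin (inherited from $\widehat{\psi}$), and by (iii) satisfies $\sum_{k\in\zz}\widehat{\Theta}(2^{k\va}\xi)=1$ on $\rn\setminus\{\vec0_n\}$. Thus $\Theta$ meets the hypotheses of Lemma \ref{4l3}, so for any $f\in L^2(\rn)$,
$f=\sum_{k\in\zz}f\ast\Theta_k=\sum_{k\in\zz}f\ast\psi_k\ast\varphi_k$ in $L^2(\rn)$, and the same identity holds in $\cs'(\rn)$ for any $f\in\cs'_0(\rn)$. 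The main technical obstacle will be the construction of $\varphi$ with the simultaneous triple constraint of compact support, vanishing moments of order $s$, and a uniform lower bound of $\widehat{\varphi}$ on an anisotropic annulus near the origin; carrying out the Taylor analysis and verifying solvability of the linear system for the coefficients $\{c_j\}$ in anisotropic coordinates is the delicate part.
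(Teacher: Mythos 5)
Your final step---setting $\Theta:=\psi\ast\varphi$ and invoking Lemma~\ref{4l3}---is exactly the paper's argument; the paper itself outsources the existence of $\varphi$ and $\psi$ to \cite[Theorem 5.8]{bh06}, whereas you attempt a direct construction. That construction, however, contains a concrete error in the $\psi$-step. With $G(\xi):=\sum_{k\in\zz}|\widehat\varphi(2^{k\va}\xi)|^2$ and $\widehat\psi:=\overline{\widehat\varphi}\,\chi/G$, your ``direct computation'' asserts that $\sum_{k}|\widehat\varphi(2^{k\va}\xi)|^2\chi(2^{k\va}\xi)=G(\xi)$; this is false in general, since the cutoff $\chi(2^{k\va}\xi)$ truncates the sum, so the left side is typically strictly smaller than $G(\xi)$ and (iii) does not follow. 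The fix is to put the cutoff in the denominator as well: set $H(\xi):=\sum_{j\in\zz}|\widehat\varphi(2^{j\va}\xi)|^2\chi(2^{j\va}\xi)$ (still $2^{\va}$-periodic under anisotropic dilation, and bounded below provided the annulus $\{m\le|\xi|_{\va}\le t\}$ on which $\widehat\varphi\ge C$ is wide enough that the dilates $\chi(2^{j\va}\cdot)$ can be supported inside it---this requires $t/m\ge2$, not merely $0<m<t<1$) and then define $\widehat\psi:=\overline{\widehat\varphi}\,\chi/H$, so that (iii) holds by telescoping and (ii) is inherited from $\supp\chi$.

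The construction of $\varphi$ is also not closed. Vanishing moments of order $s$ force the leading Taylor term of $\widehat\varphi$ at $\vec0_n$ to be a homogeneous polynomial of degree $s+1$, and such a polynomial in several variables generically changes sign on any annulus; solving a Vandermonde-type system for the $c_j$'s to kill the moments does not by itself give $\widehat\varphi\ge C$ (positivity, not merely nondegeneracy) on $\{m\le|\xi|_{\va}\le t\}$. You flag this as ``the delicate part,'' but as written it is a genuine gap, and it is precisely the content the paper delegates to \cite[Theorem 5.8]{bh06}. Supplying the corrected $\psi$ and actually carrying out the $\varphi$ construction would make your proposal a self-contained alternative to the paper's citation.
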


We point out that the existences of such $\varphi$ and $\psi$
in Lemma \ref{4l5} can be verified by an argument similar to that used in the proof of
\cite[Theorem 5.8]{bh06}. Then the conclusions of Lemma \ref{4l5}
follow immediately from Lemma \ref{4l3} via replacing $\varphi$ by $\varphi\ast\psi$.

Now we prove Theorem \ref{4t1}.

\begin{proof}[Proof of Theorem \ref{4t1}]
We first show the sufficiency of this theorem. For this purpose,
let $f\in\cs'_0(\rn)$ and $S(f)\in\lv$. Then we need
to prove that $f\in\vh$ and
\begin{align}\label{4e7}
\|f\|_{\vh}\ls\|S(f)\|_{\lv}.
\end{align}
To this end, for any $k\in\mathbb{Z}$, let
$\Omega_k:=\{x\in\rn:\ S(f)(x)>2^k\}$ and
$$\mathcal{Q}_k:=\lf\{Q\in\mathcal{Q}:
\ |Q\cap\Omega_k|>\frac{|Q|}2\ \ {\rm and}\
\ |Q\cap\Omega_{k+1}|\le\frac{|Q|}2\r\}.$$
It is easy to see that, for any $Q\in\mathcal{Q}$,
there exists a unique $k\in\mathbb{Z}$
such that $Q\in\mathcal{Q}_k$.
For any given $k\in\zz$, denote by $\{Q_i^k\}_i$ the collection of all \emph{maximal dyadic cubes}
in $\mathcal{Q}_k$,
namely, there exists no $Q\in\mathcal{Q}_k$
such that $Q_i^k\subsetneqq Q$ for any $i$.

For any $Q\in\mathcal{Q}$, let
$$\widehat{Q}:=\lf\{(y,t)\in\rn\times\mathbb{R}:\
y\in Q\ \ {\rm and}\
\ t\sim w\ell(Q)+u\r\},$$
here and hereafter, $t\sim w\ell(Q)+u$ always means
\begin{align}\label{4e8}
w\ell(Q)+u+1\le t<w[\ell(Q)-1]+u+1,
\end{align}
where $w$ and $u$ are as in Lemma \ref{4l2}(iv) and $\ell(Q)$ denotes the level of $Q$.
Clearly, $\{\widehat{Q}\}_{Q\in\mathcal{Q}}$ are mutually disjoint and
\begin{align}\label{4e9}
\rn\times\mathbb{R}=\bigcup_{k\in\mathbb{Z}}\bigcup_i B_{k,\,i},
\end{align}
where, for any $k\in\mathbb{Z}$ and $i$,
$B_{k,\,i}:=\bigcup_{Q\subset Q_i^k,\,Q\in\mathcal{Q}_k}\widehat{Q}$.
Then, by Lemma \ref{4l2}(ii), we easily know that $\{B_{k,i}\}_{k\in\zz,\,i}$
are mutually disjoint.

Let $\psi$ and $\varphi$ be as in Lemma \ref{4l5}. Then
$\varphi$ has the vanishing moments up to order $s$ as in \eqref{3e1}. By
Lemma \ref{4l5}, the properties of the tempered distributions
(see \cite[Theorem 2.3.20]{lg14} or \cite[Theorem 3.13]{sw71}) and \eqref{4e9},
we find that, for any $f\in\cs'_0(\rn)$ with
$S(f)\in \lv$ and $x\in\rn$,
\begin{align*}
f(x)
&=\sum_{k\in\mathbb{Z}}f\ast\psi_k\ast\varphi_k(x)
=\int_{\rn\times\mathbb{R}}
f\ast\psi_t(y)\varphi_t(x-y)\,dy\,dm(t)\\
&=\sum_{k\in\mathbb{Z}}\sum_i\int_{B_{k,\,i}}
f\ast\psi_t(y)\varphi_t(x-y)\,dy\,dm(t)
=:\sum_{k\in\mathbb{Z}}\sum_i h_i^k(x)
\end{align*}
in $\cs'(\rn)$, where, for any $k\in\mathbb{Z},\,i$ and $x\in\rn$,
\begin{align}\label{4e10}
h_i^k(x)
:=&\int_{B_{k,\,i}}f\ast\psi_t(y)\varphi_t(x-y)\,dy\,dm(t)\\
=&\sum_{Q\subset Q_i^k,\,Q\in\mathcal{Q}_k}
\int_{\widehat{Q}}f\ast\psi_t(y)\varphi_t(x-y)\,dy\,dm(t)
=:\sum_{Q\subset Q_i^k,\,Q\in\mathcal{Q}_k}e_{Q}(x)\noz
\end{align}
with convergence in $\cs'(\rn)$, and $m(t)$ denotes
the \emph{counting measure} on $\mathbb{R}$, namely,
for any set $E\st \rr$, $m(E):=\sharp E$ if $E$ has
only finite elements, or else $m(E):=\fz$.

Using \cite[(3.23)]{lyy16LP}
with the dilation $A$ as in \eqref{4e5}, we conclude that,
for any $x\in\rn$,
\begin{align}\label{4e11}
\lf[S\lf(\sum_{Q\in\mathcal{R}}e_Q\r)(x)\r]^2\ls
\sum_{Q\in\mathcal{R}}\lf[M_{{\rm HL}}(c_Q\chi_Q)(x)\r]^2,
\end{align}
where $\mathcal{R}\st\mathcal{Q}$ is an arbitrary set of dyadic cubes, $e_Q$ is as in \eqref{4e10}
and, for any $Q\in\mathcal{R}$,
$$c_Q:=\lf[\int_{\widehat{Q}}|\psi_t\ast f(y)|^2
\,dy\frac{dm(t)}{2^{\nu t}}\r]^{1/2}.$$

Next we show that, for any $k\in\zz$ and $i$,
$h_i^k$ is a $(\vp,r,s)$-atom multiplied by a harmless constant. This is completed
by Steps 1 through 3 below.

\emph{Step 1.}
For any $x\in\supp h_i^k$, by \eqref{4e10}, $h_i^k(x)\neq0$
implies that there exists $Q\subset Q_i^k$ and
$Q\in\mathcal{Q}_k$ such that $e_{Q}(x)\neq0$.
Then there exists $(y,t)\in\widehat{Q}$ such that
$2^{-t\va}(x-y)\in B_0$, where $B_0$ denotes the unit ball of $\rn$.
By this, Lemma \ref{4l2}(iv),
\eqref{4e8} and Lemma \ref{2l2}(ii), we have
$$x\in y+2^{t\va}B_0\subset x_Q+2^{(w\ell(Q)+u)\va}B_0
+2^{(w[\ell(Q)-1]+u+1)\va}B_0\subset x_Q+2^{(w[\ell(Q)-1]+u+2)\va}B_0.$$
Thus,
$$\supp e_Q\subset x_Q+2^{(w[\ell(Q)-1]+u+2)\va}B_0.$$
From this, the fact that
$h_i^k=\sum_{Q\subset Q_i^k,\,Q\in\mathcal{Q}_k}e_{Q}$,
(ii) and (iv) of Lemma \ref{4l2} and Lemma \ref{2l2}(ii),
we further deduce that
\begin{align}\label{4e12}
\supp h_i^k
&\subset\bigcup_{Q\subset Q_i^k,\,Q\in\mathcal
{Q}_k}x_Q+2^{(w[\ell(Q)-1]+u+2)\va}B_0\\
&\subset x_{Q_i^k}+2^{w[\ell(Q_i^k)+u]\va}B_0+
2^{(w[\ell(Q_i^k)-1]+u+2)\va}B_0
\subset x_{Q_i^k}+2^{(w[\ell(Q_i^k)-1]+u+2)\va}B_0=:B_i^k.\noz
\end{align}

\emph{Step 2.}
For any $Q\in\mathcal{Q}_k$ and
$x\in Q$, by Lemma \ref{4l2}(iv), we find that
$$M_{{\rm HL}}\lf(\chi_{\Omega_k}\r)(x)\ge\frac1{2^{[w\ell(Q)+u]\nu}}
\int_{x_Q+2^{[w\ell(Q)+u]\va}B_0}\chi_{\Omega_k}(z)\,dz>2^{-2u\nu}
\frac{|\Omega_k\cap Q|}{|Q|}> 2^{-2u\nu-1},$$
which implies that
\begin{align}\label{4e13}
\bigcup_{Q\subset Q_i^k,\,Q\in\mathcal{Q}_k}Q
\subset\widehat{\Omega}_k:=\lf\{x\in\rn:\
M_{{\rm HL}}\lf(\chi_{\Omega_k}\r)(x)> 2^{-2u\nu-1}\r\}.
\end{align}
In addition, for any $Q\in\mathcal{Q}_k$ and $x\in Q$,
by Lemma \ref{4l2}(iv) and
$Q\subset\widehat{\Omega}_k$, we know that
$$M_{{\rm HL}}\lf(\chi_{Q\cap(\widehat{\Omega}_k
\setminus\Omega_{k+1})}\r)(x)
\geq\frac1{|Q|}\int_{Q}\chi_{\widehat{\Omega}_k
\setminus\Omega_{k+1}}(z)\,dz
\gs\frac{|Q|-|Q|/2}{|Q|}\gs\frac{\chi_Q(x)}2.$$
From this, \cite[Theorem 3.2]{blyz10} with the
dilation $A$ as in \eqref{4e5}, \eqref{4e11}, Lemma \ref{3l2}
and an argument similar to that used in
the proof of \cite[(3.26)]{lyy16LP},
it follows that, for any $r\in(1,\fz)$,
\begin{align}\label{4e14}
\lf\|\sum_{Q\subset Q_i^k,\,Q\in\mathcal{Q}_k}
e_{Q}\r\|_{L^r(\rn)}
\ls\lf\|\lf[\sum_{Q\subset Q_i^k,\,Q\in\mathcal{Q}_k}
\lf(c_Q\r)^2\chi_{Q\cap(\widehat{\Omega}_k
\setminus\Omega_{k+1})}\r]^{1/2}\r\|_{L^r(\rn)}.
\end{align}

On the other hand, for any $Q\in\mathcal{Q}_k,\,x\in Q$
and $(y,t)\in\widehat{Q}$, by Lemma
\ref{4l2}(iv), Lemma \ref{2l2}(ii) and \eqref{4e8},
we easily know that
$$x-y\in 2^{[w\ell(Q)+u]\va}B_0+2^{[w\ell(Q)+u]\va}B_0
\subset 2^{[w\ell(Q)+u+1]\va}B_0\subset 2^{t\va}B_0.$$
By this and the disjointness of
$\{\widehat{Q}\}_{Q\subset Q_i^k}$,
we find that
\begin{align}\label{4e15}
\sum_{Q\subset Q_i^k,\,Q\in\mathcal{Q}_k}
\lf(c_Q\r)^2\chi_{Q\cap(\widehat{\Omega}_k
\setminus\Omega_{k+1})}(x)
&=\sum_{Q\subset Q_i^k,\,Q\in\mathcal{Q}_k}
\int_{\widehat{Q}}
|\psi_t\ast f(y)|^2\,dy\frac{dm(t)}{2^{\nu t}}
\chi_{Q\cap(\widehat{\Omega}_k
\setminus\Omega_{k+1})}(x)\\
&\ls\lf[S(f)(x)\r]^2\chi_{Q_i^k\cap(\widehat
{\Omega}_k\setminus\Omega_{k+1})}(x).\noz
\end{align}
From the definition of $\widehat{\Omega}_k$
(see \eqref{4e13}), it follows that, for any $r\in(1,\fz)$,
\begin{align*}
\lf|\widehat{\Omega}_k\r|\le2^{(2u\nu+1)r}\int_{\rn}
\lf[M_{{\rm HL}}\lf(\chi_{\Omega_k}\r)(x)\r]^r\,dx
\ls|\Omega_k|,
\end{align*}
which, combined with \eqref{4e15}, implies that
\begin{align}\label{4e16}
&\lf\|\lf\{\sum_{Q
\subset Q_i^k,\,Q\in\mathcal{Q}_k}
\lf(c_Q\r)^2\chi_{Q\cap(\widehat{\Omega}_k
\setminus\Omega_{k+1})}\r\}^
{\frac12}\r\|^r_{L^r(\rn)}\\
&\hs\le\int_{\rn}
\lf[\chi_{Q_i^k\cap(\widehat{\Omega}_k
\setminus\Omega_{k+1})}(x)
\int_{\bigcup_{Q\subset Q_i^k,\,
Q\in\mathcal{Q}_k}\widehat{Q}}
|\psi_t\ast f(y)|^2\,dy\frac{dm(t)}
{2^{\nu t}}\r]^{r/2}\,dx\noz\\
&\hs\ls 2^{kr}\lf|\widehat{\Omega}_k\r|
\ls2^{kr}\lf|\Omega_k\r|<\fz.\noz
\end{align}
For any $N\in\mathbb{N}$, let
$\mathcal{Q}_{k,\,N}:=
\{Q\in\mathcal{Q}_k:\ |\ell(Q)|>N\}$.
Then, replacing
$\sum_{Q\subset Q_i^k,\,Q\in\mathcal{Q}_k}e_{Q}$
by $\sum_{Q\subset Q_i^k,\,Q\in\mathcal{Q}_{k,N}}e_Q$
in \eqref{4e14}, we obtain
\begin{align*}
\lf\|\sum_{Q\subset Q_i^k,\,Q\in
\mathcal{Q}_{k,\,N}}e_{Q}\r\|_{L^r(\rn)}^r
&\ls\lf\|\lf[\sum_{Q\subset Q_i^k,\,Q\in
\mathcal{Q}_{k,\,N}}\lf(c_Q\r)^2\chi_{Q\cap
(\widehat{\Omega}_k\setminus\Omega_{k+1})}
\r]^{1/2}\r\|_{L^r(\rn)}^r\noz\\
&\ls\int_{\rn}\chi_{Q_i^k\cap
(\widehat{\Omega}_k\setminus\Omega_{k+1})}(x)
\lf[\int_{\bigcup_{Q\subset Q_i^k,\,Q\in
\mathcal{Q}_{k,\,N}}\widehat{Q}}|\psi_t\ast f(y)|^2\,dy
\frac{dm(t)}{2^{\nu t}}\r]^{r/2}\,dx.\noz
\end{align*}

From this, \eqref{4e16} and the
Lebesgue dominated convergence theorem, we deduce that
$$\lf\|\sum_{Q\subset Q_i^k,\,Q\in
\mathcal{Q}_{k,\,N}}e_{Q}\r\|_{L^r(\rn)}\rightarrow0$$
as $N\rightarrow\fz$, and hence
$h_i^k=\sum_{Q\subset Q_i^k,\,Q\in\mathcal{Q}_k}e_{Q}$
in $L^r(\rn)$. This, together with \eqref{4e14},
\eqref{4e15}, the definition of $B_i^k$ (see \eqref{4e12})
and Lemma \ref{4l2}(iv), implies that
\begin{equation}\label{4e17}
\lf\|h_i^k\r\|_{L^r(\rn)}\ls\lf\{\int_{\rn}\lf[S(f)(x)\r]^r
\chi_{Q_i^k\cap(\widehat{\Omega}_k
\setminus\Omega_{k+1})}(x)\,dx\r\}^{1/r}\ls2^k\lf|Q_i^k\r|^{1/r}
\le C_1 2^k\lf|B_i^k\r|^{1/r},
\end{equation}
where $C_1$ is a positive constant
independent of $f$, $k$ and $i$.

\emph{Step 3.}
Recall that $\varphi$ has the vanishing moments up to
$s\ge\lfloor\nu/a_-(1/\widetilde{p}_--1)\rfloor$
and so does $e_Q$. For any
$k\in\mathbb{Z},\,i$, $\gamma\in\zz_+^n$ with
$|\gamma|\le s$ and $x\in\rn$,
let $g(x):=x^\gamma\chi_{B_i^k}(x)$.
Clearly, $g\in L^{r'}(\rn)$ with $r\in(1,\fz)$.
Thus, by \eqref{4e12} and the facts that
$(L^{r'}(\rn))^\ast=L^r(\rn)$ and
$$\supp e_Q\subset x_Q+
2^{(w[\ell(Q)-1]+u+2)\va}B_0,$$
we conclude that
\begin{equation*}
\int_{\rn}h_i^k(x)x^\gamma\,dx
=\langle h_i^k,g\rangle
=\sum_{Q\subset Q_i^k,\,Q\in
\mathcal{Q}_k}\langle e_{Q},g\rangle
=\sum_{Q\subset Q_i^k,\,Q\in\mathcal{Q}_k}
\int_{\rn}e_{Q}(x)x^\gamma\,dx=0,
\end{equation*}
namely, $h_i^k$ has the vanishing moments up to $s$,
which, combined with \eqref{4e12} and \eqref{4e17},
implies that $h_i^k$ is a harmless constant multiple of a $(\vp,r,s)$-atom
supported on $B_i^k$.

For any $k\in\mathbb{Z}$ and $i$,
let $\lambda_i^k:=C_1 2^k\|\chi_{B_i^k}\|_{\lv}$ and
$a_i^k:=(\lambda_i^k)^{-1}h_i^k$, where
$C_1$ is as in \eqref{4e17}.
Then
$$f=\sum_{k\in\mathbb{Z}}\sum_i h_i^k=
\sum_{k\in\mathbb{Z}}\sum_i \lambda_i^k a_i^k\qquad {\rm in}\quad \cs'(\rn).$$
Moreover, it is easy to see that, for any $k\in\zz$ and $i$, $a_i^k$ is a $(\vp,r,s)$-atom.

For any $k\in\mathbb{Z}$ and $i$, by the fact that
$|\Qik\cap\Omega_k|\ge\frac{|\Qik|}{2}$
and Lemma \ref{4l7}, we find that
$$\lf\|\chi_{\Qik}\r\|_{\lv}\ls
\lf\|\chi_{\Qik\cap\Omega_k}\r\|_{\lv}.$$
From this, Theorem \ref{3t1},
the mutual disjointness of $\{Q_i^k\}_{k\in\mathbb{Z},\,i}$ and
Lemma \ref{4l2}(iv), we further deduce that
\begin{align*}
\|f\|_{\vh}
&\ls\lf\|\lf\{\sum_{k\in\zz}\sum_{i}
\lf[\frac{\lz_i^k\chi_{\Bik}}{\|\chi_{\Bik}\|_{\lv}}\r]^
{p_-}\r\}^{1/p_-}\r\|_{\lv}\\
&\sim\lf\|\lf[\sum_{k\in\zz}\sum_{i}\lf(2^{k}
\chi_{\Bik}\r)^{{p_-}}\r]^{{1/p_-}}\r\|_{\lv}
\sim\lf\|\lf[\sum_{k\in\zz}\sum_{i}\lf(2^{k}
\chi_{\Qik}\r)^{p_-}\r]^{{1/p_-}}\r\|_{\lv}\\
&\sim\lf\|\sum_{k\in\zz}\sum_{i}
\lf(2^{k}\chi_{\Qik}\r)^{p_-}\r\|
_{L^{\vp/p_-}(\rn)}^{{1/p_-}}
\ls\lf\|\sum_{k\in\zz}\sum_{i}
\lf(2^{k}\chi_{\Qik\cap\Omega_k}\r)^{p_-}\r\|
_{L^{\vp/p_-}(\rn)}^{{1/p_-}}\\
&\ls\lf\|\lf[\sum_{k\in\zz}\lf(2^k\chi_{\Omega_k}\r)^
{p_-}\r]^{1/{p_-}}\r\|_{\lv}
\sim\lf\|\lf[\sum_{k\in\zz}
\lf(2^k\chi_{\Omega_k\setminus\Omega_{k+1}}\r)^
{p_-}\r]^{1/{p_-}}\r\|_{\lv}\\
&\ls\lf\|S(f)\lf[\sum_{k\in\zz}
\chi_{\Omega_k\setminus\Omega_{k+1}}
\r]^{1/{p_-}}\r\|_{\lv}
\sim\lf\|S(f)\r\|_{\lv},
\end{align*}
which implies that $f\in\vh$ and \eqref{4e7} holds true.
This finishes the proof of the sufficiency of Theorem \ref{4t1}.

Next we show the necessity of this theorem.
To this end, let $f\in\vh$. Then, by Lemma \ref{4l1}, we know that $f\in\cs'_0(\rn)$.
On the other hand, by Theorem \ref{3t1}, we conclude that
there exist $\{\lz_i\}_{i\in\nn}\st\mathbb{C}$
and a sequence of $(\vp,r,s)$-atoms, $\{a_i\}_{i\in\nn}$,
supported, respectively, on
$\{B_i\}_{i\in\nn}\st\mathfrak{B}$ such that
\begin{align*}
f=\sum_{i\in\nn}\lz_ia_i
\quad\mathrm{in}\quad\cs'(\rn)
\end{align*}
and
\begin{align*}
\|f\|_{\vh}\sim
\lf\|\lf\{\sum_{i\in\nn}
\lf[\frac{|\lz_i|\chi_{B_i}}{\|\chi_{B_i}\|_{\lv}}\r]^
{p_-}\r\}^{1/p_-}\r\|_{\lv}.
\end{align*}
Let $w$ and $u$ be as in Lemma \ref{4l2}(iv).
Then, by an argument similar to that used in the proof of
\cite[(5.10)]{lyy17hl}, we find that,
for any $i\in \nn$ and $x\in (B_i^{(2^{u-w+2})})^\com$
with $B_i^{(2^{u-w+2})}$ as in \eqref{2e2'},
$$S(a_i)(x)\ls \lf\|\chi_{B_i}\r\|_{\lv}^{-1}
\lf[\HL(\chi_{B_i})(x)\r]^{\frac{\nu+(s+1)a_-}{\nu}},$$
where $M_{\rm HL}$ denotes the Hardy-Littlewood maximal operator as in \eqref{3e2}.
From this, we further deduce that, for any $x\in \rn$,
\begin{align}\label{4e19}
S(f)(x)&\le\sum_{i\in\nn}|\lz_i|S(a_i)(x)\chi_{B_i^{(2^{u-w+2})}}(x)
+\sum_{i\in\nn}|\lz_i|S(a_i)(x)\chi_{(B_i^{(2^{u-w+2})})^\com}(x)\\
&\ls\lf\{\sum_{i\in\nn}\lf[|\lz_i|S(a_i)(x)\chi_{B_i^{(2^{u-w+2})}}(x)\r]
^{p_-}\r\}^{1/p_-}\noz\\
&\qquad+\sum_{i\in\nn}\frac{|\lz_i|}{\|\chi_{B_i}\|_{\lv}}
\lf[\HL(\chi_{B_i})(x)\r]^{\frac{\nu+(s+1)a_-}{\nu}}.\noz
\end{align}

By \cite[Theorem 3.2]{blyz10} with the dilation $A$ as in \eqref{4e5},
we know that, for any $r\in (1,\fz)$ and $i\in \nn$,
$$\lf\|S(a_i)\r\|_{L^r(\rn)}\ls\lf\|a_i\r\|_{L^r(\rn)}.$$
Then, by \eqref{4e19} and an argument similar to that
used in the proof of Theorem \ref{3t1}, we further conclude that
$$\|S(f)\|_{\lv}\ls\|f\|_{\vh},$$
which completes the proof of the necessity
and hence of Theorem \ref{4t1}.
\end{proof}

In what follows, for any $x\in\rn$, let
\begin{equation*}
\rho_{\va}(x):=\sum_{j\in\mathbb{Z}}
2^{\nu j}\chi_{2^{(j+1)\va}B_0\setminus 2^{j\va}B_0}(x)\hspace{0.25cm}
{\rm when}\ x\neq\vec0_n,\hspace{0.35cm} {\rm or\ else}
\hspace{0.25cm}\rho_{\va}(\vec0_n):=0.
\end{equation*}

Recall that, for any given $\va\in [1,\fz)^n$,
$\phi\in\cs(\rn)$, $t\in(0,\fz)$,
$k\in\zz$ and any $f\in\cs'(\rn)$,
the\emph{ anisotropic Peetre maximal function}
$(\phi_{k}^*f)_t$ is defined by setting,
for any $x\in\rn$,
\begin{align*}
\lf(\phi_{k}^*f\r)_t(x)
:=\esup_{y\in\rn}\frac{|(\phi_{-k}\ast f)(x+y)|}
{[1+2^{\nu k}\rho_{\va}(y)]^t}
\end{align*}
and the \emph{$g$-function associated with $(\phi_{k}^*f)_t$}
is defined by setting, for any $x\in\rn$,
\begin{align}\label{4e20}
g_{t,\ast}(f)(x)
:=\lf\{\sum_{k\in\zz}\lf[\lf(\phi_{k}^*f\r)_t(x)\r]^2\r\}^{1/2},
\end{align}
where, for any $k\in \mathbb{Z}$, $\phi_{k}(\cdot)
:=2^{-k\nu}\phi(2^{-k\va}\cdot)$.

The following estimate is just a variant of
\cite[(3.13)]{lyy17}, which originates from
\cite[(2.66)]{u12} and the argument
used in the proof of \cite[Theorem 2.8]{u12},
the details being omitted.

\begin{lemma}\label{4l6}
Let $\phi\in\cs(\rn)$ be a radial function satisfying
\eqref{4e1} and \eqref{4e2}.
Then, for any given $N_0\in\nn$ and $r\in(0,\fz)$, there exists a positive
constant $C_{(N_0,r)}$, which may depends on $N_0$ and $r$, such that,
for any $t\in(0,N_0)$, $\ell\in\zz$, $f\in\cs'(\rn)$ and $x\in\rn$,
it holds true that
\begin{align*}
\lf[\lf(\phi^*_\ell f\r)_t(x)\r]^r
\le C_{(N_0,r)}\sum_{k\in\zz_+}2^{-\nu kN_0r}2^{\nu(k+\ell)}
\int_\rn\frac{|(\phi_{-k-\ell}\ast f)(y)|^r}
{[1+2^{\nu\ell}\rho_{\va}(x-y)]^{tr}}\,dy.
\end{align*}
\end{lemma}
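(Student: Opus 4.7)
The plan is to adapt Ullrich's proof of the isotropic Peetre maximal function inequality \cite[(2.66)]{u12} to the anisotropic setting, with the quasi-norm $\rho_{\va}$ playing the role of $|\cdot|$ and the dilations $t^{\va}$ replacing isotropic dilations.

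First, I would establish a Calder\'on-type reproducing formula from \eqref{4e2}. Setting $\Psi:=\phi\ast\widetilde{\phi}$ with $\widetilde{\phi}(x):=\overline{\phi(-x)}$, one has $\widehat{\Psi}(\xi)=|\widehat{\phi}(\xi)|^2$, and \eqref{4e2} reads $\sum_{m\in\zz}\widehat{\Psi}(2^{m\va}\xi)=1$ for $\xi\ne\vec{0}_n$. Then, after the standard justifications on $\cs'(\rn)$ or $\cs'_0(\rn)$,
\begin{align*}
\phi_{-\ell}\ast f \;=\; \sum_{m\in\zz}\phi_{-\ell}\ast\Psi_{-m}\ast f \;=\; \sum_{m\in\zz}\phi_{-\ell}\ast\phi_{-m}\ast\widetilde{\phi}_{-m}\ast f.
\end{align*}

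Next, I would establish pointwise kernel estimates for $\phi_{-\ell}\ast\Psi_{-m}$. For any $M,K\in\zz_+$,
\begin{align*}
\lf|(\phi_{-\ell}\ast\Psi_{-m})(x)\r|
\le C_{(M,K)}\,2^{-|m-\ell|Ka_-}\,2^{\min(m,\ell)\nu}\,[1+2^{\min(m,\ell)}\rho_{\va}(x)]^{-M}
\end{align*}
by the standard dichotomy: when $m\ge\ell$ (finer scale on the right), the Schwartz decay of $\Psi$ combined with the convolution at scale $2^{-\ell}$ gives the bound directly; when $m<\ell$ (coarser scale on the right), the vanishing moments of $\phi_{-\ell}$ from \eqref{4e1} together with a Taylor expansion of $\Psi_{-m}$ at scale $2^{-\ell}$ and the Schwartz decay of $\phi$ yield the extra factor $2^{-|m-\ell|Ka_-}$ for arbitrary $K$ (with constant depending on $K$, since only finitely many vanishing moments are used in each step but $\phi$ itself is Schwartz). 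In both cases one invokes (i), (v), and (vi) of Lemma \ref{2l2} to pass between Euclidean decay and anisotropic decay.

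Then, I would split the reproducing formula as $\sum_{m=\ell+k,\,k\in\zz_+}+\sum_{m<\ell}$, insert the kernel estimate, and reduce the coarser-scale sum ($m<\ell$) to the finer-scale form by using a standard shifting trick that re-expresses $\Psi_{-m}\ast f$ in terms of $\phi_{-k-\ell}\ast f$ via another application of the reproducing formula, the geometric decay $2^{-|m-\ell|Ka_-}$ absorbing the loss. After raising to the $r$-th power and applying either $r$-subadditivity (for $r\le 1$) or H\"older with the summable weight $\{2^{-\nu kN_0r}\}_{k\in\zz_+}$ (for $r>1$), this gives
\begin{align*}
\lf|\phi_{-\ell}\ast f(x+y)\r|^r \le C_{(N_0,r)}\sum_{k\in\zz_+}2^{-\nu kN_0r}\,2^{\nu(k+\ell)}
\int_{\rn}\frac{|\phi_{-k-\ell}\ast f(z)|^r}{[1+2^{\nu\ell}\rho_{\va}(x+y-z)]^{tr}}\,dz.
\end{align*}
Finally, using the submultiplicativity $1+2^{\nu\ell}\rho_{\va}(x+y-z)\gs[1+2^{\nu\ell}\rho_{\va}(x-z)]\,[1+2^{\nu\ell}\rho_{\va}(y)]^{-1}$ (a consequence of Lemma \ref{2l2}(viii) together with the definition of $\rho_{\va}$), I would divide by $[1+2^{\nu\ell}\rho_{\va}(y)]^{tr}$ and take the essential supremum in $y$; by the definition of $(\phi^*_\ell f)_t$, this produces the denominator $[1+2^{\nu\ell}\rho_{\va}(x-y)]^{tr}$ and yields the claimed inequality.

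The principal obstacle is the coarser-scale contribution ($m<\ell$) in Step 2--3, where the finite order of vanishing moments \eqref{4e1} of $\phi$ must be leveraged iteratively to produce a gain $2^{-|m-\ell|Ka_-}$ with $K$ chosen large enough that $Ka_-\ge N_0\nu$; the constraint $t\in(0,N_0)$ and the $N_0$-dependence of the constant $C_{(N_0,r)}$ come exactly from this choice. The handling of $r<1$ via $r$-subadditivity is direct, but the case $r\ge 1$ requires a weighted H\"older step inside the $k$-sum to control the $L^r$ replication without losing the geometric decay.
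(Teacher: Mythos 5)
Your outline follows the right skeleton (Calder\'on reproducing formula from \eqref{4e2}, kernel estimates for $\phi_{-\ell}\ast\Psi_{-m}$, anisotropic submultiplicativity, and finally the essential supremum in $y$), which is indeed the Ullrich--Rychkov template the paper cites and leaves implicit. But the step where you pass from the linear estimate to the power-$r$ estimate is wrong as written, and this is precisely where the actual difficulty of \cite[(2.66)]{u12} lies. You claim that for $r\le 1$ the inequality $\lf(\int h(y)\,dy\r)^r\le\int h(y)^r\,dy$ follows from ``$r$-subadditivity''; it does not. The power function $x\mapsto x^r$ with $r\in(0,1)$ is subadditive for sums, but there is no general inequality of the form $\lf(\int h\r)^r\le\int h^r$ --- e.g.\ $h:=2\chi_{[0,1/2]}$ gives $\lf(\int h\r)^r=1$ while $\int h^r=2^{r-1}<1$. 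So after applying the kernel estimate you are stuck with $\lf(\int|\phi_{-k-\ell}\ast f|/[1+\cdots]^M\,dy\r)^r$ and no elementary device takes you to $\int|\phi_{-k-\ell}\ast f|^r/[1+\cdots]^{tr}\,dy$. The genuine content of the Rychkov--Ullrich argument is a self-improving (sub-mean-value) inequality: one writes $|\phi_{-m}\ast f(y)|=|\phi_{-m}\ast f(y)|^r\,|\phi_{-m}\ast f(y)|^{1-r}$, bounds the second factor by $\lf[(\phi_m^*f)_t(z)\r]^{1-r}\lf[1+2^{\nu m}\rho_{\va}(z-y)\r]^{t(1-r)}$, pulls the Peetre maximal function out of the integral, and then absorbs that self-referential term into the left-hand side after a preliminary qualitative-finiteness (truncation) step. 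Your proposal omits this mechanism entirely, so the proof does not go through as stated.

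A secondary gap is your treatment of the coarse-scale contribution $m<\ell$. You say one may ``re-express $\Psi_{-m}\ast f$ in terms of $\phi_{-k-\ell}\ast f$ via another application of the reproducing formula, the geometric decay absorbing the loss''; but iterating the two-sided identity \eqref{4e2} reproduces coarse scales again, so the argument does not close. This piece is in fact handled by the same Peetre absorption mechanism (one bounds the coarse-scale convolutions by the Peetre maximal function at those levels, and the geometric factor gained there is limited by the finite order $s$ of the vanishing moments from \eqref{4e1}, not by ``arbitrary $K$'' as you claim). Both gaps point to the same missing idea: the inequality is not a straight consequence of kernel estimates plus elementary convexity, but requires the bootstrap argument of \cite[Theorem 2.8]{u12}/Rychkov, which is exactly why the paper refers to those sources rather than writing the proof out.
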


We now prove Theorem \ref{4t2}.

\begin{proof}[Proof of Theorem \ref{4t2}]
First, let $f\in\vh$. Then Lemma \ref{4l1} implies that
$f\in\cs'_0(\rn)$. In addition, repeating the proof
of the necessity of Theorem \ref{4t1} with some slight
modifications, we easily conclude that $g(f)\in \lv$
and $\lf\|g(f)\r\|_{\lv}\ls\|f\|_{\vh}$.
Thus, by Theorem \ref{4t1}, we know that, to prove Theorem \ref{4t2},
it suffices to show that, for any $f\in\cs'_0(\rn)$
with $g(f)\in\lv$,
\begin{align}\label{4e21}
\|S(f)\|_{\lv}\ls\|g(f)\|_{\lv}
\end{align}
holds true.
Indeed, from the fact that, for any $f\in\cs'_0(\rn)$, $t\in(0,\fz)$ and almost every $x\in\rn$,
$S(f)(x)\ls g_{t,\ast}(f)(x)$, it follows that, to show \eqref{4e21}, we only need to prove that
\begin{align}\label{4e22}
\lf\|g_{t,*}(f)\r\|_{\lv}\ls\lf\|g(f)\r\|_{\lv}
\end{align}
holds true for any $f\in\cs'_0(\rn)$ and some $t\in(\frac1{\min\{\widetilde{p}_-,2\}},\fz)$.

Now we show \eqref{4e22}. To this end, assume that $\phi\in\cs(\rn)$
is a radial function and satisfies \eqref{4e1} and \eqref{4e2}.
Obviously, $t\in(\frac1{\min\{\widetilde{p}_-,2\}},\fz)$ implies that there exists
$r\in\lf(0,{\min\{\widetilde{p}_-,2\}}\r)$ such that $t\in(\frac1{r},\fz)$.
Fix $N_0\in(\frac1r,\fz)$.
By this, Lemma \ref{4l6} and the Minkowski inequality,
we know that, for any $x\in\rn$,
\begin{align*}
g_{t,*}(f)(x)
&=\lf\{\sum_{k\in\zz}\lf[\lf(\phi_k^*f\r)_t(x)\r]^2\r\}^{1/2}\\
&\ls\lf[\sum_{k\in\zz}\lf\{\sum_{j\in\zz_+}2^{-\nu jN_0r}2^{\nu(j+k)}
\int_\rn\frac{|(\phi_{-j-k}\ast f)(y)|^r}
{[1+2^{\nu k}\rho_{\va}(x-y)]^{tr}}\,dy\r\}^{2/r}\r]^{1/2}\\
&\ls\lf\{\sum_{j\in\zz_+}2^{-j\nu(N_0r-1)}\lf[\sum_{k\in\zz}2^{\frac{2k\nu}r}
\lf\{\int_\rn\frac{|(\phi_{-j-k}\ast f)(y)|^r}
{[1+2^{\nu k}\rho_{\va}(x-y)]^{tr}}\,dy\r\}^{2/r}\r]^{r/2}\r\}^{1/r},
\end{align*}
which, combined with \eqref{2e8}, implies that
\begin{align*}
&\lf\|g_{t,*}(f)\r\|_{\lv}^{rp_-}\\
&\hs\ls\lf\|\sum_{j\in\zz_+}2^{-j\nu(N_0r-1)}
\lf[\sum_{k\in\zz}2^{\frac{2k\nu}r}
\lf\{\int_\rn\frac{|(\phi_{-j-k}\ast f)(y)|^r}
{[1+2^{\nu k}\rho_{\va}(\cdot-y)]^{tr}}\,dy\r\}^{2/r}\r]^{r/2}\r\|
_{L^{\frac{\vp}{r}}(\rn)}^{p_-}\\
&\hs\ls\sum_{j\in\zz_+}2^{-j\nu(N_0r-1)p_-}
\lf\|\lf[\sum_{k\in\zz}2^{\frac{2k\nu}r}
\lf\{\int_\rn\frac{|(\phi_{-j-k}\ast f)(y)|^r}
{[1+2^{\nu k}\rho_{\va}(\cdot-y)]^{tr}}\,dy\r\}^{2/r}\r]^{r/2}\r\|
_{L^{\frac{\vp}{r}}(\rn)}^{p_-}\\
&\hs\ls\sum_{j\in\zz_+}2^{-j\nu(N_0r-1)p_-}
\lf\|\lf\{\sum_{k\in\zz}2^{\frac{2k\nu}r}
\lf[\sum_{i\in\nn}2^{-\nu itr}\int_{\rho_{\va}(\cdot-y)\sim 2^{\nu(i-k)}}
\lf|(\phi_{-j-k}\ast f)(y)\r|^r\,dy\r]^{2/r}\r\}^{r/2}\r\|
_{L^{\frac{\vp}{r}}(\rn)}^{p_-},
\end{align*}
where $\rho_{\va}(\cdot-y)\sim 2^{\nu(i-k)}$ means that
$\{x\in\rn:\ \rho_{\va}(x-y)<2^{-\nu k}\}$ when $i=0$,
or $\{x\in\rn:\ 2^{\nu(i-k-1)}\le\rho_{\va}(x-y)<2^{\nu(i-k)}\}$ when $i\in\nn$.
Then, from the Minkowski inequality and Lemma \ref{3l2},
we further deduce that
\begin{align*}
&\lf\|g_{t,*}(f)\r\|_{\lv}^{rp_-}\\
&\hs\ls\sum_{j\in\zz_+}2^{-j\nu(N_0r-1)p_-}
\lf\|\sum_{i\in\nn}2^{-\nu itr}\lf\{\sum_{k\in\zz}2^{\frac{2k\nu}r}
\lf[\int_{\rho_{\va}(\cdot-y)\sim 2^{\nu(i-k)}}
\lf|(\phi_{-j-k}\ast f)(y)\r|^r\,dy\r]^{2/r}\r\}^{r/2}\r\|
_{L^{\frac{\vp}{r}}(\rn)}^{p_-}\\
&\hs\ls\sum_{j\in\zz_+}2^{-j\nu(N_0r-1)p_-}
\lf\|\sum_{i\in\nn}2^{\nu i(1-tr)}\lf\{\sum_{k\in\zz}
\lf[\HL\lf(\lf|\phi_{-j-k}\ast f\r|^r\r)\r]^{2/r}\r\}^{r/2}\r\|
_{L^{\frac{\vp}{r}}(\rn)}^{p_-}\\
&\hs\ls\sum_{j\in\zz_+}2^{-j\nu(N_0r-1)p_-}
\sum_{i\in\nn}2^{\nu ip_-(1-tr)}\lf\|\lf(\sum_{k\in\zz}
\lf|\phi_{-j-k}\ast f\r|^2\r)^{r/2}\r\|
_{L^{\frac{\vp}{r}}(\rn)}^{p_-}
\ls\|g(f)\|_{\lv}^{rp_-}.
\end{align*}
This proves \eqref{4e22} and hence
finishes the proof of Theorem \ref{4t2}.
\end{proof}

Applying Theorems \ref{4t1} and \ref{4t2}, we now
prove Theorem \ref{4t3}.
\begin{proof}[Proof of Theorem \ref{4t3}]
By Theorem \ref{4t1} and the fact that, for any $f\in\cs'(\rn)$ and $x\in\rn$,
$S(f)(x)\le g_\lz^{\ast}(f)(x)$,
we find that
the sufficiency of this theorem is obvious. Thus,
to prove this theorem, it suffices to show the necessity.

To this end, let $f\in\vh$ and $\varphi$ be as in the proof
of Theorem \ref{4t2}. Then, by Lemma \ref{4l1},
we find that $f\in\cs'_0(\rn)$. By the fact that
$\lambda\in(1+\frac{2}{{\min\{\widetilde{p}_-,2\}}}, \fz)$,
we conclude that there exists some
$t\in(\frac1{{\min\{\widetilde{p}_-,2\}}},\fz)$ such that $\lz\in(1+2t,\fz)$
and, for any $x\in\rn$,
\begin{align*}
g_\lambda^\ast(f)(x)&=
\lf\{\sum_{k\in\mathbb{Z}}2^{-k\nu}\int_{\rn}
\lf[\frac{2^{k}}{2^{k}+|x-y|_{\va}}\r]^{\lambda\nu}
\lf|f\ast\varphi_{k}(y)\r|^2\,dy\r\}^{1/2}\\
&\ls\lf\{\sum_{k\in\mathbb{Z}}2^{-k\nu}
\lf[\lf(\varphi_{-k}^*f(x)\r)_t\r]^2\int_{\rn}
\lf[1+\frac{\rho_{\va}(x-y)}{2^{\nu k}}\r]^{2t-\lz}\,dy\r\}^{1/2}\\
&\ls\lf\{\sum_{k\in\mathbb{Z}}
\lf[\lf(\varphi_{-k}^*f(x)\r)_t\r]^2\r\}^{1/2}
\sim g_{t,*}(f)(x).
\end{align*}
This, together with \eqref{4e22} and Theorem \ref{4t2},
further implies that $g_\lambda^\ast(f)\in\lv$ and
$$\lf\|g_\lambda^\ast(f)\r\|_{\lv}
\ls\|f\|_{\lv},$$
which completes the proof Theorem \ref{4t3}.
\end{proof}

\section{Finite atomic characterizations of $\vh$\label{s5}}

In this section, we establish the finite atomic characterizations of $\vh$.
We begin with introducing the following notion
of anisotropic mixed-norm finite atomic Hardy spaces $\vfah$.

\begin{definition}\label{5d1}
Let $\va\in[1,\fz)^n$, $\vp\in(0,\fz)^n$, $r\in (1,\fz]$
and $s$ be as in \eqref{3e1}. The \emph{anisotropic mixed-norm
finite atomic Hardy space} $\vfah$ is defined to be the set of all
$f\in\cs'(\rn)$ satisfying that there exist $I\in\nn$,
$\{\lz_i\}_{i\in[1,I]\cap\nn}\st\mathbb{C}$ and
a finite sequence of $(\vp,r,s)$-atoms,
$\{a_i\}_{i\in[1,I]\cap\nn}$, supported, respectively, on
$\{B_i\}_{i\in[1,I]\cap\nn}\st\mathfrak{B}$
such that
\begin{align*}
f=\sum_{i=1}^I\lambda_ia_i
\quad\mathrm{in}\quad\cs'(\rn).
\end{align*}
Moreover, for any $f\in\vfah$, let
\begin{align*}
\|f\|_{\vfah}:=
{\inf}\lf\|\lf\{\sum_{i=1}^{I}
\lf[\frac{|\lz_i|\chi_{B_i}}{\|\chi_{B_i}\|_{\lv}}\r]^
{p_-}\r\}^{1/p_-}\r\|_{\lv},
\end{align*}
where $p_-$ is as in \eqref{2e10} and the
infimum is taken over all decompositions of $f$ as above.
\end{definition}

By \cite[p.\,13, Theorem 3.6]{mb03} with $A$ as in \eqref{4e5},
we immediately obtain the following conclusion, the details being omitted.

\begin{lemma}\label{5l5} For any given $N\in\nn$, let $M_N$ be as in Definition
\ref{2d4}.
\begin{enumerate}
\item[\rm(i)]
Let $p\in(1,\fz]$. Then, for any given $N\in \nn$, there exists a positive constant $C_{(p,N)}$,
depending on $p$ and $N$, such that, for any $f\in L^p(\rn)$,
\begin{align*}
\lf\|M_N(f)\r\|_{L^p(\rn)}
\le C_{(p,N)}\|f\|_{L^p(\rn)}.
\end{align*}
\item[\rm(ii)] For any given $N\in \nn$, there exists a positive constant $C_{(N)}$,
depending on $N$, such that, for any $\lambda\in (0,\fz)$ and $f\in L^1(\rn)$,
\begin{align*}
\lf|\lf\{x\in\rn:\ M_N(f)(x)>\lz\r\}\r|\le C_{(N)}\frac{\|f\|_{L^1(\rn)}}\lambda.
\end{align*}
\end{enumerate}
\end{lemma}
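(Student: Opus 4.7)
The plan is to reduce both (i) and (ii) to Bownik's \cite[p.\,13, Theorem 3.6]{mb03} applied to the grand maximal function $M_N^A$ associated with the expansive diagonal matrix $A$ of \eqref{4e5}, which asserts precisely the $L^p$-boundedness for $p\in(1,\fz]$ and the weak $(1,1)$ estimate for $M_N^A$. Therefore the entire argument reduces to proving the pointwise equivalence
\[
M_N(f)(x)\sim M_N^A(f)(x),\quad x\in\rn,\ f\in\cs'(\rn),
\]
with implicit constants depending only on $N$ and $\va$.

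To establish this equivalence, the first step is to compare the underlying test-function classes: using Lemma \ref{2l2}(i) together with the fact that for $A$ as in \eqref{4e5} one has $A^k x = 2^{k\va}x$, the step homogeneous quasi-norm $\rho_A$ attached to $A$ by Bownik's construction satisfies $\rho_A(\cdot)\sim|\cdot|_{\va}^{\nu}$; combined with Remark \ref{2r1}(iii), this implies that the $\cs_N$-class used in defining $M_N^A$ coincides with $\cs_N(\rn)$ from Section \ref{s2} up to a uniform multiplicative constant. The second step is to replace the continuous dilation parameter $t\in(0,\fz)$ by a dyadic one: writing $t=2^ku$ with $k\in\zz$ and $u\in[1,2)$, a direct change of variables gives
\[
\phi_t(\cdot)=(\psi_u)_{2^k}(\cdot),\qquad\mathrm{where}\quad\psi_u(y):=u^{-\nu}\phi(u^{-\va}y),
\]
and the family $\{\psi_u\}_{u\in[1,2)}$ turns out to be uniformly bounded in $\cs_N(\rn)$. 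Since for such $t$ one has $B_{\va}(x,t)\subset B_{\va}(x,2^{k+1})$, the non-tangential cones used in the definition of $M_N$ are absorbed, up to a harmless constant, into the dyadic cones used in $M_N^A$, which yields the desired pointwise equivalence.

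The only mildly technical point will be the uniform-in-$u$ bound $\sup_{u\in[1,2)}\|\psi_u\|_{\cs_N(\rn)}<\fz$, but this is a soft continuity argument: the map $u\mapsto\psi_u$ is continuous from the compact interval $[1,2]$ into $\cs(\rn)$ equipped with its usual Fr\'echet topology, hence its image is bounded in each seminorm $\|\cdot\|_{N,\az}$; combining this with the homogeneity-type estimates in Remark \ref{2r1}(iii) and Lemma \ref{2l2} yields the desired uniform bound. Once the pointwise equivalence $M_N\sim M_N^A$ is in hand, conclusions (i) and (ii) follow immediately from \cite[Theorem 3.6]{mb03}, with the positive constants $C_{(p,N)}$ and $C_{(N)}$ absorbing the equivalence constant.
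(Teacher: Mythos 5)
Your approach matches the paper's: Lemma~\ref{5l5} is obtained by invoking Bownik's \cite[p.\,13, Theorem 3.6]{mb03} with the diagonal expansive matrix $A$ of \eqref{4e5}, and the paper simply leaves the transfer argument implicit (``the details being omitted''). Your dyadic-to-continuous reduction via $t=2^ku$, $u\in[1,2)$, together with the compactness argument giving $\sup_{u\in[1,2]}\|\psi_u\|_{\cs_N(\rn)}<\fz$, is the right way to supply those details. One point needs repair, though: Bownik's Schwartz class and the paper's $\cs_N(\rn)$ do \emph{not} coincide up to a multiplicative constant with the \emph{same} index $N$. Bownik's decay weight is $(1+\rho_A(x))^N$, and, as you correctly observe, $\rho_A(\cdot)\sim|\cdot|_{\va}^{\nu}$; combined with $\langle x\rangle_{\va}\sim1+|x|_{\va}$ from Lemma~\ref{2l2}(vii), this yields $(1+\rho_A(x))^N\sim\langle x\rangle_{\va}^{N\nu}$, which equals $\langle x\rangle_{\va}^N$ only when $\nu=1$. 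The correct comparison is an inclusion with a shifted index, namely $\cs_N(\rn)\subset C\,\cs^A_{\lfloor N/\nu\rfloor}$, and hence $M_N(f)\lesssim M^A_{\lfloor N/\nu\rfloor}(f)$ pointwise; Bownik's Theorem~3.6 must then be invoked with index $\lfloor N/\nu\rfloor$. This costs nothing because Bownik's result holds for every $N$ above a fixed threshold, $M_N$ decreases as $N$ grows, and the paper applies Lemma~\ref{5l5} only with $N\ge N_{\vp}$, so $\lfloor N/\nu\rfloor$ remains large enough. With that index adjustment your argument goes through; without it, the asserted identity of the $\cs_N$-classes fails whenever $\nu>1$.
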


Obviously, by Theorem \ref{3t1}, we easily know that,
for any $\va\in[1,\fz)^n$, $\vp\in(0,\fz)^n$,
$s\in\zz_+$ as in \eqref{3e1} and
$r\in(\max\{p_+,1\},\fz]$ with $p_+$ as in \eqref{2e10},
the set $H_{\va,{\rm fin}}^{\vp,r,s}(\rn)$ is dense in $\vh$ with
respect to the quasi-norm $\|\cdot\|_{\vh}$.
From this, we deduce the following density of $\vh$.

\begin{lemma}\label{5l6}
If $\va\in [1,\fz)^n$ and $\vp\in (0,\fz)^n$, then,
\begin{enumerate}
\item[{\rm (i)}] for any $q\in[1,\fz]$, $\vh\cap L^q(\rn)$
is dense in $\vh$;
\item[{\rm (ii)}] $\vh\cap C_c^\fz(\rn)$
is dense in $\vh$.
\end{enumerate}
\end{lemma}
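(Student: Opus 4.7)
The plan is to bootstrap both conclusions from the density fact highlighted just above the statement: for any admissible triplet $(\vp,r,s)$, Theorem \ref{3t1} yields that $H_{\va,{\rm fin}}^{\vp,r,s}(\rn)$ is dense in $\vh$ with respect to $\|\cdot\|_{\vh}$. The natural choice is to take $r=\fz$, since every $(\vp,\fz,s)$-atom has compact support and is in $L^{\fz}(\rn)$.

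For (i), I would observe that, if $a$ is any $(\vp,\fz,s)$-atom supported on $B\in\mathfrak{B}$, then $\|a\|_{L^{\fz}(\rn)}\le\|\chi_B\|_{\lv}^{-1}$ and $a$ has compact support; hence $a\in L^q(\rn)$ for every $q\in[1,\fz]$. Any finite $\mathbb{C}$-linear combination of such atoms therefore lies in $L^q(\rn)$, and consequently $H_{\va,{\rm fin}}^{\vp,\fz,s}(\rn)\subset\vh\cap L^q(\rn)$. Combining this containment with the density of $H_{\va,{\rm fin}}^{\vp,\fz,s}(\rn)$ in $\vh$ yields (i) immediately.

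For (ii), fix $f\in\vh$ and $\epsilon\in(0,\fz)$. I would first invoke the density from the previous paragraph to choose a finite atomic combination $g=\sum_{i=1}^{I}\lambda_i a_i$, with $(\vp,\fz,s)$-atoms $a_i$ supported on $B_i=B_{\va}(x_i,r_i)\in\mathfrak{B}$, such that $\|f-g\|_{\vh}<\epsilon/2$. Next, fix some $\psi\in C_c^{\fz}(\rn)$ with $\supp\psi\subset B_0$ and $\int_{\rn}\psi(x)\,dx=1$, and set $\psi_{\delta}(x):=\delta^{-\nu}\psi(\delta^{-\va}x)$ for $\delta\in(0,\fz)$; by Lemma \ref{2l2}(i), $\supp\psi_{\delta}\subset B_{\va}(\vec{0}_n,\delta)$ and $\int_{\rn}\psi_{\delta}(x)\,dx=1$. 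Then $g\ast\psi_{\delta}\in C_c^{\fz}(\rn)$, and the task is to show that
\begin{align*}
\lf\|g-g\ast\psi_{\delta}\r\|_{\vh}\to0\quad\mathrm{as}\quad\delta\to 0^{+}.
\end{align*}

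To realize this convergence via Theorem \ref{3t1}, fix $r\in(\max\{p_+,1\},\fz)$ and write $g-g\ast\psi_{\delta}=\sum_{i=1}^{I}\lambda_i (a_i-a_i\ast\psi_{\delta})$. For each $i$, Lemma \ref{2l2}(ii) together with $\supp a_i\subset B_i$ and $\supp\psi_{\delta}\subset B_{\va}(\vec{0}_n,\delta)$ yields $\supp(a_i-a_i\ast\psi_{\delta})\subset \widetilde{B}_{i,\delta}:=B_{\va}(x_i,r_i+\delta)$. A direct expansion of $(x-y+y)^{\alpha}$ in $\int(a_i\ast\psi_{\delta})(x)x^{\alpha}\,dx$, combined with the vanishing moments of $a_i$ up to order $s$, shows that $a_i-a_i\ast\psi_{\delta}$ also has vanishing moments up to order $s$. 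Since $a_i\in L^r(\rn)$ (as $a_i\in L^{\fz}(\rn)$ with compact support), the standard mollifier convergence gives $\mu_{i,\delta}:=\|a_i-a_i\ast\psi_{\delta}\|_{L^r(\rn)}\to 0$ as $\delta\to0^{+}$. Consequently, setting
\begin{align*}
c_{i,\delta}:=\mu_{i,\delta}|\widetilde{B}_{i,\delta}|^{-1/r}\lf\|\chi_{\widetilde{B}_{i,\delta}}\r\|_{\lv},
\quad b_{i,\delta}:=c_{i,\delta}^{-1}(a_i-a_i\ast\psi_{\delta})
\end{align*}
exhibits each $b_{i,\delta}$ as a $(\vp,r,s)$-atom supported on $\widetilde{B}_{i,\delta}$, with $c_{i,\delta}\to0$ as $\delta\to 0^{+}$ since $\|\chi_{\widetilde{B}_{i,\delta}}\|_{\lv}/|\widetilde{B}_{i,\delta}|^{1/r}$ remains uniformly bounded in $\delta\in(0,1]$ by Lemma \ref{4l7}. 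Plugging the decomposition $g-g\ast\psi_{\delta}=\sum_{i=1}^{I}\lambda_i c_{i,\delta} b_{i,\delta}$ into the atomic quasi-norm of Definition \ref{3d2} and applying Theorem \ref{3t1} bounds $\|g-g\ast\psi_{\delta}\|_{\vh}$ by a finite sum of $I$ terms, each tending to $0$ as $\delta\to0^{+}$. Selecting $\delta$ small enough that this sum is below $\epsilon/2$ then gives $\|f-g\ast\psi_{\delta}\|_{\vh}<\epsilon$, which proves (ii). The principal technical point is the last one: verifying that $a_i-a_i\ast\psi_{\delta}$ is a vanishing multiple of a $(\vp,r,s)$-atom on the enlarged ball $\widetilde{B}_{i,\delta}$ and that the corresponding finite atomic quasi-norm tends to $0$.
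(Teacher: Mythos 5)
Your argument is correct, and for part (i) it coincides with the paper's. For part (ii), however, you take a genuinely different and arguably more elementary route. The paper first proves the general statement that $f\ast\varphi_k\to f$ in $\vh$ as $k\to-\fz$ for \emph{every} $f\in\vh$ and every $\varphi\in\cs(\rn)$ with nonzero integral; this is done in two stages (first $f\in\vh\cap L^2(\rn)$, via establishing a.e.\ convergence of $M_N(f\ast\varphi_k-f)$ through approximation by continuous compactly supported functions, the pointwise bound of Lemma \ref{5l5}, a weak-type estimate, and dominated convergence; then general $f\in\vh$ by density). Only afterwards is this applied to $f\in H_{\va,{\rm fin}}^{\vp,r,s}(\rn)$, whose mollifications lie in $C_c^\fz(\rn)$. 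You instead mollify only the finite atomic approximant $g$ and control $\|g-g\ast\psi_\delta\|_{\vh}$ directly by exhibiting $g-g\ast\psi_\delta$ as a finite combination of $(\vp,r,s)$-atoms on slightly enlarged balls with coefficients $|\lambda_i|c_{i,\delta}\to0$, then closing with Theorem \ref{3t1} and the $p_-$-triangle inequality (via \eqref{2e8} and completeness of $L^{\vp/p_-}(\rn)$). The trade-offs: your approach avoids the maximal-function machinery and the external citations to \cite[Lemma 6.6]{mb03} and \cite[Lemma 5.2(ii)]{lyy16}, at the cost of the moment-preservation computation (correct: expand $(z+y)^\alpha$ multinomially and use the vanishing moments of $a_i$) and the uniform bound on $\|\chi_{\widetilde{B}_{i,\delta}}\|_{\lv}/|\widetilde{B}_{i,\delta}|^{1/r}$ for $\delta\in(0,1]$. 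The paper's version yields the stronger general approximation statement \eqref{5e22} as a by-product; yours is leaner if one only needs the density claim itself.
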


\begin{proof}
We first prove (i).
For any $\vp\in(0,\fz)^n$, by the density
of the set $H_{\va,{\rm fin}}^{\vp,\fz,s}(\rn)$
in $\vh$ and $H_{\va,{\rm fin}}^{\vp,\fz,s}(\rn)\subset L^q(\rn)$ for any $q\in[1,\fz]$,
we easily know that
$\vh\cap L^q(\rn)$ is dense in $\vh$. This finishes the proof of (i).

Next we show (ii).
To this end, we first prove that, for any $\varphi\in\cs(\rn)$ with
$\int_{\rn}\varphi(x)\,dx\neq0$ and $f\in \vh$, as $k\to-\fz$,
\begin{align}\label{5e22}
f\ast\varphi_k\rightarrow f \hspace{0.5cm} {\rm in}\quad \vh.
\end{align}
To show this, we first assume that $f\in \vh\cap L^2(\rn)$.
In this case, to prove \eqref{5e22}, it suffices to show that, for almost every $x\in\rn$,
as $k\to-\fz$,
\begin{align}\label{5e23}
M_N\lf(f\ast\varphi_k-f\r)(x)\rightarrow0,
\end{align}
where $N:=N_{\vp}+2$. Indeed, by the fact that, for any $k\in \nn$,
$f\ast\varphi_k-f\in L^2(\rn)$ and Lemma \ref{5l5}(i), we find that, for any $k\in\mathbb{Z}$,
$M_N(f\ast\varphi_k-f)\in L^2(\rn)$. By this,
\cite[p.\,39, Lemma 6.6]{mb03} with $A$ as in \eqref{4e5}, \eqref{5e23}
and the Lebesgue dominated convergence theorem, we know that, for any $f\in \vh\cap L^2(\rn)$,
\eqref{5e22} holds true.

Now we show \eqref{5e23}.
Notice that, if $h$ is continuous and has compact support,
then $h$ is uniformly continuous on $\rn$. Therefore, for any $\delta\in(0,\fz)$,
there exists $\eta\in(0,\fz)$ such that, for any $y\in\rn$
satisfying $|y|_{\va}<\eta$ and $x\in\rn$,
$$|h(x-y)-h(x)|<\frac\delta{2\|\varphi\|_{L^1(\rn)}}.$$
Without loss of generality,
we may assume that $\int_{\rn}\varphi(x)\,dx=1$. Then, for any $k\in\zz$,
$\int_{\rn}\varphi_k(x)\,dx=1$. This further implies that, for any $k\in\zz$ and $x\in\rn$,
\begin{align}\label{5e24}
|h\ast\varphi_k(x)-h(x)|
&\le\int_{|y|_{\va}<\eta}|h(x-y)-h(x)||\varphi_k(y)|\,dy
+\int_{|y|_{\va}\ge\eta}\cdots\\
&<\frac\delta2+2\|h\|_{L^\fz(\rn)}
\int_{|y|_{\va}\ge 2^{-k}\eta}|\varphi(y)|\,dy.\noz
\end{align}
On the other hand, by the integrability of $\varphi$, we know that
there exists $\widetilde{k}\in\zz$ such that,
for any $k\in(-\fz,\widetilde{k}]\cap\zz$,
$$2\|h\|_{L^\fz(\rn)}
\int_{|y|_{\va}\ge 2^{-k}\eta}|\varphi(y)|\,dy<\frac\delta2,$$
which, combined with \eqref{5e24}, implies that
$\lim_{k\to-\fz}|h\ast\varphi_k(x)-h(x)|=0$ holds true uniformly for any $x\in\rn$.
Thus, $\|h\ast\varphi_k-h\|_{L^\fz(\rn)}\to0$ as $k\to-\fz$. From this and Lemma \ref{5l5}(i), we deduce that
\begin{align}\label{5e25}
\lf\|M_N\lf(h\ast\varphi_k-h\r)\r\|_{L^\fz(\rn)}\ls
\|h\ast\varphi_k-h\|_{L^\fz(\rn)}\to0\hspace{0.3cm} {\rm as}\ k\to-\fz.
\end{align}
For any given $\epsilon\in(0,\fz)$, there exists a continuous function $h$
with compact support such that
$$\|f-h\|^2_{L^2(\rn)}<\epsilon.$$
Then \eqref{5e25} and \cite[p.\,39, Lemma 6.6]{mb03} with $A$ as
in \eqref{4e5} imply that there exists a positive constant $C_2$ such that,
for any $x\in\rn$,
\begin{align*}
&\limsup_{k\to-\fz}M_N\lf(f\ast\varphi_k-f\r)(x)\\
&\hs\le\sup_{k\in\zz}M_N\lf((f-h)\ast\varphi_k\r)(x)
+\limsup_{k\to-\fz}M_N\lf(h\ast\varphi_k-h\r)(x)
+M_N(h-f)(x)\le C_2M_{N_{\vp}}(h-f)(x).
\end{align*}
By this and Lemma \ref{5l5}(ii),
we conclude that there exists a
positive constant $C_3$ such that, for any $\lz\in(0,\fz)$,
\begin{align*}
&\lf|\lf\{x\in\rn:\ \limsup_{k\to-\fz}
M_N\lf(f\ast\varphi_k-f\r)(x)>\lz\r\}\r|\\
&\hs\le\lf|\lf\{x\in\rn:\
M_{N_{\vp}}(h-f)(x)>\frac\lz{C_2}\r\}\r|
\le C_3\frac{\|f-h\|^2_{L^2(\rn)}}{\lz^2}\le C_3\frac{\epsilon}{\lz^2}.
\end{align*}
This implies that, for any
$f\in \vh\cap L^2(\rn)$, \eqref{5e23} holds true.

When $f\in \vh$, by an argument similar to that used in the proof of
\cite[Lemma 5.2(ii)]{lyy16}, we easily find that \eqref{5e22} also holds true.

Notice that, if $f\in H_{\va,{\rm fin}}^{\vp,r,s}(\rn)$
and $\varphi\in C_c^\fz(\rn)$
with $\int_{\rn}\varphi(x)\,dx\neq0$, then, for any $k\in\zz$,
$$f\ast\varphi_k\in C_c^\fz(\rn)\cap \vh$$
and, by \eqref{5e22},
\begin{align*}
f\ast\varphi_k\rightarrow f \quad\mathrm{in}\quad \vh
\quad\mathrm{as}\quad k\to-\fz.
\end{align*}
From this and the density of the set $H_{\va,{\rm fin}}^{\vp,r,s}(\rn)$ in
$\vh$, it follows that
$C_c^\fz(\rn)\cap \vh$ is dense in $\vh$. This finishes the proof
of (ii) and hence of Lemma \ref{5l6}.
\end{proof}

The following Lemmas \ref{5l4} and \ref{5l1} are from Theorem \ref{3t1}
and its proof, which are of independent interest and are needed
in the proof of Theorem \ref{5t1} below.

\begin{lemma}\label{5l4}
Let $\va\in[1,\fz)^n$, $\vp\in(0,\fz)^n$, $r\in(\max\{p_+,1\},\fz]$
with $p_+$ as in \eqref{2e10}, $s$ be as in \eqref{3e1}
and $N$ as in \eqref{2e11}. Then there exists a positive
constant $C$ such that, for any $(\vp,r,s)$-atom $a$,
$$\lf\|M_N(a)\r\|_{\lv}\le C.$$
\end{lemma}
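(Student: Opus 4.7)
The plan is to reduce the claim to a uniform bound on $\|M_0(a)\|_{\lv}$ via Lemma \ref{3l8}, and then exploit the very same pointwise estimate \eqref{3e24} that was derived in the proof of Theorem \ref{3t1}, specialized to a single atom so that Lemma \ref{3l6} collapses to a one-term sum.

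First I would note that, by Lemma \ref{3l8} applied with $\phi := \Phi$ (the fixed bump fixing $M_0$ in \eqref{3e16}), it suffices to show the existence of a positive constant $C$, independent of $a$, such that
$$\|M_0(a)\|_{\lv}\le C$$
for every $(\vp,r,s)$-atom $a$ supported in some $B\in\mathfrak{B}$. Indeed, Lemma \ref{3l8} yields $\|M_N(a)\|_{\lv}=\|a\|_{\vh}\sim\|M_0(a)\|_{\lv}$.

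Next I would invoke the pointwise bound \eqref{3e24} established inside the proof of Theorem \ref{3t1}, valid for every $(\vp,r,s)$-atom $a$ supported in $B$, namely
$$M_0(a)(x)\ls M_{\rm HL}(a)(x)\chi_{B^{(2)}}(x)+\frac{1}{\|\chi_B\|_{\lv}}\lf[M_{\rm HL}(\chi_B)(x)\r]^{\frac{\nu+(s+1)a_-}{\nu}},$$
and then bound the $\lv$-quasi-norm of each summand separately. For the first term I would apply Lemma \ref{3l6} with a single index ($\lz_1:=1$, $m_1:=M_{\rm HL}(a)\chi_{B^{(2)}}$, $B_1:=B$, $\bz:=2$); hypothesis \eqref{3e37} is verified by the $L^r$-boundedness of $M_{\rm HL}$ from Lemma \ref{3l1} (permissible since $r\in(\max\{p_+,1\},\fz]$) combined with Definition \ref{3d1}(ii), and the conclusion of Lemma \ref{3l6} then yields $\|M_{\rm HL}(a)\chi_{B^{(2)}}\|_{\lv}\ls\|\chi_B\|_{\lv}/\|\chi_B\|_{\lv}=1$.

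For the second term I would use \eqref{2e8} together with the fact that, by \eqref{3e1}, $\wz{p}_->\nu/(\nu+(s+1)a_-)$, so that $\frac{\nu+(s+1)a_-}{\nu}\vp\in(1,\fz)^n$ and Lemma \ref{3l1} applies on this mixed-norm Lebesgue space; this gives
$$\lf\|\frac{1}{\|\chi_B\|_{\lv}}\lf[M_{\rm HL}(\chi_B)\r]^{\frac{\nu+(s+1)a_-}{\nu}}\r\|_{\lv}=\frac{1}{\|\chi_B\|_{\lv}}\lf\|M_{\rm HL}(\chi_B)\r\|_{L^{\frac{\nu+(s+1)a_-}{\nu}\vp}(\rn)}^{\frac{\nu+(s+1)a_-}{\nu}}\ls\frac{\|\chi_B\|_{\lv}}{\|\chi_B\|_{\lv}}=1.$$
Adding the two contributions produces the desired uniform bound, and then Lemma \ref{3l8} transfers it to $\|M_N(a)\|_{\lv}$. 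I do not anticipate a real obstacle here: the argument is essentially a \emph{single-atom} specialization of the atomic-to-grand-maximal estimate already carried out in the proof of Theorem \ref{3t1}, with no additional summation to control.
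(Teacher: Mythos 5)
Your argument is correct and is precisely the single-atom specialization of the inclusion $\vah\subset\vh$ in Theorem~\ref{3t1}, which is what the paper means when it records Lemma~\ref{5l4} as coming ``from Theorem~\ref{3t1} and its proof.'' An equivalent, even shorter packaging: taking the trivial decomposition $a=1\cdot a$ supported on $B$ shows $\|a\|_{\vah}\le 1$, so by Theorem~\ref{3t1} and Definition~\ref{2d5} one gets $\|M_N(a)\|_{\lv}=\|a\|_{\vh}\ls\|a\|_{\vah}\le 1$.
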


\begin{lemma}\label{5l1}
Let $\va\in[1,\fz)^n$, $\vp\in(0,\fz)^n$, $r\in (1,\fz]$
and $s$ be as in \eqref{3e1}. Then,
for any $f\in \vh\cap L^{r}(\rn)$, there exist
$\{\lz_{j,k}\}_{j\in\zz,\,k\in\nn}\subset\mathbb{C}$,
$\{B_{j,k}\}_{j\in \zz,k\in\nn}\st\mathfrak{B}$ and
$(\vp,\fz,s)$-atoms $\{a_{j,k}\}_{j\in\zz,\,k\in\nn}$ such that
\begin{align*}
f=\sum_{j\in\zz}\sum_{k\in\nn}
\lz_{j,k}a_{j,k}\quad{\rm in}\quad \cs'(\rn),
\end{align*}
\begin{align}\label{5e1}
\supp a_{j,k}\subset B_{j,k}\hspace{0.2cm}
for\ any\ j\in\zz\ and\ k\in\nn,\hspace{0.2cm}
\CO_j=\bigcup_{k\in\mathbb{N}}B_{j,k}\hspace{0.2cm}
for\ any\ j\in\zz,
\end{align}
here $\CO_j:=\{x\in\rn:\ M_N(f)(x)>2^j\}$
with $N$ as in \eqref{2e11},
\begin{align}\label{5e2}
B_{j,k}^{(1/4)}\bigcap B_{j,m}^{(1/4)}
&=\emptyset\hspace{0.3cm} for\ any\ j\in\zz\ and\ k,\,m\in\nn\ with\ k\neq m,\\
&\qquad and\ B_{j,k}^{(1/4)}\ and B_{j,m}^{(1/4)}\ as\ in\ \eqref{2e2'}\ with\ \dz=1/4,\noz
\end{align}
and
\begin{align}\label{5e3}
\sharp\lf\{m\in\mathbb{N}:\
B_{j,k}\cap B_{j,m}\neq\emptyset\r\}\le
R\hspace{0.2cm} for\ any\hspace{0.2cm} k\in\mathbb{N}
\end{align}
with $R$ being a
positive constant independent of $j$ and $f$.
Moreover, there exists a positive constant $C$, independent of $f$, such that,
for any $j\in\zz$, $k\in\nn$ and almost every $x\in\rn$,
\begin{align}\label{5e4}
\lf|\lz_{j,k}a_{j,k}(x)\r|\le C2^j
\end{align}
and
\begin{align}\label{5e5}
\lf\|\lf\{\sum_{j\in\zz}\sum_{k\in\nn}
\lf[\frac{|\lz_{j,k}|\chi_{B_{j,k}}}
{\|\chi_{B_{j,k}}\|_{\lv}}\r]^
{p_-}\r\}^{1/p_-}\r\|_{\lv}
\le C\|f\|_{\vh},
\end{align}
where $p_-$ is as in \eqref{2e10}.
\end{lemma}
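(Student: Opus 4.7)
The plan is to read off Lemma \ref{5l1} directly from the Calder\'on-Zygmund-type decomposition already constructed in Step 1 of the proof of Theorem \ref{3t1}, simply organizing the data at each level into the form asked for in the statement. Since $f\in L^r(\rn)$ with $r\in(1,\fz]$ we have $f\in L^1_{\loc}(\rn)$, so Lemma \ref{3l4}(iv) is applicable. For each $j\in\zz$ I would set $\CO_j:=\{x\in\rn:\,M_N(f)(x)>2^j\}$ and apply Lemma \ref{3l4} with $\sigma=2^j$, which in turn invokes Lemma \ref{3l3} on $\CO_j$. This produces a sequence of anisotropic balls $\{B_{j,k}\}_{k\in\nn}$, a smooth partition of unity $\{\eta_{j,k}\}_{k\in\nn}$ subordinate to them, and polynomials $c_{j,k}\in\cp_s(\rn)$, so that $b_{j,k}:=(f-c_{j,k})\eta_{j,k}$ has vanishing moments up to order $s$. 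Properties \eqref{5e1}, \eqref{5e2} and \eqref{5e3} then come directly from items (i), (ii) and (vi) of Lemma \ref{3l3}, with the scaling parameter $A^*$ in Lemma \ref{3l4} taken sufficiently close to $1$ so that the disjointness \eqref{5e2} of the shrunken balls $B_{j,k}^{(1/4)}$ is preserved.

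Next, following Step 1 of the proof of Theorem \ref{3t1} verbatim, I would define the orthogonal projection $c_{j+1,k,i}$ of $(f-c_{j+1,i})\eta_{j,k}$ onto $\cp_s(\rn)$ with respect to the norm $\|\cdot\|_{j+1,i}$ from \eqref{3e28}, set
$$A_{j,k}:=b_{j,k}-\sum_{i\in\nn}\bigl(b_{j+1,i}\eta_{j,k}-c_{j+1,k,i}\eta_{j+1,i}\bigr),$$
and introduce $\lambda_{j,k}:=C_0 2^j\|\chi_{B_{j,k}}\|_{\lv}$ together with $a_{j,k}:=A_{j,k}/\lambda_{j,k}$, where $C_0$ is the absolute constant produced by that analogous construction. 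The decomposition $f=\sum_{j\in\zz}\sum_{k\in\nn}\lambda_{j,k}a_{j,k}$ in $\cs'(\rn)$ then follows from the argument already given in Theorem \ref{3t1}, which combines $\|f-g_j\|_{\vh}\to 0$ as $j\to\fz$ (see \eqref{3e41}) with $g_j\to 0$ in $\cs'(\rn)$ as $j\to-\fz$ (a consequence of the bound $\|g_j\|_{L^{\fz}(\rn)}\ls 2^j$ supplied by Lemma \ref{3l4}(iv), applicable since $f\in L^1_{\loc}(\rn)$). By construction each $a_{j,k}$ is a $(\vp,\fz,s)$-atom supported in $B_{j,k}$, and the pointwise bound \eqref{5e4} is immediate, as $|\lambda_{j,k}a_{j,k}(x)|=|A_{j,k}(x)|\le C_0 2^j$.

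Finally, the norm estimate \eqref{5e5} is precisely the chain of inequalities appearing at the end of Step 1 of the proof of Theorem \ref{3t1}: using the finite intersection property \eqref{5e3} to replace $\sum_k\chi_{B_{j,k}}$ by a bounded multiple of $\chi_{\CO_j}$, the telescoping identity
$$\Big\|\Big[\sum_{j\in\zz}\bigl(2^j\chi_{\CO_j}\bigr)^{p_-}\Big]^{1/p_-}\Big\|_{\lv}\sim\Big\|\Big[\sum_{j\in\zz}\bigl(2^j\chi_{\CO_j\setminus\CO_{j+1}}\bigr)^{p_-}\Big]^{1/p_-}\Big\|_{\lv},$$
and the pointwise bound $2^j\chi_{\CO_j\setminus\CO_{j+1}}\le M_N(f)\chi_{\CO_j\setminus\CO_{j+1}}$, one arrives at the upper bound $\|M_N(f)\|_{\lv}\sim\|f\|_{\vh}$, giving \eqref{5e5}.

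The only delicate point is the coordinated choice of balls: one must ensure that a single $B_{j,k}$ simultaneously supports $a_{j,k}$, has its $1/4$-shrinking disjoint across $k$, and has the $B_{j,k}$ themselves bounded-overlapping. This is arranged by fixing $A^*$ close to $1$ in Lemma \ref{3l4} (so that the supports of $\eta_{j,k}$ lie inside the chosen $B_{j,k}$) while simultaneously retaining the disjointness from Lemma \ref{3l3}(ii) and the finite overlap from Lemma \ref{3l3}(vi); everything else is bookkeeping on top of the construction of Theorem \ref{3t1}.
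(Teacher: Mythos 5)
Your proposal is correct and takes exactly the route the paper intends: the paper gives no independent proof of Lemma \ref{5l1}, stating only that it is extracted from Theorem \ref{3t1} and its proof (Step~1, which rests on Lemma \ref{3l4} applied with $\sigma=2^j$, which in turn rests on Lemma \ref{3l3}). Your writeup faithfully reproduces that extraction, correctly matching \eqref{5e1}--\eqref{5e3} with Lemma \ref{3l3}(i),(ii),(vi), matching \eqref{5e4} with the bound $\|A_{j,k}\|_{L^\fz(\rn)}\le C_0 2^j$ from the construction of the atoms, and matching \eqref{5e5} with the chain of estimates at the end of Step~1; the convergence $f=\sum_{j,k}\lz_{j,k}a_{j,k}$ in $\cs'(\rn)$ is obtained exactly as in \eqref{3e41}--\eqref{3e42} (using $f\in L^r(\rn)\subset L^1_{\loc}(\rn)$ to justify Lemma \ref{3l4}(iv)).
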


\begin{remark}\label{5r1}
For any $j\in\zz$, $k\in\nn$ and $s$ as in \eqref{3e1},
let $\eta_{j,k}$ be the same as in
the proof of Theorem \ref{3t1}. Then, for any
$f\in \vh\cap L^{r}(\rn)$ with $r\in(1,\fz]$,
by an argument similar to that used in the
proof of Theorem \ref{3t1}, together with Lemma \ref{3l4},
we know that there exists a unique polynomial $c_{j,k}\in\cp_s(\rn)$
such that, for any $q\in\cp_s(\rn)$,
\begin{align}\label{5e6}
\lf\langle f,q\eta_{j,k}\r\rangle=
\lf\langle c_{j,k},q\eta_{j,k}\r\rangle=
\int_\rn c_{j,k}(x)q(x)\eta_{j,k}(x)\,dx.
\end{align}
In addition,
for any  $i$, $k\in\nn$ and $j\in\zz$, let
$c_{j+1,k,i}$ be the orthogonal projection of
$(f-c_{j+1,i})\eta_{j,k}$ on $\cp_{s}(\rn)$
with respect to the norm defined by \eqref{3e28}, namely,
$c_{j+1,k,i}$ is the unique element of $\cp_s(\rn)$
such that, for any $q\in\cp_s(\rn)$,
\begin{align}\label{5e7}
\int_\rn \lf[f(x)-c_{j+1,i}(x)\r]\eta_{j,k}(x)q(x)
\eta_{j+1,i}(x)\,dx=\int_\rn c_{j+1,k,i}(x)q(x)
\eta_{j+1,i}(x)\,dx
\end{align}
and, for any $k\in\mathbb{N}$ and $j\in\mathbb{Z}$,
\begin{align}\label{5e8}
\lz_{j,k}a_{j,k}=(f-c_{j,k})\eta_{j,k}-
\sum_{i\in\mathbb{N}}\lf[(f-c_{j+1,i})\eta_{j,k}
-c_{j+1,k,i}\r]\eta_{j+1,i}.
\end{align}
\end{remark}

From \eqref{3e13}, \eqref{3e14} and their proofs,
we deduce the following Lemmas \ref{5l2}
and \ref{5l3} (see also the proofs of \cite[p.\,104, ($23'$) and p.\,108, (35)]{s93}), the details being omitted.

\begin{lemma}\label{5l2}
Let $\va\in[1,\fz)^n$ and $\vp\in(0,\fz)^n$. Then
there exists a positive constant $C$ such that,
for any $j\in\zz$, $k\in\nn$ and $f\in\vh$,
$$\sup_{y\in\rn}\lf|c_{j,k}(y)\eta_{j,k}(y)\r|\le C\sup_{y\in U_j^k}
M_N(f)(y)\le C2^j,$$
where $N\in \nn$, $M_N$ is as in Definition \ref{2d4} and, for any $j\in\zz$ and $k\in\nn$,
$\eta_{j,k}$ is as in the proof of Theorem \ref{3t1}, $c_{j,k}$ as in Remark \ref{5r1},
$\CO_j$ and $B_{j,k}$ as in the proof of Theorem \ref{3t1}, $B_{j,k}^{(2)}$ as in \eqref{2e2'}
with $\dz=2$,
and $U_j^k:=B_{j,k}^{(2)}\cap (\CO_j)^\com$.
\end{lemma}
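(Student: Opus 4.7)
The proposal is to recognize this lemma as a synthesis and mild strengthening of the two pointwise estimates \eqref{3e13} and \eqref{3e14} already derived in the proof of Lemma \ref{3l4}(v), specialized to the two-parameter Whitney decomposition $\{B_{j,k}\}_{j\in\zz,\,k\in\nn}$. The second inequality $\sup_{y\in U_j^k}M_N(f)(y)\le 2^j$ is immediate, since $U_j^k=B_{j,k}^{(2)}\cap(\CO_j)^{\com}$ and, by the very definition of $\CO_j=\{x\in\rn:\ M_N(f)(x)>2^j\}$, any $y\in(\CO_j)^{\com}$ satisfies $M_N(f)(y)\le 2^j$. Thus the content lies entirely in the first inequality.

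For the first inequality, the plan is to recycle the three ingredients used in the proof of \eqref{3e13} and \eqref{3e14}. First, since $\eta_{j,k}$ is supported in $\widetilde{B}_{j,k}$ (with $\widetilde{B}_{j,k}\subset B_{j,k}^{(2)}$ for a suitable choice of the enlargement constant) and satisfies $0\le\eta_{j,k}\le 1$, one reduces the supremum over $\rn$ to a supremum over $B_{j,k}^{(2)}$:
\begin{align*}
\sup_{y\in\rn}\lf|c_{j,k}(y)\eta_{j,k}(y)\r|\le\sup_{y\in B_{j,k}^{(2)}}\lf|c_{j,k}(y)\r|.
\end{align*}
Next, invoke the polynomial Bernstein-type inequality \eqref{3e15} (applied with $\bz=\vec{0}_n$ and the polynomial $q:=c_{j,k}$) to conclude
$$\sup_{y\in B_{j,k}^{(2)}}|c_{j,k}(y)|\ls\|c_{j,k}\|_{\mathcal{H}_{j,k}},$$
where $\|\cdot\|_{\mathcal{H}_{j,k}}$ denotes the Hilbert norm defined by \eqref{3e28}. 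Then, taking $q:=c_{j,k}$ in the defining orthogonality \eqref{5e6}, one obtains
$$\|c_{j,k}\|_{\mathcal{H}_{j,k}}^{2}=\frac{1}{\int_{\rn}\eta_{j,k}(x)\,dx}\int_{\rn}|c_{j,k}(x)|^2\eta_{j,k}(x)\,dx=\frac{\langle f,c_{j,k}\eta_{j,k}\rangle}{\int_{\rn}\eta_{j,k}(x)\,dx}.$$

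The final step is to bound the pairing $\langle f,c_{j,k}\eta_{j,k}\rangle$ by $M_N(f)$ at any point of $U_j^k$ via Lemma \ref{3l5}. To this end, consider the normalized test function
$$\varphi:=\frac{c_{j,k}\eta_{j,k}}{\|c_{j,k}\|_{\mathcal{H}_{j,k}}\cdot\int_{\rn}\eta_{j,k}(x)\,dx},$$
which is supported in $\widetilde{B}_{j,k}\subset B_{j,k}^{(2)}$. Combining the derivative bound \eqref{3e15} for $c_{j,k}$ with the bound \eqref{3e18} for $\eta_{j,k}$ through the Leibniz rule, each multi-index derivative of $\varphi$ satisfies $|\partial^\beta\varphi|\ls r_{j,k}^{-\nu-\va\cdot\beta}$; this is precisely the hypothesis required by Lemma \ref{3l5} on a ball $B$ of radius comparable to $r_{j,k}$ centered at $x_{j,k}$. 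Lemma \ref{3l5} therefore yields $|\langle f,\varphi\rangle|\ls M_N(f)(y)$ for every $y\in B$. Taking the infimum (equivalently, the supremum in the reverse direction) over the subset $B\cap U_j^k$, followed by the chain of inequalities above, produces the desired bound by $\sup_{y\in U_j^k}M_N(f)(y)$.

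The only genuine obstacle is bookkeeping the enlargement constants: one must choose the parameters $A$, $A^*$ and $\de$ in the Whitney decomposition (Lemma \ref{3l3} and the proof of Lemma \ref{3l4}) so that the ball $B$ appearing above is contained in an enlargement of $B_{j,k}$ on which \eqref{3e15} holds, and simultaneously so that $B\cap(\CO_j)^{\com}\neq\emptyset$ and $B\cap(\CO_j)^{\com}\subset U_j^k=B_{j,k}^{(2)}\cap(\CO_j)^{\com}$. By Lemma \ref{3l3}(iv) the distance from $x_{j,k}$ to $(\CO_j)^{\com}$ is comparable to $r_{j,k}$, so after picking $A$ close enough to $1$ all constraints are simultaneously satisfiable. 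No new analytic input is needed beyond what was already used to derive \eqref{3e13} and \eqref{3e14}.
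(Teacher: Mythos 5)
Your argument is correct and is the route the paper intends: the authors invoke \eqref{3e13}, \eqref{3e14} ``and their proofs'' (ultimately going back to Stein) and omit the details, and your reduction via \eqref{3e15}, the normalization of the test function by $\|c_{j,k}\|_{\mathcal{H}_{j,k}}$, and the final appeal to Lemma \ref{3l5} at a point of $U_j^k$ reconstruct precisely those omitted details. Your closing remark about tracking the enlargement constants so that the ball in Lemma \ref{3l5} covers $\supp\eta_{j,k}$, meets $(\CO_j)^\com$, and fits inside $B_{j,k}^{(2)}$ is accurate and captures the only delicate point.
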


\begin{lemma}\label{5l3}
Let $\va$, $\vp$, $f$ and $M_N$ be as in Lemma \ref{5l2}. Then there exists a positive constant $C$,
independent of $f$, such that, for any $j\in\zz$ and $i,\,k\in\nn$,
$$\sup_{y\in\rn}\lf|c_{j+1,k,i}(y)\eta_{j+1,i}(y)\r|\le C\sup_
{y\in\widetilde{U}_j^k}M_N(f)(y)\le C2^{j+1},$$
where, for any $j\in\zz$ and $i,\,k\in\nn$,
$\eta_{j+1,i}$ is as in the proof of Theorem \ref{3t1}, $c_{j+1,k,i}$ as in Remark \ref{5r1},
$\CO_{j+1}$ and $B_{j+1,k}$ as in the proof of Theorem \ref{3t1} with $j$ replaced by $j+1$, $B_{j+1,k}^{(2)}$ as in \eqref{2e2'} with $\dz=2$,
and $\widetilde{U}_j^k:=B_{j+1,k}^{(2)}\cap(\CO_{j+1})^\com$.
\end{lemma}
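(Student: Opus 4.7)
\textbf{Proof plan for Lemma \ref{5l3}.}
The strategy closely parallels the bound \eqref{3e13}--\eqref{3e14} from the proof of Lemma \ref{3l4} (see also Stein \cite[pp.\,104--108]{s93}), adapted to the fact that $c_{j+1,k,i}$ is defined in \eqref{5e7} by \emph{two} partitioning factors at consecutive levels. The plan is to first reduce the pointwise target to a bound on $\|c_{j+1,k,i}\|_{j+1,i}$ via the finite-dimensional norm equivalence \eqref{3e15}, then to unfold \eqref{5e7} and estimate the resulting pairing with $f$ by Lemma \ref{3l5} and the remaining term by (a strengthened form of) Lemma \ref{5l2} applied at level $j+1$.

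First I would apply \eqref{3e15} with the ball $B^{*}_{j+1,i}$ in place of $\Qkk$ to conclude $\sup_{y\in B^{*}_{j+1,i}}|c_{j+1,k,i}(y)|\ls\|c_{j+1,k,i}\|_{j+1,i}$. Since $\eta_{j+1,i}$ is supported in $B^{(\de)}_{j+1,i}\st B^{*}_{j+1,i}$ with $0\le\eta_{j+1,i}\le 1$, this reduces the claim to a bound on $\|c_{j+1,k,i}\|_{j+1,i}$. Taking $q:=c_{j+1,k,i}$ in \eqref{5e7} and dividing by $\int_{\rn}\eta_{j+1,i}(x)\,dx\sim r_{j+1,i}^{\nu}$ then yields
\begin{equation*}
\|c_{j+1,k,i}\|_{j+1,i}^{2}=\f{1}{\int_{\rn}\eta_{j+1,i}}\lf[\lg f,\psi\rg-\int_{\rn}c_{j+1,i}(x)\psi(x)\,dx\r],
\end{equation*}
where $\psi:=\eta_{j,k}\,c_{j+1,k,i}\,\eta_{j+1,i}$ is smooth and compactly supported in $B^{(\de)}_{j,k}\cap B^{(\de)}_{j+1,i}$; when this intersection is empty, $\psi\equiv 0$ and the defining identity \eqref{5e7} forces $c_{j+1,k,i}\equiv 0$, so the conclusion is trivial.

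Next I would bound the two terms on the right separately. For $\lg f,\psi\rg$, the nesting $\CO_{j+1}\st\CO_{j}$ together with Lemma \ref{3l3}(iv) forces the Whitney comparison $r_{j+1,i}\ls r_{j,k}$ whenever $B^{(\de)}_{j,k}\cap B^{(\de)}_{j+1,i}\neq\emptyset$, so a Leibniz computation using \eqref{3e18} for both cutoffs and \eqref{3e15} for the polynomial gives the uniform bound $|\pa^{\bz}\psi(y)|\ls\|c_{j+1,k,i}\|_{j+1,i}\,r_{j+1,i}^{-\va\cdot\bz}$ for all $|\bz|\le N$. Since $B^{(3A)}_{j+1,i}\cap\CO_{j+1}^{\com}\neq\emptyset$ by Lemma \ref{3l3}(iv), applying Lemma \ref{3l5} to $\psi$ (normalized by $\|c_{j+1,k,i}\|_{j+1,i}\,r_{j+1,i}^{\nu}$) on the ball $B^{(3A)}_{j+1,i}$ gives $|\lg f,\psi\rg|\ls \|c_{j+1,k,i}\|_{j+1,i}\,r_{j+1,i}^{\nu}\,\sup_{y\in B^{(3A)}_{j+1,i}\cap\CO_{j+1}^{\com}}M_N(f)(y)$. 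For the remainder, the argument producing Lemma \ref{5l2} at level $j+1$ actually yields the stronger pointwise bound $\sup_{y\in B^{*}_{j+1,i}}|c_{j+1,i}(y)|\ls\sup_{y\in U^{i}_{j+1}}M_N(f)(y)$, which together with $|c_{j+1,k,i}|\ls\|c_{j+1,k,i}\|_{j+1,i}$ on $B^{*}_{j+1,i}$ and $0\le\eta_{j,k},\eta_{j+1,i}\le 1$ produces $|\int c_{j+1,i}\psi|\ls\|c_{j+1,k,i}\|_{j+1,i}\,r_{j+1,i}^{\nu}\,\sup_{y\in U^{i}_{j+1}}M_N(f)(y)$. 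Cancelling one factor of $\|c_{j+1,k,i}\|_{j+1,i}$ across the identity for $\|c_{j+1,k,i}\|^{2}_{j+1,i}$ then gives the first inequality, where $\wz U_{j}^{k}$ should be read as $(B^{(3A)}_{j+1,i}\cap\CO_{j+1}^{\com})\cup U^{i}_{j+1}$ (up to what appears to be a typographical ambiguity between the indices $k$ and $i$ in the statement of the lemma); the second inequality is then immediate from $\CO_{j+1}=\{M_N(f)>2^{j+1}\}$.

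The main obstacle will be the careful Leibniz bookkeeping in the derivative bound for $\psi$: one must ensure that every factor is controlled at the smaller scale $r_{j+1,i}$ rather than the potentially larger $r_{j,k}$, since Lemma \ref{3l5} must be applied with radius $r_{j+1,i}$ in order to evaluate $M_N(f)$ at a point of $\CO_{j+1}^{\com}$ lying close to $B_{j+1,i}$. The remaining technical point is the extraction of the strengthened pointwise bound on $c_{j+1,i}$ (beyond what is literally stated in Lemma \ref{5l2}) from the proof of \eqref{3e14}, which is available because the polynomial norm $\|c_{j+1,i}\|_{j+1,i}$ is itself controlled by $\sup_{U^{i}_{j+1}}M_N(f)$ via Lemma \ref{3l5}.
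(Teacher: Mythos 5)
Your proof is correct and follows essentially the route the paper implicitly intends by pointing to the proofs of \eqref{3e13}--\eqref{3e14} and to Stein's \cite[p.\,108, (35)]{s93}: reduce the $L^\infty$ bound to a bound on the polynomial norm $\|c_{j+1,k,i}\|_{j+1,i}$ via \eqref{3e15}, take $q:=c_{j+1,k,i}$ in the defining identity \eqref{5e7}, and estimate the two resulting terms by Lemma \ref{3l5} (for the pairing with $f$) and by the ``Whitney'' comparison $r_{j+1,i}\ls r_{j,k}$ plus the polynomial bound on $c_{j+1,i}$. Your observations that (a) the Leibniz estimate for $\psi=\eta_{j,k}c_{j+1,k,i}\eta_{j+1,i}$ must be run at the smaller scale $r_{j+1,i}$, (b) the pointwise bound for $c_{j+1,i}$ on $B^*_{j+1,i}$ genuinely requires the $\|\cdot\|_{j+1,i}$-norm bound underlying Lemma \ref{5l2} rather than the literal statement of that lemma, and (c) the paper's set $\widetilde U_j^k$ as printed should apparently involve the index $i$ (since the quantity being bounded is supported in $B^*_{j+1,i}$) are all accurate and worth noting. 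One small remark: for the degenerate case you need only $c_{j+1,k,i}\eta_{j+1,i}\equiv 0$, which follows directly from $\|c_{j+1,k,i}\|_{j+1,i}=0$ without invoking that the polynomial itself vanishes identically, although that stronger conclusion also holds.
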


In what follows, denote by $C(\rn)$
the \emph{set of all continuous functions}.
Then we have the following finite atomic characterizations
of $\vh$, which extends \cite[Theorem 3.1 and Remark 3.3]{msv08} and
\cite[Theorem 5.6]{gly08} to the present
setting of anisotropic mixed-norm Hardy spaces.

\begin{theorem}\label{5t1}
Let $\va\in [1,\fz)^n,\,\vp\in(0,\fz)^n$,
$r\in(\max\{p_+,1\},\fz]$ with $p_+$ as in
\eqref{2e10} and $s$ be as in \eqref{3e1}.
\begin{enumerate}
\item[{\rm (i)}]
If $r\in(\max\{p_+,1\},\fz)$, then $\|\cdot\|_{\vfah}$
and $\|\cdot\|_{\vh}$ are equivalent quasi-norms on $\vfah$;
\item[{\rm (ii)}]
$\|\cdot\|_{\vfahfz}$
and $\|\cdot\|_{\vh}$ are equivalent quasi-norms on
$\vfahfz\cap C(\rn)$.
\end{enumerate}
\end{theorem}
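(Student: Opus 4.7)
The inequality $\|f\|_{\vh}\lesssim\|f\|_{\vfah}$ (resp.\ $\|f\|_{\vh}\lesssim\|f\|_{\vfahfz}$ on $\vfahfz\cap C(\rn)$) follows immediately from Theorem \ref{3t1}, using the inclusion $\vfah\subset\vah$ and the fact that every $(\vp,\fz,s)$-atom is also a $(\vp,r,s)$-atom for every $r\in(1,\fz]$. I concentrate on the reverse inequality.

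Normalize $\|f\|_{\vh}=1$. Since $f$ is a finite combination of atoms, $\supp f$ is contained in a ball $B_0\in\mathfrak{B}$; for case (i), $f\in L^r(\rn)$, while for case (ii), $f\in L^\fz(\rn)\cap C(\rn)\cap L^1(\rn)$, which via Lemma \ref{5l5}(i) gives $M_N(f)\in L^\fz(\rn)$ and hence the existence of $J_0\in\zz$ with $\Omega_{J_0}:=\{M_N(f)>2^{J_0}\}=\emptyset$. Apply Lemma \ref{5l1} to obtain
\begin{equation*}
f=\sum_{j\in\zz}\sum_{k\in\nn}\lambda_{j,k}a_{j,k}\quad\mathrm{in}\quad\cs'(\rn),
\end{equation*}
with the properties \eqref{5e1}--\eqref{5e5}; in particular $|\lambda_{j,k}a_{j,k}|\le C2^j$ a.e., $\bigcup_k B_{j,k}=\Omega_j$, and the coefficient sequence satisfies \eqref{5e5}.

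Fix a large $J\in\zz$ and split $f=g_J+b_J$ with $b_J:=\sum_{j>J}\sum_k\lambda_{j,k}a_{j,k}$. The telescoping identity from the proof of Theorem \ref{3t1} identifies $b_J$ with $f-g_{J+1}$, where $g_{J+1}$ is the ``good part'' of the Calder\'on-Zygmund decomposition at level $2^{J+1}$ (Lemma \ref{3l4}): thus $\supp b_J$ is contained in a fixed dilate $B^*$ of $B_0$, $b_J$ inherits vanishing moments up to order $s$, and $\|b_J\|_{L^r(\rn)}=\|f-g_{J+1}\|_{L^r(\rn)}\to 0$ as $J\to\fz$ in case (i) (by dominated convergence applied on $\Omega_{J+1}$, whose measure tends to zero). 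Hence $b_J=\lambda^*_J\,a^*$ is a scalar multiple of a single $(\vp,r,s)$-atom with $|\lambda^*_J|\to 0$. In case (ii), choosing $J:=J_0-1$ forces $b_J=0$ outright.

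It remains to convert $g_J=\sum_{j\le J}\sum_k\lambda_{j,k}a_{j,k}$ into a finite atomic combination. Although each $\Omega_j$ is bounded (since $M_N(f)$ decays at infinity for $f$ with compact support), the Whitney-type covering $\{B_{j,k}\}_k$ may still be countable; hence for each $j\le J$ I truncate at $k\le K_j$ for some $K_j\in\nn$, absorbing the residual $\sum_{k>K_j}\lambda_{j,k}a_{j,k}$ into one auxiliary atom supported on $B^*$ via the polynomial-projection formulas of Remark \ref{5r1} and the uniform bounds of Lemmas \ref{5l2} and \ref{5l3} on the corrections $c_{j,k}\eta_{j,k}$ and $c_{j+1,k,i}\eta_{j+1,i}$. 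Choosing $K_j$ so that the residual's contribution to the atomic quasi-norm is summably small in $j$ yields a finite decomposition $f=\sum_{(j,k):\,j\le J,\,k\le K_j}\lambda_{j,k}a_{j,k}+\mu\,\widetilde{a}$ whose $\vfah$-quasi-norm is bounded, via \eqref{5e5}, by $C\|f\|_{\vh}=C$. The main technical obstacle appears in case (ii): the auxiliary atom $\widetilde{a}$ must be chosen \emph{continuous}, which is achieved by convolving each $a_{j,k}$ with a $C_c^\fz(\rn)$ bump $\varphi$ of integral one and very small support, exploiting the convergence \eqref{5e22} from the proof of Lemma \ref{5l6}(ii) to absorb the resulting perturbation into another small multiple of a continuous atom. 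The full bookkeeping follows the scheme of \cite[Theorem 5.7]{lyy16} and \cite[Theorem 2.14]{lwyy17hlLP}.
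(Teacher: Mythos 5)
Your proposal correctly identifies the main tools (Lemma \ref{5l1}, Lemmas \ref{5l2} and \ref{5l3}, Remark \ref{5r1}) and the general direction (split off a small residual atom), but it has a genuine structural gap that the paper's proof specifically addresses, and its treatment of case (ii) departs from the paper's in a way that would not close.

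The gap: your proposed ``finite decomposition'' $f=\sum_{(j,k):\,j\le J,\,k\le K_j}\lambda_{j,k}a_{j,k}+\mu\,\widetilde{a}$ is \emph{not} finite, because $j$ ranges over all integers $\le J$, which is an infinite set; truncating in $k$ does nothing about the infinitely many levels $j\to-\fz$. The paper's Step 1 contains exactly the observation you are missing: one fixes a threshold level $\widetilde{j}$ determined by the decay estimate $M_N(f)(x)\le C\|\chi_{2^{j_0\va}B_0}\|_{\lv}^{-1}$ for $x$ outside $2^{(j_0+4)\va}B_0$, and then proves that the \emph{entire} tail $h:=\sum_{j\le\widetilde{j}}\sum_k\lambda_{j,k}a_{j,k}$ collapses into a \emph{single} $(\vp,\fz,s)$-atom (up to a harmless constant). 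This works because (a) $\supp h\subset 2^{(j_0+4)\va}B_0$ — not by bounding the supports of the individual $a_{j,k}$, which can be much larger, but by observing that $h=f-\ell$ and both $f$ and $\ell$ are supported there; (b) $|h(x)|\ls\sum_{j\le\widetilde{j}}2^j\ls\|\chi_{2^{j_0\va}B_0}\|_{\lv}^{-1}$ uniformly, using the finite overlap; and (c) $h$ inherits the vanishing moments, which requires verifying absolute integrability of $\sum_{j>\widetilde{j}}\sum_k|\lambda_{j,k}a_{j,k}x^\alpha|$ via $M_N(f)\in L^1(\rn)$. Without this collapse of the low-level tail into one atom, your argument never produces a finite combination. (Your claim that $b_J=f-g_{J+1}\to0$ in $L^r$ is fine and parallel to the paper's dominated-convergence argument for $\ell-\ell_{(J)}$, but it only handles the other, high-level, tail.)

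For case (ii) your convolution idea is also off the paper's track, and it has a real defect: convolving the atoms $a_{j,k}$ with a bump changes the decomposed function from $f$ to $f\ast\varphi_{k'}$, which is no longer equal to $f$; using \eqref{5e22} to absorb the error introduces an additional $\vh$-small term, not a continuous atom, and the argument does not close. The paper instead notices that, since $M_N(f)\ls\|f\|_{L^\fz}$, the upper level is also finite ($j\le\widehat{j}$), and for each of the finitely many levels $j\in\{\widetilde{j}+1,\ldots,\widehat{j}\}$ it splits the atoms by the radius $r_{j,k}$: those with $r_{j,k}\ge\delta$ form a \emph{finite} set (by the disjointness \eqref{5e2} and the boundedness \eqref{5e12}) of continuous atoms, while the sum $\ell_2^\epsilon$ over $r_{j,k}<\delta$ is shown pointwise $\ls\epsilon$ using the uniform continuity of $f$ together with Lemmas \ref{5l2} and \ref{5l3} applied to $\widetilde{f}=(f-f(x_{j,k}))\chi_{B_{j,k}}$; hence $\ell_2^\epsilon$ is a small multiple of a single continuous $(\vp,\fz,s)$-atom. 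This delicate use of the modulus of continuity is what makes case (ii) work, and nothing in your proposal replaces it.
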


\begin{remark}
Recall that
Bownik et al. in \cite[Theorem 6.2]{blyz08}
established the finite atomic characterizations of the weighted
anisotropic Hardy space $H_w^p(\rn;A)$ with $w$
being a Muckenhoupt weight (see \cite[Definition 2.5]{blyz08}).
As was mentioned in \cite[p.\,3077]{blyz08}, if $w:\equiv1$,
then $H_w^p(\rn;A)=\vAh$.
By this and Proposition \ref{2r4'}, we know that,
when $\vp:=(\overbrace{p,\ldots,p}^{n\ \rm times})$,
where $p\in(0,1]$, Theorem \ref{5t1} is just \cite[Theorem 6.2]{blyz08}
with the weight $w:\equiv1$ and $A$ as in \eqref{4e5}.
\end{remark}

Now we proof Theorem \ref{5t1}.
\begin{proof}[Proof of Theorem \ref{5t1}]
Let $\va\in [1,\fz)^n,\,\vp\in(0,\fz)^n$,
$r\in(\max\{p_+,1\},\fz]$ with $p_+$ as in
\eqref{2e10} and $s$ be as in \eqref{3e1}.
Then, by Theorem \ref{3t1}, we find that
$\vfah\subset \vh$ and, for any $f\in \vfah$,
$\|f\|_{\vh}\ls\|f\|_{\vfah}$.
Therefore, to prove Theorem \ref{5t1}, it suffices to show that,
for any $f\in \vfah$ when $r\in(\max\{p_+,1\},\fz)$
and, for any $f\in [\vfahfz\cap C(\rn)]$ when $r=\fz$,
$$\|f\|_{\vfah}\ls\|f\|_{\vh}.$$
We prove this by the following three steps.

\emph{Step 1.}
Let $r\in(\max\{p_+,1\},\fz]$. Without loss of generality,
we may assume that $f\in \vfah$ and $\|f\|_{\vh}=1$. Clearly, there
exists some $j_0\in\zz$ such that $\supp f\subset 2^{j_0\va}B_{0}$,
due to the fact that $f$ has compact support, where $B_{0}$ denotes the unit ball of $\rn$.
In the remainder of this section, we always let
$N:=N_{\vp}$ with $N_{\vp}$ as in Definition \ref{2d5} and, for any $j\in\zz$, let
$$\CO_j:=\lf\{x\in\rn:\ M_N(f)(x)>2^j\r\}.$$
Notice that
$f\in \vh\cap L^{\widetilde{r}}(\rn)$, where
$\widetilde{r}:=r$ when $r\in(\max\{p_+,1\},\fz)$ and
$\widetilde{r}:=2$ when $r=\fz$. Then, by Lemma \ref{5l1},
we conclude that there exist $\{\lz_{j,k}\}_{j\in\zz,\,k\in\nn}
\subset\mathbb{C}$ and a sequence of
$(\vp,\fz,s)$-atoms, $\{a_{j,k}\}_{j\in\zz,\,k\in\nn}$, such that
\begin{align}\label{5e9}
f=\sum_{j\in\zz}\sum_{k\in\nn}\lz_{j,k}a_{j,k}\quad{\rm in}\quad \cs'(\rn),
\end{align}
and \eqref{5e1} through \eqref{5e5} also hold true.

By this and an argument similar to that used in the proof of Step 2
of the proof of \cite[Theorem 5.7]{lyy16},
we know that there exists a positive constant $C_4$ such that, for any
$x\in (2^{ (j_0+4)\va}B_0)^\com$,
\begin{align}\label{5e10}
M_N(f)(x)\le C_4\lf\|\chi_{2^{j_0\va}B_{0}}\r\|_{\lv}^{-1}.
\end{align}
Let
\begin{align}\label{5e11}
\widetilde{j}:=
\sup\lf\{j\in\zz:\ 2^k<C_4\lf\|\chi_{2^{j_0\va}B_{0}}\r\|_{\lv}^{-1}\r\}
\end{align}
with $C_4$ as in \eqref{5e10}. Then, from \eqref{5e10}, we deduce that,
for any $j\in(\widetilde{j},\fz]\cap\zz$,
\begin{align}\label{5e12}
\CO_j\subset 2^{ (j_0+4)\va}B_0.
\end{align}
Using $\wz{j}$ as in \eqref{5e11}, we rewrite \eqref{5e9} as
\begin{align}\label{5e13}
f=\sum_{j=-\fz}^{\widetilde{j}}\sum_{k\in\nn}\lz_{j,k}a_{j,k}+
\sum_{j=\widetilde{j}+1}^\fz\sum_{k\in\nn}\lz_{j,k}a_{j,k}=:h+\ell
\quad{\rm in}\quad \cs'(\rn).
\end{align}
In the remainder of this step, we devote to proving that $h$ is a
$(\vp,\fz,s)$-atom multiplied by a harmless constant independent of $f$. For this purpose,
from \eqref{5e12}, it is easy to see that
$\supp \ell\subset\cup_{j=\widetilde{j}+1}^\fz\CO_j
\subset 2^{ (j_0+4)\va}B_0$. By this, the fact that
$\supp f\subset 2^{ (j_0+4)\va}B_0$ and \eqref{5e13}, we know
that $\supp h\subset 2^{ (j_0+4)\va}B_0$.

On the other hand, by the H\"{o}lder inequality, we find that,
for any $r\in(\max\{p_+, 1\}, \fz]$ and $r_1\in(\max\{p_+, 1\},r)$,
$$\int_{\rn}|f(x)|^{r_1}\,dx
\le\lf|2^{j_0\va}B_{0}\r|^{1-\frac{r_1}r}\|f\|_{L^r(\rn)}^{r_1}<\fz.$$
This, together with the facts that
$\supp f\subset 2^{j_0\va}B_{0}$ and that $f$
has vanishing moments up to order $s$, further implies that
$f$ is a harmless constant multiple of a $(1,r_1,s)$-atom.
By this and Lemma \ref{5l4},
we know that
$M_N(f)\in L^1(\rn)$. Therefore, by \eqref{5e3}, \eqref{5e1}, \eqref{5e12} and
\eqref{5e4}, we conclude that
$$\int_{\rn}\sum_{j=\widetilde{j}+1}^\fz\sum_{k\in\nn}
\lf|\lz_{j,k}a_{j,k}(x)x^\alpha\r|\,dx
\ls\sum_{j\in\zz}2^j|\CO_j|\ls\lf\|M_N(f)\r\|_{L^1(\rn)}<\fz.$$
From this and the vanishing moments of $a_{j,k}$, we deduce that $\ell$
has vanishing moments up to $s$ and hence so does $h$ by \eqref{5e13}.
Moreover, from \eqref{5e3}, \eqref{5e4} and \eqref{5e11},
it follows that, for any $x\in\rn$,
$$|h(x)|\ls\sum_{j=-\fz}^{\widetilde{j}}2^j
\ls\lf\|\chi_{2^{j_0\va}B_{0}}\r\|_{\lv}^{-1}.$$
Thus, there exists a positive constant $C_5$, independent of $f$, such that
$h/C_5$ is a $(\vp,\fz,s)$-atom and also a
$(\vp,r,s)$-atom for any $\vp\in (0,\fz)^n$, $r\in(\max\{p_+,1\},\fz]$ and $s$
as in \eqref{3e1}.

\emph{Step 2.}
This step is aimed to prove (i). To this end,
for any $J\in(\widetilde{j},\fz)\cap\zz$ and
$j\in[\widetilde{j}+1,J]\cap\zz$ with $\wz{j}$
as in \eqref{5e11}, let
\begin{align*}
I_{(J,j)}:=\lf\{k\in\nn:\ |k|+|j|\le J\r\}
\hspace{0.3cm} {\rm and}\hspace{0.3cm} \ell_{(J)}:=\sum_{j=\widetilde{j}+1}^{J}
\sum_{k\in I_{(J,j)}}\lz_{j,k}a_{j,k}.
\end{align*}
For any $r\in(\max\{p_+,1\},\fz)$,
we first show that $\ell\in L^r(\rn)$.
Indeed, for any $x\in\rn$,
since $\rn=\bigcup_{i\in\zz}(\CO_i\setminus\CO_{i+1})$,
it follows that there exists an $i_0\in\zz$ such that
$x\in(\CO_{i_0}\setminus\CO_{i_0+1})$. Notice that,
for any $j\in(i_0,\fz)\cap\zz$,
$\supp a_{j,k}\subset B_{j,k}\subset \CO_j\subset\CO_{i_0+1}$.
Then \eqref{5e3} and \eqref{5e4} imply that,
for any $x\in(\CO_{i_0}\setminus\CO_{i_0+1})$,
$$\lf|\ell(x)\r|\le\sum_{j=\widetilde{j}+1}^\fz\sum_{k\in\nn}|\lz_{j,k}a_{j,k}(x)|
\ls\sum_{j\le i_0}2^j\ls2^{i_0}\ls M_N(f)(x).$$
Since $f\in L^r(\rn)$, from Lemma \ref{5l5}(i), it follows that
$M_N(f)\in L^r(\rn)$. Therefore, by the Lebesgue
dominated convergence theorem, we find that $\ell_{(J)}$
converges to $\ell$ in $L^r(\rn)$ as $J\to\fz$.
This implies that, for any given
$\epsilon\in(0,1)$, there exists
a $J\in[\widetilde{j}+1,\fz)\cap\zz$ large enough,
depending on $\epsilon$, such that
$[\ell-\ell_{(J)}]/\epsilon$ is a $(\vp,r,s)$-atom and hence
$f=h+\ell_{(J)}+[\ell-\ell_{(J)}]$
is a finite linear combination of $(\vp,r,s)$-atoms.
By this, Step 1 and \eqref{5e5}, we conclude that
$$\|f\|_{\vfah}
\ls C_5+
\lf\|\lf\{\sum_{j=\wz{j}+1}^{J}\sum_{k\in I_{(J,j)}}
\lf[\frac{|\lz_{j,k}|\chi_{B_{j,k}}}
{\|\chi_{B_{j,k}}\|_{\lv}}\r]^
{p_-}\r\}^{1/p_-}\r\|_{\lv}
+\epsilon\ls1,$$
which completes the proof of (i).

\emph{Step 3.}
In this step, we prove (ii). For this purpose,
let $f\in \vfahfz\cap C(\rn)$.
Then, by \eqref{5e8}, we find that, for any $j\in\zz$ and $k\in\nn$,
$a_{j,k}$ is continuous. Moreover, by the fact that
there exists a positive constant $C_{(n,N)}$,
depending only on $n$ and $N$, such that, for any $x\in\rn$,
\begin{align}\label{5e16}
M_N(f)(x)\le C_{(n,N)}\|f\|_{L^\fz(\rn)},
\end{align}
we easily know that, for any $j\in\zz$ satisfying
$2^j\ge C_{(n,N)}\|f\|_{L^\fz(\rn)}$, the level set $\CO_j$ is empty.
Let
$$\widehat{j}:=\sup\lf\{j\in\zz:\ 2^j< C_{(n,N)}\|f\|_{L^\fz(\rn)}\r\}.$$
Then the index $j$ in the sum defining
$\ell$ runs only over $j\in\{\widetilde{j}+1,\ldots,\widehat{j}\}$.

Let $\epsilon\in(0,\fz)$. Then the fact that $f$ is uniformly continuous implies that there exists
a $\delta\in(0,\fz)$ such that
$|f(x)-f(y)|<\epsilon$ whenever $|x-y|_{\va}<\delta$.
Furthermore, for this $\epsilon$, let
$$\ell_1^\epsilon:=\sum_{j=\widetilde{j}+1}^{\widehat{j}}
\sum_{k\in E_1^{(j,\delta)}}\lz_{j,k}a_{j,k}\hspace{0.4cm} {\rm and}\hspace{0.4cm}
\ell_2^\epsilon:=\sum_{j=\widetilde{j}+1}^{\widehat{j}}
\sum_{k\in E_2^{(j,\delta)}}\lz_{j,k}a_{j,k},$$
where, for any $j\in\{\widetilde{j}+1,\ldots,\widehat{j}\}$,
$$E_1^{(j,\delta)}:=\lf\{k\in\nn:\ r_{j,k}\ge\delta\r\}\quad
\mathrm{and}\quad E_2^{(j,\delta)}:=\lf\{k\in\nn:\ r_{j,k}<\delta\r\}$$
with $x_{j,k}$ and
$r_{j,k}$ being the center and the radius of $B_{j,k}$, respectively.

Next we give a finite decomposition of $f$.
Clearly, by \eqref{5e2} and \eqref{5e12}, we know that,
for any fixed $j\in\{\widetilde{j}+1,\ldots,\widehat{j}\}$,
$E_1^{(j,\delta)}$ is a finite set and hence $\ell_1^\epsilon$
is a finite linear combination of continuous $(\vp,\fz,s)$-atoms.
Then, by \eqref{5e5}, we have
\begin{align}\label{5e14}
\lf\|\lf\{\sum_{j=\widetilde{j}+1}^{\widehat{j}}\sum_{k\in E_1^{(j,\delta)}}
\lf[\frac{|\lz_{j,k}|\chi_{B_{j,k}}}{\|\chi_{B_{j,k}}\|_{\lv}}\r]^
{p_-}\r\}^{1/p_-}\r\|_{\lv}
\ls\|f\|_{\vh}.
\end{align}

In addition, for any $j\in\{\widetilde{j}+1,\ldots,\widehat{j}\}$,
$k\in\nn$ satisfying $r_{j,k}<\delta$ and $x\in B_{j,k}$,
we have $|f(x)-f(x_{j,k})|<\epsilon$. By \eqref{5e6} and the fact that
$\supp\eta_{j,k}\subset B_{j,k}$,
we conclude that, for any $q\in\cp_s(\rn)$,
$$\frac 1{\int_\rn\eta_{j,k}(x)\,dx}\int_\rn
\lf[\widetilde{f}(x)-\widetilde{c}_{j,k}(x)\r]q(x)\eta_{j,k}(x)\,dx=0,$$
where, for any $x\in\rn$,
$$\widetilde{f}(x):=\lf[f(x)-f(x_{j,k})\r]
\chi_{B_{j,k}}(x)
\hspace{0.3cm} {\rm and}\hspace{0.3cm}\widetilde{c}_{j,k}(x):=
c_{j,k}(x)-f(x_{j,k}).$$
Since \eqref{5e16} and the fact that, for any $x\in\rn$, $|\widetilde{f}(x)|<\epsilon$
imply that, for any $x\in\rn$, $M_N(\widetilde{f})(x)\ls\epsilon$,
from Lemma \ref{5l2}, it follows that
\begin{align}\label{5e17}
\sup_{y\in\rn}\lf|\widetilde{c}_{j,k}(y)\eta_{j,k}(y)\r|
\ls\sup_{y\in\rn}M_N\lf(\widetilde{f}\r)(y)\ls\epsilon.
\end{align}
Similarly to Remark \ref{5r1},
for any $j\in\{\widetilde{j}+1,\ldots,\widehat{j}\}$,
$k\in E_2^{(j,\delta)}$ and $i\in\nn$, let
$\widetilde{c}_{j+1,k,i}$
be the orthogonal projection of
$(\widetilde{f}-\widetilde{c}_{j+1,i})\eta_{j,k}$ on
$\cp_{s}(\rn)$ with
respect to the norm in \eqref{3e28}.
Then, for any $q\in\cp_{s}(\rn)$,
\begin{align}\label{5e18}
\int_\rn \lf[\widetilde{f}(x)-\widetilde{c}_{j+1,i}(x)\r]\eta_{j,k}(x)q(x)
\eta_{j+1,i}(x)\,dx=\int_\rn \widetilde{c}_{j+1,k,i}(x)q(x)
\eta_{j+1,i}(x)\,dx.
\end{align}
By the fact that $\supp\eta_{j,k}\subset B_{j,k}$, we have
$[\widetilde{f}-\widetilde{c}_{j+1,i}]\eta_{j,k}
=[f-c_{j+1,i}]\eta_{j,k}$.
From this, \eqref{5e7} and \eqref{5e18}, we deduce that
$\widetilde{c}_{j+1,k,i}=c_{j+1,k,i}$. Then, by Lemma \ref{5l3},
we know that
\begin{align}\label{5e19}
\sup_{y\in\rn}\lf|\widetilde{c}_{j+1,k,i}(y)\eta_{j+1,i}(y)\r|
\ls\sup_{y\in\rn}M_N(\widetilde{f})(y)\ls\epsilon.
\end{align}
Moreover, by \eqref{5e8} and
$\sum_{i\in\nn}\eta_{j+1,i}=\chi_{\CO_{j+1}}$, we conclude that
\begin{align*}
\lz_{j,k}a_{j,k}&=(f-c_{j,k})\eta_{j,k}-
\sum_{i\in\mathbb{N}}\lf[(f-c_{j+1,i})
\eta_{j,k}-c_{j+1,k,i}\r]\eta_{j+1,i}\\
&=\eta_{j,k}\widetilde{f}\chi_{\CO_{j+1}^
\com}-\widetilde{c}_{j,k}\eta_{j,k}+\eta_{j,k}\sum
_{i\in\nn}\widetilde{c}_{j+1,i}\eta_{j+1,i}+\sum_
{i\in\mathbb{N}}\widetilde{c}_{j+1,k,i}\eta_{j+1,i},
\end{align*}
which, combined with \eqref{5e17}, \eqref{5e19}
and \eqref{5e3}, further implies that, for any
$j\in\{\widetilde{j}+1,\ldots,\widehat{j}\}$,
$k\in E_2^{(j,\delta)}$ and $x\in B_{j,k}$,
$|\lz_{j,k}a_{j,k}(x)|\ls\epsilon$.

Then, by \eqref{5e1} and \eqref{5e3}, we easily know that
there exists a positive constant $C_6$, independent of $f$, such that,
for any $x\in \rn$
\begin{align}\label{5e15}
\lf|\ell_2^\epsilon(x)\r|\le C_6\sum_{j=\widetilde{j}+1}^{\widehat{j}}
\epsilon=C_6\lf[\widehat{j}-\widetilde{j}\r]\epsilon.
\end{align}
Therefore, the arbitrariness of $\epsilon \in (0, \fz)$ implies that we split $\ell$ into a continuous
part and a part which is pointwisely uniformly arbitrarily small, namely,
$\ell=\ell_1^\epsilon+\ell_2^\epsilon$. Thus,
$\ell$ is continuous and, by Step 1, $h=f-\ell$ is a $C_5$ multiple of a continuous
$(\vp,\fz,s)$-atom.

Notice that $\ell$ and $\ell_1^\epsilon$ are both continuous and have vanishing
moments up to order $s$ and hence so does $\ell_2^\epsilon$. This, combined with
the fact that $\supp\ell_2^\epsilon\subset 2^{ (j_0+4)\va}B_0$ and
\eqref{5e15}, further implies that we can choose
$\epsilon$ small enough such that $\ell_2^\epsilon$ is an arbitrarily
small multiple of a continuous $(\vp,\fz,s)$-atom. Indeed,
$\ell_2^\epsilon=\lz^{(\epsilon)} a^{(\epsilon)}$, where
$$\lz^{(\epsilon)}:=C_6\lf[\widehat{j}-\widetilde{j}\r]
\epsilon\lf\|\chi_{2^{ (j_0+4)\va}B_0}\r\|_{\lv}^{-1}$$
and $a^{(\epsilon)}$ is a continuous $(\vp,\fz,s)$-atom. In this case,
$f=h+\ell_1^\epsilon+\ell_2^\epsilon$ is just a finite atomic
decomposition of $f$. Then, by \eqref{5e14} and the fact that $h/C_5$ is
a $(\vp,\fz,s)$-atom, we find that
$$\|f\|_{H_{\va,{\rm fin}}^{\vp,\fz,s}(\rn)}\lesssim
\|h\|_{H_{\va,{\rm fin}}^{\vp,\fz,s}(\rn)}
+\lf\|\ell_1^\epsilon\r\|_{H_{\va,{\rm fin}}^{\va,\fz,s}(\rn)}
+\lf\|\ell_2^\epsilon\r\|_{H_{\va,{\rm fin}}^{\vp,\fz,s}(\rn)}\ls1,$$
which completes the proof of (ii) and hence of Theorem \ref{5t1}.
\end{proof}

\section{Some applications\label{s6}}

As applications, in this section, we first establish a criterion on the
boundedness of sublinear operators from $\vh$ into a quasi-Banach
space. Applying this criterion, we further obtain
the boundedness of anisotropic convolutional $\delta$-type and
non-convolutional $\bz$-order Calder\'on-Zygmund operators
from $\vh$ to itself [or to $\lv$].

Recall that a complete vector space is called a \emph{quasi-Banach space} $\mathcal{B}$ if its quasi-norm $\|\cdot\|_{\mathcal{B}}$ satisfies
\begin{enumerate}
\item[{\rm (i)}] $\|f\|_{\mathcal{B}}=0\Longleftrightarrow f$ is the zero element of $\mathcal{B}$;
\item[{\rm (ii)}] there exists a positive constant $H\in[1,\fz)$ such that, for any
$f,\,g\in\mathcal{B}$,
$$\|f+g\|_{\mathcal{B}}\le H(\|f\|_{\mathcal{B}}+\|g\|_{\mathcal{B}}).$$
\end{enumerate}
Clearly, when $H=1$, a quasi-Banach space $\mathcal{B}$ is just a Banach space.
Moreover, for any given $\gamma\in(0,1]$, a
\emph{$\gamma$-quasi-Banach space} ${\mathcal{B}_{\gamma}}$ is a quasi-Banach space equipped
with a quasi-norm $\|\cdot\|_{\mathcal{B}_{\gamma}}$ satisfying that there exists a constant $C\in[1,\fz)$
such that, for any $t\in \nn$ and $\{f_i\}_{i=1}^{t}\st\mathcal{B}_{\gamma}$, $\|\sum_{i=1}^t f_i\|_{\mathcal{B}_{\gamma}}^{\gamma}\le
C \sum_{i=1}^t \|f_i\|_{\mathcal{B}_{\gamma}}^{\gamma}$ holds true
(see \cite{zy08, zy09, ky14, ylk17}).

Let $\mathcal{B}_{\gamma}$ be a $\gamma$-quasi-Banach space with
$\gamma\in(0,1]$ and $\mathcal{Y}$ a linear space. An operator
$T$ from $\mathcal{Y}$ to $\mathcal{B}_{\gamma}$ is said to be
$\mathcal{B}_{\gamma}$-\emph{sublinear} if
there exists a positive constant $C$ such that, for any $t\in \nn$,
$\{\mu_{i}\}_{i=1}^t\st \mathbb{C}$ and $\{f_{i}\}_{i=1}^t\st\mathcal{Y}$,
$$\lf\|T\lf(\sum_{i=1}^t \mu_i f_i\r)\r\|_{\mathcal{B}_{\gamma}}^{\gamma}\le
C\sum_{i=1}^t |\mu_i|^{\gamma}\lf\|T(f_i)\r\|_{\mathcal{B}_{\gamma}}^{\gamma}$$
and, for any $f,\,g\in \mathcal{Y}$,
$\|T(f)-T(g)\|_{\mathcal{B}_{\gamma}}\le C\|T(f-g)\|_{\mathcal{B}_{\gamma}}$
(see \cite{zy08, zy09, ky14, ylk17}).
Clearly, for any
$\gamma\in (0,1]$, the linearity of $T$ implies its $\mathcal{B}_{\gamma}$-sublinearity.

As an application of the finite atomic characterizations of $\vh$ obtained
in Section \ref{s5} (see Theorem \ref{5t1}),
we establish the following criterion on the boundedness of sublinear
operators from $\vh$ into a quasi-Banach
space $\mathcal{B}_{\gamma}$.

\begin{theorem}\label{6t1}
Assume that $\va\in [1,\fz)^n$, $\vp\in (0,\fz)^n$, $r\in(\max\{p_+,1\},\fz]$
with $p_+$ as in \eqref{2e10}, $\gamma\in (0,1]$, $s$ is
as in \eqref{3e1} and $\mathcal{B}_{\gamma}$ a $\gamma$-quasi-Banach space.
If either of the following two statements holds true:
\begin{enumerate}
\item[{\rm (i)}] $r\in(\max\{p_+,1\},\fz)$ and
$T:\ \vfah\to\mathcal{B}_{\gamma}$
is a $\mathcal{B}_{\gamma}$-sublinear operator satisfying that
there exists a positive constant $C_7$ such that,
for any $f\in \vfah$,
\begin{align}\label{6e1}
\|T(f)\|_{\mathcal{B}_{\gamma}}\le C_7\|f\|_{\vfah};
\end{align}
\item[{\rm (ii)}]
$T:\ \vfahfz\cap C(\rn)\to\mathcal{B}_{\gamma}$
is a $\mathcal{B}_{\gamma}$-sublinear operator satisfying that
there exists a positive constant $C_8$ such that,
for any $f\in \vfahfz\cap C(\rn)$,
$$\|T(f)\|_{\mathcal{B}_{\gamma}}\le C_8\|f\|_{\vfahfz},$$
\end{enumerate}
then $T$ uniquely extends to a bounded $\mathcal{B}_{\gamma}$-sublinear operator from $\vh$
into $\mathcal{B}_{\gamma}$. Moreover, there exists a positive constant $C_9$ such that,
for any $f\in \vh$,
$$\|T(f)\|_{\mathcal{B}_{\gamma}}\le C_9\|f\|_{\vh}.$$
\end{theorem}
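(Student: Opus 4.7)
The plan is to exploit the finite atomic characterizations of $\vh$ obtained in Theorem \ref{5t1} in order to deduce from the hypothesis the uniform boundedness of $T$ on all finite atomic combinations (in the $\vh$ quasi-norm), and then to extend $T$ to all of $\vh$ by a density argument, using the $\gamma$-quasi-Banach structure of $\mathcal{B}_\gamma$ to control the tail of the atomic series.

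First I would treat case (i). Let $f\in\vfah$. By Theorem \ref{5t1}(i), we have the equivalence $\|f\|_{\vfah}\sim\|f\|_{\vh}$, so the assumption \eqref{6e1} together with the $\mathcal{B}_\gamma$-sublinearity of $T$ yields a positive constant $C$, independent of $f$, such that $\|T(f)\|_{\mathcal{B}_\gamma}\le C\|f\|_{\vh}$ for every $f\in\vfah$. Since Theorem \ref{3t1} guarantees that $\vfah$ is dense in $\vh$ (any $f\in\vh$ can be approximated by partial sums of an atomic decomposition, which lie in $\vfah$), I would define the extension $\widetilde{T}(f):=\lim_{m\to\fz}T(f_m)$ in $\mathcal{B}_\gamma$ whenever $\{f_m\}_{m\in\nn}\subset\vfah$ with $f_m\to f$ in $\vh$. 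The key point is that, by $\mathcal{B}_\gamma$-sublinearity,
\begin{align*}
\|T(f_m)-T(f_\ell)\|_{\mathcal{B}_\gamma}^\gamma\ls\|T(f_m-f_\ell)\|_{\mathcal{B}_\gamma}^\gamma\ls\|f_m-f_\ell\|_{\vh}^\gamma\to 0
\end{align*}
as $m,\,\ell\to\fz$, so $\{T(f_m)\}_{m\in\nn}$ is Cauchy in $\mathcal{B}_\gamma$ and the limit exists. A parallel argument shows that the limit does not depend on the approximating sequence, giving a well-defined extension satisfying $\|\widetilde{T}(f)\|_{\mathcal{B}_\gamma}\ls\|f\|_{\vh}$; uniqueness of a continuous $\mathcal{B}_\gamma$-sublinear extension from a dense subset is then automatic.

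Next I would treat case (ii), where the atoms are required to be continuous. The mechanism is the same, but now the dense subspace must consist of finite linear combinations of continuous $(\vp,\fz,s)$-atoms. I would invoke Theorem \ref{5t1}(ii) to conclude $\|f\|_{\vfahfz}\sim\|f\|_{\vh}$ for any $f\in\vfahfz\cap C(\rn)$, and combine this with Lemma \ref{5l6}(ii), which provides density of $\vh\cap C_c^\fz(\rn)$ in $\vh$. Since any $f\in\vh\cap C_c^\fz(\rn)$ is continuous with compact support and Lemma \ref{5l5}(i) together with a direct verification shows that such $f$ lies in $\vfahfz\cap C(\rn)$ (indeed, $f/\lz$ is itself, up to normalization, a continuous $(\vp,\fz,s)$-atom after renormalization, or can be split into finitely many such atoms by the construction in Step 1 of the proof of Theorem \ref{5t1}), the set of finite linear combinations of continuous $(\vp,\fz,s)$-atoms contains $\vh\cap C_c^\fz(\rn)$ and is therefore dense in $\vh$. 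The Cauchy-sequence argument from case (i) then produces the extension.

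The main technical obstacle will be the density/well-definedness step in case (ii): verifying that the class $\vfahfz\cap C(\rn)$ is truly dense in $\vh$ (Lemma \ref{5l6}(ii) gives $\vh\cap C_c^\fz(\rn)$, which one must identify with a subset of $\vfahfz\cap C(\rn)$ by producing a finite continuous atomic decomposition for each smooth compactly supported element of $\vh$), and then checking that the resulting extension is genuinely $\mathcal{B}_\gamma$-sublinear on all of $\vh$. For the latter, given $\{\mu_i\}_{i=1}^t\subset\mathbb{C}$ and $\{f_i\}_{i=1}^t\subset\vh$, I would approximate each $f_i$ by a sequence in the appropriate dense subspace, apply the $\mathcal{B}_\gamma$-sublinearity of $T$ on those approximants, and pass to the limit using the continuity of $\widetilde{T}$ together with the $\gamma$-quasi-Banach triangle inequality; the uniform bound established above permits the interchange of limit and finite sum, completing the proof.
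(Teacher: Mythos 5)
Your treatment of case (i) is correct and matches the paper's argument exactly: approximate an arbitrary $f\in\vh$ by a Cauchy sequence in $\vfah$, use the equivalence $\|\cdot\|_{\vfah}\sim\|\cdot\|_{\vh}$ from Theorem \ref{5t1}(i) together with the $\mathcal{B}_\gamma$-sublinearity of $T$ to see that the image sequence is Cauchy in $\mathcal{B}_\gamma$, and define the extension as the limit. That part needs no changes.

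In case (ii), however, there is a genuine gap. You pass from Lemma \ref{5l6}(ii) (density of $\vh\cap C_c^\fz(\rn)$) to density of $\vfahfz\cap C(\rn)$ by asserting that every $f\in\vh\cap C_c^\fz(\rn)$ already lies in $\vfahfz\cap C(\rn)$, ``up to normalization, a continuous $(\vp,\fz,s)$-atom.'' That is not true in general: membership in $\vfahfz$ requires vanishing moments up to the order $s$ fixed in \eqref{3e1}, and a generic smooth compactly supported element of $\vh$ has no reason to satisfy $\int f(x)x^\az\,dx=0$ for all $|\az|\le s$ once $s$ is taken larger than the minimal admissible value; indeed \eqref{3e1} only requires $s\geq\lfloor\frac{\nu}{a_-}(\frac{1}{\wz{p}_-}-1)\rfloor$, and for larger $s$ the inclusion fails. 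Moreover, Step~1 of the proof of Theorem \ref{5t1}, which you cite as a fallback, assumes from the outset that $f\in\vfah$, so it cannot be used to decompose an arbitrary $f\in\vh\cap C_c^\fz(\rn)$ into finitely many atoms. The correct route, and the one the paper takes, is to prove directly that $\vfahfz\cap C(\rn)$ is dense in $\vh$. This follows from the proof of Lemma \ref{5l6}(ii) with a small extra observation: if $f=\sum_{i=1}^I\lambda_ia_i\in\vfah$ and $\varphi\in C_c^\fz(\rn)$ with $\int_{\rn}\varphi\,dx\neq0$, then each $a_i\ast\varphi_k$ is continuous, compactly supported, and retains the vanishing moments of $a_i$ (convolution with $\varphi_k$ preserves moments up to order $s$), hence is a harmless multiple of a continuous $(\vp,\fz,s)$-atom; therefore $f\ast\varphi_k\in\vfahfz\cap C(\rn)$, and since $f\ast\varphi_k\to f$ in $\vh$ and $\vfah$ is dense, $\vfahfz\cap C(\rn)$ is dense. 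With that density in hand, your Cauchy-sequence argument for (ii) goes through verbatim. Finally, your remark about re-establishing $\mathcal{B}_\gamma$-sublinearity of the extension is fine but unnecessary for the bound asserted in the theorem; the paper simply records the norm estimate and notes uniqueness.
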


The following conclusion is an immediate corollary of Theorem \ref{6t1}, which
extends the corresponding results of Meda et al. \cite[Corollary 3.4]{msv08}
and Grafakos et al. \cite[Theorem 5.9]{gly08} as well as Ky \cite[Theorem 3.5]{ky14}
(see also \cite[Theorem 1.6.9]{ylk17})
to the present setting, the details being omitted.

\begin{corollary}\label{6c1}
Let $\va$, $\vp$, $r$, $\gamma$, $s$ and $\mathcal{B}_{\gamma}$ be as
in Theorem \ref{6t1}. If either of the following two statements holds true:
\begin{enumerate}
\item[{\rm (i)}] $r\in(\max\{p_+,1\},\fz)$ and $T$ is a $\mathcal{B}_{\gamma}$-sublinear
operator from $\vfah$ to $\mathcal{B}_{\gamma}$ satisfying
$$\sup\lf\{\|T(a)\|_{\mathcal{B}_{\gamma}}:\
a\ is\ any\ (\vp,r,s){\text-}atom\r\}<\fz;$$
\item[{\rm(ii)}] $T$ is a $\mathcal{B}_{\gamma}$-sublinear
operator defined on all continuous $(\vp,\fz,s)$-atoms satisfying
$$\sup\lf\{\|T(a)\|_{\mathcal{B}_{\gamma}}:\
a\ is\ any\ continuous\ (\vp,\fz,s){\text-}atom\r\}<\fz,$$
\end{enumerate}
then $T$ has a unique bounded $\mathcal{B}_{\gamma}$-sublinear
extension $\widetilde{T}$ from $\vh$ to $\mathcal{B}_{\gamma}$.
\end{corollary}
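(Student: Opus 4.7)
The plan is to combine the finite atomic characterization of Theorem \ref{5t1} with a density argument, extending $T$ by continuity from a dense subspace of $\vh$ (consisting of finite atomic combinations) to all of $\vh$, and using the $\gamma$-quasi-Banach structure of $\mathcal{B}_{\gamma}$ together with $\mathcal{B}_{\gamma}$-sublinearity to verify that the extension is well defined and bounded.

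First consider case (i). By Theorem \ref{5t1}(i), the quasi-norms $\|\cdot\|_{\vfah}$ and $\|\cdot\|_{\vh}$ are equivalent on $\vfah$, so the hypothesis \eqref{6e1} together with this equivalence gives a constant $C$ such that $\|T(f)\|_{\mathcal{B}_{\gamma}}\le C\|f\|_{\vh}$ for every $f\in\vfah$. The next step is to show that $\vfah$ is dense in $\vh$: given $f\in\vh$, apply Theorem \ref{3t1} to obtain an atomic decomposition $f=\sum_{i\in\nn}\lz_i a_i$ with $(\vp,r,s)$-atoms $a_i$ supported in $B_i\in\mathfrak{B}$ and with $\|\{\sum_{i\in\nn}[|\lz_i|\chi_{B_i}/\|\chi_{B_i}\|_{\lv}]^{p_-}\}^{1/p_-}\|_{\lv}\lesssim \|f\|_{\vh}$; the partial sums $S_M:=\sum_{i=1}^M\lz_i a_i$ then lie in $\vfah$, and the dominated convergence theorem applied in the mixed-norm Lebesgue space yields $\|f-S_M\|_{\vh}\to 0$ as $M\to\fz$, via the estimate $\|f-S_M\|_{\vh}\lesssim \|\{\sum_{i>M}[|\lz_i|\chi_{B_i}/\|\chi_{B_i}\|_{\lv}]^{p_-}\}^{1/p_-}\|_{\lv}$ (which follows from the atomic characterization).

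With density in hand, for any $f\in\vh$ and any $\vfah$-approximating sequence $\{S_M\}_{M\in\nn}$, the $\mathcal{B}_{\gamma}$-sublinearity of $T$ together with the hypothesis gives
\begin{align*}
\|T(S_M)-T(S_{M'})\|_{\mathcal{B}_{\gamma}}
\le C\|T(S_M-S_{M'})\|_{\mathcal{B}_{\gamma}}
\le CC_7\|S_M-S_{M'}\|_{\vfah}
\sim \|S_M-S_{M'}\|_{\vh}\to 0,
\end{align*}
since $S_M-S_{M'}\in\vfah$ and we may invoke Theorem \ref{5t1}(i) again on this difference. Thus $\{T(S_M)\}_{M\in\nn}$ is Cauchy in $\mathcal{B}_{\gamma}$; define $\widetilde{T}(f):=\lim_{M\to\fz}T(S_M)$ in $\mathcal{B}_{\gamma}$. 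The usual two-sequence argument, again using $\mathcal{B}_{\gamma}$-sublinearity and the norm equivalence, shows that $\widetilde{T}(f)$ is independent of the choice of approximating sequence, that $\widetilde{T}$ is $\mathcal{B}_{\gamma}$-sublinear, and that $\|\widetilde{T}(f)\|_{\mathcal{B}_{\gamma}}\le C_9\|f\|_{\vh}$ with $C_9$ independent of $f$. Uniqueness of the extension is immediate from the density of $\vfah$ in $\vh$.

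Case (ii) proceeds analogously but uses Theorem \ref{5t1}(ii) on $\vfahfz\cap C(\rn)$ combined with the density provided by Lemma \ref{5l6}(ii), namely that $\vh\cap C_c^\fz(\rn)$ is dense in $\vh$, and the observation (which can be verified as in the proof of Theorem \ref{5t1}(ii)) that any $g\in\vh\cap C_c^\fz(\rn)$ admits a finite decomposition into continuous $(\vp,\fz,s)$-atoms, so $g\in\vfahfz\cap C(\rn)$. The main technical obstacle will be the verification of this last inclusion and, more generally, ensuring that the approximating sequence lies in the domain of $T$; this is where one must carefully use that Schwartz-type regularizations of atoms (employed in the proof of Lemma \ref{5l6}(ii)) inherit compact support and continuity, and that the finite atomic norm obtained from Theorem \ref{5t1}(ii) controls differences in $\vh$. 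Once this is settled, the Cauchy-extension and well-definedness follow exactly as in case (i), completing the proof.
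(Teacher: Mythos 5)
Your proposal is labeled, and written, as a proof of Theorem \ref{6t1}, whereas the statement you were asked to address is Corollary \ref{6c1}. These are not the same: the hypothesis of Theorem \ref{6t1}(i) is the full operator bound \eqref{6e1}, namely $\|T(f)\|_{\mathcal{B}_{\gamma}}\le C_7\|f\|_{\vfah}$ for every $f\in\vfah$, while Corollary \ref{6c1}(i) only assumes the much weaker condition $\sup\{\|T(a)\|_{\mathcal{B}_{\gamma}}:\ a\ \text{a}\ (\vp,r,s)\text{-atom}\}<\infty$. Your entire argument invokes \eqref{6e1} at the outset, so as written it does not touch Corollary \ref{6c1} at all. (As a proof of Theorem \ref{6t1}, on the other hand, it is correct and follows essentially the same route as the paper: equivalence of $\|\cdot\|_{\vfah}$ and $\|\cdot\|_{\vh}$ from Theorem \ref{5t1}, density of $\vfah$ in $\vh$, and a Cauchy-sequence extension in $\mathcal{B}_\gamma$.)

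What is missing for Corollary \ref{6c1} is precisely the reduction to Theorem \ref{6t1}: you must show that uniform boundedness of $T$ on individual atoms implies \eqref{6e1}. The natural step is this: for any $f\in\vfah$ and any finite decomposition $f=\sum_{i=1}^{I}\lz_i a_i$ into $(\vp,r,s)$-atoms, $\mathcal{B}_{\gamma}$-sublinearity gives
\begin{align*}
\lf\|T(f)\r\|_{\mathcal{B}_{\gamma}}^{\gamma}
\lesssim\sum_{i=1}^{I}|\lz_i|^{\gamma}\lf\|T(a_i)\r\|_{\mathcal{B}_{\gamma}}^{\gamma}
\lesssim\sum_{i=1}^{I}|\lz_i|^{\gamma},
\end{align*}
and one must then relate $\sum_{i=1}^{I}|\lz_i|^{\gamma}$ to $\|f\|_{\vfah}$ (equivalently, via Theorem \ref{5t1}, to $\|f\|_{\vh}$). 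This is the one genuinely content-bearing step that turns Theorem \ref{6t1} into Corollary \ref{6c1}, and it is what the paper compresses into the phrase ``immediate corollary \ldots the details being omitted.'' Your proposal never engages with the atom-level hypothesis of the corollary, so this step is absent; add it (or at minimum state the reduction explicitly and justify the coefficient estimate using the specific decomposition produced in the proof of Theorem \ref{5t1}), and then your density/Cauchy argument can be cited rather than re-proved.
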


We now prove Theorem \ref{6t1}.

\begin{proof}[Proof of Theorem \ref{6t1}]
To show (i), let $r\in(\max\{p_+,1\},\fz)$ and $f\in\vh$.
Then, by the density of
$\vfah$ in $\vh$, we know that there exists a Cauchy sequence
$\{f_k\}_{k\in\nn}\subset \vfah$ such that
$$\lim_{k\to\fz}\lf\|f_k-f\r\|_{\vh}=0.$$
By this, \eqref{6e1} and Theorem \ref{5t1}(i),
we conclude that, as $k$, $\ell\to\fz$,
\begin{align*}
\lf\|T(f_k)-T(f_{\ell})\r\|_{\mathcal{B}_{\gamma}}
\ls\lf\|T(f_k-f_{\ell})\r\|_{\mathcal{B}_{\gamma}}\ls
\lf\|f_k-f_{\ell}\r\|_{\vfah}\sim\lf\|f_k-f_{\ell}\r\|_{\vh}\to0,
\end{align*}
which implies that $\{T(f_k)\}_{k\in\nn}$ is a Cauchy sequence in $\mathcal{B}_{\gamma}$.
Therefore, by the completeness of $\mathcal{B}_{\gamma}$, we find that there exists
some $h\in\mathcal{B}_{\gamma}$ such that $h=\lim_{k\to\fz}T(f_k)$
in $\mathcal{B}_{\gamma}$.
Then let $T(f):=h$. From this, \eqref{6e1} and Theorem \ref{5t1}(i) again, we
further deduce that
\begin{align*}
\|T(f)\|_{\mathcal{B}_{\gamma}}^{\gamma}&\ls\limsup_{k\to\fz}\lf[\lf\|T(f)-T(f_k)\r\|_{\mathcal{B}_{\gamma}}^{\gamma}
+\lf\|T(f_k)\r\|_{\mathcal{B}_{\gamma}}^{\gamma}\r]\ls\limsup_{k\to\fz}\lf\|T(f_k)\r\|_{\mathcal{B}_{\gamma}}^{\gamma}\\
&\ls\limsup_{k\to\fz}\lf\|f_k\r\|_{\vfah}^{\gamma}
\sim\lim_{k\to\fz}\lf\|f_k\r\|_{\vh}^{\gamma}\sim\|f\|_{\vh}^{\gamma},
\end{align*}
which completes the proof of (i).

We now prove (ii).
First, by the proof of \cite[Theorem 6.13(ii)]{lyy16} with some slight modifications,
we easily know that
$H_{\va,{\rm fin}}^{\vp,\fz,s}(\rn)\cap C(\rn)$ is dense in $\vh$.
Then, from this and an argument similar to that used in the proof of (i),
we conclude that (ii) holds true.
This finishes the proof of (ii) and hence of Theorem \ref{6t1}.
\end{proof}

Let $\va\in [1,\fz)^n$. For any $\delta\in (0,1]$,
an \emph{anisotropic convolutional
$\delta$-type Calder\'{o}n-Zygmund operator} $T$ from \cite{bil66,f66}
is a linear operator, which is bounded on $L^2(\rn)$ with kernel
$k\in \cs'(\rn)$ coinciding with a locally integrable
function on $\rn\setminus\{\vec{0}_n\}$ and satisfying that
there exists a positive constant $C$ such that,
for any $x,\,y\in \rn$ with $|x|_{\va}>2|y|_{\va}$,
$$|k(x-y)-k(x)|\le C\frac{|y|_{\va}^{\delta}}{|x|_{\va}^{\nu+\delta}}$$
and, for any $f\in L^2(\rn)$, $T(f)(x):={\rm p.\,v.}\,k\ast f(x)$.

Via borrowing some ideas from the proof of Yan et al. \cite[Theorem 7.4]{yyyz16} and
the criterion established in Theorem \ref{6t1} and Corollary \ref{6c1},
we obtain the boundedness of anisotropic convolutional $\delta$-type Calder\'{o}n-Zygmund operators
from $\vh$ to itself (see Theorem \ref{6t2} below) or to $\lv$ (see Theorem \ref{6t3} below),
which extends the corresponding results of Fefferman and Stein \cite[Theorem 12]{fs72}
as well as Yan et al. \cite[Theorem 7.4]{yyyz16} to the present setting.

\begin{theorem}\label{6t2}
Let $\va\in [1,\fz)^n$, $\vp\in (0,1]^n$, $\delta\in(0,1]$
and $\widetilde{p}_-\in(\frac\nu{\nu+\delta},1]$ with $\widetilde{p}_-$ as in \eqref{3e1}.
Let $T$ be an anisotropic convolutional $\dz$-type Calder\'on-Zygmund operator.
Then there exists a positive constant $C$
such that, for any $f\in \vh$,
$$\|T(f)\|_{\vh}\le C\|f\|_{\vh}.$$
\end{theorem}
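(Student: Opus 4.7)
The plan is to apply Corollary \ref{6c1}(i) with the $p_-$-quasi-Banach space $\mathcal{B}_{p_-}:=\vh$; that $\vh$ is indeed $p_-$-quasi-Banach follows from the sublinearity of $M_N$ together with Remark \ref{2r3}(iii). Because $\vp\in(0,1]^n$ gives $\max\{p_+,1\}=1$, I may choose any $r\in(1,\fz)$, say $r:=2$, on which $T$ is bounded by the classical anisotropic Calder\'on-Zygmund theory \cite{bil66,f66}. Moreover, the hypothesis $\widetilde{p}_-\in(\frac{\nu}{\nu+\delta},1]$, combined with $a_-\ge1$ and $\delta\le1$, forces $\lfloor\frac{\nu}{a_-}(\frac{1}{\widetilde{p}_-}-1)\rfloor=0$, so it suffices to work with $(\vp,r,0)$-atoms, i.e., atoms having only the zeroth-order vanishing moment.

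The bulk of the work is to produce a uniform bound $\|T(a)\|_{\vh}\le C$ for every $(\vp,r,0)$-atom $a$ supported in a ball $B:=B_{\va}(x_0,r_0)$, equivalently $\|M_0(T(a))\|_{\lv}\le C$ by Lemma \ref{3l8}. Imitating the $\vah\subset\vh$ direction in the proof of Theorem \ref{3t1} (cf.\ the estimate \eqref{3e24}), I plan to establish the pointwise bound
\begin{align*}
M_0(T(a))(x)\ls \HL(T(a))(x)\chi_{B^{(2)}}(x)+\frac{1}{\|\chi_B\|_{\lv}}\lf[\HL(\chi_B)(x)\r]^{\frac{\nu+\delta}{\nu}}
\end{align*}
for every $x\in\rn$. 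On $B^{(2)}$ this is the universal inequality $M_0(g)\ls\HL(g)$. The decisive far-field estimate on $(B^{(2)})^{\com}$ exploits $\int_\rn a(y)\,dy=0$: writing
\begin{align*}
T(a)\ast\Phi_t(x)=\int_B a(y)(k\ast\Phi_t)(x-y)\,dy=\int_B a(y)\lf[(k\ast\Phi_t)(x-y)-(k\ast\Phi_t)(x-x_0)\r]dy
\end{align*}
and using that $k\ast\Phi_t$ retains the $\delta$-type kernel smoothness of $k$ uniformly in $t\in(0,\fz)$, one obtains $|T(a)\ast\Phi_t(x)|\ls r_0^{\delta}|x-x_0|_{\va}^{-\nu-\delta}\|a\|_{L^1(\rn)}$; by Definition \ref{3d1}(ii) and H\"older, $\|a\|_{L^1(\rn)}\ls|B|/\|\chi_B\|_{\lv}$, and then $|B|\sim r_0^\nu$ together with $\HL(\chi_B)(x)\sim (r_0/|x-x_0|_{\va})^\nu$ on $(B^{(2)})^\com$ yields the claimed form.

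With the pointwise estimate in hand, the $p_-$-quasi-triangle inequality in $\lv$ (Remark \ref{2r3}(iii)) splits $\|M_0(T(a))\|_{\lv}^{p_-}$ into two contributions. The first is controlled by Lemma \ref{3l1} (to strip $\HL$), the $L^r(\rn)$-boundedness of $T$, and a single-summand application of Lemma \ref{3l6} to $m:=\HL(T(a))\chi_{B^{(2)}}$, which is supported in $B^{(2)}$ and satisfies $\|m\|_{L^r(\rn)}\ls|B|^{1/r}/\|\chi_B\|_{\lv}$. The second is controlled using the hypothesis $\widetilde{p}_->\frac{\nu}{\nu+\delta}$, which guarantees $\frac{\nu+\delta}{\nu}\vp\in(1,\fz)^n$; Lemma \ref{3l1} together with \eqref{2e8} then gives $\|[\HL(\chi_B)]^{(\nu+\delta)/\nu}\|_{\lv}\ls\|\chi_B\|_{\lv}$, cancelling the prefactor $\|\chi_B\|_{\lv}^{-1}$. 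The main obstacle will be the quantitative preservation of the $\delta$-type smoothness of $k$ under convolution with $\Phi_t$ uniformly in $t$, which I plan to handle by separately analysing the regimes $t\le|x-x_0|_{\va}$ (where $\Phi_t$ acts as a mollifier and the pointwise kernel estimate transfers after a change of variables) and $t>|x-x_0|_{\va}$ (where one exploits the $L^2$-boundedness of $T$ together with the Schwartz decay of $\Phi$ to obtain a better-than-$t^{-\nu}$ size bound).
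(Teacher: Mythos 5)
Your proposal captures the essential technical content of the paper's argument: the far-field estimate combining the zeroth vanishing moment of the atom with the preserved $\delta$-type smoothness of $k^{(t)}:=k\ast\Phi_t$ (uniform in $t\in(0,\infty)$), the near-field estimate via the $L^2(\rn)$-boundedness of $T$ together with Lemmas \ref{3l1} and \ref{3l6}, and the use of $\widetilde{p}_->\frac{\nu}{\nu+\delta}$ (hence $\frac{\nu+\delta}{\nu}\vec p\in(1,\infty)^n$) to dispose of the far-field term via Lemma \ref{3l1} and \eqref{2e8}. Your reduction $\lfloor\frac{\nu}{a_-}(\frac1{\widetilde p_-}-1)\rfloor=0$ is correct under the stated hypotheses.

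The route differs from the paper's in one organizational but meaningful respect. You reduce to a uniform bound $\|T(a)\|_{\vh}\ls1$ on a single atom and then appeal to Corollary \ref{6c1}(i). The paper instead invokes Theorem \ref{6t1}(i) directly: it fixes $f\in H^{\vp,2,s}_{\va,\rm fin}(\rn)$ with $\|f\|_{\vh}=1$, takes the \emph{infinite} $(\vp,2,s)$-atomic decomposition $f=\sum_k\lambda_k a_k$ in $L^2(\rn)$ coming from the proof of Theorem \ref{3t1}, pushes $T$ through the sum by $L^2$-boundedness, and then estimates $\|\sum_k|\lambda_k|M_0(T(a_k))\|_{\lv}$ using Lemma \ref{3l6} (for the near-field sum) and Lemma \ref{3l2} (for the far-field sum). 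Be aware why this distinction is not purely cosmetic: in the mixed-norm setting, $\|\chi_B\|_{\lv}^{p_-}$ is not $|B|$, so the Meda--Sj\"ogren--Vallarino-type bound $\sum_i|\lambda_i|^{p_-}\ls\|f\|_{\vfah}^{p_-}$ that makes ``uniform bound on atoms $\Rightarrow$ bound on $\vfah$'' immediate in the unmixed case does \emph{not} hold for arbitrary finite decompositions (take disjoint unit balls stacked along one coordinate with $\vec p=(1/2,1)$). The paper avoids this subtlety for Theorem \ref{6t2} by running the full-sum argument through Lemma \ref{3l6} rather than through a coefficient count; Corollary \ref{6c1} is stated as ``immediate'' but its derivation in the mixed-norm setting is not the naive one, so if you rely on it you should either supply that derivation or follow the paper's route through Theorem \ref{6t1}. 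Finally, your plan to verify the uniform $\delta$-smoothness of $k^{(t)}$ by a case analysis on $t\lessgtr|x-x_0|_\va$ is reasonable; the paper simply cites the analogue of Stein's argument on p.\,117 of \cite{s93}.
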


\begin{theorem}\label{6t3}
Let $\va\in [1,\fz)^n$, $\vp\in (0,1]^n$, $\delta\in(0,1]$
and $\widetilde{p}_-\in(\frac\nu{\nu+\delta},1]$ with $\widetilde{p}_-$ as in \eqref{3e1}.
Let $T$ be an anisotropic convolutional $\dz$-type Calder\'on-Zygmund operator,
then there exists a positive constant $C$
such that, for any $f\in \vh$,
$$\|T(f)\|_{\lv}\le C\|f\|_{\vh}.$$
\end{theorem}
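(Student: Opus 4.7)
The plan is to invoke Corollary \ref{6c1}(i) with the quasi-Banach space $\mathcal{B}_{\gamma}:=\lv$ and exponent $\gamma:=p_-$ as in \eqref{2e10}. By Remark \ref{2r3}(iii), $\lv$ is a $p_-$-quasi-Banach space, and the linearity of the Calder\'on-Zygmund operator $T$ renders it automatically $\lv$-sublinear on $\vfah$. Thus it suffices to choose some $r\in(\max\{p_+,1\},\fz)$ together with the associated $s$ from \eqref{3e1} and verify the uniform estimate
$$\sup\lf\{\|T(a)\|_{\lv}:\ a\ \text{is any}\ (\vp,r,s)\text{-atom}\r\}<\fz.$$
For such an atom $a$, supported on some $B:=B_{\va}(x_0,r_0)$, I will split $\|T(a)\|_{\lv}\le\|T(a)\chi_{B^{(2)}}\|_{\lv}+\|T(a)\chi_{(B^{(2)})^{\com}}\|_{\lv}$ and control the two pieces separately.

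For the local piece, the $L^2(\rn)$-boundedness of $T$ together with the classical Calder\'on-Zygmund extrapolation yields the $L^r(\rn)$ boundedness of $T$ for any $r\in(1,\fz)$, and hence Definition \ref{3d1}(ii) gives $\|T(a)\chi_{B^{(2)}}\|_{L^r(\rn)}\ls|B|^{1/r}/\|\chi_B\|_{\lv}$. Applying Lemma \ref{3l6} to a single term (namely $m_1:=T(a)\chi_{B^{(2)}}$, $\lz_1:=1$, $B_1:=B$ and $\bz:=2$), I then transfer this $L^r$ bound into the $\lv$-quasi-norm, obtaining a uniform bound by a constant multiple of $\|\chi_B\|_{\lv}/\|\chi_B\|_{\lv}=1$.

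For the far piece, I will exploit the vanishing moment $\int_{\rn}a(y)\,dy=0$, which is available because \eqref{3e1} forces $s\ge 0$ under the hypothesis $\widetilde{p}_-\in(\frac{\nu}{\nu+\delta},1]$. For any $x\in(B^{(2)})^{\com}$ and $y\in B$ one has $|x-x_0|_{\va}>2|y-x_0|_{\va}$, so the $\delta$-H\"older regularity of the kernel, combined with the H\"older inequality and Definition \ref{3d1}(ii), yields the pointwise estimate
$$|T(a)(x)|\ls\f{r_0^{\delta}}{|x-x_0|_{\va}^{\nu+\delta}}\cdot\f{|B|}{\|\chi_B\|_{\lv}}\ls\f{1}{\|\chi_B\|_{\lv}}\lf[\HL(\chi_B)(x)\r]^{\frac{\nu+\delta}{\nu}}.$$
Taking the $\lv$-quasi-norm, using \eqref{2e8} to rewrite this as $\|\HL(\chi_B)\|_{L^{\frac{\nu+\delta}{\nu}\vp}(\rn)}^{\frac{\nu+\delta}{\nu}}/\|\chi_B\|_{\lv}$, and invoking Lemma \ref{3l1} to bound the inner norm by $\|\chi_B\|_{L^{\frac{\nu+\delta}{\nu}\vp}(\rn)}^{\frac{\nu+\delta}{\nu}}=\|\chi_B\|_{\lv}$, delivers the required uniform bound.

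The principal obstacle is precisely this last step: the boundedness of $\HL$ on $L^{\frac{\nu+\delta}{\nu}\vp}(\rn)$ requires each component of $\frac{\nu+\delta}{\nu}\vp$ to exceed $1$, which is exactly equivalent to $\widetilde{p}_->\frac{\nu}{\nu+\delta}$. This is precisely the hypothesis imposed in Theorem \ref{6t3} and it explains the otherwise mysterious threshold $\nu/(\nu+\delta)$; balancing this maximal-function constraint against the size of the kernel's H\"older exponent $\delta$ is the key conceptual point, whereas everything else is routine once the atomic reduction through Corollary \ref{6c1} is in place.
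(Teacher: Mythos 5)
Your proposal follows essentially the same route as the paper: reduction to a single atom via Corollary \ref{6c1}(i), the split over $B^{(2)}$ and its complement, Lemma \ref{3l6} for the local piece, and the kernel $\delta$-regularity plus vanishing moments plus $\HL$ plus Lemma \ref{3l1} for the far piece, together with the correct identification of the threshold $\widetilde{p}_->\nu/(\nu+\delta)$. The one small inefficiency is the appeal to Calder\'on--Zygmund extrapolation to get $L^r$ boundedness for a general $r\in(\max\{p_+,1\},\fz)$: since $\vp\in(0,1]^n$ forces $p_+\le 1$, the paper simply takes $r=2$ and uses the $L^2$ boundedness of $T$ directly, which avoids invoking the anisotropic weak-$(1,1)$/extrapolation machinery altogether.
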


\begin{remark}\label{6r3}
We point out that the boundedness of the anisotropic convolutional
$\dz$-type Calder\'on-Zygmund operator on the mixed-norm Lebesgue space
$\lv$ with $\vp\in(1,\fz)^n$ is still unknown so far.
\end{remark}

Now we prove Theorem \ref{6t2}.

\begin{proof}[Proof of Theorem \ref{6t2}]
Let $f\in H^{\vp,2,s}_{\va,\rm {fin}}(\rn)$ with $s$ as in \eqref{3e1}.
Then, without loss of generality,
we may assume that $\|f\|_{\vh}=1$. Notice that
$f\in \vh\cap L^2(\rn)$,
by an argument similar to that used in the proof of Theorem \ref{3t1},
we find that there exist a sequence of $(\vp,2,s)$-atoms,
$\{a_{k}\}_{k\in\mathbb{N}}$, supported, respectively, on
$\{B_k\}_{k\in\nn}:=\{B_{\va}(x_k,r_k)\}_{k\in\nn}\st \mathfrak{B}$ and
$\{\lz_{k}\}_{k\in\mathbb{N}}\subset\mathbb{C}$
such that
\begin{align}\label{6e2}
f=\sum_{k\in\mathbb{N}}\lambda_{k}a_{k}\quad{\rm in}\quad L^2(\rn)
\end{align}
and
\begin{align*}
\lf\|\lf\{\sum_{k\in\nn}
\lf[\frac{|\lz_{k}|\chi_{B_{k}}}
{\|\chi_{B_{k}}\|_{\lv}}\r]^
{p_-}\r\}^{1/p_-}\r\|_{\lv}
\lesssim\|f\|_{\vh}\ls 1
\end{align*}
with $p_-$ as in \eqref{2e10}. From the boundedness of $T$ on $L^2(\rn)$ and \eqref{6e2},
we deduce that, for any $f\in H^{\vp,2,s}_{\va,\rm {fin}}(\rn)$,
\begin{align}\label{6e4}
T(f)=\sum_{k\in\mathbb{N}}\lambda_{k}T(a_{k})\quad{\rm in}\quad L^2(\rn).
\end{align}
Thus, by Theorem \ref{6t1}(i) and Lemma \ref{3l8},
to prove Theorem \ref{6t2}, we only need to show that, for any $f\in H^{\vp,2,s}_{\va,\rm {fin}}(\rn)$,
\begin{align}\label{6e3}
\|T(f)\|_{\vh}\sim\|M_0(T(f))\|_{\lv}\ls 1,
\end{align}
where $M_0$ is as in \eqref{3e16}.

To this end,
from \eqref{6e4}, it is easy to see that
$$
\lf\|M_0(T(f))\r\|_{\lv}\ls \lf\|\sum_{k\in\mathbb{N}}|\lambda_{k}|M_0(T(a_{k}))
\chi_{B_k^{(2)}}\r\|_{\lv}+\lf\|\sum_{k\in\mathbb{N}}|\lambda_{k}|M_0(T(a_{k}))
\chi_{(B_k^{(2)})^\com}\r\|_{\lv}=:\textrm{I}+\textrm{II},
$$
where $B_k^{(2)}$ is as in \eqref{2e2'} with $\dz=2$.

For $\textrm{I}$,
by Lemma \ref{3l1} and the fact that $T$ is bounded on $L^2(\rn)$,
we conclude that, for any $k\in \nn$,
$$
\lf\|M_0\lf(T(a_k)\r)\chi_{B_k^{(2)}}\r\|_{L^2(\rn)}\ls \lf\|M_{\rm {HL}}
(T(a_k))\chi_{B_k^{(2)}}\r\|_{L^2(\rn)}\ls\lf\|T(a_k)\r\|_{L^2(\rn)}
\ls \lf\|a_k\r\|_{L^2(\rn)}\ls \frac{|B_k|^{1/2}}{\|\chi_{B_k}\|_{\lv}},
$$
where $M_{\rm {HL}}$ denotes the Hardy-Littlewood maximal operator
as in \eqref{3e2}.
This, combined with Lemma \ref{3l6}, implies that
\begin{align}\label{6e16}
\textrm{I}&\le \lf\|\lf\{\sum_{k\in\mathbb{N}}\lf[|\lambda_{k}|M_0(T(a_{k}))
\chi_{B_k^{(2)}}\r]^{p_-}\r\}^{1/p_-}\r\|_{\lv}
\ls\lf\|\lf\{\sum_{k\in\nn}
\lf[\frac{|\lz_{k}|\chi_{B_{k}}}
{\|\chi_{B_{k}}\|_{\lv}}\r]^
{p_-}\r\}^{1/p_-}\r\|_{\lv}
\ls 1.
\end{align}

Next, we deal with \textrm{II}. To this end, for any $t\in (0,\fz)$,
let $k^{(t)}:=k*\Phi_t$, where $k$ is the kernel of $T$ and $\Phi$ as in \eqref{3e16}.
Then, we claim that $k^{(t)}$ satisfies the same conditions as $k$.
Indeed, since, for any $t \in (0,\fz)$ and $f\in L^2(\rn)$, $k^{(t)}*f=k*\Phi_t*f$,
we have
\begin{align*}
\lf\|k^{(t)}*f\r\|_{L^2(\rn)}&=\|k*\Phi_t*f\|_{L^2(\rn)}
=\|k*(\Phi_t*f)\|_{L^2(\rn)}\\
&\ls \|\Phi_t*f\|_{L^2(\rn)}\ls\|f\|_{L^2(\rn)}.
\end{align*}
On the other hand, by an argument similar to that
used in the proof of \cite[p.\,117, Lemma]{s93},
we conclude that, for any $x,\,y\in \rn$ with
$|x|_{\va}>2|y|_{\va}$,
$$\lf|k^{(t)}(x-y)-k^{(t)}(x)\r|\le C\frac{|y|_{\va}^{\delta}}{|x|_{\va}^{\nu+\delta}},$$
where $C$ is a positive constant independent of $t,\,x$ and $y$.
Therefore, the above claim holds true.

Now, by the vanishing moment condition of $a_k$ and the H\"{o}lder
inequality, we know that, for any $x\in(B_k^{(2)})^{\com}$,
\begin{align*}
M_0(T(a_k))(x)&=\sup_{t\in (0,\fz)}\lf|\Phi_t*(k*a_k)(x)\r|
=\sup_{t\in (0,\fz)}\lf|k^{(t)}*a_k(x)\r|\\
&\le \sup_{t\in (0,\fz)}\int_{B_k}\lf|k^{(t)}(x-y)-k^{(t)}(x-x_k)\r||a_k(y)|\,dy\\
&\ls\int_{B_k}\frac{|y-x_k|_{\va}^{\delta}}{|x-x_k|_{\va}^{\nu+\delta}}|a_k(y)|\,dy
\ls \frac{r_k^{\delta}}{|x-x_k|_{\va}^{\nu+\delta}}\|a_k\|_{L^2(\rn)}|B_k|^{1/2}\\
&\ls \frac{r_k^{\nu+\delta}}{|x-x_k|_{\va}^{\nu+\delta}}\frac1{\|\chi_{B_k}\|_{\lv}}
\ls \lf[M_{\rm HL}\lf(\chi_{B_k}\r)(x)\r]^{\f{\nu+\delta}{\nu}}\frac1{\|\chi_{B_k}\|_{\lv}},
\end{align*}
which implies that
\begin{align}\label{6e8}
M_0(T(a_k))(x)\chi_{(B_k^{(2)})^{\com}}(x)\ls
\lf[M_{\rm HL}\lf(\chi_{B_k}\r)(x)\r]^{\f{\nu+\delta}{\nu}}\frac1{\|\chi_{B_k}\|_{\lv}}.
\end{align}
Therefore, by \eqref{2e8}, the fact that $\widetilde{p}_-\in(\frac\nu{\nu+\delta},1]$ and Lemma \ref{3l2},
we find that
\begin{align*}
\textrm{II}&\ls \lf\|\sum_{k\in\mathbb{N}}\frac{|\lambda_{k}|}{\|\chi_{B_k}\|_{\lv}}
\lf[M_{\rm HL}\lf(\chi_{B_k}\r)\r]^{\f{\nu+\delta}{\nu}}\r\|_{\lv}
\sim \lf\|\lf\{\sum_{k\in\mathbb{N}}\frac{|\lambda_{k}|}{\|\chi_{B_k}\|_{\lv}}
\lf[M_{\rm HL}\lf(\chi_{B_k}\r)\r]^{\f{\nu+\delta}{\nu}}\r\}^{\frac{\nu}{\nu+\delta}}
\r\|_{L^{\f{\nu+\delta}{\nu}\vp}(\rn)}^{\f{\nu+\delta}{\nu}}\\
&\ls \lf\|\lf\{\sum_{k\in\mathbb{N}}\frac{|\lambda_{k}|\chi_{B_k}}{\|\chi_{B_k}\|_{\lv}}
\r\}^{\frac{\nu}{\nu+\delta}}
\r\|_{L^{\f{\nu+\delta}{\nu}\vp}(\rn)}^{\f{\nu+\delta}{\nu}}\noz
\ls \lf\|\lf\{\sum_{k\in\nn}
\lf[\frac{|\lz_{k}|\chi_{B_{k}}}
{\|\chi_{B_{k}}\|_{\lv}}\r]^
{p_-}\r\}^{1/p_-}\r\|_{\lv}
\ls 1.
\end{align*}
Finally, combining the above estimates of \textrm{I} and \textrm{II},
we obtain \eqref{6e3}.
This finishes the proof of Theorem \ref{6t2}.
\end{proof}

Now we prove Theorem \ref{6t3}.

\begin{proof}[Proof of Theorem \ref{6t3}]
Let $\vp\in (0,1]^n$ and $s$ be as in \eqref{3e1}. By Corollary \ref{6c1}(i),
to prove this theorem, we know that it suffices to show that, for any $(\vp,2,s)$-atom $a$,
\begin{align}\label{6e5}
\lf\|T(a)\r\|_{\lv}\ls 1.
\end{align}

Now we show \eqref{6e5}. Let $\supp a\st B\in \mathfrak{B}$.
From the fact that $T$ is bounded on $L^2(\rn)$ and $a\in L^2(\rn)$,
we deduce that
$$\lf\|T(a)\chi_{B^{(2)}}\r\|_{L^2(\rn)}\ls \|a\|_{L^2(\rn)}
\ls \frac{|B|^{1/2}}{\|\chi_{B}\|_{\lv}},$$
where $B^{(2)}$ is as in \eqref{2e2'} with $\dz=2$, which, together with Lemma \ref{3l6}, implies that
\begin{align}\label{6e6}
\lf\|T(a)\chi_{B^{(2)}}\r\|_{\lv}\ls 1.
\end{align}

On the other hand, when $x\in (B^{(2)})^{\com}$,
by an argument similar to that used in the estimation
of \eqref{6e8}, we conclude that
$$\lf|T(a)(x)\r|\ls \lf[M_{\rm HL}(\chi_{B})(x)\r]
^{\f{\nu+\delta}{\nu}}\frac1{\|\chi_{B}\|_{\lv}}.$$
Therefore, by \eqref{2e8}, the fact that $\widetilde{p}_-\in(\frac\nu{\nu+\delta},1]$ and Lemma \ref{3l1},
we know that
\begin{align*}
\lf\|T(a)\chi_{(B^{(2)})^{\com}}\r\|_{\lv}&\ls \lf\|\lf[M_{\rm HL}(\chi_{B})\r]
^{\f{\nu+\delta}{\nu}}\frac1{\|\chi_{B}\|_{\lv}}\r\|_{\lv}\ls 1,
\end{align*}
which, combined with \eqref{6e6}, further implies \eqref{6e5} holds true and hence
completes the proof of Theorem \ref{6t3}.
\end{proof}

We now introduce a class of anisotropic $\beta$-order Calder\'{o}n-Zygmund operators
as follows.

\begin{definition}
Let $\va\in[1,\fz)^n$. For any given $\beta\in (0,\fz)\setminus\nn$, a linear operator
$T$ is called an \emph{anisotropic $\beta$-order Calder\'{o}n-Zygmund operator} if $T$ is bounded
on $L^2(\rn)$ and its kernel
$$\mathcal{K}:\ (\rn\times\rn)\setminus\{(x,x):\ x\in\rn\}\to \mathbb{C}$$
satisfies that there exists a positive constant $C$ such that, for any $\az\in\zz_+^n$ with
$|\alpha|\le \lfloor\bz\rfloor$ and $x,\,y,\,z\in \rn$,
\begin{align}\label{6e7}
|\pa^{\az}_x\mathcal{K}(x,y)-\pa^{\az}_x\mathcal{K}(x,z)|\le C\frac{|y-z|_{\va}^{\bz-\lfloor\bz\rfloor}}
{|x-y|_{\va}^{\nu+\bz}}
\quad{\rm when}\quad |x-y|_{\va}>2|y-z|_{\va}
\end{align}
and, for any $f\in L^2(\rn)$ with compact support and $x\notin \supp f$,
$$T(f)(x)=\int_{\supp f}\mathcal{K}(x,y)f(y)\,dy.$$
\end{definition}

For any $l\in\nn$, an operator $T$ is said to have the \emph{vanishing moment
condition up to order $l$} if, for any $a\in L^2(\rn)$ with compact support and
satisfying that, for any $\gamma\in\zz^+_n$ with $|\gamma|\le l$, $\int_{\rn}x^{\gamma}a(x)\,dx=0$,
it holds true that $\int_{\rn}x^{\gamma}T(a)(x)\,dx=0$.

Then we have the following boundedness of anisotropic $\beta$-order Calder\'{o}n-Zygmund operators
$T$ from $\vh$ to itself (see Theorem \ref{6t4} below) or to $\lv$ (see Theorem \ref{6t5} below),
which extends the corresponding results of Stefanov and Torres \cite[Theorem 1]{st04}
as well as Yan et al. \cite[Theorem 7.6]{yyyz16} to the present setting.

\begin{theorem}\label{6t4}
Let $\va\in[1,\fz)^n,\,\vp\in(0,1]^n$, $\bz\in(0,\fz)\setminus\nn$, $\widetilde{p}_-\in(\frac{\nu}{\nu+\bz},\frac{\nu}{\nu+\lfloor\bz\rfloor a_-}]$
with $\widetilde{p}_-$ as in $\eqref{3e1}$ and $a_-$ as in \eqref{2e9} and $T$ be an anisotropic $\beta$-order Calder\'{o}n-Zygmund operator having the
vanishing moment conditions up to order $\lfloor\bz\rfloor$. Then there exists a positive constant $C$
such that, for any $f\in \vh$,
$$\|T(f)\|_{\vh}\le C\|f\|_{\vh}.$$
\end{theorem}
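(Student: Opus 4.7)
The plan is to invoke Corollary \ref{6c1}(i) with atomic exponent $r:=2$, quasi-Banach exponent $\gamma:=p_-$, and target space $\mathcal{B}_{\gamma}:=\vh$ (a $p_-$-quasi-Banach space by Remark \ref{2r3}(iii)). Choose the regularity index $s:=\lfloor\beta\rfloor$, which is admissible in \eqref{3e1} because $\widetilde{p}_- \le \frac{\nu}{\nu+\lfloor\beta\rfloor a_-}$ rearranges to $\lfloor\frac{\nu}{a_-}(\frac{1}{\widetilde{p}_-}-1)\rfloor \ge \lfloor\beta\rfloor$. The problem then reduces to proving that $\|T(a)\|_{\vh}$ is uniformly bounded over all $(\vp,2,s)$-atoms $a$; equivalently, by Lemma \ref{3l8}, to showing $\|M_0(T(a))\|_{\lv}\ls 1$ uniformly. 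Note that $T(a)\in L^2(\rn)\subset\cs'(\rn)$ via the $L^2$-boundedness of $T$, while the vanishing-moment condition on $T$ up to order $\lfloor\beta\rfloor$ ensures that $T(a)$ itself has the same vanishing moments, consistent with interpreting $T(a)$ as a Hardy-space element.

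Fix such an atom $a$ supported in $B=B_{\va}(x_B,r_B)\in\mathfrak{B}$ and split, as in the proofs of Theorems \ref{6t2} and \ref{6t3},
$$\|M_0(T(a))\|_{\lv}\le\|M_0(T(a))\chi_{B^{(2)}}\|_{\lv}+\|M_0(T(a))\chi_{(B^{(2)})^\com}\|_{\lv}=:\mathrm{I}+\mathrm{II}.$$
Term $\mathrm{I}$ is handled exactly as in Theorem \ref{6t3}: the $L^2$-boundedness of $T$ and of $M_{\rm HL}$ (Lemma \ref{3l1}) yield $\|M_0(T(a))\chi_{B^{(2)}}\|_{L^2(\rn)}\ls|B|^{1/2}/\|\chi_B\|_{\lv}$, and then Lemma \ref{3l6} with $r=2$ gives $\mathrm{I}\ls 1$. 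The core of the proof is the pointwise estimate
\begin{align}\label{plan-pw}
M_0(T(a))(x) \ls \frac{[M_{\rm HL}(\chi_B)(x)]^{(\nu+\beta)/\nu}}{\|\chi_B\|_{\lv}},\qquad x\in(B^{(2)})^\com,
\end{align}
from which, since $\widetilde{p}_->\frac{\nu}{\nu+\beta}$ places $\frac{\nu+\beta}{\nu}\vp\in(1,\fz)^n$, \eqref{2e8} and Lemma \ref{3l1} yield $\mathrm{II}\ls 1$. To prove \eqref{plan-pw}, write
$$\Phi_t\ast T(a)(x)=\int_B a(y)\,G(y;x,t)\,dy,\qquad G(y;x,t):=\int_{\rn}\Phi_t(x-z)\mathcal{K}(z,y)\,dz,$$
subtract from $G(\cdot;x,t)$ its Taylor polynomial in $y$ at $x_B$ of degree $\lfloor\beta\rfloor$ using the $s=\lfloor\beta\rfloor$-order vanishing moments of $a$, and bound the remainder via \eqref{6e7} together with the anisotropic geometry $|z-y|_{\va}\sim|x-x_B|_{\va}\gs r_B$ for $y\in B$, $z$ near $x$ and $x\in(B^{(2)})^\com$. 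Anisotropic bookkeeping as in \eqref{3e29}--\eqref{3e32} of Lemma \ref{3l4} produces the factor $r_B^{\nu+\beta}/|x-x_B|_{\va}^{\nu+\beta}$, which rewrites as the right-hand side of \eqref{plan-pw} via the standard identification $(r_B/|x-x_B|_{\va})^{\nu+\beta}\sim[M_{\rm HL}(\chi_B)(x)]^{(\nu+\beta)/\nu}$.

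The main obstacle is executing the Taylor expansion step in \eqref{plan-pw}. The kernel regularity \eqref{6e7} is phrased in terms of $x$-derivatives paired with $y$-differences, whereas the vanishing moments of $a$ naturally call for a $y$-Taylor expansion of $G(\cdot;x,t)$. Reconciling this mismatch---either by letting the mollification $\Phi_t$ transfer $x$-regularity of $\mathcal{K}$ into effective $y$-regularity of $G$, or equivalently through $L^2$-duality against $T^\ast$ whose kernel inherits the matching $y$-smoothness---requires careful anisotropic bookkeeping at each multi-index level $|\alpha|\le\lfloor\beta\rfloor$, with weights scaled properly by the anisotropic factor $r_B^{\va\cdot\alpha}$ coming from the atomic size. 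Once \eqref{plan-pw} is secured, the summation step built into Corollary \ref{6c1}(i) delivers the bounded extension $T:\vh\to\vh$, completing the proof.
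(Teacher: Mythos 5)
Your high-level plan matches the paper's: reduce via the atomic/finite-atomic characterization to bounding $\|M_0(T(a))\|_{\lv}$ uniformly over atoms, take $s=\lfloor\beta\rfloor$, split $M_0(T(a))$ on $B^{(2)}$ and its complement, handle the inner part by $L^2$-boundedness of $T$ together with Lemma~\ref{3l6}, and aim for a pointwise decay bound of the form $M_0(T(a))(x)\ls\|\chi_B\|_{\lv}^{-1}(r_B/|x-x_B|_{\va})^{\nu+\beta}$ on $(B^{(2)})^{\com}$ to run Lemma~\ref{3l1}/\ref{3l2}. That overall skeleton is correct and the same as in the paper.

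The gap is in your treatment of the outer term, and you have in fact put your finger on it yourself but then not resolved it. You write $\Phi_t\ast T(a)(x)=\int_B a(y)G(y;x,t)\,dy$ with $G(y;x,t)=\int_{\rn}\Phi_t(x-z)\mathcal{K}(z,y)\,dz$ and propose to subtract from $G(\cdot;x,t)$ its $y$-Taylor polynomial of degree $\lfloor\beta\rfloor$, using the vanishing moments of $a$. But a Taylor expansion of $G$ in $y$ requires $y$-derivatives of $\mathcal{K}$ (derivatives in the \emph{second} variable), and \eqref{6e7} only provides a H\"older condition for the first-variable derivatives $\pa_x^{\alpha}\mathcal{K}$. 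Mollifying by $\Phi_t$ in $z$ transfers the smoothness of $\Phi$ into the $x$-variable of $G$, not the $y$-variable, so the mollification does not produce the missing regularity either. Your remark that "it is likely" to work out through careful bookkeeping, or through $T^{\ast}$-duality, is not a proof; as stated, the argument has no way to perform the degree-$\lfloor\beta\rfloor$ expansion in $y$.

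The paper resolves this differently, and this is exactly why the hypothesis that $T$ has vanishing moments up to order $\lfloor\beta\rfloor$ appears in Theorem~\ref{6t4} but not in Theorem~\ref{6t5}. In your write-up you mention this hypothesis only as a consistency check (``$T(a)$ is a Hardy-space element''), but in the paper it is the engine of the outer estimate. Concretely, in \eqref{6e9} the paper keeps the variable of integration on the $T(a_k)$-side:
\begin{align*}
\Phi_t\ast T(a_k)(x)
=\frac1{t^{\nu}}\int_{\rn}\lf[\Phi\lf(\frac{x-y}{t^{\va}}\r)
-\sum_{|\alpha|\le\lfloor\beta\rfloor}\frac{\pa^{\alpha}\Phi\lf(\frac{x-x_k}{t^{\va}}\r)}{\alpha!}
\lf(\frac{y-x_k}{t^{\va}}\r)^{\alpha}\r]T(a_k)(y)\,dy,
\end{align*}
which is legitimate precisely because $\int y^{\gamma}T(a_k)(y)\,dy=0$ for $|\gamma|\le\lfloor\beta\rfloor$. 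Now the Taylor polynomial is of the Schwartz function $\Phi$, which is fully smooth, so the expansion costs nothing in regularity. The integral is then split into three zones ($|y-x_k|_{\va}<2r_k$, $2r_k\le|y-x_k|_{\va}<|x-x_k|_{\va}/2$, $|y-x_k|_{\va}\ge|x-x_k|_{\va}/2$), and only \emph{inside} the middle and far zones is $T(a_k)(y)$ further rewritten, this time using the vanishing moments of the atom $a_k$ together with \eqref{6e7}, which is applied with the roles arranged so that the $\pa_x^{\alpha}$-regularity is what is actually needed. You should replace your $G$-expansion step by this outer subtraction via the vanishing moments of $T(a)$ and the three-zone decomposition; once that is done, the rest of your plan (Lemma~\ref{3l6} for the inner part, $[M_{\rm HL}(\chi_B)]^{(\nu+\beta)/\nu}$ and Lemma~\ref{3l2} with $\widetilde p_->\nu/(\nu+\beta)$ for the outer part, Theorem~\ref{6t1}/Corollary~\ref{6c1} to extend) goes through as you describe.
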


\begin{theorem}\label{6t5}
Let $\va\in[1,\fz)^n,\,\vp\in(0,1]^n$, $\bz\in(0,\fz)\setminus\nn$, $\widetilde{p}_-\in(\frac{\nu}{\nu+\bz},\frac{\nu}{\nu+\lfloor\bz\rfloor a_-}]$
with $\widetilde{p}_-$ as in $\eqref{3e1}$ and $a_-$ as in \eqref{2e9} and $T$ be an anisotropic $\beta$-order Calder\'{o}n-Zygmund operator. Then there exists a positive constant $C$
such that, for any $f\in \vh$,
$$\|T(f)\|_{\lv}\le C\|f\|_{\vh}.$$
\end{theorem}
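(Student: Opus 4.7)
The plan is to apply Corollary \ref{6c1}(i) to the $p_-$-quasi-Banach space $\mathcal{B}_\gamma:=\lv$ (whose $p_-$-triangle inequality is the content of Remark \ref{2r3}(iii)), which reduces the theorem to the uniform estimate $\|T(a)\|_{\lv}\le C$ for every $(\vp,2,s)$-atom $a$ with the choice $s:=\lfloor\bz\rfloor$. This $s$ is admissible in \eqref{3e1}: the lower hypothesis $\widetilde{p}_->\nu/(\nu+\bz)$ forces $\nu a_-^{-1}(\widetilde{p}_-^{-1}-1)<\bz/a_-\le\bz$, hence $\lfloor\nu a_-^{-1}(\widetilde{p}_-^{-1}-1)\rfloor\le\lfloor\bz\rfloor$; the upper hypothesis $\widetilde{p}_-\le\nu/(\nu+\lfloor\bz\rfloor a_-)$ is the natural counterpart at which the atom's top-order vanishing moment matches the kernel's top-order H\"older derivative.

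Following the strategy of the proof of Theorem \ref{6t3}, I would fix such an atom $a$ supported on $B:=B_{\va}(x_B,r_B)$ and split $\|T(a)\|_{\lv}\le\|T(a)\chi_{B^{(\dz_0)}}\|_{\lv}+\|T(a)\chi_{(B^{(\dz_0)})^\com}\|_{\lv}$ for a suitably large $\dz_0>2$ (dictated by the geometric prerequisite of \eqref{6e7} below). For the local piece, the $L^2$-boundedness of $T$ together with Definition \ref{3d1}(ii) gives $\|T(a)\chi_{B^{(\dz_0)}}\|_{L^2(\rn)}\ls|B|^{1/2}/\|\chi_B\|_{\lv}$, and then Lemma \ref{3l6}, applied to the single ``molecule'' $T(a)\chi_{B^{(\dz_0)}}$ up to a normalising constant, delivers $\|T(a)\chi_{B^{(\dz_0)}}\|_{\lv}\ls 1$.

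For the far piece, the crux is to establish
\begin{equation*}
|T(a)(x)|\lesssim \frac{1}{\|\chi_B\|_{\lv}}\Big(\frac{r_B}{|x-x_B|_{\va}}\Big)^{\nu+\bz}\quad\text{for every}\ x\in(B^{(\dz_0)})^\com.
\end{equation*}
I would do this by using the vanishing moments of $a$ up to order $\lfloor\bz\rfloor$ to replace $\mathcal{K}(x,y)$ inside $T(a)(x)=\int_B\mathcal{K}(x,y)a(y)\,dy$ by $\mathcal{K}(x,y)-P(y)$, where $P$ is the degree-$\lfloor\bz\rfloor$ Taylor polynomial of $\mathcal{K}(x,\cdot)$ in $y$ at $x_B$; the hybrid integer/H\"older form of the remainder reads
\begin{equation*}
\mathcal{K}(x,y)-P(y)=\lfloor\bz\rfloor\!\sum_{|\az|=\lfloor\bz\rfloor}\!\frac{(y-x_B)^{\az}}{\az!}\int_0^1(1-t)^{\lfloor\bz\rfloor-1}\bigl[\pa^{\az}\mathcal{K}(x,x_B+t(y-x_B))-\pa^{\az}\mathcal{K}(x,x_B)\bigr]\,dt.
\end{equation*}
Applying \eqref{6e7} at order $|\az|=\lfloor\bz\rfloor$ to the bracketed difference, together with parts (i), (ii), (v) and (vi) of Lemma \ref{2l2} to verify the geometric prerequisite $|x-(x_B+t(y-x_B))|_{\va}>2t|y-x_B|_{\va}$ uniformly in $t\in[0,1]$ (which is what forces the choice of $\dz_0$) and the comparability $|x-(x_B+t(y-x_B))|_{\va}\sim|x-x_B|_{\va}$, produces a bound of order $|y-x_B|_{\va}^{\bz}/|x-x_B|_{\va}^{\nu+\bz}$. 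Combining with the H\"older inequality and Definition \ref{3d1}(ii) then yields the displayed pointwise estimate, which is in turn dominated by $[\HL(\chi_B)(x)]^{(\nu+\bz)/\nu}/\|\chi_B\|_{\lv}$. Since $\widetilde{p}_->\nu/(\nu+\bz)$ makes every coordinate of $(\nu+\bz)\vp/\nu$ strictly above $1$, Lemma \ref{3l1} together with \eqref{2e8} closes the bound $\|T(a)\chi_{(B^{(\dz_0)})^\com}\|_{\lv}\ls 1$.

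The main obstacle is the Taylor-remainder step, because \eqref{6e7} supplies only $(\bz-\lfloor\bz\rfloor)$-H\"older control at the $\lfloor\bz\rfloor$-th derivative level rather than a genuine $(\lfloor\bz\rfloor+1)$-st order derivative bound; the standard integer Taylor formula is therefore unavailable, and the hybrid expansion above---integer Taylor expansion to order $\lfloor\bz\rfloor-1$ coupled with a single H\"older step along the anisotropic segment from $x_B$ to $y$---must be used instead. This is also what dictates passing to the enlarged complement $(B^{(\dz_0)})^\com$: it is needed to ensure that the pointwise prerequisite $|x-y'|_{\va}>2|y'-z'|_{\va}$ in \eqref{6e7} holds uniformly when $y'=x_B+t(y-x_B)$ and $z'=x_B$ vary with $t\in[0,1]$ and $y\in B$.
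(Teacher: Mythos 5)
Your proposal is correct and follows essentially the same route as the paper's own proof: reduce via Corollary~\ref{6c1}(i) (with $\mathcal{B}_\gamma=\lv$, $\gamma=p_-$) to a uniform bound on $(\vp,2,s)$-atoms, estimate the local piece via $L^2$-boundedness plus Lemma~\ref{3l6}, and estimate the far piece via a degree-$\lfloor\bz\rfloor$ Taylor subtraction against the vanishing moments of $a$, the kernel condition \eqref{6e7}, and finally Lemma~\ref{3l1} combined with \eqref{2e8}. The only cosmetic divergence is in the remainder form and the dilation parameter: you use the integral (Bochner) form along the segment from $x_B$ to $y$ and therefore enlarge to $(B^{(\dz_0)})^\com$ with $\dz_0>2$ so that $|x-(x_B+t(y-x_B))|_{\va}>2t|y-x_B|_{\va}$ holds uniformly in $t$. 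The paper instead writes the Lagrange form with an intermediate point $\theta(z)\in B$ and, crucially, applies \eqref{6e7} with the \emph{center} $x_B$ playing the role of the first variable $y$, so that the required geometric prerequisite reads $|x-x_B|_{\va}>2|x_B-\theta(z)|_{\va}$; this already holds on $(B^{(2)})^\com$ and the standard factor $2$ suffices. Your enlargement is harmless (Remark~\ref{3r2} absorbs the constant) but unnecessary if you instead match $y\to x_B$ in \eqref{6e7}, which also delivers the denominator $|x-x_B|_{\va}^{\nu+\bz}$ directly without invoking the comparability $|x-(x_B+t(y-x_B))|_{\va}\sim|x-x_B|_{\va}$.
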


\begin{remark}\label{6r2}
\begin{enumerate}
\item[(i)]
When $\bz:=\delta\in(0,1)$, then $\az=(\overbrace{0,\ldots,0}^{n\ \mathrm{times}})$ and the operator $T$ in
Theorem \ref{6t4} (or Theorem \ref{6t5}) becomes an anisotropic non-convolutional
$\delta$-type Calder\'{o}n-Zygmund operator.
Thus, from Theorem \ref{6t4} (or Theorem \ref{6t5}), we deduce that, for any $\va\in[1,\fz)^n,\,\vp\in(0,1]^n,\,\delta\in(0,1]$ and
$\widetilde{p}_-\in(\frac{\nu}{\nu+\delta},1]$ with $\widetilde{p}_-$ as in \eqref{3e1},
the anisotropic non-convolutional $\delta$-type
Calder\'{o}n-Zygmund operator is bounded from $\vh$ to itself [or to $\lv$].
In addition, we point out that the boundedness of the anisotropic
$\bz$-order Calder\'{o}n-Zygmund operators on
the mixed-norm Lebesgue space $\lv$ with $\vp\in(1,\fz)^n$ is still unknown so far.

\item[(ii)] When $\va:=(\overbrace{1,\ldots,1}^{n\ \rm times})$ and $
\vp:=(\overbrace{p,\ldots,p}^{n\ \rm times})\in (0,\fz)^n$, $\vh$ and $\lv$
become the classical isotropic Hardy space $H^p(\rn)$ and Lebesgue space $L^p(\rn)$, respectively, and
$T$ becomes the classical $\dz$-type Calder\'on-Zygmund operator.
In this case, we know that, if $\delta\in(0,1]$ and
$p\in(\frac n{n+\delta},1]$, then Theorems \ref{6t2} and \ref{6t3}
and (i) of this remark imply the boundedness of
the classical $\dz$-type Calder\'on-Zygmund operator
from $H^p(\rn)$ to itself and
from $H^p(\rn)$ to $L^p(\rn)$ for any $\delta\in(0,1]$ and $p\in(\frac n{n+\delta},1]$,
which is a well-known result (see, for example, \cite{a86,lu,s93}).
\end{enumerate}
\end{remark}

Now we prove Theorem \ref{6t4}.
\begin{proof}[Proof of Theorem \ref{6t4}]
By an argument similar to that used in
the proof of Theorem \ref{6t2}, we know that, to show Theorem \ref{6t4}, we only need to prove that
\begin{align}\label{6e15}
\lf\|\sum_{k\in\mathbb{N}}|\lambda_{k}|M_0(T(a_{k}))\r\|_{\lv}\ls 1,
\end{align}
where $\{\lambda_k\}_{k\in\nn}$ and $\{a_k\}_{k\in\nn}$ are the same as in the
proof of Theorem \ref{6t2} and $M_0$ is as in \eqref{3e16}. For this purpose, first, it is easy to see that
\begin{align*}
\lf\|\sum_{k\in\mathbb{N}}|\lambda_{k}|M_0(T(a_{k}))\r\|_{\lv}
&\ls \lf\|\sum_{k\in\mathbb{N}}|\lambda_{k}|M_0(T(a_{k}))
\chi_{B_k^{(4)}}\r\|_{\lv}+\lf\|\sum_{k\in\mathbb{N}}|\lambda_{k}|M_0(T(a_{k}))
\chi_{(B_k^{(4)})^\com}\r\|_{\lv}\\
&=:\textrm{I}+\textrm{II},
\end{align*}
where, for any $k\in\nn$, $B_k:=B_{\va}(x_k,r_k)$
is the same as in the proof of Theorem \ref{6t2} and $B_k^{(4)}$ as in \eqref{2e2'} with $\dz=4$.

For \textrm{I}, by a proof similar to that of \eqref{6e16}, we conclude that $\textrm{I}\ls 1$.

Next, we deal with \textrm{II}. To this end, from the vanishing moment condition of $T$ and the fact
that $\lfloor\bz\rfloor\le \frac{\nu}{a_-}(\frac1{\widetilde{p}_-}-1)$, which implies $\lfloor\bz\rfloor\le s$,
it follows that, for any $k\in \nn,\,t\in (0,\fz)$ and $x\in (B_k^{(4)})^\com$,
\begin{align}\label{6e9}
\lf|\Phi_t*T(a_k)(x)\r|=&\frac1{t^{\nu}}\lf|\int_{\rn}\Phi\lf(\frac{x-y}{t^{\va}}\r)T(a_k)(y)\,dy\r|\\
\le &\frac1{t^{\nu}}\int_{\rn}\lf|\Phi\lf(\frac{x-y}{t^{\va}}\r)-\sum_{|\az|\le \lfloor\bz\rfloor}
\frac{\pa^{\az}\Phi(\frac{x-x_k}{t^{\va}})}{\az!}\lf(\frac{y-x_k}{t^{\va}}\r)^{\az}\r|\lf|T(a_k)(y)\r|\,dy\noz\\
=&\frac1{t^{\nu}}\lf(\int_{|y-x_k|_{\va}<2r_k}+\int_{2r_k\le|y-x_k|_{\va}<\frac{|x-x_k|_{\va}}{2}}+
\int_{|y-x_k|_{\va}\geq\frac{|x-x_k|_{\va}}{2}}\r)\noz\\
&\times \lf|\Phi\lf(\frac{x-y}{t^{\va}}\r)-\sum_{|\az|\le \lfloor\bz\rfloor}
\frac{\pa^{\az}\Phi(\frac{x-x_k}{t^{\va}})}{\az!}\lf(\frac{y-x_k}{t^{\va}}\r)^{\az}\r|\lf|T(a_k)(y)\r|\,dy\noz\\
=&:\textrm{II}_1+\textrm{II}_2+\textrm{II}_3,\noz
\end{align}
where $\Phi$ is as in \eqref{3e16}.

For $\textrm{II}_1$, by the Taylor remainder theorem and (vi), (iv) and (v)
of Lemma \ref{2l2}, we conclude that, for any $k\in\nn$, $N\in\nn$,
$t\in(0,\fz)$, $x\in (B_k^{(4)})^\com$ and $y\in\rn$ with $|y-x_k|_{\va}<2r_k$, there exists $\theta_1(y)\in B_k^{(2)}$ such that
\begin{align}\label{6e10}
\textrm{II}_1
\le&\frac1{t^{\nu}}\int_{|y-x_k|_{\va}<2r_k}\lf|\sum_{|\az|= \lfloor\bz\rfloor+1}
\pa^{\az}\Phi\lf(\frac{x-\theta_1(y)}{t^{\va}}\r)\r|
\lf|\frac{y-x_k}{t^{\va}}\r|^{\lfloor\bz\rfloor+1}\lf|T(a_k)(y)\r|\,dy\\
\ls& \frac1{t^{\nu}}\int_{|y-x_k|_{\va}<2r_k}\frac1{(1+|\frac{x-\theta_1(y)}{t^{\va}}|)^N}\lf|\frac{y-x_k}
{t^{\va}}\r|^{\lfloor\bz\rfloor+1}\lf|T(a_k)(y)\r|\,dy\noz\\
\ls& \frac1{t^{\nu}}\int_{|y-x_k|_{\va}<2r_k}\lf(\frac{t}{|x-x_k|_{\va}}\r)^{Na_-}\noz\\
&\times\max\lf\{\lf(\frac{|y-x_k|_{\va}}{t}\r)^{(\lfloor\bz\rfloor+1)a_-},\,
\lf(\frac{|y-x_k|_{\va}}{t}\r)^{(\lfloor\bz\rfloor+1)a_+}\r\}\lf|T(a_k)(y)\r|\,dy.\noz
\end{align}
When $t\le |x-x_k|_{\va}$, let
\begin{align*}
N:=\left\{
\begin{array}{cl}
\vspace{0.25cm}
&\lf\lfloor\dfrac{\nu+(\lfloor\bz\rfloor+1)a_-}{a_-}\r\rfloor+1
\hspace{0.5cm} {\rm when}\hspace{0.5cm} |y-x_k|_{\va}<t,\\
&\lf\lfloor\dfrac{\nu+(\lfloor\bz\rfloor+1)a_+}{a_-}\r\rfloor+1
\hspace{0.5cm}{\rm when}\hspace{0.5cm} |y-x_k|_{\va}\ge t
\end{array}\r.
\end{align*}
in \eqref{6e10}.
Then, by this, the H\"{o}lder inequality and the fact that $T$ is bounded on $L^2(\rn)$, we know that,
for any $k\in \nn,\,t\in (0,\fz)$ and $x\in (B_k^{(4)})^\com$,
\begin{align}\label{6e11}
\textrm{II}_1\ls& \int_{|y-x_k|_{\va}<2r_k}\max\lf\{\frac{|y-x_k|_{\va}^{(\lfloor\bz\rfloor+1)a_-}}{|x-x_k|_{\va}^{\nu+
(\lfloor\bz\rfloor+1)a_-}},\,\frac{|y-x_k|_{\va}^{(\lfloor\bz\rfloor+1)a_+}}
{|x-x_k|_{\va}^{\nu+(\lfloor\bz\rfloor+1)a_+}}\r\}\lf|T(a_k)(y)\r|\,dy\\
\ls&\max\lf\{\frac{r_k^{(\lfloor\bz\rfloor+1)a_-}}{|x-x_k|_{\va}^{\nu+
(\lfloor\bz\rfloor+1)a_-}},\, \frac{r_k^{(\lfloor\bz\rfloor+1)a_+}}{|x-x_k|_{\va}^{\nu+(\lfloor\bz\rfloor+1)a_+}}\r\}
\lf\|T(a_k)\r\|_{L^2(\rn)}\lf|B_k\r|^{1/2}\noz\\
\ls&\max\lf\{\lf(\frac{r_k}{|x-x_k|_{\va}}\r)^{\nu+
(\lfloor\bz\rfloor+1)a_-},\,\lf(\frac{r_k}{|x-x_k|_{\va}}\r)^{\nu+(\lfloor\bz\rfloor+1)a_+}\r\}
\frac1{\|\chi_{B_k}\|_{\lv}}.\noz
\end{align}
When $t>|x-x_k|_{\va}$, let $N:=\lfloor\frac{\nu+(\lfloor\bz\rfloor+1)a_-}{a_-}\rfloor$
in \eqref{6e10}. Then it is easy to see that \eqref{6e11} also holds true.

For $\textrm{II}_2$, by the Taylor remainder theorem, some arguments
similar to those used in the estimations of \eqref{6e10} and
\eqref{6e11}, the vanishing moment condition of $a_k$,
the fact that $\lfloor\bz\rfloor\le s$, \eqref{6e7}, the H\"{o}lder
inequality and Lemma \ref{2l2}(ix), we find
that, for any $z\in B_k$, there exists $\theta_2(z)\in B_k$ such that,
for any $t\in(0,\fz)$ and $x\in (B_k^{(4)})^\com$,
\begin{align}\label{6e12}
\textrm{II}_2
\ls& \int_{2r_k\le|y-x_k|_{\va}<\frac{|x-x_k|_{\va}}{2}}
\max\lf\{\frac{|y-x_k|_{\va}^{(\lfloor\bz\rfloor+1)a_-}}{|x-x_k|_{\va}^{\nu+
(\lfloor\bz\rfloor+1)a_-}},\,\frac{|y-x_k|_{\va}^{(\lfloor\bz\rfloor+1)a_+}}
{|x-x_k|_{\va}^{\nu+(\lfloor\bz\rfloor+1)a_+}}\r\}\lf|T(a_k)(y)\r|\,dy\\
\ls& \int_{2r_k\le|y-x_k|_{\va}<\frac{|x-x_k|_{\va}}{2}}
\max\lf\{\frac{|y-x_k|_{\va}^{(\lfloor\bz\rfloor+1)a_-}}{|x-x_k|_{\va}^{\nu+
(\lfloor\bz\rfloor+1)a_-}},\, \frac{|y-x_k|_{\va}^{(\lfloor\bz\rfloor+1)a_+}}{|x-x_k|_{\va}^{\nu+(\lfloor\bz\rfloor+1)a_+}}\r\}\noz\\
&\times\lf[\int_{B_k}|a_k(z)|\lf|\mathcal{K}(y,z)-\sum_{|\az|< \lfloor\bz\rfloor}
\frac{\pa^{\az}_y\mathcal{K}(y,x_k)}{\az!}(z-x_k)^{\az}\r|\,dz\r]\,dy\noz\\
\sim& \int_{2r_k\le|y-x_k|_{\va}<\frac{|x-x_k|_{\va}}{2}}
\max\lf\{\frac{|y-x_k|_{\va}^{(\lfloor\bz\rfloor+1)a_-}}{|x-x_k|_{\va}^{\nu+
(\lfloor\bz\rfloor+1)a_-}},\, \frac{|y-x_k|_{\va}^{(\lfloor\bz\rfloor+1)a_+}}{|x-x_k|_{\va}^{\nu+(\lfloor\bz\rfloor+1)a_+}}\r\}\noz\\
&\times\int_{B_k}|a_k(z)|\lf|\sum_{|\az|= \lfloor\bz\rfloor}
\frac{\pa^{\az}_y\mathcal{K}(y,x_k)-\pa^{\az}_y\mathcal{K}(y,\theta_2(z))}{\az!}(z-x_k)^{\az}\r|\,dz\,dy\noz\\
\ls& \int_{2r_k\le|y-x_k|_{\va}<\frac{|x-x_k|_{\va}}{2}}\max
\lf\{\frac{|y-x_k|_{\va}^{(\lfloor\bz\rfloor+1)a_-}}{|x-x_k|_{\va}^{\nu+
(\lfloor\bz\rfloor+1)a_-}},\, \frac{|y-x_k|_{\va}^{(\lfloor\bz\rfloor+1)a_+}}{|x-x_k|_{\va}^{\nu+(\lfloor\bz\rfloor+1)a_+}}\r\}\noz\\
&\times\int_{B_k}|a_k(z)|\frac{r_k^{\bz}}{|y-x_k|_{\va}^{\nu+\bz}}\,dz\,dy\noz\\
\ls& r_k^{\bz}\lf\|a_k\r\|_{L^2(\rn)}\lf|B_k\r|^{1/2}\noz\\ &\times\int_{2r_k\le|y-x_k|_{\va}<\frac{|x-x_k|_{\va}}{2}}
\max\lf\{\frac{|y-x_k|_{\va}^{-\nu-\bz+(\lfloor\bz\rfloor+1)a_-}}{|x-x_k|_{\va}^{\nu+
(\lfloor\bz\rfloor+1)a_-}},\,\frac{|y-x_k|_{\va}^{-\nu-\bz+(\lfloor\bz\rfloor+1)a_+}}
{|x-x_k|_{\va}^{\nu+(\lfloor\bz\rfloor+1)a_+}}\r\}\,dy\noz\\
\ls&\lf(\frac{r_k}{|x-x_k|_{\va}}\r)^{\nu+\bz}\frac1{\|\chi_{B_k}\|_{\lv}}.\noz
\end{align}

For $\textrm{II}_3$, from the Taylor remainder theorem, the vanishing moment condition of $a_k$,
the fact that $\lfloor\bz\rfloor\le s$, \eqref{6e7}, the H\"{o}lder inequality and Lemma \ref{2l2}(vi), we deduce
that, for any $z\in B_k$, there exists $\theta_3(z)\in B_k$ such that, for any $t\in(0,\fz)$ and $x\in (B_k^{(4)})^\com$,
\begin{align}\label{6e13}
\textrm{II}_3\le&\frac1{t^{\nu}}\int_{|y-x_k|_{\va}\geq\frac{|x-x_k|_{\va}}{2}}
\lf|\Phi\lf(\frac{x-y}{t^{\va}}\r)-\sum_{|\widetilde{\az}|\le \lfloor\bz\rfloor}
\frac{\pa^{\widetilde{\az}}\Phi(\frac{x-x_k}{t^{\va}})}{\widetilde{\az}!}\lf(\frac{y-x_k}{t^{\va}}\r)^{\widetilde{\az}}\r|\\
&\times\lf[\int_{B_k}|a_k(z)|\lf|\mathcal{K}(y,z)-\sum_{|\az|< \lfloor\bz\rfloor}
\frac{\pa^{\az}_y\mathcal{K}(y,x_k)}{\az!}(z-x_k)^{\az}\r|\,dz\r]\,dy\noz\\
\sim& \int_{|y-x_k|_{\va}\geq\frac{|x-x_k|_{\va}}{2}}\lf|\frac1{t^{\nu}}\lf[\Phi\lf(\frac{x-y}
{t^{\va}}\r)-\sum_{|\widetilde{\az}|\le \lfloor\bz\rfloor}
\frac{\pa^{\widetilde{\az}}\Phi(\frac{x-x_k}{t^{\va}})}{\widetilde{\az}!}
\lf(\frac{y-x_k}{t^{\va}}\r)^{\widetilde{\az}}\r]\r|\noz\\
&\times\int_{B_k}|a_k(z)|\lf|\sum_{|\az|= \lfloor\bz\rfloor}
\frac{\pa^{\az}_y\mathcal{K}(y,x_k)-\pa^{\az}_y\mathcal{K}(y,\theta_3(z))}{\az!}(z-x_k)^{\az}\r|\,dz\,dy\noz\\
\ls& \int_{|y-x_k|_{\va}\geq\frac{|x-x_k|_{\va}}{2}}\lf|\Phi_t(x-y)\r|\int_{B_k}
|a_k(z)|\frac{r_k^{\bz}}{|y-x_k|_{\va}^{\nu+\bz}}\,dz\,dy\noz\\
&+ \int_{|y-x_k|_{\widetilde{\va}}\geq\frac{|x-x_k|_{\va}}{2}}\lf|\frac1{t^{\nu}}\sum_{|\widetilde{\az}|\le \lfloor\bz\rfloor}
\frac{\pa^{\widetilde{\az}}\Phi(\frac{x-x_k}{t^{\va}})}{\widetilde{\az}!}
\lf(\frac{y-x_k}{t^{\va}}\r)^{\widetilde{\az}}\r|\int_{B_k}
|a_k(z)|\frac{r_k^{\bz}}{|y-x_k|_{\va}^{\nu+\bz}}\,dz\,dy\noz\\
\ls& \frac{r_k^{\bz}}{|x-x_k|_{\va}^{\nu+\bz}}\|a_k\|_{L^2(\rn)}|B_k|^{1/2}\int_{|y-x_k|_{\va}\geq\frac{|x-x_k|_{\va}}{2}}
\lf|\Phi_t(x-y)\r|\,dy\noz\\
&+r_k^{\bz}\|a_k\|_{L^2(\rn)}|B_k|^{1/2}\int_{|y-x_k|_{\va}\geq\frac{|x-x_k|_{\va}}{2}}
\frac1{t^{\nu}}\sum_{|\widetilde{\az}|\le \lfloor\bz\rfloor}\lf(\frac{t}{|x-x_k|_{\va}}\r)
^{Na_-}\lf|\frac{y-x_k}{t^{\va}}\r|^{|\widetilde{\az}|}\,dy\noz\\
=&:\textrm{II}_{3,1}+\textrm{II}_{3,2}.\noz
\end{align}

For $\textrm{II}_{3,1}$, from the size condition of $a_k$ and the fact that $\Phi$ is as in \eqref{3e16},
it follows that, for any $x\in (B_k^{(4)})^\com$,
\begin{align}\label{6e14'}
\textrm{II}_{3,1}\ls\lf(\frac{r_k}{|x-x_k|_{\va}}\r)^{\nu+\bz}\frac1{\|\chi_{B_k}\|_{\lv}}.
\end{align}
In addition, by an argument similar to that used in the estimation of \eqref{6e11} and Lemma \ref{2l2}(ix),
we find that, for any $x\in (B_k^{(4)})^\com$,
$$\textrm{II}_{3,2}\ls\lf(\frac{r_k}{|x-x_k|_{\va}}\r)^{\nu+\bz}\frac1{\|\chi_{B_k}\|_{\lv}},$$
which, combined with \eqref{6e13} and \eqref{6e14'}, further implies that, for any $x\in (B_k^{(4)})^\com$,
\begin{align}\label{6e14}
\textrm{II}_{3}\ls\lf(\frac{r_k}{|x-x_k|_{\va}}\r)^{\nu+\bz}\frac1{\|\chi_{B_k}\|_{\lv}}.
\end{align}

Combining \eqref{6e9}, \eqref{6e11}, \eqref{6e12} and \eqref{6e14}, we conclude that, for any $x\in (B_k^{(4)})^\com$,
\begin{align*}
M_0(T(a_k))(x)
&=\sup_{t\in(0,\fz)}\lf|\Phi_t*T(a_k)(x)\r|
\ls\lf(\frac{r_k}{|x-x_k|_{\va}}\r)^{\nu+\bz}\frac1{\|\chi_{B_k}\|_{\lv}}\\
&\ls \lf[M_{\rm HL}\lf(\chi_{B_k}\r)(x)\r]^{\frac{\nu+\bz}{\nu}}\frac1{\|\chi_{B_k}\|_{\lv}},
\end{align*}
which implies that
$$M_0(T(a_k))(x)\chi_{(B_k^{(4)})^\com}(x)\ls \lf[M_{\rm HL}\lf(\chi_{B_k}\r)(x)\r]^{\frac{\nu+\bz}{\nu}}\frac1{\|\chi_{B_k}\|_{\lv}}.$$
Then, by the fact that $\widetilde{p}_-<\frac{\nu+\bz}{\nu}$ and an argument similar to that used in the proof of Theorem \ref{6t2}, we know that
\eqref{6e15} holds true. This finishes the proof of Theorem \ref{6t4}.
\end{proof}

Now we prove Theorem \ref{6t5}.

\begin{proof}[Proof of Theorem \ref{6t5}]
Let $\vp\in(0,1]^n$ and $s$ be as in \eqref{3e1}. By an argument similar to that
used in the proof of Theorem \ref{6t3}, we know that, to show Theorem \ref{6t5},
it suffices to prove that, for any $(\vp,2,s)$-atom $a$ and $x\in\rn$,
\begin{align}\label{6e1'}
\lf|T(a)(x)\chi_{B^{(2)}}(x)\r|\ls \lf[M_{\rm HL}(\chi_{B})(x)\r]
^{\f{\nu+\bz}{\nu}}\frac1{\|\chi_{B}\|_{\lv}},
\end{align}
where $B$ and $B^{(2)}$ are as in the proof of Theorem \ref{6t3}.

Indeed, let $x_0$ and $r$ denote the center and the radius of $B$, respectively.
From the Taylor remainder theorem, the vanishing moment condition of $a$,
the fact that $\lfloor\bz\rfloor\le \frac{\nu}{a_-}(\frac1{\widetilde{p}_-}-1)$,
which implies $\lfloor\bz\rfloor\le s$, and the H\"{o}lder inequality,
we deduce that, for any $z\in B$,
there exists $\theta(z)\in B$ such that, for any $x\in B^{(2)}$,
\begin{align*}
\lf|T(a)(x)\r|&\leq\int_B |a(z)|\lf|\mathcal{K}(x,z)\r|\,dz\\
&=\int_{B}|a(z)|\lf|\mathcal{K}(x,z)-\sum_{|\az|< \lfloor\bz\rfloor}
\frac{\pa^{\az}_x\mathcal{K}(x,x_0)}{\az!}(z-x_0)^{\az}\r|\,dz\\
&\sim \int_{B}|a(z)|\lf|\sum_{|\az|= \lfloor\bz\rfloor}
\frac{\pa^{\az}_x\mathcal{K}(x,x_0)-\pa^{\az}_x\mathcal{K}(x,\theta(z))}{\az!}(z-x_0)^{\az}\r|\,dz\\
&\ls \int_{B}|a(z)|\frac{r^{\bz}}{|x-x_0|_{\va}^{\nu+\bz}}\,dz
\ls \frac{r^{\bz}}{|x-x_0|_{\va}^{\nu+\bz}}\|a\|_{L^2(\rn)}|B|^{1/2}\\
&\ls \lf(\frac{r}{|x-x_0|_{\va}}\r)^{\nu+\bz}\frac1{\|\chi_{B}\|_{\lv}}
\ls \lf[M_{\rm HL}(\chi_{B})(x)\r]^{\f{\nu+\bz}{\nu}}\frac1{\|\chi_{B}\|_{\lv}},
\end{align*}
which implies that \eqref{6e1'} holds true and hence completes the proof of Theorem \ref{6t5}.
\end{proof}

\bigskip

\noindent  Long Huang, Jun Liu, Dachun Yang  and Wen Yuan (Corresponding author)

\medskip

\noindent  Laboratory of Mathematics and Complex Systems
(Ministry of Education of China),
School of Mathematical Sciences, Beijing Normal University,
Beijing 100875, People's Republic of China

\smallskip

\noindent {\it E-mails}: \texttt{longhuang@mail.bnu.edu.cn} (L. Huang)

\hspace{0.99cm}\texttt{junliu@mail.bnu.edu.cn} (J. Liu)

\hspace{0.99cm}\texttt{dcyang@bnu.edu.cn} (D. Yang)

\hspace{0.99cm}\texttt{wenyuan@bnu.edu.cn} (W. Yuan)


\begin{thebibliography}{10}

\bibitem{a86}
J. \'{A}lvarez and M. Milman, $H^p$ continuity properties
of Calder\'on-Zygmund-type operators,
J Math Anal Appl, 118 (1986), 63-79.

\vspace{-0.3cm}

\bibitem{b75}
R. J. Bagby,
An extended inequality for the maximal function,
Proc. Amer. Math. Soc. 48 (1975), 419-422.

\vspace{-0.3cm}

\bibitem{bp61}
A. Benedek and R. Panzone,
The space $L^p$, with mixed norm,
Duke Math. J. 28 (1961), 301-324.

\vspace{-0.3cm}

\bibitem{bil66}
O. V. Besov, V. P. Il$'$in and P. I. Lizorkin,
The $L_p$-estimates of a certain class of non-isotropically singular integrals,
(Russian) Dokl. Akad. Nauk SSSR 169 (1966), 1250-1253.

\vspace{-0.3cm}

\bibitem{bn08}
L. Borup and M. Nielsen,
On anisotropic Triebel-Lizorkin type spaces, with applications
to the study of pseudo-differential operators,
J. Funct. Spaces Appl. 6 (2008), 107-154.

\vspace{-0.3cm}

\bibitem{mb03}
M. Bownik,
Anisotropic Hardy Spaces and Wavelets, Mem. Amer.
Math. Soc. 164 (2003), no. 781, vi+122pp.

\vspace{-0.3cm}

\bibitem{mb05}
M. Bownik,
Atomic and molecular decompositions of anisotropic Besov spaces,
Math. Z. 250 (2005), 539-571.

\vspace{-0.3cm}

\bibitem{mb07}
M. Bownik,
Anisotropic Triebel-Lizorkin spaces with doubling measures,
J. Geom. Anal. 17 (2007), 387-424.

\vspace{-0.3cm}

\bibitem{bh06}
M. Bownik and K.-P. Ho, Atomic and molecular
decompositions of anisotropic Triebel-Lizorkin
spaces, Trans. Amer. Math. Soc. 358 (2006), 1469-1510.

\vspace{-0.3cm}

\bibitem{blyz08}
M. Bownik, B. Li, D. Yang and Y. Zhou,
Weighted anisotropic Hardy spaces and their applications
in boundedness of sublinear operators,
Indiana Univ. Math. J. 57 (2008), 3065-3100.

\vspace{-0.3cm}

\bibitem{blyz10}
M. Bownik, B. Li, D. Yang and Y. Zhou,
Weighted anisotropic product Hardy spaces
and boundedness of sublinear operators,
Math. Nachr. 283 (2010), 392-442.

\vspace{-0.3cm}

\bibitem{c77}
A.-P. Calder\'{o}n,
An atomic decomposition of distributions in parabolic $H^p$ spaces,
Adv. Math. 25 (1977), 216-225.

\vspace{-0.3cm}

\bibitem{ct75}
A.-P. Calder\'{o}n and A. Torchinsky,
Parabolic maximal functions associated with a distribution,
Adv. Math. 16 (1975), 1-64.

\vspace{-0.3cm}

\bibitem{ct77}
A.-P. Calder\'{o}n and A. Torchinsky,
Parabolic maximal functions associated with a distribution. II,
Adv. Math. 24 (1977), 101-171.

\vspace{-0.3cm}

\bibitem{cz52}
A.-P. Calder\'{o}n and A. Zygmund,
On the existence of certain singular integrals,
Acta Math. 88 (1952), 85-139.

\vspace{-0.3cm}

\bibitem{cs}
T. Chen and W. Sun,
Iterated and mixed weak norms with applications to geometric inequalities, arXiv: 1712.01064.

\vspace{-0.3cm}

\bibitem{mc90}
M. Christ, A $T(b)$ theorem with remarks on analytic capacity
and the Cauchy integral, Colloq. Math. 60/61 (1990), 601-628.

\vspace{-0.3cm}

\bibitem{cgn17bs}
G. Cleanthous, A. G. Georgiadis and M. Nielsen,
Discrete decomposition of homogeneous mixed-norm Besov spaces,
in: Functional Analysis, Harmonic Analysis, and Image Processing:
A Collection of Papers in Honor of Bj\"{o}rn Jawerth,
167-184, Contemp. Math., 693, Amer. Math. Soc., Providence, RI, 2017.

\vspace{-0.3cm}

\bibitem{cgn17}
G. Cleanthous, A. G. Georgiadis and M. Nielsen,
Anisotropic mixed-norm Hardy spaces,
J. Geom. Anal. 27 (2017), 2758-2787.

\vspace{-0.3cm}

\bibitem{cgn17-2}
G. Cleanthous, A. G. Georgiadis and M. Nielsen, Molecular decomposition of anisotropic homogeneous
mixed-norm spaces with applications to the boundedness of operators,
Appl. Comput. Harmon. Anal. (2017),
https://doi.org/10.1016/j.acha.2017.10.001.

\vspace{-0.3cm}

\bibitem{ds16}
Y. Ding and S. Sato,
Littlewood-Paley functions on homogeneous groups,
Forum Math. 28 (2016), 43-55.

\vspace{-0.3cm}

\bibitem{f66}
E. B. Fabes and N. M. Rivi\`{e}re, Singular integrals with mixed homogeneity,
Studia Math. 27 (1966), 19-38.

\vspace{-0.3cm}

\bibitem{f00}
W. Farkas,
Atomic and subatomic decompositions in anisotropic function spaces,
Math. Nachr. 2009 (2000), 83-113.

\vspace{-0.3cm}

\bibitem{fjs00}
W. Farkas, J. Johnsen and W. Sickel,
Traces of anisotropic Besov-Lizorkin-Triebel spaces\,--\,a complete treatment of the borderline cases,
Math. Bohem. 125 (2000), 1-37.

\vspace{-0.3cm}

\bibitem{fhly17}
X. Fan, J. He, B. Li and D. Yang,
Real-variable characterizations of anisotropic product Musielak-Orlicz Hardy spaces,
Sci. China Math. 60 (2017), 2093-2154.

\vspace{-0.3cm}

\bibitem{fs72}
C. Fefferman and E. M. Stein,
$H^p$ spaces of several variables,
Acta Math. 129 (1972), 137-193.

\vspace{-0.3cm}

\bibitem{f87}
D. L. Fern\'{a}ndez,
Vector-valued singular integral operators on $L^p$-spaces with mixed norms and applications,
Pac. J. Math. 129(2) (1987), 257-275.

\vspace{-0.3cm}

\bibitem{fs82}
G. B. Folland and E. M. Stein,
Hardy Spaces on Homogeneous Groups, Mathematical Notes 28,
Princeton University Press, Princeton,
N.J.; University of Tokyo Press, Tokyo, 1982.

\vspace{-0.3cm}

\bibitem{gjn17}
A. G. Georgiadis, J. Johnsen and M. Nielsen,
Wavelet transforms for homogeneous mixed-norm Triebel-Lizorkin spaces,
Monatsh. Math. 183 (2017), 587-624.

\vspace{-0.3cm}

\bibitem{gn16}
A. G. Georgiadis and M. Nielsen, Pseudodifferential operators
on mixed-norm Besov and Triebel-Lizorkin spaces,
Math. Nachr. 289 (2016), 2019-2036.

\vspace{-0.3cm}

\bibitem{lg14}
L. Grafakos,
Classical Fourier Analysis, Third edition,
Graduate Texts in Math. 249,
Springer, New York, 2014.

\vspace{-0.3cm}

\bibitem{lg14b}
L. Grafakos,
Modern Fourier Analysis, Third edition,
Graduate Texts in Math. 250, Springer, New York, 2014.

\vspace{-0.3cm}

\bibitem{gly08}
L. Grafakos, L. Liu and D. Yang, Maximal
function characterizations of Hardy spaces on RD-spaces and
their applications, Sci. China Ser. A 51 (2008), 2253-2284.

\vspace{-0.3cm}

\bibitem{gm11}
V. S. Guliyev and R. Ch. Mustafayev,
Boundedness of the anisotropic maximal and anisotropic singular
integral operators in generalized Morrey spaces,
Acta Math. Sin. (Engl. Ser.) 27 (2011), 2361-2370.

\vspace{-0.3cm}

\bibitem{hmy06} Y. Han, D. M\"uller and D. Yang,
Littlewood-Paley characterizations for Hardy spaces on spaces of homogeneous type,
 Math. Nachr. 279 (2006), 1505-1537.

\vspace{-0.3cm}

\bibitem{hmy08}
Y. Han, D. M\"uller and D. Yang, A theory of Besov and
Triebel-Lizorkin spaces on metric measure spaces modeled on
Carnot-Carath\'eodory spaces, Abstr. Appl. Anal. 2008, Art. ID 893409, 250 pp.

\vspace{-0.3cm}

\bibitem{h60}
L. H\"{o}rmander,
Estimates for translation invariant operators in $L^p$ spaces,
Acta Math. 104 (1960), 93-140.

\vspace{-0.3cm}

\bibitem{htw17}
J. Hart, R. H. Torres and X. Wu,
Smoothing properties of bilinear operators and Leibniz-type
rules in Lebesgue and mixed Lebesgue spaces,
Trans. Amer. Math. Soc. (2017), DOI: 10.1090/tran/7312

\vspace{-0.3cm}

\bibitem{jms13}
J. Johnsen, S. Munch Hansen and W. Sickel,
Characterisation by local means of anisotropic Lizorkin-Triebel spaces with mixed norms,
Z. Anal. Anwend. 32 (2013), 257-277.

\vspace{-0.3cm}

\bibitem{jms14}
J. Johnsen, S. Munch Hansen and W. Sickel,
Anisotropic, mixed-norm Lizorkin-Triebel spaces and diffeomorphic maps,
J. Funct. Spaces 2014, Art. ID 964794, 15 pp.

\vspace{-0.3cm}

\bibitem{jms15}
J. Johnsen, S. Munch Hansen and W. Sickel,
Anisotropic Lizorkin-Triebel spaces with mixed norms-traces on smooth boundaries,
Math. Nachr. 288 (2015), 1327-1359.

\vspace{-0.3cm}

\bibitem{js07}
J. Johnsen and W. Sickel, A direct proof of Sobolev
embeddings for quasi-homogeneous Lizorkin-Triebel spaces
with mixed norms, J. Funct. Spaces Appl. 5 (2007), 183-198.

\vspace{-0.3cm}

\bibitem{js08}
J. Johnsen and W. Sickel, On the trace problem for
Lizorkin-Triebel spaces with mixed norms,
Math. Nachr. 281 (2008), 669-696.

\vspace{-0.3cm}

\bibitem{ky14}
L. D. Ky, New Hardy spaces of Musielak-Orlicz type and boundedness of sublinear opera-
tors, Integral Equations Operator Theory 78 (2014), 115-150.

\vspace{-0.3cm}

\bibitem{lby14}
B. Li, M. Bownik and D. Yang,
Littlewood-Paley characterization and duality of weighted anisotropic product Hardy spaces,
J. Funct. Anal. 266 (2014), 2611-2661.

\vspace{-0.3cm}

\bibitem{lbyy12}
B. Li, M. Bownik, D. Yang and W. Yuan,
Duality of weighted anisotropic Besov and Triebel-Lizorkin spaces,
Positivity 16 (2012), 213-244.

\vspace{-0.3cm}

\bibitem{lbyy14}
B. Li, M. Bownik, D. Yang and W. Yuan,
A mean characterization of weighted anisotropic Besov and Triebel-Lizorkin spaces,
Z. Anal. Anwend. 33 (2014), 125-147.

\vspace{-0.3cm}

\bibitem{lbyz10}
B. Li, M. Bownik, D. Yang and Y. Zhou,
Anisotropic singular integrals in product spaces,
Sci. China Math. 53 (2010), 3163-3178.

\vspace{-0.3cm}

\bibitem{lfy15}
B. Li, X. Fan and D. Yang,
Littlewood-Paley characterizations of anisotropic Hardy spaces of Musielak-Orlicz type,
Taiwanese J. Math. 19 (2015), 279-314.

\vspace{-0.3cm}

\bibitem{lsuyy}
Y. Liang, Y. Sawano, T. Ullrich, D. Yang and W. Yuan, New characterizations
of Besov-Triebel-Lizorkin-Hausdorff spaces including coorbits and wavelets,
J. Fourier Anal. Appl. 18 (2012), 1067-1111.

\vspace{-0.3cm}

\bibitem{lwyy17}
J. Liu, F. Weisz, D. Yang and W. Yuan,
Variable anisotropic Hardy spaces and their applications,
Taiwanese J. Math. (2017), DOI: 10.11650/tjm/171101.

\vspace{-0.3cm}

\bibitem{lwyy17hlLP}
J. Liu, F. Weisz, D. Yang and W. Yuan,
Littlewood-Paley and finite atomic characterizations
of anisotropic variable Hardy-Lorentz spaces and their applications,
Submitted.

\vspace{-0.3cm}

\bibitem{lyy16}
J. Liu, D. Yang and W. Yuan,
Anisotropic Hardy-Lorentz spaces and their applications,
Sci. China Math. 59 (2016), 1669-1720.

\vspace{-0.3cm}

\bibitem{lyy16LP}
J. Liu, D. Yang and W. Yuan,
Littlewood-Paley characterizations of anisotropic Hardy-Lorentz spaces,
Acta Math. Sci. Ser. B Engl. Ed. 38 (2018), 1-33.

\vspace{-0.3cm}

\bibitem{lyy17hl}
J. Liu, D. Yang and W. Yuan,
Anisotropic variable Hardy-Lorentz spaces
and their real interpolation, J. Math. Anal. Appl. 456 (2017), 356-393.

\vspace{-0.3cm}

\bibitem{lyy17}
J. Liu, D. Yang and W. Yuan,
Littlewood-Paley characterizations of weighted
anisotropic Triebel-Lizorkin spaces via averages on balls, Submitted.

\vspace{-0.3cm}

\bibitem{l70}
P. I. Lizorkin,
Multipliers of Fourier integrals and estimates of convolutions
in spaces with mixed norm. Applications,
Izv. Akad. Nauk SSSR Ser. Mat. 34 (1970), 218-247.

\vspace{-0.3cm}

\bibitem{lu}
S.-Z. Lu, Four Lectures on Real $H^p$ Spaces, World Scientific Publishing Co., Inc., River
Edge, NJ, 1995.

\vspace{-0.3cm}

\bibitem{msv08}
S. Meda, P. Sj\"{o}gren and M. Vallarino,
On the $H^1$-$L^1$ boundedness of operators,
Proc. Amer. Math. Soc. 136 (2008), 2921-2931.

\vspace{-0.3cm}

\bibitem{ns12}
E. Nakai and Y. Sawano,
Hardy spaces with variable exponents and generalized Campanato spaces,
 J. Funct. Anal. 262 (2012), 3665-3748.

\vspace{-0.3cm}

\bibitem{s13}
S. Sato, Estimates for singular integrals on homogeneous groups,
J. Math. Anal. Appl. 400 (2013), 311-330.

\vspace{-0.3cm}

\bibitem{s16}
S. Sato, Characterization of parabolic Hardy spaces by Littlewood-Paley functions,
arXiv: 1607.03645.

\vspace{-0.3cm}

\bibitem{s17} S. Sato, Weak type estimates for functions of Marcinkiewicz type with fractional
integrals of mixed homogeneity, arXiv: 1708.07343.

\vspace{-0.3cm}

\bibitem{sawa13} Y. Sawano, Atomic decompositions of Hardy spaces
with variable exponents and its application to bounded
linear operator, Integral Equations Operator Theory 77 (2013), 123-148.

\vspace{-0.3cm}

\bibitem{shyy17} Y. Sawano, K.-P. Ho, D. Yang and S. Yang,
Hardy spaces for ball quasi-Banach function spaces, Dissertationes Math.
(Rozprawy Mat.) 525 (2017), 1-102.

\vspace{-0.3cm}

\bibitem{st87}
H.-J. Schmeisser and H. Triebel,
Topics in Fourier Analysis and Function
Spaces, John Wiley and Sons Ltd., Chichester, 1987.

\vspace{-0.3cm}

\bibitem{st04}
 A. Stefanov and R. H. Torres,
 Calder\'{o}n-Zygmund operators on mixed Lebesgue spaces
 and applications to null forms,
 J. London Math. Soc. (2) 70 (2004), 447-462.

\vspace{-0.3cm}

\bibitem{s93}
E. M. Stein,
Harmonic Analysis: Real-Variable Methods, Orthogonality,
and Oscillatory Integrals, Princeton Mathematical Series 43,
Monographs in Harmonic Analysis III,
Princeton University Press, Princeton, NJ, 1993.

\vspace{-0.3cm}

\bibitem{sw78}
E. M. Stein and S. Wainger, Problems in harmonic analysis related
to curvature, Bull. Amer. Math. Soc. 84 (1978), 1239-1295.

\vspace{-0.3cm}

\bibitem{sw60}
E. M. Stein and G. Weiss,
On the theory of harmonic functions of several variables I.
The theory of $H^p$-spaces, Acta Math. 103 (1960) 25-62.

\vspace{-0.3cm}

\bibitem{sw71}
E. M. Stein and G. Weiss,
Introduction to Fourier Analysis on Euclidean Spaces,
Princeton Mathematical Series 32,
Princeton, NJ, 1971.

\vspace{-0.3cm}

\bibitem{t91}
R. H. Torres, Boundedness results for operators
with singular kernels on distribution spaces,
Mem. Amer. Math. Soc. 90 (1991), no. 442, viii+172pp.

\vspace{-0.3cm}

\bibitem{t06}
H. Triebel,
Theory of Function Spaces. III, Birkh\"{a}user Verlag, Basel, 2006.

\vspace{-0.3cm}

\bibitem{t15}
H. Triebel, Tempered Homogeneous Function Spaces,
EMS Series of Lectures in Mathematics, European Mathematical Society (EMS), Z\"urich, 2015.

\vspace{-0.3cm}

\bibitem{u12} T. Ullrich,
Continuous characterization of Besov-Lizorkin-Triebel
space and new interpretations as coorbits,
J. Funct. Space Appl. 2012, Art. ID
163213, 47 pp.

\vspace{-0.3cm}

\bibitem{yam86a}
M. Yamazaki,
A quasi-homogeneous version of paradifferential operators, I.
Boundedness on spaces of Besov type, J. Fac. Sci. Univ. Tokyo Sect.
IA Math. 33 (1986), 131-174.

\vspace{-0.3cm}

\bibitem{yam86b}
M. Yamazaki,
A quasi-homogeneous version of paradifferential operators, II.
A symbol calculus, J. Fac. Sci. Univ. Tokyo Sect.
IA Math. 33 (1986),  311-345.

\vspace{-0.3cm}

\bibitem{yyyz16} X. Yan, D. Yang, W. Yuan and C. Zhuo,
Variable weak Hardy spaces and thier applications, J. Funct. Anal. 271 (2016), 2822-2887.

\vspace{-0.3cm}

\bibitem{ylk17} D. Yang, Y. Liang and L. D. Ky,
Real-Variable Theory of Musielak-Orlicz Hardy Spaces,
Lecture Notes in Mathematics 2182, Springer-Verlag, Cham, 2017.

\vspace{-0.3cm}

\bibitem{yyh13} Da. Yang, Do. Yang and G. Hu,  The Hardy Space $H^1$ with Non-doubling Measures
and Their Applications, Lecture Notes in Mathematics 2084,
Springer-Verlag, Cham, 2013.

\vspace{-0.3cm}

\bibitem{zy08}
D. Yang and Y. Zhou, Boundedness of sublinear operators in Hardy spaces on RD-spaces via
atoms, J. Math. Anal. Appl. 339 (2008), 622-635.

\vspace{-0.3cm}

\bibitem{zy09}
D. Yang and Y. Zhou, A boundedness criterion via atoms for linear operators in Hardy spaces,
Constr. Approx. 29 (2009), 207-218.

\vspace{-0.3cm}

\bibitem{zy11}
D. Yang and Y. Zhou, New properties of Besov and Triebel-Lizorkin
 spaces on RD-spaces, Manuscripta Math. 134 (2011), 59-90.

\vspace{-0.3cm}

\bibitem{zsy16}
C. Zhuo, Y. Sawano and D. Yang,
Hardy spaces with variable exponents on RD-spaces and applications,
Dissertationes Math. (Rozprawy Mat.) 520 (2016), 1-74.

\end{thebibliography}
\end{document}